\newtheorem{theorem}{Theorem}[section]
\newtheorem{lemma}[theorem]{Lemma}
\newtheorem{corollary}[theorem]{Corollary}
\newtheorem{remark}[theorem]{Remark}
\newtheorem{proposition}[theorem]{Proposition}
\newtheorem{definition}[theorem]{Definition}
\numberwithin{equation}{section}
\begin{document}

\title{\textbf{Global structure and regularity of solutions to the Eikonal equation} }
\author{Tian-Hong Li\footnote{The first author is
supported by National Natural Science Foundation of China  under contract 10931007 and CAS NCMIS Y62911CZZ1 and Y629096ZZ3.}\\
Academy of Mathematics and Systems Science \\and Hua
Loo-Keng Key
 Laboratory of Mathematics\\
 Chinese Academy of Sciences\\ Beijing 100190, P. R. China\vspace{0.1cm}\\
 JingHua Wang\footnote{The second author is
supported by National Natural Science Foundation of China  under contract 11471332.}\; \\
 Academy of Mathematics and System
Sciences\\ Chinese Academy of Sciences\\Beijing 100190, P. R.
China\vspace{0.1cm}\\
HaiRui Wen\\Department of Mathematics\\Beijing Institute of Technology\\Beijing 100081,  P. R. China
}\date{} \maketitle \vspace{0.4cm}

\begin{center}{\bf Abstract}\end{center}

{\bfseries Abstract.} The time dependent Eikonal equation  is a Hamilton-Jacobi equation with  Hamiltonian $H(P)=|P|$, which is  not  strictly convex nor smooth.  The regularizing effect of Hamiltonian for the Eikonal equation is much weaker than that of strictly convex  Hamiltonians, therefore  leading to  new phenomena such as the appearance of "contact discontinuity". 
In this paper, 
 we study the set of singularity points of solutions in the upper half space for $C^1$ or $C^2$ initial data, with emphasis on the countability of connected components of the set.
  The regularity of solutions in the complement of the set of  singularity points is also obtained.
 \vspace{0.5cm}

{\bfseries Keywords:}{ Hamilton-Jacobi equation; Hopf-Lax formula;
 global structure;  regularity; singularity;   connected components}

Mathematics Subject Classifications 2000: 35L60, 49L25, 35L67, 35D10.

\section{Introduction}
Consider the initial value problem for the time dependent Eikonal  equation
 \begin{equation}\label{1.2} \left\{\begin{array}{rl}
u_{t}+|\nabla u|=0& \quad \mbox{in}\  \mathbb{R}^{n}\times(0,\infty),\\
u=g& \quad \mbox{on}\  \mathbb{R}^{n}\times\{t=0\}.
\end{array}
\right.
\end{equation}
where  $|\cdot|$ is the Euclidean norm defined as
$$|X|=(\Sigma_{i=1}^nx_i^2)^{\frac{1}{2}}$$
for  an $n$-dimensional vector $X$ with components $x_i$, $i=1...n$. $\nabla u$ is the gradient vector $\nabla u=(\frac{\partial u}{\partial x_1}, ..., \frac{\partial u}{\partial x_n})$. (\ref{1.2}) is  a special Hamilton-Jacobi equation, $u_{t}+ H(x, \nabla u)=0$, with $H(x, \nabla u)=|\nabla u|$. This Hamiltonian is not strictly convex, nor superlinear nor smooth. 
 Yet here we consider the  case where $H$ does not depend on $x$, for which many phenomena are more transparent.

 It is worth mentioning that the  Eikonal equation is regarded as a very  important equation in geometric
optics.  It is derivable from Maxwell's equations of electromagnetics by WKB methods, and provides a link between physical (wave) optics and geometric (ray) optics.

The  Eikonal equation (\ref{1.2}) also appears as a typical  Mayer problem in optimal control. We refer the reader to  the comprehensive  and beautiful book \cite{C} of Cannarsa and Sinestrari and  references therein for the framework of  Mayer problem and related Hamiton-Jacobi equations.

No matter how smooth initial data are, solutions to  Hamilton-Jacobi equations are not assured to be in $C^1$. In general, there are nondifferentiable points of solutions  in ${\mathbb R^n}\times \{t>0\}$.  Hence it is interesting to study   sets of singularity points, including  the set of
nondifferentiable points $\Sigma$, its closure $\overline \Sigma$, and the set of nondifferentiable points and generating points of nondifferentiable points $\Sigma\cup T_1$.

 For the Eikonal equation  (\ref{1.2}),
the Legendre transform of $H(P)=|P|$ is
\begin{eqnarray*}
H^*(Q)&=&
\left\{
\begin{array}{ll}
0, & \text{when $|Q|\le 1$}; \\
\infty, & \text{when $|Q|> 1$},
\end{array}
\right.
\end{eqnarray*}
 to which the solution given by the Hopf-Lax formula  has the following form  (see\cite{L.C.}\cite{Bl}\cite{C}\cite{Lion}):
\begin{equation}\label{HL}u(x, t)=\min_{|x-y|\le t}g(y).
\end{equation}
This solution is the unique viscosity solution to the Eikonal equation.
 It is notable that the range of minimizers for $(x, t)$ is a closed ball $\subset {\mathbb R^n}$, not the whole ${\mathbb R^n}$ space. 
   So the criterion of local minimum in ${\mathbb R^n}$ cannot be applied here. 
    Different techniques and new methods are needed in our analysis. 

Instead of general initial data, we restrict to $C^1$ or $C^2$ initial data so that we can study solution properties in the framework of the space of semiconcave functions. 
When  initial data is $C^1$, the solution to (\ref{1.2}) is much less regular than the case with $C^2$ initial data.

In order to better understand the set $\Sigma$   of nondifferentiable points of solutions to the Eikonal equation, we  establish a more complete criterion (Theorem \ref{diff}) to characterize nondifferentiable points in ${\mathbb R^n}\times \{t>0\}$. 
 This criterion is different from the criterion for the case of strictly convex, superlinear and smooth Hamiltonians.

 By  concrete constructions, some  structures of $\Sigma\cup T_1$ or $\Sigma$ or $\overline\Sigma$  are revealed, 
  where $T_1$ is, roughly speaking, the set of differentiable generating points of nondifferentiable points. 
 For $C^1$ initial data, both $\Sigma\cup T_1$ and $\Sigma$ are composed of at most countable connected components or almost  connected components (Theorem \ref{thm1} and Theorem \ref{Sigma}). The definition of an almost connected set can be found in Definition \ref{almost}. 
 For the closure $\overline\Sigma$, we are unable to obtain the countability of connected components, and only
 under certain additional condition, $\overline\Sigma$ is proved to be composed of at most countable connected components (Theorem \ref{overSigma}).
 For $C^2$ initial data, we obtain a much better result: $\Sigma\cup T_1$  is composed of at most countable path connected components (Theorem \ref{path connected}).

For the regularity property, it is proved that for $C^1$ initial data, on the complement of $\Sigma$, the solution $u$ is $C^1$ (Proposition \ref{C1}). For $C^k$ ($k\ge 2$) initial data, a surprise appears,  even outside $\overline\Sigma$, $C^k$ regularity is no more always true. We find a subtle region $P^0$ such that, sometimes only in the complement of $\Sigma\cup T_1\cup P^0$, 
$u$ is $C^k$ (Theorem \ref{C2}). 

 One    difficulty for the Eikonal equation lies in that the Hamiltonian $H$ is not differentiable at $0$. 
 By defining the first and  second termination points of a characteristic from $y$ with $Dg(y)\neq 0$, and finding the close relation between termination points of characteristics from points with $Dg\neq 0$ and nondifferentiable points of solutions, 
 we are able to establish some global structure of  sets of singularity points. For $C^2$ initial data, by use of curvature, more regular properties of termination points are obtained, and several special eigenvectors of Jacobian matrix of a characteristic are identified, so we obtain satisfactory regularity results.

To avoid a long introduction, we defer the discussion of the new phenomena of the Eikonal equation and previous works in section 2.

This paper is organized as follows: In section 2, we list new phenomena of the Eikonal equation 
and previous works.  In section 3, in the framework
that initial data is in $C^1$,  characteristics are defined first,
the relation between characteristics and optimality is discussed.
Then we define first and second termination points. A criterion
that determines points at which $u$ is differentiable or not differentiable is completely established. Many
properties of termination points are  studied.  We find some structure of sets of singularity points different with the structure for the case of smooth, superlinear and strictly convex Hamiltonian,  
and prove that $\Sigma\cup T_1$ is  composed of at most countable connected
components or almost connected components. The same property holds for $\Sigma$. We also discuss $\overline \Sigma$. 
In the complement of $\Sigma$, $u$ is proved to be in $C^1$. In section
4, When the initial data is in $C^2$,  the global structure of $\Sigma\cup T_1$ is considered. 
Better properties are obtained. 
$\Sigma\cup T_1$ is proved to be composed of
at most countable path connected components. 
In section 5,  regularity properties for $C^k$ ($k\ge 2$) initial data are considered. We study $T_1$ and prove $T_1=\Gamma\setminus \Sigma$, where $\Gamma$ is the set of conjugate points (see \cite{C}). Furthermore, we find a subtle region $P^0$ and prove that on ${\mathbb R^n}\times \{t>0\}\setminus (\Sigma\cup T_1\cup P^0)$,  $u$ inherits the regularity of the initial data.
\section{New phenomena and previous works}
In this section,   new phenomena of the Eikonal equation compared with the case of smooth, superlinear and strictly convex Hamiltonians 
 and  previous works are listed.
\subsection{New phenomena for the Eikonal equation}
There are many new phenomena  for the solutions to the Eikonal
equation. For instances,

1. The appearance of "contact discontinuities" \cite{C}.  

  2. For any point with $Dg=0$ at $t=0$,  there might be a a family of characteristics with direction $|P|\le 1$ from it. The phenomena of "contact discontinuity" is caused by those families of characteristics.

  3. 
The set of nondifferentiable points can terminate at finite time. 

4. 
 A point having more than one minimizer does not imply that it is a nondifferentiable point of solutions.
 
 5. For $C^2$ initial data,  nondifferentiable points could appear right after $t=0$.

6.  When $H$ is strictly convex, superlinear and smooth,  there is a one- to- one correspondence between $R_i$ and $S_i$, where $R_i$ is a path connected component  of a set $\tilde R\subset {\mathbb R^n}\times \{t=0\}$,  $S_i$ is a  component  of $\Sigma\cup T_1$.  In another words, let   $\tilde R_i$ be the region that is composed of effective characteristics from points in $R_i$, then  each $\tilde R_i$ contains one and only one  $S_i$.  
For the Eikonal equation, we find out that in general, it does not have this one- to- one structure.

 7. 
We no longer have the strong
regularizing effect which ensures the
solutions $u(x, t)$  to be semiconcave when
the initial data is only Lipschitz (See \cite{C}) or lower semi-continuous (see \cite{Lion}). This has been found already in \cite{C} since  Theorem 7.2.3 therein tells us that $u(x,
t)$ can only be ensured to be Lipschitz if the initial data is
Lipschitz. The singularity in the initial data will propagate along
characteristics.

 8. When $H$ is strictly convex, superlinear and smooth, and initial data is $C^2$, $\Sigma\cup T_1=\overline \Sigma$. While for the Eikonal equation, $\Sigma\cup T_1\subset\overline\Sigma$ and $\Sigma\cup T_1\cup P^0\supset \overline\Sigma$.

  9. When $H$ is strictly convex, superlinear and smooth, and initial data is $C^k$, $k\ge 2$, for any point in  the complement of $\overline \Sigma $, there is a neighborhood $U$ of it, $u$ is $C^k(U)$.  While for the Eikonal equation, sometimes only in  the complement of $\Sigma\cup T_1\cup P^0 $, for any point,  there is a neighborhood $U$ of it, $u$ is $C^k(U)$.

  10. When $H$ is strictly convex, superlinear and smooth,  there is a one- to- one correspondence between $D^*u(x, t)$, the set of all reachable gradients of $u$ at $(x, t)$,   and the set of the minimizers for $(x, t)$; i.e. the set of effective characteristics. While for the Eikonal equation,  it is false. 
  We find out that there is  a new one- to- one correspondence, it is  between  $D^*u(x, t)$ and the set of  values of $Dg$
at minimizers for $(x, t)$.

\subsection{Previous works}
 There are  many important works about the Hamilton-Jacobi equation. Here we only list  some very closely related works. There is  some relation between the Hamilton-Jacobi equation and  conservation laws: 
 the gradient of solutions to  the Eikonal equation $$v=(v_1, v_2, v_3,...,v_n) = \nabla u$$ satisfies the following multi-dimensional  conservation laws: \begin{equation}\label{c}(v_i)_t+H(v)_{x_i}=0, \quad i=1,  2, ..., n.\end{equation} It is well known that the study of multi-dimensional conservation laws is very difficult. As  multidimensional conservation laws, (\ref{c}) has a simple structure, 
  but it provides a picture of how solutions to multi-dimensional conservation laws can possibly behave.    

The solution to the Hamilton-Jacobi equation with convex Hamiltonian  is given by Hopf-Lax formula.  It is known  that the solution given by the Hopf-Lax formula is  the unique  viscosity solution of the Hamilton-Jacobi equation in many circumstances. The concept of
viscosity solutions is introduced by  Crandall and  Lions   (\cite{Lion}\cite{Cr1}\cite{Cr2})
to single out the  physically relevant  solution and therefore guarantee the uniqueness.
For general $H$, Alvarez, Barron, Ishii \cite{A} proved that
the solution given by the Hopf-Lax formula
is  the unique lower semi-continuous viscosity solution, when g
is lower semi-continuous, possibly with value $\infty$ and satisfies $
g(y)\ge -M(|y|+1)$ and H is merely
continuous, finite and convex. From the viewpoint of optimal control,  it is
also proved in \cite{C} that when $g$ is locally Lipschitz and $H$ is convex and  locally Lipschitz, the solution given by Hopf-Lax formula is the unique  viscosity solution. Hence solutions to the Eikonal equation defined by the  Hopf-Lax formula  are unique viscosity solutions. When $H$ is superlinear,  see \cite{L.C., L.C1., K.S.N, St} and  references  therein. When $H$ is superlinear, for a conservation law, the corresponding version of viscosity solutions is closely related to entropy solutions ( see \cite{La, L.C1., Ol} and reference  therein). There are numerous studies about regularity of the viscosity solutions
 (for instances, see  \cite{C}).

When $H$ is strictly convex, superlinear and smooth, there are numerous results about the set of
nondifferentiable points $\Sigma$ and  the regularity of solutions in the set of  differentiable points. When $n=1$, Schaeffer \cite{Scha} proved that,   generically, for initial data having derivatives in the rapidly decreasing space, $\Sigma$ consists of finite number of curves and solutions are $C^\infty$ outside $\overline \Sigma$. Here generic property means that there exists an intersection of  countable open dense sets in the function class of initial data such that this property holds. For
 $C^{4, 1} (C^k, k\ge 5)$ initial data, Li \cite{Li2} proved that, generically,  in any compact set in upper half plane ${\mathbb R}\times\{t > 0\}$, $\Sigma$ consists of finite number of curves and solutions are $C^4 (C^k, k\ge 5)$ outside $\overline \Sigma$.  But this is not a generic property for initial data in weaker class. For $C^1$ initial data, it is proved in  \cite{Li2}  that,  generically, the set of
nondifferentiable points $\Sigma$ is dense in ${\mathbb R}\times\{t > 0\}$; that is, $\overline\Sigma={\mathbb R}\times\{t\ge 0\}$. 
 For Lipschitz initial data and initial data in the class where the derivative functions are of  bounded variations, rarefaction waves might appear, their generic properties are that  $\Sigma$ is dense in the exterior of rarefaction waves \cite{Li2}. 

 When initial data is Lipschitz,  Li \cite{Li2} proved that
the set of  curves at which the points are nondifferentiable and starting points of those curves has $L_2$ measure 0.  Furthermore,  for $C^2$ initial data,  he showed  that
it is a closed set; i.e. $\Sigma\cup St=\overline \Sigma$, $St$ being the set of  starting points of those "shock" curves. So combining above,  $\overline \Sigma$ is a set of  $L_2$ measure 0. Recall from above, the generic property for the case of $C^1$ initial data is $\overline \Sigma={\mathbb R}\times\{t\ge 0\}$,  which implies that the set of nondifferentiable points  for the $C^2$ case might be much smaller than  the $C^1$ case.

When $n>1$, and solutions $u$ are semiconcave,  using geometric measure theory, it is proved that (find more details  in \cite{C}) the set of nondifferentiable points $\Sigma$ is countably $n$-rectifiable.
For convenience of readers, we recall the definition of  $n$-rectifiable set here. 
Definition: Let $C\subset {\mathbb R^{n+1}}$.
 \begin{itemize}
 \item i) $C$ is called an $n$-rectifiable set if there exists a
Lipschitz continuous function $f:{\mathbb R^{n}}\rightarrow {\mathbb R^{n+1}}$ such that
$C\subset f({\mathbb R^{n}})$.
\item ii) $C$ is called a countably $n$-rectifiable set if it is the union of a
countable family of $n$-rectifiable sets.
 \item iii) $C$ is called a countably $H^n$-rectifiable set if
there exists a countably $n$-rectifiable set  $E\subset {\mathbb R^{n+1}}$ such that $H^n(C\setminus E)=0$.\end{itemize}
 In \cite{Flem}, when initial data is $C^k$, $k\ge 2$, it is shown that $\Sigma\cup\Gamma$ is a closed set; i.e. $\Sigma\cup\Gamma=\overline \Sigma$,  $\Gamma$ being the set of conjugate points, which are the "generating" points of nondifferentiable points as shown in \cite{C}.   The solutions inherit the regularity of initial data in the complement of this  closed set (also see \cite{ZTW}\cite{C}). Furthermore, $\Sigma\setminus\Gamma$ is contained in a locally finite union of n-dimensional smooth surfaces. In \cite{C}, $\Gamma$ is a countably $H^n$-rectifiable set, hence $\overline \Sigma$ is a countably $H^n$-rectifiable set. In \cite{Flem}\cite{CMS}, the Hausdorff dimension of the set $\Gamma\setminus\Sigma$ is given.

 About the propagation of singularity, in \cite{C}, when the solution $u$ is semiconcave with linear modulus (this function class is stronger than $C^1$ class, weaker than $C^2$ class), from any nondifferentiable point $z_0$, there exists a Lipschitz curve $z(s)\subset\Sigma$ starting from $z_0$. This curve is a generalized characteristic defined by Dafermos \cite{Da}. Furthermore, from $z_0$, there is a $\nu-$ dimensional Lipschitz surface $\in \Sigma$, $\nu$ is the dimension of the normal cone to $D^+u(z_0)$; i.e. $\nu=n+1-dim D^+u(z_0)$. When initial data is $C^2$, from any point $z_0$ in $\Gamma$, there also exists a Lipschitz curve $\subset\Sigma\cup\Gamma$ starting from $z_0$. When initial data is weaker,  for any point $z_0$ in $\Sigma$,   it is a convergence point of  points in $\Sigma$ at later time (see \cite{C}\cite{CMS}).

 From the topological view, when $n=1$, in \cite{Li}, when initial data is Lipschitz, there is a one- to- one correspondence between path connected components of a set of ${\mathbb R}\times \{t=0\} $ and path connected components of the set of singularities. A singularity here means a nondifferentiable point or a starting points of the curves at which the points are nondiffernetiable.   
  When $n>1$, when the initial data is in $C^2$ and
bounded, it is   shown in \cite{ZTW} that  there is a one- to- one correspondence between path connected components of a set of ${\mathbb R^n}\times \{t=0\} $ and path connected components of   $\overline \Sigma  \;(i.e., \Sigma\cup \Gamma)$ by proving the singularity map is continuous. 
 When
the initial data is in $C^2$ and unbounded, in \cite{LL},   a
similar result is obtained with richer phenomena.

When the initial data is weaker than $C^2$,  a conjugate point can not be defined. Following \cite{Ol}\cite{Li}\cite{ZTW},   the set of termination points of characteristics such as those points have unique minimizer is defined, denoted as $T_1$.  In \cite{ZTW} it is shown that when initial data is $C^2$, $T_1=\Gamma\setminus \Sigma$. Hence $T_1$ is a generalization of the concept of $\Gamma\setminus \Sigma$ for weaker class of initial data. When $n=1$, a point in $T_1$ must be a starting point of a "shock" wave \cite{Li}. When $n>1$, the  lower  second derivatives of  $u$ at a point in $T_1$ must be $-\infty$  \cite{LL} (also see \cite{Lin}\cite{Li}), while for any  point in the complement of $\Sigma\cup T_1$, the lower  second derivatives of  $u$ at it is bounded.  A point in $T_1$ is a starting point of a sequence of nondifferentiable points at later time \cite{LT}.

When initial data is Lipschitz or
lower
semi-continuous, in \cite{LT} it is shown that there is a one- to- one correspondence between path connected components of a set of ${\mathbb R^n}\times \{t=0\} $ and  connected (or almost connected ) components  of  $\Sigma\cup T_1$.
When initial data is weaker than $C^2$,  in general, $\Sigma\cup T_1\neq \overline \Sigma$, it is a proper subset of $\overline \Sigma$.

For the Mayer problem where the Eikonal equation (\ref{1.2}) is a typical case, the tool of Pontryagin maximum priciple is used instead of usual Euler-Lagrange condition. In \cite{C},  when initial data is $C^1$ and semiconcave with linear modulus, for any nondifferentiable point $z_0$, similar to the case of strictly convex, superlinear and smooth Hamiltonians,   there still exists a Lipschitz curve $z(s)\subset \Sigma$ starting from $z_0$. When $0\neq D^* u(x, t)$, there is a one-to-one correspondence between $D^* u(x, t)$ and the effective characteristics passing through point $(x, t)$.

\section{$C^1$ initial data}
\subsection{Preliminary}
For convenience, we denote  the open and closed ball centered at $x$ with radius $t$ by $$B_t(x)=\{y|\;\;|y-x|<t\},\;\overline B_t(x)=\{y|\;\;|y-x|\le t\}.$$ If $u(x, t)=g(y)$ and $|y-x|\le t$, then  $y$ is called a minimizer for $(x, t)$. 

 By Proposition 2.1.2 in \cite{C} that if  $g$ is in
$C^1$, then $g$ is locally semiconcave and  by Remark 7.2.9 in \cite{C} that
if initial data $g$ is semiconcave, then $u$ is semiconcave,  it implies that
if $g$ is in $C^1$, then $u$ is locally semiconcave. Since we want to
study  solutions $u$ in the framework of semiconcave functions,
 for convenience,   our initial data will be assumed in $C^1$.

First in this section, characteristics are defined, which play
important role in studying the regularity and global structure of
solutions to the Eikonal equation.

For  general Hamilton-Jacobi equations, given $y_0\in {\mathbb R^n}$,
the characteristic from $y_0$ can be written as$$C=
\left\{(x,t)|x=y_0+tDH(Dg(y_0)),
  t>0\right\}. $$
  For the Eikonal equation, $H(p)=|p|$, it is easy to deduce
  $DH(p)=\frac{p}{|p|}$, $p\neq 0$; when $p=0$, we use
  subdifferential $$D^-H(p)|_{p=0}=D^-|p||_{p=0}=\{P\in
  {\mathbb R^n}||P|\le 1\},$$ recall the definition of
  subdifferential in Definition \ref{sub} later in this paper.
Hence the characteristics of the Eikonal equation can be defined as
\begin{definition} 1) When $Dg(y_0)\neq 0$,
 the characteristic from $y_0$  is \\$C= \left\{(x,t)|x=y_0+t
 \frac{Dg(y_0)}{|Dg(y_0)|},
  t>0\right\}$.

  2) When $Dg(y_0)= 0$, there is a family of characteristics from
  $y_0$, $C=\left\{(x,t)|x=y_0+tP,
  t>0\right\}$, where $|P|\le 1$.
\end{definition}
 Next we prove that characteristics have the optimality, that is,
for any $y_0\in {\mathbb R^n}$, only points on the characteristic
from $y_0$ are possible to have $y_0$ as their minimizer; points on
any other lines  from $y_0$ do not have $y_0$ as their minimizer.
\begin{proposition}\label{NC}For any $y_0\in {\mathbb R^n}$, if
a straight line $l$ from $y_0$ is not the characteristic from $y_0$,
then any point $(x, t)$ on $l$ does not have $y_0$ as its minimizer.
\end{proposition}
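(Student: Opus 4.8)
The plan is to reformulate the notion of minimizer through the Hopf--Lax formula \eqref{HL}: the point $y_0$ is a minimizer for $(x,t)$ precisely when $y_0\in\overline B_t(x)$ and $g(y_0)=\min_{\overline B_t(x)}g$. Any straight line $l$ from $y_0$ that enters the upper half space can be written as $x=y_0+tP$, $t>0$, for a fixed $P\in{\mathbb R^n}$, so that $|x-y_0|=t|P|$. In this parametrization the characteristic from $y_0$ corresponds to $P=Dg(y_0)/|Dg(y_0)|$ when $Dg(y_0)\neq 0$, and to every $P$ with $|P|\le 1$ when $Dg(y_0)=0$. Thus the task is to show that, whenever $P$ is not one of these characteristic values, $y_0$ fails to minimize $g$ over $\overline B_t(x)$.

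First I would dispose of the lines lying outside the light cone, $|P|>1$. Then $|y_0-x|=t|P|>t$, so $y_0\notin\overline B_t(x)$ and $y_0$ simply cannot be a minimizer. This case already settles the situation $Dg(y_0)=0$ entirely, because for such $y_0$ the only non-characteristic lines are exactly those with $|P|>1$.

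The substantive case is $Dg(y_0)\neq 0$ together with $|P|\le 1$ and $P\neq e:=Dg(y_0)/|Dg(y_0)|$, where $y_0$ does lie in $\overline B_t(x)$. Here the idea is to exhibit a single feasible descent direction. I would take $v:=P-e\neq 0$ and verify two inequalities. For the descent, $Dg(y_0)\cdot v=|Dg(y_0)|\,(e\cdot P-1)<0$, since $e\cdot P\le|P|\le 1$ with strict inequality because $e$ and $P$ are distinct (for $|P|=1$ this is the strict Cauchy--Schwarz inequality for unit vectors, and for $|P|<1$ it is immediate). For feasibility, expanding $|y_0+\varepsilon v-x|^2=t^2|P|^2-2\varepsilon t\,(|P|^2-P\cdot e)+\varepsilon^2|v|^2$ shows that for small $\varepsilon>0$ the perturbed point stays in $\overline B_t(x)$: when $|P|=1$ the negative linear term $-2\varepsilon t(1-P\cdot e)$ drives the square strictly below $t^2$, and when $|P|<1$ we already start strictly inside the ball. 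Consequently $u(x,t)\le g(y_0+\varepsilon v)<g(y_0)$ for small $\varepsilon>0$, so $g(y_0)$ is not the minimum and $y_0$ is not a minimizer.

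The main obstacle is the boundary situation $|P|=1$, $P\neq e$, in which $y_0$ sits on the sphere $\partial B_t(x)$ and not every direction is admissible; the delicate point is to produce one direction that simultaneously decreases $g$ and points into the ball, and the choice $v=P-e$ threading this needle—justified by the strict Cauchy--Schwarz inequality—is the crux of the argument. The exterior case $|P|>1$ and the interior case $|P|<1$ are comparatively routine.
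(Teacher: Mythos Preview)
Your proof is correct and follows the same overall strategy as the paper: split according to $|P|$ and, in the nontrivial situation, produce a feasible direction along which $g$ strictly decreases. The paper treats the three cases $|P|>1$, $|P|<1$, and $|P|=1$ separately; in the last case it argues that since $\overline B_t(x)\subset\{y:(y-y_0)\cdot P\ge 0\}$ and $P\neq Dg(y_0)/|Dg(y_0)|$, one can find some nearby $y_1$ in the ball with $(y_1-y_0)\cdot Dg(y_0)<0$, without naming the direction explicitly. Your choice $v=P-e$ handles both $|P|<1$ and $|P|=1$ in one stroke and makes the feasibility check concrete via the expansion of $|y_0+\varepsilon v-x|^2$, which is a tidy refinement of the paper's more case-by-case argument.
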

\begin{proof}First we look at the case when $Dg(y_0)\neq 0$.
 The characteristic is $C= \left\{(x,t)|x=y_0+t\frac{Dg(y_0)}{|Dg(y_0)|},
  t>0\right\}$. If $l$ is not the characteristic, then
  $ l=\left\{(x,t)|x=y_0+Pt, t>0\right\}$, where
  $P\neq \frac{Dg(y_0)}{|Dg(y_0)|}$.

1. $|P|>1$.

For any $t$, $x=y_0+Pt$,  we have $|x-y_0|=|Pt|>t$. Then $y_0$ is not in $\overline B_t(x)=\{y||x-y|\le t\}$.

2. $|P|<1$.

For any $t$, $x=y_0+Pt$,  we have $|x-y_0|=|Pt|<t$. $y_0$ is in the interior of $\overline B_t(x)$. Since $Dg(y_0)\neq 0$, $y_0$ is not minimizer for $(x,t)$ in $\overline B_t(x)$.

3. $|P|=1$, $P\neq \frac{Dg(y_0)}{|Dg(y_0)|}$.

For any $t$, $x=y_0+Pt$, we have that for any $y\in \overline B_t(x)$, $(y-y_0)\cdot P\ge 0$. Since  $P\neq \frac{Dg(y_0)}{|Dg(y_0)|}$ and $|P|=1$, we have that the direction of $P$ and $Dg(y_0)$ are different. Hence there must exist some $y_1\in \overline B_t(x)$ sufficiently close to $y_0$ such that $(y_1-y_0)\cdot Dg(y_0)< 0$. Then $$g(y_1)=g(y_0)+Dg(y_0)\cdot (y_1-y_0)+o(|y_1-y_0|)<g(y_0).$$
Therefore $y_0$ is not a minimizer for $(x,t)$.

We look at the case when $Dg(y_0)=0$. If $|P|>1$, the reason is  same as the case  when $Dg(y_0)\neq 0$ above.
\end{proof}
 When $Dg(y_0)\neq 0$, we can define two times of
the characteristic and two points of the
characteristic.\begin{definition} Let $\overline t_s$ be the first
termination time of $C$, where\\
$C:\left\{(x,t)|x=y_0+t\frac{Dg(y_0)}{|Dg(y_0)|}, t>0\right\}$,
\begin{equation}\label{bart_s}\overline t_s(y_0)=\sup\{t\mid
y_0\; \mbox{is  unique minimizer for}\,(x,t), (x, t)\in C\}.
\end{equation} Let $t_s(y_0)$ be the second
termination time of $C$:
\begin{equation}\label{t_s}t_s(y_0)=\inf\{t\mid
y_0\; \mbox{is not a minimizer for}\,(x,t), (x, t)\in C\}.
\end{equation}
Accordingly, we can define termination points when termination times
are finite:

1)If $\overline t_s(y_0)<\infty$, then $(\overline x_s(y_0),
 \overline t_s(y_0))$ is called the first termination point of $C$,
 where $\overline x_s(y_0)=y_0+\frac{Dg(y_0)}{|Dg(y_0)|}
 \overline t_s(y_0)$.

 2)If $t_s(y_0)<\infty$, then $( x_s(y_0),  t_s(y_0))$ is called the
second termination point of $C$, where
$x_s(y_0)=y_0+\frac{Dg(y_0)}{|Dg(y_0)|}t_s(y_0)$.

\end{definition}
It is easy to see that
\begin{equation}\overline t_s(y_0)\le t_s(y_0).
\end{equation}
Similarly we can define termination times and termination points of
the characteristic from $y_0$ when $Dg(y_0)=0$. When characteristic
has direction $P$ with $|P|\le 1$, we can write $t_s$ as $t_s(y_0,
P)$, $\overline t_s$ as $\overline t_s(y_0, P)$.

 With the help of second termination time,  effective characteristics
  can be defined . \begin{definition}1) When $Dg(y_0)\neq
0$,
 the effective characteristic is $\bar C= C\cap \{t\le t_s(y_0)\}$.

2) When $Dg(y_0)=0$ and  $C=\left\{(x,t)|x=y_0+tP,
  t>0\right\}$, the effective characteristic is
  $\bar C= C\cap \{t\le t_s(y_0, P)\}$. \end{definition}
   The definition of
characteristic from $y_0$ is defined  by only using $Dg(y_0)$. While
the definition of effective characteristic from $y_0$ is related to
the optimality.

Define multifunction \begin{equation}\label{L(x, t)}L(x, t)=\{y|u(x, t)=g(y), \,y\in \overline B_t(x)\}.
\end{equation}That is, $L(x, t)$ is the set of minimizers for $(x, t)$.
It is easy to see that   \begin{lemma}$y_0\in L(\overline x_s(y_0), \overline t_s(y_0))$ and $y_0\in L(x_s(y_0),  t_s(y_0))$.
\end{lemma}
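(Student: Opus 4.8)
The plan is to reduce both assertions to a single monotonicity property of the balls $\overline B_t(x(t))$ along the characteristic, followed by a limiting argument that uses the continuity of $g$. Throughout I write $x(t)=y_0+t\frac{Dg(y_0)}{|Dg(y_0)|}$ for the point of $C$ at time $t$ when $Dg(y_0)\neq 0$, and $x(t)=y_0+tP$ with $|P|\le 1$ in the case $Dg(y_0)=0$; in either case $|x(t)-y_0|\le t$ and $u(x,t)=\min_{y\in\overline B_t(x)}g(y)$. So the two claims both say: at the termination point $(x(T),T)$ (with $T=\overline t_s(y_0)$, resp. $T=t_s(y_0)$) the minimum defining $u$ is attained at $y_0$.

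First I would establish the key geometric fact that for $0<t_1<t_2$ one has $\overline B_{t_1}(x(t_1))\subseteq \overline B_{t_2}(x(t_2))$. To see this, fix $y$ and set $f_y(t)=|y-x(t)|^2-t^2$, so that $y\in\overline B_t(x(t))$ is equivalent to $f_y(t)\le 0$. Since $|P|\le 1$, $f_y$ is a concave function of $t$ (affine when $|P|=1$, a downward parabola when $|P|<1$), with $f_y(0)=|y-y_0|^2\ge 0$ and leading coefficient $-(1-|P|^2)\le 0$; an elementary root analysis then shows that $\{t>0:f_y(t)\le 0\}$ is a closed half-line $[t_y,\infty)$ (possibly empty, or all of $(0,\infty)$). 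Hence once $y$ lies in $\overline B_{t_1}(x(t_1))$ it lies in every later ball, which is exactly the claimed nesting. The same computation gives, for every $T>0$, the inclusions $B_T(x(T))\subseteq\bigcup_{0<t<T}\overline B_t(x(t))\subseteq \overline B_T(x(T))$, because $f_y(T)<0$ forces $f_y(t)<0$ for $t$ slightly below $T$ by continuity.

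Next I would translate the definitions of the termination times into pointwise minimality of $y_0$ for all earlier $t$. For the second termination time, the definition of $t_s(y_0)$ as an infimum immediately gives that for every $t<t_s(y_0)$ the point $y_0$ is a minimizer for $(x(t),t)$, i.e. $g(y_0)\le g(y)$ for all $y\in\overline B_t(x(t))$. For the first termination time the definition of $\overline t_s(y_0)$ as a supremum, combined with the nesting above, yields the same conclusion for every $t<\overline t_s(y_0)$: given such a $t$, use the supremum property to pick $s\in(t,\overline t_s(y_0)]$ for which $y_0$ is the (unique) minimizer for $(x(s),s)$, and then $\overline B_t(x(t))\subseteq\overline B_s(x(s))$ forces $y_0$ to minimize $g$ over $\overline B_t(x(t))$ as well.

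Finally I would pass to the limit $t\uparrow T$. Any $y$ in the open ball $B_T(x(T))$ lies in some $\overline B_t(x(t))$ with $t<T$ by the union statement above, so $g(y_0)\le g(y)$ there; continuity of $g$ (it is $C^1$) then extends this inequality to the closed ball $\overline B_T(x(T))$. Since $|y_0-x(T)|=T|P|\le T$ shows $y_0\in\overline B_T(x(T))$, the minimum is attained at $y_0$, that is $u(x(T),T)=g(y_0)$ and $y_0\in L(x(T),T)$; taking $T=\overline t_s(y_0)$ and $T=t_s(y_0)$ gives the two assertions. I expect this last closure step to be the main obstacle: the union of the earlier balls recovers only the \emph{open} ball $B_T(x(T))$ (the boundary sphere, apart from $y_0$ itself, is never reached for $t<T$), so minimality over the full closed ball $\overline B_T(x(T))$ — which is what membership in $L$ requires — cannot be read off directly and must be obtained through the nesting lemma together with the continuity of $g$.
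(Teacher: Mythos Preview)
Your proof is correct. The nesting lemma for the balls $\overline B_t(x(t))$ along the characteristic is sound (your computation of $f_y$ is right, and in the case $|P|=1$ relevant to the lemma as stated, $f_y$ is affine with nonnegative value at $t=0$, so the sublevel set is indeed a half-line or empty), the translation of the termination-time definitions into ``$y_0$ minimizes on every earlier ball'' is accurate, and the closure step via continuity of $g$ is clean.

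The paper, however, gives no proof at all: the lemma is introduced with ``It is easy to see that'' and left at that. The one-line argument the authors presumably have in mind is: for every $t$ strictly below the termination time $T$ one has $u(x(t),t)=g(y_0)$ directly from the definitions of $t_s$ and $\overline t_s$; since $u$ is continuous (a standard property of the Hopf--Lax value function, used freely elsewhere in the paper), letting $t\uparrow T$ gives $u(x(T),T)=g(y_0)$, and $|y_0-x(T)|\le T$ places $y_0$ in the admissible ball, hence $y_0\in L(x(T),T)$. Your route trades the appeal to continuity of $u$ for an explicit geometric monotonicity of the balls together with continuity of $g$; this is more self-contained (it does not presuppose regularity of the value function) but also considerably longer than what the situation requires. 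Note too that the separate treatment of $\overline t_s$ in your Step~2 is redundant: since $\overline t_s(y_0)\le t_s(y_0)$, the conclusion for times below $\overline t_s(y_0)$ is already contained in the $t_s$ case.
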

\subsection{Criterion for nondifferentiable points}
Fix $t$, we denote the directional derivatives of $u(x, t)$ along $l\in {\mathbb R^n}$ by $\partial_lu(x, t)$. The following  lemma holds:
\begin{lemma}\label{direction}Suppose that $g$ is semiconcave, then we have
\begin{equation}\partial_lu(x, t)=\min_{y\in L(x, t)}\partial_lg(y).
\end{equation}
\end{lemma}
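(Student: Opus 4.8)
The plan is to read $\partial_l u(x,t)$ as the one-sided directional derivative $\lim_{h\to 0^+} h^{-1}\big(u(x+hl,t)-u(x,t)\big)$, which exists because $u$ is semiconcave (established above from the semiconcavity of $g$), and to prove the identity by squeezing between matching upper and lower bounds. The geometric fact that makes this tractable is that the admissible ball in the Hopf--Lax formula \eqref{HL} merely translates under a shift of the base point: $\overline B_t(x+hl)$ is exactly $\overline B_t(x)$ translated by $hl$, so $u(x+hl,t)=\min_{y\in \overline B_t(x)} g(y+hl)$, and the two minimizations forming the difference quotient run over one fixed compact domain.

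For the upper bound I would fix any minimizer $y_0\in L(x,t)$, observe that $y_0+hl\in \overline B_t(x+hl)$ is admissible for $(x+hl,t)$, hence $u(x+hl,t)\le g(y_0+hl)$ while $u(x,t)=g(y_0)$. Dividing by $h>0$ and letting $h\to 0^+$ gives $\partial_l u(x,t)\le \partial_l g(y_0)$, and minimizing over $y_0\in L(x,t)$ yields $\partial_l u(x,t)\le \min_{y\in L(x,t)}\partial_l g(y)$. This half is essentially free.

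The substance is the matching lower bound. For each $h>0$ I would choose an actual minimizer $y_h\in L(x+hl,t)$, so that $u(x+hl,t)=g(y_h)$ and $y_h-hl\in \overline B_t(x)$ is admissible for $(x,t)$; hence $u(x,t)\le g(y_h-hl)$ and the difference quotient is bounded below by $h^{-1}\big(g(y_h)-g(y_h-hl)\big)$. The difficulty is that $y_h$ drifts with $h$, so this is not directly a directional derivative of $g$ at a fixed point. Here semiconcavity of $g$ enters: choosing $p_h\in D^+g(y_h)$, the linear-modulus estimate gives $g(y_h-hl)\le g(y_h)-h\,p_h\cdot l+Ch^2|l|^2$, so the quotient is at least $p_h\cdot l-Ch|l|^2$. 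By compactness of $\overline B_t(x)$ I extract $y_h\to y^*$ along a subsequence, and continuity of $g$ and $u$ forces $g(y^*)=u(x,t)$, i.e. $y^*\in L(x,t)$; local boundedness of the superdifferential lets me also take $p_h\to p^*$, and upper semicontinuity (closed graph) of $D^+g$ gives $p^*\in D^+g(y^*)$. Passing to the limit, the lower bound becomes $p^*\cdot l\ge \min_{p\in D^+g(y^*)}p\cdot l=\partial_l g(y^*)\ge \min_{y\in L(x,t)}\partial_l g(y)$, using the standard identity $\partial_l g(y)=\min_{p\in D^+g(y)}p\cdot l$ for semiconcave $g$ (see \cite{C}). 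Combined with the upper bound, this closes the estimate.

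I expect the main obstacle to be exactly this coupled passage to the limit: one must simultaneously send $h\to 0^+$ and control the moving minimizer $y_h$, and the naive quantity $h^{-1}\big(g(y_h)-g(y_h-hl)\big)$ need not converge to $\partial_l g(y^*)$ without the quadratic correction furnished by semiconcavity together with the upper semicontinuity of $D^+g$. I would be careful to record that only semiconcavity of $g$ (not $C^1$ regularity) is used, so the lemma holds in the stated generality, and to verify that the extracted limit point $y^*$ genuinely lies in $L(x,t)$ and not merely in $\overline B_t(x)$.
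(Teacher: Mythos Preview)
Your argument is correct and structurally identical to the paper's: both obtain the upper bound by fixing a minimizer $y_0\in L(x,t)$ and translating, and the lower bound by choosing moving minimizers $y_h\in L(x+hl,t)$ and using $u(x,t)\le g(y_h-hl)$. The only divergence is in how the limit is taken in the lower bound. The paper invokes the identity $\partial_l g(z)=g_-^0(z,l):=\liminf_{h\to 0^+,\,z'\to z}h^{-1}\big(g(z'+hl)-g(z')\big)$ for semiconcave $g$ (Theorem~3.2.1 in \cite{C}), which directly absorbs the drifting base point $y_h-hl\to\bar y$ without any supergradient bookkeeping. Your route through $p_h\in D^+g(y_h)$ and the closed graph of $D^+g$ is equally valid, but be aware that your remainder $Ch^2|l|^2$ presupposes semiconcavity with \emph{linear} modulus, whereas the lemma is stated for general semiconcave $g$; for a general modulus $\omega$ the correct estimate is $g(y_h-hl)\le g(y_h)-h\,p_h\cdot l+h|l|\,\omega(h|l|)$, which still vanishes in the limit and leaves your argument intact. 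The paper's use of $g_-^0$ sidesteps this distinction entirely.
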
  This lemma has similar spirit as Lemma 9.2 in \cite{V}.
\begin{proof}Assume unit vector $l\in {\mathbb R}^n\setminus 0$, from the definition of directional derivative, there
exist $\{\alpha_k\}$, $\{\beta_k\}$ such that
\begin{equation}\partial_l^+u(x, t)=\limsup_{\delta\rightarrow 0^+}
\frac{u(x+l\delta, t)-u(x, t)}{\delta}=\lim_{\alpha_k\rightarrow 0^+}
\frac{u(x+l\alpha_k, t)-u(x, t)}{\alpha_k},
\end{equation}
\begin{equation}\partial_l^-u(x, t)=\liminf_{\delta\rightarrow 0^+}
\frac{u(x+l\delta, t)-u(x, t)}{\delta}=\lim_{\beta_k\rightarrow 0^+}
\frac{u(x+l\beta_k, t)-u(x, t)}{\beta_k}
\end{equation}
Set $u(x, t)=g(y)$, $\forall y\in L(x, t)$ and notice that
$u(x+l\alpha_k, t)\le g(y+l\alpha_k)$, we have
$$\frac{u(x+l\alpha_k, t)-u(x, t)}{\alpha_k}\le
\frac{g(y+l\alpha_k)-g(y)}{\alpha_k}.$$ Theorem 3.2.1 in \cite{C}
tell us that if $g$ is semiconcave, then for any $z$ and $l\in
{\mathbb R^n}$, the directional derivative $\partial_lg(z)$ exists.
Hence let $\alpha_k\rightarrow 0^+$, the right side term approaches
$\partial_lg(y)$. Thus
\begin{equation}\partial_l^+u(x, t)\le \inf_{y\in L(x, t)} \partial_lg(y).
\end{equation}
Denote $u(x+l\beta_k, t)=g(y_k)$, $\forall y_k\in  L(x+l\beta_k,
t)$, and notice that $|x-y_k+l\beta_k|\le t$, we have
\begin{equation}\label{2.8}\frac{u(x+l\beta_k, t)-u(x, t)}{\beta_k}\ge \frac{g(y_k)-g(y_k- l\beta_k)}{\beta_k}.
\end{equation}The right side
$$ \frac{g(y_k)-g(y_k- l\beta_k)}{\beta_k}= \frac{g(y_k-l\beta_k+l\beta_k)-g(y_k- l\beta_k)}{\beta_k}.$$ There is a subsequence of $\{y_k\}$, still denoted as $\{y_k\}$, converging to some point $\bar y\in L(x, t)$. When $\beta_k\rightarrow 0^+$,   $y_k-l\beta_k\rightarrow \bar y$. By Definition 3.1.11 in \cite{C} about the  generalized lower derivative $g_-^0(z, l)$, which is defined as $\liminf_{h\rightarrow 0^+, z'\rightarrow z}\frac{g(z'+hl)-g(z')}{h}$  and Theorem 3.2.1 in \cite{C} that if $g$ is semiconcave, then $\partial_lg(z)=g_-^0(z, l)$, now let $\beta_k\rightarrow 0^+$ in (\ref{2.8}), we have
\begin{eqnarray*}\partial_l^-u(x, t)&\ge& \liminf_{\beta_k\rightarrow 0^+,\;y_k-l\beta_k\rightarrow \bar y}\frac{g(y_k-l\beta_k+l\beta_k)-u(y_k- l\beta_k)}{\beta_k}\\&\ge& g_-^0(\bar y, l)=\partial_lg(\bar y)\ge \inf_{y\in L(x, t)}\partial_lg( y).
\end{eqnarray*}
Therefore $$\partial_l^+u(x, t)=\partial_l^-u(x, t)=\inf_{y\in L(x, t)}\partial_lg( y).$$
Since $L(x, t)$ is a closed set, $\inf_{y\in L(x, t)}\partial_lg( y)=\min_{y\in L(x, t)}\partial_lg( y)$. Therefore the proof is complete.
\end{proof}
From the proof above, we can see that if $g$ is assumed to be in $C^1$, it will be easier to prove.

The above lemma tells us the property of directional
derivatives of $u(x, t)$, but it is well known that the existence of
directional derivatives of all direction does not imply the
differentiability of $u$ at this point. Even for any two functions
$f$ and $h$, $h$ is differentiable at $(x, t)$ and
$\partial_lf(x,t)=\partial_lh(x,t)$ for any direction $l$, it does
not imply that $f$ is differentiable at $(x, t)$. To prove
differentiability, first recall some definition about classical
derivatives. $f$ is said Gateaux differentiable at $x$ provided 1)
$\partial_lf(x)$ exists for all direction $l$, 2) and there exists a
(necessarily unique) element $ f'_G(x)$ (called the Gateaux
derivative) that satisfies
\begin{equation}\label{Gat}\partial_lf(x)=<f'_G(x), l>, \end{equation} for
all $l$.(See page 31 in \cite{Cl} ).

Next we have a criterion  about
differentiability of $u(x,t)$ at $(x_0, t_0)$. It uncovers the relation between  differentiability of $u(x,t)$ at $(x_0, t_0)$ and minimizers for $(x_0, t_0)$.   Before giving the
criterion, let us recall some generalized differentials (see \cite{C}), they will be needed  in the proof.
\begin{definition}\label{sub}Given $v: \Omega\rightarrow {\mathbb R}$,
$\Omega$ being an open subset of ${\mathbb R^N}$, and $z\in \Omega$,
the (Frechet) subdifferential of $v$ at $z$ is the set
$$D^-v(z)=\{p\in {\mathbb R^N}|\liminf_{h\rightarrow 0}
\frac{v(z+h)-v(z)-p\cdot h}{|h|}\ge 0\}.$$ Similarly, the (Frechet)
supdifferential of $v$ at $z$ is the set
$$D^+v(z)=\{p\in {\mathbb R^N}|\limsup_{h\rightarrow 0}
\frac{v(z+h)-v(z)-p\cdot h}{|h|}\le 0\}.$$
We say that $p$ is a reachable gradient of $v$ at a point $z\in\Omega$ if there exists $\{z_k\}$ of points at which $v$ is differentiable such that $$z_k\rightarrow z, \qquad Dv(z_k)\rightarrow p.
$$ We denote with $D^*v(z)$ the set of reachable gradients of $v$ at $z$.
\end{definition}

\begin{theorem}\label{diff}Suppose $g(y)\in C^1$, then we have the following
results:

1) If $L(x_0, t_0)$ is a singleton, then $u$ is differentiable at
$(x_0, t_0)$.  And $Du(x_0, t_0)=(\nabla u, u_t)(x_0, t_0)=(Dg(y_0), -|Dg(y_0)|)$, where $y_0\in L(x_0, t_0)$.

2) If $L(x_0, t_0)$ is not a singleton, and there exists at
least one, say, $y_1\in L(x_0, t_0)$ such that $D(y_1)\neq 0$, then
$u$ is nondifferentiable at $(x_0, t_0)$.

 3) If $Dg(y)=0$ for all $y\in L(x_0, t_0)$, then $u$ is differentiable at
$(x_0, t_0)$. And $Du(x_0, t_0)=0$.

\end{theorem}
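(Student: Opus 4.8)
The plan is to exploit the local semiconcavity of $u$ together with the standard characterization of differentiability for semiconcave functions: a locally semiconcave function is differentiable at a point $z_0$ if and only if its set of reachable gradients $D^*u(z_0)$ is a singleton, in which case $Du(z_0)$ equals that unique element (Cannarsa--Sinestrari, \cite{C}). Thus for parts 1) and 3) I would compute $D^*u(x_0,t_0)$ and show it reduces to a single vector, while for part 2) I would instead work directly with Lemma \ref{direction} and show that the spatial directional-derivative map fails to be odd, which already obstructs differentiability.

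The technical core is a formula for the gradient at differentiability points. First I would show: if $u$ is differentiable at $(x,t)$, then $\nabla u(x,t)=Dg(y)$ for every $y\in L(x,t)$, and consequently $Du(x,t)=(Dg(y),-|Dg(y)|)$. Indeed, for spatial directions $l$ differentiability gives $\partial_l u(x,t)=\nabla u(x,t)\cdot l$, while Lemma \ref{direction} gives $\partial_l u(x,t)=\min_{y\in L(x,t)}Dg(y)\cdot l$ (using $g\in C^1$). Since $\partial_{-l}u=-\partial_l u$, comparing the two expressions forces $Dg(y)\cdot l$ to be independent of $y\in L(x,t)$ for every $l$, hence $Dg(y)\equiv\nabla u(x,t)$ on $L(x,t)$; the value $u_t(x,t)=-|\nabla u(x,t)|$ then comes from the fact that a viscosity solution satisfies the equation classically wherever it is differentiable. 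Next I would pass to reachable gradients: if $z_k=(x_k,t_k)\to(x_0,t_0)$ are differentiability points and $y_k\in L(x_k,t_k)$, then the $y_k$ remain in a bounded region and a subsequence converges to some $y_*$; closedness of the minimizer multifunction $L$ (from continuity of $u$ and $g$) gives $y_*\in L(x_0,t_0)$, and continuity of $Dg$ gives $Du(z_k)=(Dg(y_k),-|Dg(y_k)|)\to(Dg(y_*),-|Dg(y_*)|)$. Hence
\[
D^*u(x_0,t_0)\subseteq\{(Dg(y),-|Dg(y)|):y\in L(x_0,t_0)\}.
\]

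Parts 1) and 3) then follow immediately, since $D^*u(x_0,t_0)$ is always nonempty for the locally Lipschitz $u$ (Rademacher together with boundedness of $Du$). If $L(x_0,t_0)=\{y_0\}$ the right-hand side above is the single vector $(Dg(y_0),-|Dg(y_0)|)$, so $D^*u(x_0,t_0)$ is that singleton and $u$ is differentiable with the stated gradient. If instead $Dg\equiv0$ on $L(x_0,t_0)$ the right-hand side is $\{(0,0)\}$, giving $Du(x_0,t_0)=0$.

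For part 2) I would argue by contradiction. If $u$ were differentiable at $(x_0,t_0)$, the gradient formula above would force $Dg$ to be constant on $L(x_0,t_0)$, so it suffices to produce two minimizers with different $Dg$ values. Take $y_1\in L(x_0,t_0)$ with $Dg(y_1)\neq0$ and any second minimizer $y_2\neq y_1$. If $Dg(y_2)=0$ we are done. If $Dg(y_2)\neq0$, then by Proposition \ref{NC} the point $(x_0,t_0)$ lies on the characteristic from each $y_i$, so $\frac{Dg(y_i)}{|Dg(y_i)|}=\frac{x_0-y_i}{t_0}$ and $|x_0-y_i|=t_0$; since $y_1\neq y_2$ are distinct points of the sphere of radius $t_0$ about $x_0$, the unit vectors $\frac{x_0-y_1}{t_0}$ and $\frac{x_0-y_2}{t_0}$ differ, whence $Dg(y_1)\neq Dg(y_2)$. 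Either way $Dg$ is non-constant on $L(x_0,t_0)$, contradicting differentiability. I expect the main obstacle to be exactly this step: controlling the \emph{values} of $Dg$ at distinct minimizers. The comparison of directional derivatives only sees the minimum of the linear functionals $Dg(y)\cdot l$, so one must rule out the possibility that several distinct minimizers share a common gradient; the radial-direction identity supplied by Proposition \ref{NC}, together with the separate treatment of the zero-gradient case, is what closes this gap.
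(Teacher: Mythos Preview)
Your proof is correct, and for parts 1) and 3) it takes a genuinely different route from the paper. The paper first uses Lemma~\ref{direction} to establish Gateaux differentiability of $u(\cdot,t_0)$ at $x_0$, upgrades this to Fr\'echet differentiability in $x$ via the Lipschitz property, and then argues through the projection $\Pi_x D^+u(x_0,t_0)=\nabla^+u(x_0,t_0)$ together with the constraint $p_t+|p_x|=0$ satisfied by every element of $D^*u$ to force $D^*u(x_0,t_0)$ to be a singleton. You instead first derive the gradient formula $Du(x,t)=(Dg(y),-|Dg(y)|)$ at \emph{every} differentiability point (using Lemma~\ref{direction} and the viscosity property), and then pass to limits using upper semicontinuity of $L$ to obtain the inclusion $D^*u(x_0,t_0)\subseteq\{(Dg(y),-|Dg(y)|):y\in L(x_0,t_0)\}$ directly, from which 1) and 3) drop out at once. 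Your argument is more streamlined and in fact anticipates (and independently proves one half of) the later Proposition~\ref{*}; the paper's route, on the other hand, stays closer to the spatial variable and only invokes the equation at the very end. For part 2) the two arguments are essentially the same: both show $Dg(y_1)\neq Dg(y_2)$ via the case split on $Dg(y_2)$ and the characteristic identity from Proposition~\ref{NC}, and then deduce nondifferentiability---the paper by exhibiting a direction $l$ with $\partial_{-l}u<-\partial_l u$, you by the contrapositive of your gradient formula, which amounts to the same thing.
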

\begin{remark}For a more general setting \cite{C},
1) is obtained in \cite[Theorem 7.3.14]{C},
2) can be obtained by combining \cite[Theorem 7.3.3]{C},
\cite[Corollary 7.3.7]{C}, and \cite[Theorem 7.3.9]{C}.
 The   result 3) characterizes an essential difference
 between the Eikonal equation and strictly convex, superlinear, smooth Hamiltonians:  a differentiable  point for the Eikonal equation
 can have more than one minimizer. Example 7.2.10 in \cite{C} has   pointed it out. This should be due to the singular point of $H$ at $0$.
   3) also gives the sufficient and necessary condition to a differentiable point which has more than one minimizer.

 In below we give a self-contained proof which
 takes advantage of the simple form of Hopf-Lax formula (\ref{HL})
 for the Eikonal equation (\ref{1.2}).
\end{remark}
\begin{proof}First we prove 1). If $L(x_0, t_0)$ is a singleton,
assume $y_0$ is the unique element in $L(x_0, t_0)$, by Lemma
\ref{direction}, we have $$\partial_lu(x_0, t_0)=\partial_lg(y_0).
$$ Since $g$ is differentiable at $y_0$, we have  $$
\partial_lu(x_0, t_0)=\partial_lg(y_0)=Dg(y_0)\cdot l. $$Recall (\ref{Gat}),
 it implies that
$u$ is Gateaux differentiable  at $(x_0, t_0)$ for $x$ variable.
11.20 d), in page 66 in \cite{Cl}, it  says that  if $f : {\mathbb R}^n
\rightarrow{\mathbb R}$ is Lipschitz near $x$, then at $ x$, Gateaux
differentiability implies differentiability (this is false in
general in infinite dimensional spaces). We have known $u$ is
semiconcave , and semiconcavity implies Lipschitz continuity. Hence
$\nabla u(x_0, t_0)$ exists. By Lemma \ref{direction}, we know that
$\nabla u(x_0, t_0)=Dg(y_0)$.

Next  we want to prove that $ Du(x_0, t_0)$ exists. Since $\nabla
u(x_0, t_0)$ exists, by Proposition 3.1.5 c) in \cite{C}, we have
that $\nabla ^+u(x_0, t_0)=\{\nabla u(x_0, t_0)\}$. By Lemma 3.3.16
in \cite{C}, $\Pi_xD^+u(x_0, t_0)=\nabla ^+u(x_0, t_0)$, where
$\Pi_x:{\mathbb R^{n+1}}\rightarrow {\mathbb R^{n}}$ be the
projection onto the $x-$space. Hence
$D^+u(x_0, t_0)$ is  a subset of the line $ \{(a, \nabla u(x_0,
t_0))|a\in {\mathbb R}\}$. By Theorem 3.3.6 in \cite{C}, $D^+u(x_0,
t_0)=co D^*u(x_0, t_0)$, which is the convex hull of $D^*u(x_0,
t_0)$. It implies that $D^*u(x_0, t_0)$ is also a subset of the line
$ \{(a, \nabla u(x_0, t_0))|a\in {\mathbb R}\}$. Therefore every
element in $D^*u(x_0, t_0)$ has the form $(a, \nabla u(x_0, t_0))$.
It is easy to see that for any $(p_t, p_x) \in D^*u(x_0, t_0)$,   it
satisfies the Eikonal equation $u_t+|\nabla u|=0$. Hence
\begin{equation}\label{pt}p_t+|p_x|=0.\end{equation} Then for any two elements $(a_1,
\nabla u(x_0, t_0))$, $(a_2, \nabla u(x_0, t_0))\in D^*u(x_0, t_0)
$, they satisfy (\ref{pt}). Hence $a_1+|\nabla u(x_0, t_0)|=0$ and
$a_2+|\nabla u(x_0, t_0)|=0$. It implies that $a_1=a_2$. Therefore $
D^*u(x_0, t_0)$ is a singleton.  By Theorem 3.3.6 in \cite{C} again,
we have that
 $D^+u(x_0, t_0)=coD^*u(x_0, t_0)$, it implies that  $D^+u(x_0, t_0)$
 is a singleton. By Proposition 3.3.4
d) in \cite{C} that if $v$ is semiconcave and $D^+v(z)$ is a
singleton, then $v$ is differentiable at $z$, we can conclude that
$u$ is differentiable at $(x_0, t_0)$. Hence at $(x_0, t_0)$, $u$
satisfies Eikonal equation. It implies that $u_t(x_0, t_0)=-|\nabla
u(x_0, t_0)|=-|Dg(y_0)|$. Therefore $Du(x_0, t_0)=(\nabla u(x_0,
t_0), u_t(x_0, t_0))=(Dg(y_0), -|Dg(y_0)| )$.

For 2), if $L(x_0, t_0)$ is not a singleton,  and there exists at
least one, say, $y_1\in L(x_0, t_0)$ such that $D(y_1)\neq 0$, assume $y_2$ is another element in $L(x_0, t_0)$, then we have that $Dg(y_1)\neq Dg(y_2)$. The reason is that 1., if $Dg(y_2)=0$, then $Dg(y_1)\neq Dg(y_2)$. 2., If $Dg(y_2)\neq 0$, then $x_0=y_1+\frac{Dg(y_1)}{|Dg(y_1)|} t_0=y_2+\frac{Dg(y_2)}{|Dg(y_2)|} t_0$, it implies that $\frac{Dg(y_1)}{|Dg(y_1)|}\neq \frac{Dg(y_2)}{|Dg(y_2)|}$, hence \begin{equation}\label{Dg}Dg(y_1)\neq Dg(y_2).\end{equation}
Next we claim that  there must exist some $l$ and some $y_1$, $y_2\in L(x_0, t_0)$ with $Dg(y_1)\neq 0$,   $\partial_lg(y_1)\neq\partial_lg(y_2)$ holds.
 It will be proved  by contradiction argument. Assume that for any direction $l$ and any $y_1$, $y_2\in L(x_0, t_0)$ with $Dg(y_1)\neq 0$, the directional derivatives $\partial_lg(y_1)=\partial_lg(y_2)$ holds, then $Dg(y_1)\cdot l=Dg(y_2)\cdot l$. It implies that $Dg(y_1)=Dg(y_2)$, which contradicts with (\ref{Dg}). Hence the proof of the claim is complete. Assume $\partial_lg(y_1)<\partial_lg(y_2)$. By Lemma \ref{direction}, there exist some $y_3\in L(x_0, t_0)$ such that
\begin{equation}\label{l} \partial_lu(x_0, t_0)=\min_{y\in L(x_0, t_0)}\partial_lg(y)=\partial_lg(y_3)\le \partial_lg(y_1)<\partial_lg(y_2).\end{equation} Hence  \begin{equation}\partial_lg(y_3)<\partial_lg(y_2).\end{equation}
Since $g$ is differentiable at any point $y$, $\partial_lg(y)=-\partial_{-l}g(y)$. We have that $$  -\partial_{-l}g(y_3)<-\partial_{-l}g(y_2).      $$ That is, \begin{equation}\label{-l}\partial_{-l}g(y_3)>\partial_{-l}g(y_2).\end{equation}

Next we observe $\partial_{-l}u(x_0, t_0)$, by (\ref{-l}), (\ref{l}) and there exists some $y_4\in L(x_0, t_0)$ such that
\begin{eqnarray*}\partial_{-l}u(x_0, t_0)&=&\min_{y\in L(x_0, t_0)}\partial_{-l}g(y)=\partial_{-l}g(y_4)\le \partial_{-l}g(y_2)<\partial_{-l}g(y_3)\\&=&-\partial_{l}g(y_3)=-\partial_lu(x_0, t_0).\end{eqnarray*}
It implies that $$\partial_{-l}u(x_0, t_0)<-\partial_lu(x_0, t_0).$$ Therefore $u$ is not differentiable at $(x_0, t_0)$.

Finally we will prove 3). Since $Dg(y)=0$ for all $y\in L(x_0, t_0)$,
then for any direction $l\in{\mathbb R^n}$, by Lemma \ref{direction},
$\partial_lu(x_0, t_0)=\min_{y\in L(x_0, t_0)}\partial_lg(y)=
\min_{y\in L(x_0, t_0)}Dg(y)\cdot l=0$. It implies that
$$\partial_lu(x_0, t_0)=0\cdot l.$$ Hence $u$ is Gateaux differentiable
for $x$ variable at $(x_0, t_0)$. Then similarly to the argument in
the proof of part 1), we can obtain that $u$ is differentiable at
$(x_0, t_0)$. And $Du(x_0, t_0)=0\in {\mathbb R^{n+1}}$.
\end{proof}

We  can define the set \begin{equation}\tilde L(x, t)=\{Dg(y)|y\in
L(x, t)\}.
\end{equation}
With the definition $\tilde L(x, t)$, Theorem \ref{diff} can be
written in the following brief form:
\begin{corollary}Suppose $g(y)\in C^1$. Then $u$ is differentiable
at $(x_0, t_0)$ if and only if $\tilde L(x_0, t_0)$ is a singleton.
\end{corollary}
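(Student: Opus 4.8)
The plan is to derive the corollary directly from the trichotomy already established in Theorem \ref{diff}, whose three cases are mutually exclusive and exhaustive once one classifies $(x_0,t_0)$ according to the cardinality of $L(x_0,t_0)$ and the presence of a nonzero gradient among its minimizers. The strategy is to track, in each of the three cases, whether $\tilde L(x_0,t_0)=\{Dg(y)\mid y\in L(x_0,t_0)\}$ is a singleton, and to check that differentiability of $u$ and the singleton property of $\tilde L$ coincide case by case.

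For the forward implication I would argue by contraposition: assume $\tilde L(x_0,t_0)$ is not a singleton, so there are minimizers $y_1,y_2\in L(x_0,t_0)$ with $Dg(y_1)\neq Dg(y_2)$. Since distinct gradient values force $y_1\neq y_2$, the set $L(x_0,t_0)$ is not a singleton, and since two vanishing gradients would be equal, at least one of them, say $Dg(y_1)$, is nonzero. These are exactly the hypotheses of part 2) of Theorem \ref{diff}, which yields that $u$ is nondifferentiable at $(x_0,t_0)$. Hence differentiability of $u$ forces $\tilde L(x_0,t_0)$ to be a singleton.

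For the reverse implication, suppose $\tilde L(x_0,t_0)=\{p\}$. If $p=0$ then $Dg(y)=0$ for every $y\in L(x_0,t_0)$, and part 3) of Theorem \ref{diff} gives differentiability. If $p\neq 0$, the key observation is that $L(x_0,t_0)$ must then be a singleton: any minimizer $y$ with $Dg(y)\neq 0$ lies on the characteristic reaching $(x_0,t_0)$, so it satisfies $x_0=y+\frac{Dg(y)}{|Dg(y)|}t_0$, equivalently $y=x_0-\frac{p}{|p|}t_0$, which recovers $y$ uniquely from $(x_0,t_0)$ and the common value $p$. Thus two distinct minimizers cannot share the same nonzero gradient, so $L(x_0,t_0)$ is a singleton and part 1) of Theorem \ref{diff} applies, giving differentiability.

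The step that carries the real content is this last injectivity observation---that distinct minimizers carrying a nonzero gradient must have distinct gradient values---since it is what prevents the singleton property of $\tilde L$ from masking a genuine multiplicity of minimizers. It is, however, not a new obstacle: it is precisely the characteristic relation already exploited in the proof of part 2) of Theorem \ref{diff} and underlying Proposition \ref{NC}. Once it is in hand, the corollary is merely the bookkeeping that the three cases of Theorem \ref{diff} reorganize, under the map $L(x_0,t_0)\mapsto\tilde L(x_0,t_0)$, into the clean dichotomy between $\tilde L(x_0,t_0)$ being or not being a singleton.
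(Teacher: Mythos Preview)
Your proof is correct and follows precisely the approach the paper has in mind: the corollary is presented there without a separate proof, as an immediate reformulation of Theorem \ref{diff}, and your case analysis is exactly how that reformulation is unpacked. The injectivity observation you single out---that two minimizers sharing a common nonzero gradient value would coincide via the characteristic relation $y=x_0-\tfrac{p}{|p|}t_0$---is the same fact established as (\ref{Dg}) inside the proof of part 2) of Theorem \ref{diff}, so nothing new is being introduced.
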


\subsection{Relation between termination points and nondifferentiable points}

 The next proposition tells us that for the characteristic from $y_0$ with $Dg(y_0)\neq 0$,  when the first termination
 point and the second termination point  coincide, it
 is either a nondifferentiable point
 or a cluster point of nondifferentiable points from above.
 When they do not coincide, every point in the line segment between
 these two termination points is nondifferentiable point.
\begin{proposition}\label{cluster} Let $y_0$ be any point in
${\mathbb R^n}$ such that $Dg(y_0)\neq 0$ and $\overline
t_s(y_0)<\infty$. 1. When $\overline t_s(y_0)=t_s(y_0)$, $(x_s(y_0),
t_s(y_0))$ is either 1) a differentiable
point if $(x_s(y_0),
t_s(y_0))$ has unique minimizer   or 2) a nondifferentiable point if $(x_s(y_0),
t_s(y_0))$ has more than one minimizer. Furthermore,   if it is  a differentiable point, or a nondifferentiable
point such that all minimizers have $Dg\neq 0$, then there exists
$h>0$ with the following property: Given any $\epsilon\in (0, h]$,
there exists $x_{\epsilon}\in B_{\epsilon}( x_s(y_0))$ such that $(
x_{\epsilon}, t_s(y_0)+\epsilon)$ is a nondifferentiable point,
where $\overline B_{\epsilon}( x_s(y_0))$ is the closed ball centered in $
x_s(y_0)$ with radius $\epsilon$. 2. When $\overline t_s(y_0)<
t_s(y_0)$, for any point in $\{(x,
t)|x=y_0+\frac{Dg(y_0)}{|Dg(y_0)|}t, \, t\in [\overline t_s(y_0),
t_s(y_0)]\}$, it is a nondifferentiable point.
\end{proposition}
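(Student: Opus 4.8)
The plan is to read the dichotomy of Part 1 and the interior of Part 2 off Theorem \ref{diff} directly, and to isolate the genuine work in the ``furthermore'' clause of Part 1 (which is intimately tied to the two endpoints of Part 2), where new singularities must be manufactured out of a single terminating characteristic.

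First I would settle the dichotomy. Since $\overline t_s(y_0)=t_s(y_0)$, the lemma stating $y_0\in L(x_s(y_0),t_s(y_0))$ applies, and $Dg(y_0)\neq 0$ by hypothesis. If $L(x_s(y_0),t_s(y_0))$ is a singleton, part 1) of Theorem \ref{diff} gives differentiability; if it contains a second minimizer, then, because the minimizer $y_0$ has nonzero gradient, part 2) of Theorem \ref{diff} gives nondifferentiability. This is exactly the stated alternative 1)/2). For the interior of Part 2, fix $t\in(\overline t_s(y_0),t_s(y_0))$ and let $(x,t)$ be the corresponding point of $C$. By \eqref{bart_s}, $t>\overline t_s(y_0)$ forces $y_0$ to be non-unique, so some $y'\neq y_0$ also minimizes; by \eqref{t_s}, $t<t_s(y_0)$ forces $y_0$ to remain a minimizer. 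Hence $L(x,t)$ is not a singleton and still contains $y_0$ with $Dg(y_0)\neq 0$, so Theorem \ref{diff} 2) makes $(x,t)$ nondifferentiable.

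The substance lies in the ``furthermore'' clause, which I would prove by contradiction. Suppose no admissible $h$ exists; then there is a sequence $\epsilon_k\downarrow 0$ with $u$ differentiable at every $(x,t_s(y_0)+\epsilon_k)$, $x\in B_{\epsilon_k}(x_s(y_0))$. By the corollary following Theorem \ref{diff}, $\tilde L(x,t_s(y_0)+\epsilon_k)$ is then a singleton for each such $x$; since $u(\cdot,t)$ is semiconcave, $\nabla_x u(\cdot,t_s(y_0)+\epsilon_k)$ is continuous on this ball and, for $k$ large, bounded away from $0$ near $x_s(y_0)$ — because every minimizer at $(x_s(y_0),t_s(y_0))$ has nonzero gradient (the hypothesis of this case), so by upper semicontinuity of $L$ the same holds at nearby points. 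Consequently every minimizer lies on the bounding sphere with radial gradient and is uniquely recovered as $y(x)=x-(t_s(y_0)+\epsilon_k)\,\nabla_x u/|\nabla_x u|$, a continuous section of the characteristic map $\Phi_t(y):=y+t\,Dg(y)/|Dg(y)|$, i.e. $\Phi_{t_s(y_0)+\epsilon_k}(y(x))=x$. Thus $\Phi_{t_s(y_0)+\epsilon_k}$ restricts to a homeomorphism of a neighborhood of $y(x_s(y_0))$ onto $B_{\epsilon_k}(x_s(y_0))$; this neighborhood shrinks to a minimizer of $(x_s(y_0),t_s(y_0))$ (to $y_0$ in the differentiable case, where $L(x_s(y_0),t_s(y_0))=\{y_0\}$) yet omits $y_0$, since $\Phi_{t_s(y_0)+\epsilon_k}(y_0)=x_s(y_0)+\epsilon_k\,Dg(y_0)/|Dg(y_0)|$ lies on the boundary sphere, not inside the ball.

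I expect the decisive step — and the main obstacle — to be converting ``the characteristic from $y_0$ terminates at $t_s(y_0)$'' into a quantitative failure of $\Phi$ to be an orientation-preserving local homeomorphism at the terminating point, which is incompatible with the continuous single-valued section just produced. For $C^2$ data this would be the degeneracy of the Jacobian at a conjugate point, but here $g$ is only $C^1$ and $\Phi$ need not be differentiable, so the argument must run at the level of the minimizer multifunction: in the nondifferentiable case two distinct gradient branches persist near $x_s(y_0)$ and, by the corollary, directly yield a nearby nondifferentiable point, whereas in the differentiable case one combines upper semicontinuity of $L$ with the strict drop $u(x_s(y_0),t_s(y_0)+\epsilon_k)<g(y_0)$ (valid because $y_0$ becomes interior with $Dg(y_0)\neq 0$) to force two distinct minimizers, hence a nondifferentiable $(x_\epsilon,t_s(y_0)+\epsilon)$. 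The same coalescence phenomenon disposes of the two endpoints of Part 2: at $t=\overline t_s(y_0)$ and $t=t_s(y_0)$ one passes to a limit of the competing minimizers $y'(t)\neq y_0$ using upper semicontinuity of $L$, the only delicate case being when these competitors collapse onto $y_0$, which is precisely the terminating/focusing configuration handled by the degree argument above.
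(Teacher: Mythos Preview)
Your handling of the dichotomy in Part 1 and of the open interval $(\overline t_s(y_0),t_s(y_0))$ in Part 2 via Theorem \ref{diff} is correct and matches the paper. The genuine gap is in the ``furthermore'' clause: you correctly set up the contradiction hypothesis and extract a continuous minimizer section $y(\cdot)$, but you then explicitly flag the decisive step as ``the main obstacle'' and do not close it. The observation that the image of $y$ is an open set near $y_0$ omitting $y_0$ is not by itself a contradiction (open neighborhoods of points approaching $y_0$ need not contain $y_0$), and the sketches you offer at the end --- ``two distinct gradient branches persist'', ``force two distinct minimizers'' --- do not work under the standing assumption that $u$ is differentiable on the whole slice $\overline B_\epsilon(x_s)\times\{t_s+\epsilon\}$.

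The missing ingredient is Brouwer's fixed point theorem. Under the contradiction hypothesis, $\nabla u(\cdot,t_s+\epsilon)$ is continuous and nonvanishing on $\overline B_\epsilon(x_s(y_0))$, so
\[
\Lambda(x)=x_s(y_0)+\epsilon\,\frac{\nabla u(x,t_s+\epsilon)}{|\nabla u(x,t_s+\epsilon)|}
\]
maps $\overline B_\epsilon(x_s(y_0))$ continuously into its boundary sphere, hence into itself; Brouwer yields a fixed point $x_\epsilon$. Unwinding $\Lambda(x_\epsilon)=x_\epsilon$ shows that the backward characteristic from $(x_\epsilon,t_s+\epsilon)$ passes through $(x_s(y_0),t_s(y_0))$, so its foot $y_\epsilon$ is also a minimizer for $(x_s(y_0),t_s(y_0))$. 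If $y_\epsilon=y_0$ this contradicts the definition of $t_s(y_0)$; if $y_\epsilon\neq y_0$, then $y_0\in\partial B_{t_s}(x_s)$ lies in the \emph{open} ball $B_{t_s+\epsilon}(x_\epsilon)$, and since $Dg(y_0)\neq 0$ there is $y'$ nearby with $g(y')<g(y_0)=g(y_\epsilon)$, contradicting that $y_\epsilon$ minimizes at $(x_\epsilon,t_s+\epsilon)$.

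For the lower endpoint $t=\overline t_s(y_0)$ in Part 2 you also defer to the unfinished degree argument, but the paper avoids your ``collapse onto $y_0$'' worry altogether: it first shows (Lemma \ref{more than one}) that for $t\in(\overline t_s,t_s)$ every minimizer other than $y_0$ is a local minimum of $g$ in ${\mathbb R}^n$, hence has $Dg=0$. Any subsequential limit $\bar y$ as $t\downarrow\overline t_s(y_0)$ then satisfies $Dg(\bar y)=0\neq Dg(y_0)$, so $\bar y\neq y_0$ automatically and $(\overline x_s,\overline t_s)$ is nondifferentiable by Theorem \ref{diff}.
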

The proof is quite similar to the proof of Lemma 6.5.1 in \cite{C}.
For the completeness of this paper, we present the proof here.
Before giving the proof,  a lemma is needed here.
\begin{lemma}\label{nabla}If $u$ is differentiable at $(x_0, t_0)$
with $\nabla u(x_0, t_0)\neq 0$,  let $y_1=x_0-\frac{\nabla u(x_0,
t_0)}{|\nabla u(x_0, t_0)|}t_0$, then $y_1$ is the unique minimizer
for $(x_0, t_0)$ and there is a unique effective
characteristic passing through $(x_0, t_0)$.
\end{lemma}
\begin{proof} 
Since $u$ is differentiable at $(x_0, t_0)$, by Theorem \ref{diff},
we have that $\nabla u(x_0, t_0)=Dg(y_0)$, for any $y_0\in L(x_0,
t_0)$. Since $\nabla u(x_0, t_0)\neq 0$, it implies that
$Dg(y_0)\neq 0$. By Theorem \ref{diff} again, $y_0$ is the unique
minimizer for $(x_0, t_0)$. Due to Proposition \ref{NC}, we have
that $x_0=y_0+\frac{Dg(y_0)}{|Dg(y_0)|}t_0$. Let
$y_1=x_0-\frac{\nabla u(x_0, t_0)}{|\nabla u(x_0, t_0)|}t_0$, hence
$y_1=x_0-\frac{\nabla u(x_0, t_0)}{|\nabla u(x_0,
t_0)|}t_0=x_0-\frac{Dg(y_0)}{|Dg(y_0)|}t_0$. It implies that
$y_0=y_1$. Therefore $y_1$ is the unique minimizer for $(x_0, t_0)$.
By Proposition \ref{NC}, $(x_0, t_0)$ is on the effective
characteristic from $y_1$.  If we assume that there is another
effective characteristic from $\bar y$ passing through $(x_0, t_0)$,
then $\bar y$ is also a minimizer for $(x_0, t_0)$. It contradicts
with that $(x_0, t_0)$ has unique minimizer. Hence there is unique
effective characteristic passing through $(x_0, t_0)$, which is from
$y_1$.

\end{proof}
Now we are ready
for the proof of Proposition \ref{cluster}.
\begin{proof}Case 1 when $t_s(y_0)=\overline t_s(y_0)$.

 If $(x_s(y_0), t_s(y_0))$ has more than one minimizer,
 by $Dg(y_0)\neq 0$ and Theorem \ref{diff}, then $(x_s(y_0), t_s(y_0))$ is a nondifferentiable point.
If $(x_s(y_0), t_s(y_0))$ has unique minimizer, by Theorem
\ref{diff}
 again, we know $(x_s(y_0), t_s(y_0))$ is a differentiable point.

 Next the statement of "Furthermore" will be proved  by contradiction arguments. Suppose that there exists some
 $\epsilon\in (0, h]$ such that for any point $x$ in
 $\overline B_\epsilon(x_s(y_0))$, $(x, t_s(y_0)+\epsilon )$ is a differentiable
 point. By Proposition 2.1.2, Remark 7.2.9 and Proposition 3.3.4 in \cite{C},  it
 follows that $u$ is locally semiconcave and  $\nabla u(x, t_s(y_0)+
 \epsilon)$ is continuous on
  $\overline B_\epsilon(x_s(y_0))$.  For any $y\in L(x, t_s(y_0)+
  \epsilon )$ when $\epsilon$ is sufficiently small,  we have
  $y\approx y_1$, where $y_1$ is some minimizer for
  $(x_s(y_0), t_s(y_0))$. Since we assume that
    $(x_s(y_0), t_s(y_0))$
   is  a differentiable point with $Dg(y_0)\neq 0$ or a nondifferentiable
point such that all minimizers have $Dg\neq 0$,
 hence $Dg(y)\neq 0$.  Then by Theorem \ref{diff}, $(x, t_s(y_0)+\epsilon )$  must
 have  unique minimizer $y$ and $\nabla u(x, t_s(y_0)+\epsilon)=Dg(y)\neq
 0$.

 Let $$\Lambda(x)=x_s(y_0)+x-X(t_s(y_0), t_s(y_0)+\epsilon, x,
 \nabla u(x, t_s(y_0)+\epsilon)),$$where $$X(t_s(y_0), t_s(y_0)+\epsilon, x, \nabla
   u(x, t_s(y_0)+\epsilon))=x-\frac{\nabla u(x, t_s(y_0)+\epsilon)}{|\nabla
   u(x, t_s(y_0)+\epsilon)|}\epsilon.$$ By Lemma \ref{nabla}, we know
   that
$X(t_s(y_0), t_s(y_0)+\epsilon,
 x, \nabla u(x, t_s(y_0)+\epsilon))$ is the $x$ component of the point
    at
   time $t_s(y_0)$ on the unique effective characteristic passing through
   $(x, t_s(y_0)+\epsilon)$.   It follows that $\Lambda(x) $ is
   continuous on $\overline B_\epsilon(x_s(y_0))$.

 Furthermore, $$\Lambda(x)-x_s(y_0)=x-X(t_s(y_0), t_s(y_0)+\epsilon, x, \nabla
 u(x, t_s(y_0)+\epsilon))=\frac{\nabla u(x, t_s(y_0)+\epsilon)}{|\nabla
 u(x, t_s(y_0)+\epsilon)|}\epsilon.$$
 Then we have that $|\Lambda(x)-x_s(y_0)|=\epsilon$, therefore
 $\Lambda(x)$ maps $\overline B_\epsilon(x_s(y_0))$ into itself.
 By Brouwer's fixed point theorem, there exists some
 $x_\epsilon\in \overline B_\epsilon(x_s(y_0))$ such that
 $\Lambda(x_\epsilon)=x_\epsilon$. It follows that
 $$x_s(y_0)=X(t_s(y_0), t_s(y_0)+\epsilon, x_\epsilon, \nabla u(x_\epsilon, t_s(y_0)+\epsilon)).$$
 The effective characteristic passing through $(x_\epsilon, t_s(y_0)+\epsilon)$ also
 passes through $(x_s(y_0), t_s(y_0))$.
 Let $y_\epsilon$ be the unique
  minimizer for $(x_\epsilon, t_s(y_0)+\epsilon)$.   If
 $y_\epsilon=y_0$, then it contradicts with the
 definition of $t_s(y_0)$. If $y_0\neq y_\epsilon$, then 
$(x_s(y_0), t_s(y_0))$ have $y_0$ and $y_\epsilon$ as its
minimizers. Since $Dg(y_0)\neq 0$, it implies that $t_s(y_\epsilon)=t_s(y_0)$, which
contradicts with that $y_\epsilon$ is a
  minimizer for $(x_\epsilon, t_s(y_0)+\epsilon)$, which means $t_s(y_\epsilon)\ge t_s(y_0)+\epsilon$.

 Case 2 when $t_s(y_0)>\overline t_s(y_0)$.

 First by the definition of $\overline t_s(y_0)$, for any
 $t\in ( \overline t_s(y_0),  t_s(y_0)]$, the point $(x,t)$ on the
 characteristic from $y_0$ has more than one minimizer. By
  Theorem \ref{diff}, the point $(x, t)$ is nondifferentiable.
 Next we observe the point $(\overline x_s(y_0), \overline
 t_s(y_0))$. For any point $(x,t)$ on the
 characteristic from $y_0$ and $\overline
 t_s(y_0)< t <
 t_s(y_0)$, there exists some $ y_t\in L(x, t)$ such that $g$
 attains
 a local minimum in ${\mathbb R^n}$ at   $ y_t$. Hence we have that
 $Dg(y_t)=0$.  Then there exists some subsequence
 $\{y_{t'}\}$ and $\bar y$ such that $y_{t'}\rightarrow \bar y$ when
 $t'\rightarrow \overline
 t_s(y_0)$ and $\bar y\in L(\overline x_s(y_0), \overline
 t_s(y_0))$. Obviously, $Dg(\bar y)=0$. Hence $y_0\neq \bar y$.
 Therefore $(\overline x_s(y_0), \overline
 t_s(y_0))$ has more than one minimizer with $Dg(y_0)\neq 0$. It implies that it is a
  nondifferentiable point of $u$.
\end{proof}
Although the proof above is quite similar to the proof of Lemma
6.5.1 in \cite{C}, the properties of the case of  $H$ being strictly convex, superlinear and smooth and the
Eikonal equation might not be  the same. For the case of  $H$ being strictly convex and smooth, any nondifferentiable point is  also a cluster point of
nondifferentiable points from above (see Lemma 6.5.1 in \cite{C}),
while for the Eikonal equation (as seen in the proposition above), so far
it  can only be proven that only for those  nondifferentiable points
such that all minimizers have $Dg\neq 0$, they are cluster points of
nondifferentiable points from above. For other nondifferentiable
points,   it is not clear that whether the property holds or not.
The mathematical difficulty is that for the case of  $H$ being strictly convex and smooth,
if $(x, t)$ is a differentiable point, then it implies that $(x, t)$
has unique minimizer; while for the Eikonal equation, if $(x, t)$ is a
differentiable point, it does not imply that $(x, t)$ has unique
minimizer. $(x, t)$ may have more than one minimizer with $Dg=0$.
Therefore if $\nabla u(x, t)=0$, there might be more than one
effective characteristic passing through $(x, t)$. From here, we can
 see clearly one of the differences between   the case of  $H$ being strictly convex and smooth and the
Eikonal equation: $\nabla u=0$ is a trouble for the Eikonal equation,
while it is not a trouble for the case of $H$ being strictly convex and smooth.
\begin{remark}The above proposition tells us that for the
characteristic from $y_0$ with $Dg(y_0)\neq 0$ and $\overline
t_s(y_0)<\infty$, when $\overline t_s(y_0)= t_s(y_0)$, $( x_s, t_s)$
is nondifferentiable point or a cluster point of nondifferentiable
points.  When $\overline t_s(y_0) < t_s(y_0)$, $(\overline x_s,
\overline t_s)$ , $(x_s, t_s)$ and the points in between  are all
nondifferentiable points. That is, every termination point or any
point in between the  first termination point and the second termination point of the characteristic from any $y_0$ with $Dg(y_0)\neq 0$ is
either a nondifferentiable point or a cluster point of
nondifferentiable points.

 Conversely, 1) it is easy to see that nondifferentiable
points must be on characteristics from some $y_0$ with $Dg(y_0)\neq
0$. 2) However, cluster points of nondifferentiable points  are not
necessarily termination points of characteristics. Example 1, if
$\liminf_{y_n\rightarrow y_0} t_s(y_n)<\overline t_s(y_0)$, then
$(x_s(y_n), t_s(y_n))\rightarrow (x,t)$, $t<\overline t_s(y_0)$.
Hence $(x, t)$ is some point before the first termination point of the
characteristic from $y_0$. Example 2, for some $(x, t)$ with $\nabla u(x, t)=0$, it is possible to be  a cluster point of nondifferentiable points as well.
\end{remark}
\subsection{Properties of termination points}
Since termination points of characteristics are closely related to
nondifferentiable points, we further study their properties.

First, we  define  some notations. Since $g$ is $C^1$,
for any $y_0$ such that $Dg(y_0)\neq 0$, by implicit theorem, there exists
a  $n-1$ dimensional $C^1$ surface $H(y_0)\subset {\mathbb R^n}$ such that $y_0\in H(y_0)$ and for any
$y\in H(y_0)$, we have that $g(y)=g(y_0)$. $Dg(y_0)$ is a normal vector
of $H(y_0)$. $H(y_0)$ separates the neighborhood of $y_0$ as
two open sets. One is denoted as $H^-(y_0)$, such that when $y\in H^-(y_0)$,
$g(y)< g(y_0)$. Another is denoted as $H^+(y_0)$, such that when $y\in H^+(y_0)$, $g(y)> g(y_0)$. For clearness, we write it as a definition.
\begin{definition}\label{H} We denote the above surface as $H(y_0)$,
the above two sets as $H^-(y_0)$ and $H^+(y_0)$.
\end{definition}

\begin{proposition}\label{limsup}For any $y_0$ with $Dg(y_0)\neq 0, $\begin{equation}\label{=}
\limsup_{y_n\rightarrow y_0}\overline t_s(y_n)=\limsup_{y_n\rightarrow y_0}t_s(y_n)=t_s(y_0).
\end{equation}
\end{proposition}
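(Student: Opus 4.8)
The plan is to prove the chain of equalities by a squeeze. Write $\nu_0 = Dg(y_0)/|Dg(y_0)|$ and $x_0(s) = y_0 + s\nu_0$ for the characteristic from $y_0$. Since $\overline t_s(y)\le t_s(y)$ for every $y$, one always has $\limsup_{y_n\to y_0}\overline t_s(y_n)\le \limsup_{y_n\to y_0} t_s(y_n)$, so it suffices to establish the two one-sided bounds $\limsup_{y_n\to y_0} t_s(y_n)\le t_s(y_0)$ (upper semicontinuity of the second termination time) and $\limsup_{y_n\to y_0}\overline t_s(y_n)\ge t_s(y_0)$; together with $\overline t_s\le t_s$ these force all three quantities to equal $t_s(y_0)$. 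A preliminary observation used throughout is a ``downward-closedness'' property: since $|x_0(t)-x_0(s)| = t-s$ for $s\le t$, the triangle inequality gives $\overline B_s(x_0(s))\subseteq \overline B_t(x_0(t))$, so if $y_0$ is a minimizer for $(x_0(t),t)$ then it is a minimizer for every $(x_0(s),s)$, $s\le t$; the same nesting shows uniqueness of $y_0$ as a minimizer is downward-closed in $t$ as well.

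For the upper bound I would argue by contradiction and compactness. If $\limsup t_s(y_n) > t_s(y_0)$, pick $t^*$ strictly between them; along a subsequence $t_s(y_n) > t^*$, so $y_n$ is a minimizer for $(x_n(t^*),t^*)$ with $x_n(t^*) = y_n + t^*\,Dg(y_n)/|Dg(y_n)|$, i.e. $g(y_n)\le g(y)$ for all $y\in \overline B_{t^*}(x_n(t^*))$. Letting $n\to\infty$ and using $y_n\to y_0$, continuity of $Dg$ (so $x_n(t^*)\to x_0(t^*)$) and of $g$, one passes to the limit on the whole closed ball $\overline B_{t^*}(x_0(t^*))$ to conclude $g(y_0)\le g$ there; hence $y_0$ is a minimizer for $(x_0(t^*),t^*)$ with $t^* > t_s(y_0)$, contradicting the downward-closedness above together with the definition of $t_s(y_0)$.

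The lower bound is the heart of the matter. I would first prove a structural lemma: for every $s < t_s(y_0)$, every minimizer $y'\ne y_0$ of $(x_0(s),s)$ satisfies $Dg(y')=0$. Indeed, if $Dg(y')\ne 0$ then by Proposition \ref{NC} the point lies on the characteristic from $y'$, forcing $\nu_{y'}:=Dg(y')/|Dg(y')|\ne \nu_0$ (equality would give $y'=y_0$); a short computation then shows that for every small $\eta>0$ the steepest-descent point $y' - \rho\nu_{y'}$, with $\rho\sim \eta(1-\nu_{y'}\cdot\nu_0)$, lies in $\overline B_{s+\eta}(x_0(s+\eta))$ and has $g$-value strictly below $g(y_0)$, so $y_0$ would fail to be a minimizer at time $s+\eta$, contradicting $s<t_s(y_0)$. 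Granting this, fix $t^* < t_s(y_0)$ and set $z^* = (x_0(t^*),t^*)$; then $\{Dg(y): y\in L(z^*)\}\subseteq\{Dg(y_0),0\}$. Lemma \ref{direction} identifies $l\mapsto \partial_l u(z^*)$ with the support function of $\mathrm{co}\{Dg(y):y\in L(z^*)\}$, so by semiconcavity $\nabla^+ u(z^*) = \mathrm{co}\{Dg(y_0),0\}$, a segment with $Dg(y_0)$ as an endpoint, hence an extreme point. Because $D^+u = \mathrm{co}\,D^*u$ and $\nabla^+ u = \Pi_x D^+u$, extreme points of $\nabla^+u(z^*)$ are reachable spatial gradients, so $Dg(y_0)\in \Pi_x D^*u(z^*)$: there are differentiable points $(x_k,t_k)\to z^*$ with $\nabla u(x_k,t_k)\to Dg(y_0)\ne 0$. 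For large $k$, Lemma \ref{nabla} gives unique minimizers $y_k = x_k - \frac{\nabla u(x_k,t_k)}{|\nabla u(x_k,t_k)|} t_k\to y_0$, whence $\overline t_s(y_k)\ge t_k$; thus $\limsup_{y\to y_0}\overline t_s(y)\ge t^*$, and letting $t^*\uparrow t_s(y_0)$ finishes the bound.

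The main obstacle is the lower bound, specifically the two steps just sketched: proving that all competing minimizers before time $t_s(y_0)$ have vanishing gradient, and extracting from this that $Dg(y_0)$ is an honest reachable gradient so that Lemma \ref{nabla} produces nearby unique minimizers converging to $y_0$. I would emphasize that the naive approach — taking $y_\delta = y_0 - \delta\nu_0$ and checking directly that it stays the unique minimizer up to a time near $t_s(y_0)$ — is problematic for merely $C^1$ data, since the perturbation of the characteristic direction $\nu_{y_\delta}-\nu_0$ is only $o(1)$ and can dominate the first-order ``head start'' $\delta|Dg(y_0)|$ that $y_\delta$ enjoys; routing the argument through reachable gradients and Lemma \ref{nabla} avoids any such rate estimate and is what makes it work at the $C^1$ level.
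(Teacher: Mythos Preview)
Your proof is correct; the upper bound argument is essentially the paper's (the paper phrases it via continuity of $u$ at $(x_0(t^*),t^*)$, you pass to the limit pointwise on the ball, which amounts to the same thing). The structural lemma you prove is exactly the paper's Lemma~\ref{more than one}.

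The lower bound, however, is obtained by a genuinely different mechanism. The paper proceeds constructively: it picks $y_j\in H^-(y_0)$, looks at the minimizers $\bar y_j$ for the perturbed points $(x_j,t)$ on the characteristic from $y_j$, and argues directly (using $g(\bar y_j)<g(y_0)$ and the structural lemma) that every element of $L(x_j,t)$ has nonzero gradient, whence $\overline t_s(\bar y_j)=t_s(\bar y_j)\ge t$. You instead route the argument through the semiconcave machinery: from $\tilde L(z^*)\subseteq\{Dg(y_0),0\}$ you identify $\nabla^+u(z^*)$ as the segment $\mathrm{co}\{Dg(y_0),0\}$, observe that $Dg(y_0)$ is an extreme point hence lies in $\Pi_xD^*u(z^*)$, and then let Lemma~\ref{nabla} produce the approximating unique minimizers. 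This is clean and avoids any explicit geometric construction. What the paper's approach buys in exchange is localization: its approximating sequence $\bar y_j$ lives in $H^-(y_0)$, which is precisely the content of the Corollary stated immediately after Proposition~\ref{limsup} and is used later (e.g.\ in the global-structure arguments). Your reachable-gradient sequence $y_k$ need not lie in $H^-(y_0)$, so that refinement would require a separate argument.
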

The above proposition also holds when $t_s(y_0)=\infty$.

Before the proof, a lemma is needed here.
\begin{lemma}\label{more than one} If $Dg(y_0)\neq 0$ and
$\overline t_s(y_0)<t_s(y_0)$, then for any $t$ such that $\overline
t_s(y_0)<t<t_s(y_0)$, $(x, t)$ has more than one minimizer and  all
minimizers except $y_0$, at which $g$ obtains local minimum in
${\mathbb R^n}$, where
$x=y_0+\frac{Dg(y_0)}{|Dg(y_0)|}t$.\end{lemma}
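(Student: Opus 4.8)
The plan is to treat the two assertions in turn, and the main point is that everything follows directly from the \emph{definitions} of $\overline t_s(y_0)$ and $t_s(y_0)$ together with one elementary geometric computation; no appeal to Theorem \ref{diff} or to Proposition \ref{NC} is actually needed. Throughout I write $e_0=\frac{Dg(y_0)}{|Dg(y_0)|}$ and $x_s=y_0+s\,e_0$ for the point of the characteristic from $y_0$ at time $s$, so that $x=x_t$ and $|x_s-y_0|=s$. For the first assertion, fix $t$ with $\overline t_s(y_0)<t<t_s(y_0)$. Since $t<t_s(y_0)=\inf\{s\mid y_0\text{ is not a minimizer for }(x_s,s)\}$, the point $y_0$ must be a minimizer for $(x,t)$; and since $t>\overline t_s(y_0)=\sup\{s\mid y_0\text{ is the unique minimizer for }(x_s,s)\}$, the point $y_0$ is not the unique minimizer for $(x,t)$. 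Hence there is a second minimizer $\bar y\neq y_0$, so $(x,t)$ has more than one minimizer.

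For the second assertion, let $\bar y\neq y_0$ be \emph{any} minimizer for $(x,t)$. The crux is to show that $\bar y$ is drawn strictly inside the ball at every slightly later time, which is the mechanism forcing it to be a genuine interior (hence local) minimizer. From $\bar y\in\overline B_t(x)$ with $x=y_0+t\,e_0$ I expand $|\bar y-x|^2=|\bar y-y_0|^2-2t\,e_0\cdot(\bar y-y_0)+t^2\le t^2$, which yields the strict inequality $e_0\cdot(\bar y-y_0)\ge\frac{|\bar y-y_0|^2}{2t}>0$ (using $\bar y\neq y_0$). Then for $t'=t+\delta$ with $\delta>0$ one computes $|\bar y-x_{t'}|^2-t'^2=|\bar y-y_0|^2-2(t+\delta)\,e_0\cdot(\bar y-y_0)\le -2\delta\,e_0\cdot(\bar y-y_0)<0$, where the middle inequality again uses $|\bar y-y_0|^2\le 2t\,e_0\cdot(\bar y-y_0)$. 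Thus $\bar y$ lies in the \emph{interior} $B_{t'}(x_{t'})$ of the ball for every $t'>t$.

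To finish, choose $t'$ with $t<t'<t_s(y_0)$. Exactly as above, $t'<t_s(y_0)$ forces $y_0$ to be a minimizer for $(x_{t'},t')$, so $u(x_{t'},t')=g(y_0)$. On the other hand, $\bar y$ being a minimizer for $(x,t)$ gives $g(\bar y)=u(x,t)=g(y_0)$, and since $\bar y\in \overline B_{t'}(x_{t'})$ with $g(\bar y)=u(x_{t'},t')$ we conclude $\bar y\in L(x_{t'},t')$. But $\bar y$ is an interior point of $\overline B_{t'}(x_{t'})$, so an entire neighborhood of $\bar y$ lies in the ball and $g(\bar y)\le g(y)$ throughout it; that is, $g$ attains a local minimum in $\mathbb{R}^n$ at $\bar y$, whence $Dg(\bar y)=0$. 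Since $\bar y$ was an arbitrary minimizer other than $y_0$, this proves the second assertion.

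I expect the only real obstacle to be the geometric step in the second paragraph: one must see that the competitor $\bar y$ is not merely a boundary minimizer at time $t$ but is swept strictly into the interior of the advancing ball $\overline B_{t'}(x_{t'})$, and that the common optimal value $g(\bar y)=g(y_0)$ persists (because $y_0$ stays a minimizer for all $t'<t_s(y_0)$). Once the interior inclusion and the value bookkeeping are in place, the local-minimum conclusion, and hence $Dg(\bar y)=0$, is immediate from $g\in C^1$.
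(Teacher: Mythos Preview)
Your proof is correct and uses essentially the same geometric idea as the paper: any point $\bar y\in\overline B_t(x)\setminus\{y_0\}$ becomes an \emph{interior} point of the larger ball $\overline B_{t'}(x_{t'})$ for $t'>t$ (the balls are internally tangent only at $y_0$), and this forces $\bar y$ to be a local minimum of $g$. The only differences are presentational: you give a direct argument with the inequality $e_0\cdot(\bar y-y_0)\ge|\bar y-y_0|^2/(2t)>0$ made explicit, whereas the paper phrases the same step as a contradiction (``if $\bar y$ were not a local minimum, pick $y_2$ nearby with $g(y_2)<g(\bar y)$; then $y_2$ lies in $\overline B_{t_1}(x_1)$ and beats $y_0$'') and simply asserts the containment $\overline B_{t_1}(x_1)\supset\supset\overline B_t(x)$ tangent at $y_0$ without writing out the quadratic computation.
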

\begin{proof}By the definition of $\overline t_s$,  we know that
$(x, t)$ has more than one minimizer. Next it will be proved by
contradiction argument, if $y_1$ is another
 minimizer for $(x, t)$ and at it,  $g$ does not attain a local
 minimum in
${\mathbb R^n}$. It implies that in any neighborhood of $y_1$, there
exists some $y_2$ such that $g(y_1)>g(y_2)$.  Then for any $t_1$
such that
 $t<t_1<t_s(y_0)$, let $x_1=y_0
+\frac{Dg(y_0)}{|Dg(y_0)|}t_1$, we have that $\overline B_{t_1}(x_1)\supset\supset
\overline B_{t}(x)$ and $\overline B_{t_1}(x_1)$ is tangent to $\overline B_{t}(x)$ at
$y_0$. Hence $y_2\in \overline B_{t_1}(x_1)$  with $g(y_2)<g(y_1)=g(y_0)$.
It implies that $y_0$ is not a minimizer for $(x_1, t_1)$. Then we
have $t_1>t_s(y_0)$. It is a contradiction with assumption that
$t_1<t_s(y_0)$.
\end{proof}
Next we prove Proposition \ref{limsup}.
\begin{proof}First we prove that
$\limsup_{y_n\rightarrow y_0}t_s(y_n)\le t_s(y_0)$.

By contradiction method, we assume $\limsup_{y_n\rightarrow
y_0}t_s(y_n)> t_s(y_0)$. Let $ \limsup_{y_n\rightarrow
y_0}t_s(y_n)=t_1$. Then there exists subsequence $\{y_j\}$ such that
\\ $ \lim_{y_j\rightarrow y_0}t_s(y_j) =t_1$. Since $y_j\rightarrow
y_0$, $Dg(y_j)\rightarrow Dg(y_0)$. Choose some $t_2$ such that
$t_s(y_0)<t_2<t_1$. Let $x_j=y_j+\frac{Dg(y_j)}{|Dg(y_j)|}t_2$. Let
$x_2=y_0+\frac{Dg(y_0)}{|Dg(y_0)|}t_2$. We have that $(x_j,
t_2)\rightarrow (x_2, t_2)$ as $j\rightarrow \infty$. Since
$t_s(y_j)>t_2$ when $j$ is large, $(x_j, t_2)$ has $y_j$ as its  minimizer. Then
$$u(x_j, t_2)=g(y_j).$$ Since $u(x,t)$ is continuous, let
$j\rightarrow\infty$, we obtain that $$u(x_2, t_2)=g(y_0).$$ It
contradicts with that $y_0$ is not a minimizer for $(x_2, t_2)$.
Then we have that
\begin{equation}\label{lims}\limsup_{y_n\rightarrow y_0}t_s(y_n)\le
t_s(y_0).\end{equation}

Next we prove equality (\ref{=}). 
For any $t< t_s(y_0)$, let $x=y_0+\frac{Dg(y_0)}{|Dg(y_0)|}t$.

 By Lemma \ref{more than one},  $(x, t)$ has unique minimizer $y_0$ with $Dg(y_0)\neq 0$. In other words, if $(x,t)$ has other minimizers, then $g$ attains   local minimum at these minimizers.


For any $y_j\in H^-(y_0)$ (see Definition \ref{H}) such that $y_j\rightarrow y_0$, $(x_j, t)\rightarrow (x, t)$, where $x_j=y_j+\frac{Dg(y_j)}{|Dg(y_j)|}t$. 
Then for any $\bar y_j\in L(x_j, t)$, 
 we have that $g(\bar y_j)\le g(y_j)<g(y_0)$. From above, we have known
  that except $y_0$, at other minimizers for $(x,t)$ if there is any,
  $g$ attains local
 minimum in ${\mathbb R^n}$ and  by $L(\cdot, \cdot)$ is upper
 semi-continuous multi-function,
  we know that $\bar y_j$ can only be around $ y_0$. Since $g(\bar y_j)<g(y_0)$,
  it implies that  $\bar y_j\in H^-(y_0)$ and $Dg(\bar y_j)\neq 0$.

  Next we claim that for any $y\in L(x_j, t)$,   $Dg(y)\neq 0$.

Proof of the claim: Recall that  $\bar y_j$ is one minimizer for
$(x_j, t)$. For any $y\in \overline B_t(x_j)$, if $y\in
\overline B_{t_s(y_0)}(x_s(y_0))$, then $g(y)\ge g(y_0)>g(\bar y_j)$, it
implies that $y$ is not a minimizer for $(x_j, t)$. If $y$ is not in
$\overline B_{t_s(y_0)}(x_s(y_0))$, when $\bar y_j$ is sufficiently close to
$y_0$, then $y$ is  close to $\bar y_j$. It implies that $Dg(y)\neq
0$. The proof of the claim is complete.

    By the claim above, we obtain that $t_s(\bar y_j)=
    \overline t_s(\bar y_j)$.   
 Otherwise, if $t_s(\bar y_j)>\overline t_s(\bar y_j)$, then there must exist
 a minimizer $y_1$ such that $g$ attains a local minimum in
 ${\mathbb R^n}$ at $y_1$, which means $Dg(y_1)=0$. It contradicts with the claim.
   Furthermore, since $\bar y_j\in L(x_j, t)$, we have that
   $\overline t_s(\bar y_j)=
     t_s(\bar y_j)\ge t$. Hence $$\limsup_{\bar y_j\rightarrow y_0}\overline t_s(\bar y_j)\ge t.$$
 It implies that $$\limsup_{y_n\rightarrow y_0}\overline t_s(y_n)\ge t.$$
Since $t$ is arbitrary as long as $t<t_s(y_0)$, we have that
$$\limsup_{y_n\rightarrow y_0}\overline t_s(y_n)\ge t_s(y_0).$$ With
(\ref{lims}), the following equalities hold.
$$\limsup_{y_n\rightarrow y_0}\overline
t_s(y_n)=\limsup_{y_n\rightarrow y_0} t_s(y_n) = t_s(y_0).$$
 \end{proof}
 From the proof of the proposition above, we can  see that
 $t_s(y_0)$
  can be obtained by a limit of $\overline t_s(\bar y_j)$ for some sequence
  $\{\bar y_j\}$ such that $\bar y_j\in H^-(y_0)$ and $\bar y_j\rightarrow
y_0$. Hence the more accurate version can be written as follows:
 \begin{corollary}\label{}For any $y_0$ with $Dg(y_0)\neq 0, $\begin{equation}
\limsup_{y_n\in H^-(y_0),\,y_n\rightarrow y_0}\overline t_s(y_n)=
\limsup_{y_n\in H^-(y_0),\,y_n\rightarrow y_0}t_s(y_n)=t_s(y_0).
\end{equation}
\end{corollary}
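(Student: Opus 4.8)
The plan is to recognize the corollary as a direct sharpening of Proposition \ref{limsup}, requiring essentially no new estimate beyond a careful reading of that proposition's proof. The decisive observation is that the approximating sequence constructed in the lower-bound part of the proof of Proposition \ref{limsup} already lies entirely inside $H^-(y_0)$; thus the restriction to $H^-(y_0)$ is free for the lower estimate, while the two upper estimates follow automatically from the monotonicity of $\limsup$ under restriction of its domain.

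First I would dispose of the two upper inequalities. Since $H^-(y_0)\setminus\{y_0\}$ is contained in a punctured neighborhood of $y_0$, restricting the supremum that defines the $\limsup$ to points of $H^-(y_0)$ can only produce a value no larger than the unrestricted one. Combining this with the elementary bound $\overline t_s\le t_s$ already recorded in the excerpt gives
$$\limsup_{y_n\in H^-(y_0),\,y_n\to y_0}\overline t_s(y_n)\le \limsup_{y_n\in H^-(y_0),\,y_n\to y_0} t_s(y_n)\le \limsup_{y_n\to y_0} t_s(y_n)=t_s(y_0),$$
where the final equality is precisely the conclusion of Proposition \ref{limsup}.

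Next I would establish the matching lower bound, which carries the actual content. Fix any $t<t_s(y_0)$. The proof of Proposition \ref{limsup} produces, starting from points $y_j\in H^-(y_0)$ with $y_j\to y_0$, minimizers $\bar y_j\in L(x_j,t)$ with $x_j=y_j+\frac{Dg(y_j)}{|Dg(y_j)|}t$ satisfying $\bar y_j\in H^-(y_0)$, $Dg(\bar y_j)\neq 0$, $\bar y_j\to y_0$, and (via the claim proved there) $\overline t_s(\bar y_j)=t_s(\bar y_j)\ge t$. Because every $\bar y_j$ lies in $H^-(y_0)$, this sequence is admissible for the $H^-$-restricted $\limsup$, so that
$$\limsup_{y_n\in H^-(y_0),\,y_n\to y_0}\overline t_s(y_n)\ge t.$$
Letting $t\uparrow t_s(y_0)$ yields the reverse inequality, and chaining it with the upper estimate forces equality throughout both expressions.

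The main point to watch is bookkeeping rather than analysis: one must confirm that the membership $\bar y_j\in H^-(y_0)$ asserted inside the proof of Proposition \ref{limsup} is genuinely used, so that the approximating sequence is legitimate for the $H^-$-restricted limit. Once this is verified, the corollary is immediate and no further control of the termination times $\overline t_s$ or $t_s$ is needed; in other words, the sharper statement is obtained at no extra cost from the construction already in hand.
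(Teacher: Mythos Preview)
Your proposal is correct and takes essentially the same approach as the paper, which does not give a separate proof but simply remarks that the sequence $\{\bar y_j\}$ constructed in the proof of Proposition \ref{limsup} already satisfies $\bar y_j\in H^-(y_0)$ and $\bar y_j\to y_0$, so the corollary is immediate. Your write-up makes the upper-bound monotonicity step and the lower-bound extraction from that proof explicit, which is exactly what is needed.
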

Next how about $\limsup t_s(y_n)$ when $y_n\in H^+(y_0)$? We have
the following:
\begin{proposition}\label{limsH+}$$\limsup_{y_n\in H^+(y_0),\,
y_n\rightarrow y_0}t_s(y_n)=\limsup_{y_n\in H^+(y_0),\,
y_n\rightarrow y_0}\overline t_s(y_n) =\overline t_s(y_0).$$
\end{proposition}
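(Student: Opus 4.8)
The plan is to establish the two one-sided bounds
$$\limsup_{y_n\in H^+(y_0),\,y_n\rightarrow y_0}t_s(y_n)\le \overline t_s(y_0),\qquad \limsup_{y_n\in H^+(y_0),\,y_n\rightarrow y_0}\overline t_s(y_n)\ge \overline t_s(y_0),$$
and then to conclude by squeezing. Since $\overline t_s(y)\le t_s(y)$ for every $y$, these two inequalities give
$$\overline t_s(y_0)\le \limsup_{H^+}\overline t_s(y_n)\le \limsup_{H^+}t_s(y_n)\le \overline t_s(y_0),$$
forcing all three quantities to equal $\overline t_s(y_0)$. So the proposition reduces to the displayed upper and lower bounds.

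For the upper bound I would argue as in the first half of Proposition \ref{limsup}. If $\overline t_s(y_0)=t_s(y_0)$ there is nothing to do beyond invoking the inequality (\ref{lims}), which already gives $\limsup_{H^+}t_s(y_n)\le t_s(y_0)=\overline t_s(y_0)$. If $\overline t_s(y_0)<t_s(y_0)$, I fix $t\in(\overline t_s(y_0),t_s(y_0))$ and use Lemma \ref{more than one}: the point $x=y_0+\frac{Dg(y_0)}{|Dg(y_0)|}t$ has a second minimizer $\bar y\neq y_0$ at which $g$ attains a local minimum, so $Dg(\bar y)=0$ and $g(\bar y)=u(x,t)=g(y_0)$. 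Because $t$ lies strictly beyond the tangency time of $\bar y$ (raising $t$ slightly inside $(\overline t_s(y_0),t_s(y_0))$ if necessary, keeping the same $\bar y$), the point $\bar y$ is \emph{interior} to $\overline B_t(x)$. For $y_n\in H^+(y_0)$ close to $y_0$ we have $x_n:=y_n+\frac{Dg(y_n)}{|Dg(y_n)|}t\to x$, hence $\bar y\in \overline B_t(x_n)$ for large $n$; since $g(\bar y)=g(y_0)<g(y_n)$, the point $y_n$ cannot be a minimizer for $(x_n,t)$, so $t_s(y_n)\le t$. Letting $t\downarrow\overline t_s(y_0)$ gives the upper bound, and with $\overline t_s\le t_s$ also $\limsup_{H^+}\overline t_s(y_n)\le\overline t_s(y_0)$.

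The lower bound is the substantive step, and here I would mimic the perturbation used for the reverse inequality in Proposition \ref{limsup}, but perturbing the \emph{terminal} space point forward. Fix any $t<\overline t_s(y_0)$, so that $y_0$ is the unique minimizer for $(x(t),t)$ with $x(t)=y_0+\frac{Dg(y_0)}{|Dg(y_0)|}t$; equivalently $g>g(y_0)$ on $\overline B_t(x(t))\setminus\{y_0\}$. For small $\beta>0$ set $x'_\beta=y_0+\frac{Dg(y_0)}{|Dg(y_0)|}(t+\beta)$ and choose a minimizer $\bar y_\beta\in L(x'_\beta,t)$. Since $x'_\beta\to x(t)$ and $L(x(t),t)=\{y_0\}$, upper semicontinuity of $L$ forces $\bar y_\beta\to y_0$. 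The key geometric claim is that, for $\beta$ small, $\overline B_t(x'_\beta)\cap\{g\le g(y_0)\}=\emptyset$: the point $y_0$ is excluded because $|y_0-x'_\beta|=t+\beta>t$, the forward push separates the ball near $y_0$ from the sublevel set (which near $y_0$ lies on the side $(z-y_0)\cdot Dg(y_0)\le 0$), and distant sublevel points stay out by a compactness argument based on the strict minimality at $y_0$. Consequently every minimizer of $(x'_\beta,t)$, in particular $\bar y_\beta$, satisfies $g>g(y_0)$, so $\bar y_\beta\in H^+(y_0)$ and $Dg(\bar y_\beta)\neq 0$ for small $\beta$. By Proposition \ref{NC}, $x'_\beta=\bar y_\beta+\frac{Dg(\bar y_\beta)}{|Dg(\bar y_\beta)|}t$, so $(x'_\beta,t)$ lies on the characteristic from $\bar y_\beta$ and $t\le t_s(\bar y_\beta)$. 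Finally, since no minimizer of $(x'_\beta,t)$ has vanishing gradient, Lemma \ref{more than one} rules out $\overline t_s(\bar y_\beta)<t\le t_s(\bar y_\beta)$ (such a gap would, at time $t$, force a minimizer of $(x'_\beta,t)$ with $Dg=0$); hence $\overline t_s(\bar y_\beta)\ge t$. Letting $\beta\to0$ and then $t\uparrow\overline t_s(y_0)$ yields $\limsup_{H^+}\overline t_s(y_n)\ge\overline t_s(y_0)$.

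The main obstacle is the lower bound, and within it the separation claim $\overline B_t(x'_\beta)\cap\{g\le g(y_0)\}=\emptyset$ for small $\beta$, since this is exactly what keeps the perturbed minimizers $\bar y_\beta$ on the $H^+$ side with nonvanishing gradient instead of collapsing back into the region $\{g\le g(y_0)\}$. Proving it rests on combining the strict (uniqueness) minimality of $y_0$ for times $t<\overline t_s(y_0)$ with the first-order description of $\{g\le g(y_0)\}$ near $y_0$ afforded by $g\in C^1$. A secondary subtlety, on the upper-bound side, is arranging the competing local-minimum minimizer $\bar y$ to be \emph{interior} to the relevant ball, which is what allows a nearby perturbed ball to still capture a point of $g$-value strictly below $g(y_n)$; this is handled by taking $t$ strictly past the tangency time of $\bar y$.
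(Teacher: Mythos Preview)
Your proof is correct and follows the same two-inequality strategy as the paper, with some differences in execution worth noting.

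For the upper bound, the paper is more direct: for any $t>\overline t_s(y_0)$ it simply observes (from the definition of $\overline t_s$) that there is some $y_1\neq y_0$ in the \emph{open} ball $B_t(x)$ with $g(y_1)\le g(y_0)$; since $y_n\in H^+(y_0)$ forces $g(y_n)>g(y_0)\ge g(y_1)$ and $y_1$ survives the perturbation $x\to x_n$, one gets $t_s(y_n)<t$ without any case split or appeal to Lemma~\ref{more than one}. Your route via Lemma~\ref{more than one} and the interiority of the local-minimum competitor works, but is more elaborate.

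For the lower bound, your forward push $x'_\beta=x(t+\beta)$ is essentially a concrete instantiation of the paper's choice of perturbed centers $x_k$ satisfying $\overline B_t(x_k)\subset B_{\overline t_s(y_0)}(\overline x_s(y_0))$; the paper then proceeds, as you do, by looking at minimizers $\bar y_k\in L(x_k,t)$ and ruling out a gap $\overline t_s(\bar y_k)<t_s(\bar y_k)$ via the absence of $Dg=0$ minimizers. Your separation claim $\overline B_t(x'_\beta)\cap\{g\le g(y_0)\}=\emptyset$ has a one-line proof you seem to have missed: for $t+\beta<\overline t_s(y_0)$ one has $\overline B_t(x'_\beta)\subset\overline B_{t+\beta}(x(t+\beta))\setminus\{y_0\}$, on which $g>g(y_0)$ by unique minimality. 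This makes the near/far splitting and the ``first-order description'' unnecessary.

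One minor point: Lemma~\ref{more than one} is stated for times strictly below $t_s$, so its invocation in your lower bound when $t=t_s(\bar y_\beta)$ is not literal. The fix is to apply it at some $t'\in(\overline t_s(\bar y_\beta),t)$, producing a minimizer $\tilde y$ with $Dg(\tilde y)=0$ in $\overline B_{t'}(X(\bar y_\beta,t'))\subset\overline B_t(x'_\beta)$ and $g(\tilde y)=g(\bar y_\beta)=u(x'_\beta,t)$, so that $\tilde y\in L(x'_\beta,t)$ after all---the same contradiction.
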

\begin{proof}For any $t>\overline t_s(y_0)$, any  $y_n\in H^+(y_0)$,
we have that $g(y_n)>g(y_0)$. Let
$x=y_0+\frac{Dg(y_0)}{|Dg(y_0)|}t$. By  $t>\overline t_s(y_0)$,
there exists some $y_1$ in $B_t(x)$ such that $y_1\neq y_0$ and
$g(y_1)\le g(y_0)$. Notice that by definition, $B_t(x)$ is an open
ball. When $y_n$ is sufficiently close to $y_0$, we have that
$y_1\in B_t(x_n)$, where $x_n=y_n+\frac{Dg(y_n)}{|Dg(y_n)|}t$. Hence
$g(y_1)<g(y_n)$. It implies that $ t_s(y_n)<t$. Therefore
\begin{equation}\label{limsH+>}\limsup_{y_n\in H^+(y_0),\,y_n
\rightarrow y_0}t_s(y_n) \le
\overline t_s(y_0).\end{equation}

Next, for any $t<\overline t_s(y_0)$, $y_0$ is unique minimizer for
$(x, t)$ in $\overline B_t(x)$, where $x=y_0+\frac{Dg(y_0)}{|Dg(y_0)|}t$.
Choose a sequence $\{y_k\}\subset B_t(x)$ such that $y_k\rightarrow
y_0$ and $\overline B_t(x_k)\subset B_{\overline t_s(y_0)}(\overline
x_s(y_0))$, where $x_k=y_k+\frac{Dg(y_k)}{|Dg(y_k)|}t$. Since $y_0$
is unique minimizer for $(x, t)$ in $\overline B_t(x)$ and $y_k\in B_t(x)$,
we have that $g(y_k)>g(y_0)$, it implies that $y_k\in H^+(y_0)$.

Case 1, if $\overline t_s(y_k)\ge t$ for all $k$, then $\limsup_{y_n\in
H^+(y_0),\,y_n\rightarrow y_0}\overline t_s(y_n)\ge t$.

Case 2, if there exists a subsequence of $\{y_k\}$, (still denoted as $\{y_k\}$) such that $\overline t_s(y_k)< t$, then let $\bar y_k\in
L(x_k, t)$,  there must exist some
 subsequence $\{\bar y_{k_j}\}$ and some $\bar y$ such that
  $\bar y_{k_j}\rightarrow \bar y$ and
 $\bar y\in L(x, t)$. It implies that $\bar y=y_0$ due to that
$y_0$ is unique minimizer for $(x, t)$. That means that $\bar
y_{k_j}\rightarrow  y_0$. Since $\bar y_{k_j}\in L(x_{k_j}, t)$, we
have that $t_s(\bar y_{k_j})\ge t$. Next we want to prove $\overline t_s(\bar
y_{k_j}) = t_s(\bar y_{k_j})$ by contradiction methods.  Assume that $\overline t_s(\bar
y_{k_j}) < t_s(\bar y_{k_j})$, then there exists some $\tilde
y_{k_j}\in L(y_{k_j}, t)$ such that $Dg(\tilde y_{k_j})=0$. Then
there exist some subsequence of $\{\tilde y_{k_j}\}$, still denoted
as $\{\tilde y_{k_j}\}$ and some $\tilde y$ such that $\tilde
y_{k_j}\rightarrow \tilde y$ with $\tilde y\in L(x, t)$ and
$Dg(\tilde y)=0$. Since $Dg(y_0)\neq 0$, we have that $y_0\neq
\tilde y$. It contradicts with that $y_0$ is the unique minimizer in
$L(x, t)$. Hence the assumption is incorrect.   $\overline t_s(\bar
y_{k_j}) =t_s(\bar y_{k_j})$ must hold. By  $t_s(\bar y_{k_j})\ge
t$ for  any $\bar y_{k_j}$, we immediately have $\overline t_s(\bar y_{k_j})\ge t$ for  any $\bar y_{k_j}$. Since $\bar
y_{k_j}\in \overline B_t(x_{k_j})\subset B_{\overline t_s(y_0)}(\overline
x_s(y_0))$, we have that $g(\bar y_{k_j})>g(y_0)$, it implies that
$\bar y_{k_j}\in H^+(y_0)$. Therefore $$\limsup_{y_n\in
H^+(y_0),\,y_n\rightarrow y_0}\overline t_s(y_n)\ge t.$$ Due to that
$t$ is arbitrary as long as $t<\overline t_s(y_0)$, we have that
$$\limsup_{y_n\in H^+(y_0),\,y_n\rightarrow y_0}\overline
t_s(y_n)\ge \overline t_s(y_0).$$ By (\ref{limsH+>}), the following
holds:$$\limsup_{y_n\in H^+(y_0),\, y_n\rightarrow
y_0}t_s(y_n)=\limsup_{y_n\in H^+(y_0),\, y_n\rightarrow
y_0}\overline t_s(y_n) =\overline t_s(y_0).$$
\end{proof}
From Proposition \ref{limsH+}, we can deduce the property of
$\liminf_{y_n\rightarrow y_0}t_s(y_n)$ immediately.
\begin{proposition}\label{liminf} $\liminf_{y_n\rightarrow y_0}t_s(y_n)\le
\overline t_s(y_0)$.
\end{proposition}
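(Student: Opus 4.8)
The plan is to obtain this inequality as an immediate consequence of Proposition \ref{limsH+}, so the bulk of the work is simply to read off the right piece of information from that proposition and to invoke the definition of $\liminf$. Recall that for the real-valued function $t_s$ and the limit point $y_0$, the quantity $\liminf_{y_n\rightarrow y_0}t_s(y_n)$ is the smallest subsequential limit of $t_s$ along sequences approaching $y_0$; in particular, it is bounded above by any value that arises as $\lim_k t_s(y_{n_k})$ for some sequence $y_{n_k}\rightarrow y_0$.

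First I would invoke Proposition \ref{limsH+}, which gives $\limsup_{y_n\in H^+(y_0),\,y_n\rightarrow y_0}t_s(y_n)=\overline t_s(y_0)$. Since this $\limsup$ is attained as a cluster value of $t_s$ along sequences in $H^+(y_0)$ tending to $y_0$, there exists a sequence $\{y_n\}\subset H^+(y_0)$ with $y_n\rightarrow y_0$ and $t_s(y_n)\rightarrow \overline t_s(y_0)$. This very sequence is, in particular, a sequence converging to $y_0$ without the restriction $y_n\in H^+(y_0)$, so $\overline t_s(y_0)$ is a cluster value of $t_s$ at $y_0$ over all approaching sequences.

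Finally I would conclude: because $\liminf_{y_n\rightarrow y_0}t_s(y_n)$ is by definition the smallest such cluster value, it cannot exceed the cluster value $\overline t_s(y_0)$ exhibited above, which yields $\liminf_{y_n\rightarrow y_0}t_s(y_n)\le \overline t_s(y_0)$ as desired. There is essentially no serious obstacle here; the only point requiring care is verifying that the equality of the $\limsup$ to $\overline t_s(y_0)$ genuinely produces an approximating sequence, which is guaranteed since a finite $\limsup$ is always realized along some subsequence (and the degenerate case $\overline t_s(y_0)=\infty$ is handled by the same convention used throughout this subsection, where the inequality then holds trivially).
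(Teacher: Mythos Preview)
Your proposal is correct and takes essentially the same approach as the paper, which simply states that the inequality follows immediately from Proposition \ref{limsH+}. Your extraction of a sequence in $H^+(y_0)$ realizing the cluster value $\overline t_s(y_0)$ makes explicit exactly the step the paper leaves implicit.
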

It is trivial to see the following
\begin{equation}\label{limi}\liminf_{y_n\rightarrow y_0}\overline t_s(y_n)\le
\overline t_s(y_0).
\end{equation}

If the equality in Proposition \ref{liminf} can be obtained, then
$t_s$ is similar to a continuous function.  We can say more about
the property of the set of singularity points like in \cite{ZTW}.
Actually, for $C^2$ initial data, the equality can be obtained,  which
will be seen  in the next section.

But for $C^1$ initial data, it is possible that the inequality can be obtained.  
Recall we mentioned in the introduction,   for strictly convex, superlinear and smooth Hamiltonians in one dimensional case, for any
initial data in a second
category in $C^1$ class, 
the nondifferentiable points of the solution $u$
are dense in ${\mathbb R}\times \{t \,| t>0\}$ (\cite{Li2}). 
  So it is very likely that the  inequality could also be
strict in Proposition \ref{liminf} or (\ref{limi}) for the Eikonal equation.

Then what will
happen when  $\liminf_{y_n\rightarrow y_0}t_s(y_n)<
\overline t_s(y_0)$? 
We find out the following  proposition, it tell us that
 when the inequality is strict, for
each point on the characteristic segment from $y_0$ in the time
interval ($\liminf_{y_n\rightarrow y_0}t_s(y_n)$, $\overline
t_s(y_0)$), it is a cluster point of nondifferentiable points.

 \begin{proposition}\label{dense}If $\liminf_{y_n\rightarrow y_0}\overline t_s(y_n)
 <\overline t_s(y_0)$, then for any $t_0$ such that $\liminf_{y_n\rightarrow y_0}\overline t_s(y_n)<t_0< \overline t_s(y_0)$, let $x_0=y_0+\frac{Dg(y_0)}{|Dg(y_0)|}t_0$, for any $\epsilon$, there exists some $x_{\epsilon}\in \overline B_{\epsilon}(x_0)$ such that $(x_{\epsilon}, t_0)$ is a nondifferentiable point, where $\overline B_{\epsilon}(x_0)$ is the closed ball centered in $x_0$ with radius $\epsilon$.
 \end{proposition}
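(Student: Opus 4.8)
The plan is to argue by contradiction and convert the hypothesis into a topological statement about the minimizer map. First I would record the basic geometry: since $t_0<\overline t_s(y_0)$, the point $(x_0,t_0)$ has $y_0$ as its unique minimizer and is differentiable by Theorem \ref{diff}, with $\nabla u(x_0,t_0)=Dg(y_0)\ne 0$. The hypothesis $\liminf_{y\to y_0}\overline t_s(y)<t_0$ supplies a sequence $y_n\to y_0$, $y_n\ne y_0$, with $\overline t_s(y_n)<t_0$; since $Dg$ is continuous and $Dg(y_0)\ne 0$ we also have $Dg(y_n)\ne 0$ for large $n$. Writing $\Psi(y)=y+\frac{Dg(y)}{|Dg(y)|}t_0$ for the forward characteristic map at time $t_0$ and $x_n=\Psi(y_n)$, continuity of $Dg$ gives $x_n\to x_0$.

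Next, suppose for contradiction that the conclusion fails for some $\epsilon>0$: every $(x,t_0)$ with $x\in\overline B_\epsilon(x_0)$ is differentiable. Shrinking $\epsilon$ if necessary, upper semicontinuity of the minimizer multifunction $L(\cdot,t_0)$ together with $L(x_0,t_0)=\{y_0\}$ forces every minimizer of every such $(x,t_0)$ to lie in a small neighborhood of $y_0$, hence to have $Dg\ne 0$; by Theorem \ref{diff} each such $(x,t_0)$ then has a \emph{unique} minimizer. I would first observe that under this hypothesis $y_n$ cannot be a minimizer of $(x_n,t_0)$ for large $n$: indeed $x_n\in\overline B_\epsilon(x_0)$ eventually, and since $t_0>\overline t_s(y_n)$ the point $y_n$ is not the unique minimizer of $(x_n,t_0)$; were it a non-unique minimizer, then $(x_n,t_0)$ would carry at least two minimizers, one of which is $y_n$ with $Dg(y_n)\ne 0$, making it nondifferentiable by Theorem \ref{diff}(2), contrary to the hypothesis. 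Hence $y_n$ is not a minimizer of $(x_n,t_0)$.

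Now define $F(x)$ to be the unique minimizer of $(x,t_0)$ for $x\in\overline B_\epsilon(x_0)$. Because $u(\cdot,t_0)$ is semiconcave and differentiable throughout $\overline B_\epsilon(x_0)$, its gradient $\nabla u(\cdot,t_0)=Dg(F(\cdot))$ is continuous and nonvanishing there, so $F(x)=x-\frac{\nabla u(x,t_0)}{|\nabla u(x,t_0)|}t_0$ is continuous. By Proposition \ref{NC} the minimizer $F(x)$ satisfies $x=\Psi(F(x))$, i.e. $\Psi\circ F=\mathrm{id}$ on $\overline B_\epsilon(x_0)$; in particular $F$ is injective. Applying the invariance of domain theorem to the continuous injection $F:B_\epsilon(x_0)\to\mathbb R^n$, the image $F(B_\epsilon(x_0))$ is open and contains $F(x_0)=y_0$, hence contains $y_n$ for all large $n$. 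For such $n$ we obtain some $x\in B_\epsilon(x_0)$ with $F(x)=y_n$, whence $x=\Psi(y_n)=x_n$ and $y_n$ is a minimizer of $(x_n,t_0)$, contradicting the previous paragraph. This contradiction shows that for every $\epsilon>0$ there is $x_\epsilon\in\overline B_\epsilon(x_0)$ with $(x_\epsilon,t_0)$ nondifferentiable, which is the assertion.

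The routine ingredients are the continuity and nonvanishing of $\nabla u(\cdot,t_0)$ (from semiconcavity together with upper semicontinuity of $L$) and the identity $\Psi\circ F=\mathrm{id}$ (from Proposition \ref{NC}). The crux is the topological step: recognizing that the forward characteristic map $\Psi$ is a left inverse of the minimizer map $F$, so that $F$ is a continuous injection whose image is forced open by invariance of domain, thereby trapping the ``expired'' points $y_n$ (those with $\overline t_s(y_n)<t_0$) inside the set of genuine minimizers and producing the contradiction. The point demanding care is ensuring $\epsilon$ is small enough that all competing minimizers remain in the region where $Dg\ne 0$, so that Theorem \ref{diff} applies uniformly on $\overline B_\epsilon(x_0)$.
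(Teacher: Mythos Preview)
Your argument is correct and follows essentially the same route as the paper's proof: assume by contradiction that $u$ is differentiable on $\overline B_\epsilon(x_0)\times\{t_0\}$, use upper semicontinuity of $L$ together with $Dg(y_0)\ne 0$ to force every $(x,t_0)$ there to have a unique minimizer with $Dg\ne 0$, show the resulting minimizer map is a continuous injection (the paper shows it is a homeomorphism onto its image), and conclude that its image contains a neighborhood of $y_0$, which traps the $y_n$ with $\overline t_s(y_n)<t_0$ and yields the contradiction. The only notable difference is that you invoke invariance of domain explicitly, whereas the paper asserts ``$y_0$ is an interior point of $L(\overline B_\epsilon(x_0),t_0)$'' after establishing the homeomorphism, leaving that topological fact implicit; your version is in that sense slightly more transparent.
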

 \begin{proof}We will prove by contradiction argument. Assume there
 exists some $\epsilon$ small enough such that for any
 $x\in \overline B_{\epsilon}(x_0)$, $(x, t_0)$ is a differentiable point. Due
 to the definition of $\overline t_s(y_0)$ and the upper semi-continuity
 of $L$, for any $y\in L(x, t_0)$, we have that $y\approx y_0$, so
 $Dg(y)\neq 0$. Since $(x, t_0)$ is a differentiable point, by
 Theorem \ref{diff}, we know that $(x, t_0)$ has unique minimizer.
 Then $\overline B_{\epsilon}(x_0)$ and $L(\overline B_{\epsilon}(x_0), t_0)$ are one-
 to- one correspondence, where $L(\overline B_{\epsilon}(x_0), t_0)$ denotes
 the set of minimizers for points $(x, t_0)$ when $x\in
 \overline B_{\epsilon}(x_0)$.

 For any $y\in L(\overline B_{\epsilon}(x_0), t_0)$, let
 $x=y+\frac{Dg(y)}{|Dg(y)|}t_0$.
 Let $x_n=y_n+\frac{Dg(y_n)}{|Dg(y_n)|}t_0$,
 where $y_n\in L(\overline B_{\epsilon}(x_0), t_0)$. When $y_n\rightarrow y$,
 we have that $x_n\rightarrow x$. It follows that
 $L^{-1}$ is continuous on $L(\overline B_{\epsilon}(x_0), t_0)$. Conversely,
 for any $x\in \overline B_{\epsilon}(x_0) $, when $x_n\rightarrow x$,
 by assumption that $(x, t_0)$ has unique minimizer $y\in L(x, t_0)$
 and the upper semi-continuity of $L$, we have that $y_n\rightarrow y$, where $y_n$ is unique minimizer in $L(x_n, t_0)$. It follows that $L$ is continuous on $(\overline B_{\epsilon}(x_0), t_0)$.

 Then $L(\overline B_{\epsilon}(x_0), t_0)$ is homeomorphism to the disk $(\overline B_{\epsilon}(x_0), t_0)$. Therefore $L(\overline B_{\epsilon}(x_0), t_0)$ is a closed simply connected set. Since $y_0$ is an interior point of $ L(\overline B_{\epsilon}(x_0), t_0)$, for any $\{y_n\}$ such that $y_n\rightarrow y_0$, we have that $y_n\in L(\overline B_{\epsilon}(x_0), t_0)$. It follows that $\liminf_{y_n\rightarrow y_0}\overline t_s(y_n)\ge t_0$. It contradicts with  $\liminf_{y_n\rightarrow y_0}\overline t_s(y_n)$ $<$ $t_0$.
 \end{proof}
  The above proposition holds when $\overline t_s(y_0)=\infty$.

\subsection{Global structure of $\Sigma\cup T_1$, $\Sigma$ and $\overline \Sigma$ and regularity of solutions }
Next let us collect    termination points of the  characteristic
from $y_0$ for all $y_0$ with $Dg(y_0)\neq 0$ and points in between the first
termination point $(\overline x_s(y_0), \overline t_s(y_0))$ and the
second termination point $( x_s(y_0), t_s(y_0))$ when $\overline
t_s(y_0)<t_s(y_0)$ as follows:
 \begin{definition}\label{M} Let $M=\{y_0\in {\mathbb R^n}|Dg(y_0)\neq 0, \overline t_s(y_0)<\infty\}$.
Let $T(M)=\{(x,t)|y_0\in M, x=y_0+\frac{Dg(y_0)}{|Dg(y_0)|}t, (1)\,
t\in [\overline t_s(y_0),  t_s(y_0)] \;\mbox{if} \;
t_s(y_0)<\infty;\quad (2)\, t\in [\overline t_s(y_0),
 \infty)
   \;\mbox{if}\; t_s(y_0)=\infty \}$. We call $T(M)$  the set of termination points of characteristics from points with
 $Dg\neq 0$.\end{definition}
\begin{definition}
 We denote the set of all
 nondifferentiable points as $\Sigma$. Denote 
 the set of termination points in $T(M)$ such as those points have unique minimizer as $T_1$.
 \end{definition}
   Any point in $\Sigma$ has more than one minimizer. Any point in $T_1$ is a differentiable point.

 \begin{remark}   Points in $T_1$ are generating points of nondifferentiable points. The reason is that recall Proposition \ref{cluster} that any point in $T_1$ is a cluster point of nondifferentiable points at later time and in section 5, we will show that for $C^2$ initial data, the norm of  second derivatives of $u$ is infinite at any  point in $T_1$.  In section 5, it will  also be shown that $T_1$ is a generalization of the concept of the set of conjugate points  $\Gamma$ (which can only be defined for $C^2$ initial data) for  function classes weaker than the $C^2$ class,   since when the initial data is of $C^2$ class,  we have that $T_1=\Gamma\setminus \Sigma$.  \end{remark}
 It is easy to see that
  \begin{proposition}$T(M)=\Sigma\cup T_1$.
  \end{proposition}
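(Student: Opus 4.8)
The plan is to prove the two inclusions $T(M)\subset \Sigma\cup T_1$ and $\Sigma\cup T_1\subset T(M)$ separately. The containment $T_1\subset T(M)$ is immediate, since $T_1$ is defined as the subset of $T(M)$ consisting of those points with a unique minimizer, so the real content lies in showing $\Sigma\subset T(M)$ and the reverse inclusion. Throughout I would use the elementary nesting fact already exploited in the proof of Lemma \ref{more than one}: along the characteristic from a point $y_1$ with $Dg(y_1)\neq 0$, writing $x(s)=y_1+\frac{Dg(y_1)}{|Dg(y_1)|}s$, the closed balls $\overline B_s(x(s))$ all pass through $y_1$, are internally tangent there, and increase with $s$, i.e. $\overline B_{s_1}(x(s_1))\subset \overline B_{s_2}(x(s_2))$ whenever $s_1<s_2$.

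For $T(M)\subset \Sigma\cup T_1$ I would take $(x,t)\in T(M)$ arising from some $y_0\in M$, so that $Dg(y_0)\neq 0$ and $t\le t_s(y_0)$. Since $t\le t_s(y_0)$, with the endpoint $t=t_s(y_0)$ covered by the lemma asserting $y_0\in L(x_s(y_0),t_s(y_0))$, the point $y_0$ is a minimizer for $(x,t)$, that is $y_0\in L(x,t)$. I then split on the cardinality of $L(x,t)$: if $L(x,t)$ is a singleton then $(x,t)$ has a unique minimizer and hence lies in $T_1$ by definition; if $L(x,t)$ has more than one element, then because $y_0\in L(x,t)$ has $Dg(y_0)\neq 0$, part 2) of Theorem \ref{diff} forces $(x,t)$ to be nondifferentiable, so $(x,t)\in\Sigma$. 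In either case $(x,t)\in\Sigma\cup T_1$.

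For $\Sigma\subset T(M)$ I would start from a nondifferentiable point $(x,t)$. By Theorem \ref{diff}, $L(x,t)$ is not a singleton and there is a minimizer $y_1\in L(x,t)$ with $Dg(y_1)\neq 0$; Proposition \ref{NC} then places $(x,t)$ on the characteristic from $y_1$, i.e. $x=y_1+\frac{Dg(y_1)}{|Dg(y_1)|}t$. It remains to locate $t$ inside $[\overline t_s(y_1),t_s(y_1)]$ and to verify $\overline t_s(y_1)<\infty$. Since $y_1$ is a minimizer at time $t$ and $y_1\in\overline B_s(x(s))$ for all $s$, the nesting $\overline B_s(x(s))\subset \overline B_t(x)$ for $s\le t$ shows $y_1$ is a minimizer at every earlier time, whence $t\le t_s(y_1)$. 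On the other hand, there is a second minimizer $y_2\neq y_1$ with $g(y_2)=g(y_1)=u(x,t)$, and because $y_2\in\overline B_t(x)\subset \overline B_s(x(s))$ for every $s\ge t$, the point $y_1$ is never the unique minimizer for $s\ge t$, which forces $\overline t_s(y_1)\le t$. In particular $\overline t_s(y_1)\le t<\infty$, so $y_1\in M$ and $(x,t)\in T(M)$.

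Combining the three containments yields $T(M)=\Sigma\cup T_1$. The only genuinely delicate step is the sandwich $\overline t_s(y_1)\le t\le t_s(y_1)$ in the second inclusion: one must argue that being a minimizer, and being the unique minimizer, are each monotone in $s$ along a fixed characteristic, and this is exactly where the internal tangency and monotone nesting of the balls $\overline B_s(x(s))$ is indispensable. Everything else is a direct application of Theorem \ref{diff} and Proposition \ref{NC}.
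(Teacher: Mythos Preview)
Your proof is correct and follows essentially the same route as the paper. The paper organizes the argument as $\Sigma\subset T(M)$ together with $T(M)\setminus\Sigma=T_1$, while you split it into the three containments $T(M)\subset\Sigma\cup T_1$, $\Sigma\subset T(M)$, $T_1\subset T(M)$; the content is identical, and your explicit ball-nesting justification of the sandwich $\overline t_s(y_1)\le t\le t_s(y_1)$ simply spells out what the paper asserts in one line.
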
\begin{proof}For any point $(x_0, t_0)$ in $\Sigma$, by Theorem
 \ref{diff}, it has more than one minimizer and one of minimizers
 $y_0$ has $Dg(y_0)\neq 0$. It implies that $(x_0, t_0)$ is on the
  effective
 characteristic from $y_0$ with $\overline t_s(y_0)\le t_0\le
 t_s(y_0)$. Hence $(x_0, t_0)\in T(M)$. It implies that $\Sigma\subset T(M)$. For any point in $T(M)\setminus \Sigma$, it is a differentiable termination point of a characteristic from some $y_0$ with $Dg(y_0)\neq 0$. Hence it is a point with unique minimizer in $T(M)$ by Theorem
 \ref{diff}. It follows that $ T(M)\setminus \Sigma=T_1$. 
 \end{proof}
 Next since by Theorem
 \ref{diff}, we know that any point in $T_1$ is a cluster point of nondifferentiable points,  so the following holds:
 \begin{proposition}\label{over}$\overline {T(M)}=\overline \Sigma$.
 \end{proposition}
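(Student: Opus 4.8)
The plan is to reduce the whole statement to the decomposition $T(M)=\Sigma\cup T_1$ furnished by the preceding proposition, and then to show that the only points of $T(M)$ which might not already sit inside $\Sigma$—namely the points of $T_1$—lie in the closure $\overline\Sigma$. Since taking closures is monotone and distributes over finite unions, once I establish $T_1\subseteq\overline\Sigma$ I obtain
$$\overline{T(M)}=\overline{\Sigma\cup T_1}=\overline{\Sigma}\cup\overline{T_1}\subseteq\overline{\Sigma},$$
while the reverse inclusion $\overline{\Sigma}\subseteq\overline{T(M)}$ is immediate from $\Sigma\subseteq T(M)$. Thus the entire proposition collapses to the single claim $T_1\subseteq\overline\Sigma$.

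To prove that claim I would fix an arbitrary point $(x_\ast,t_\ast)\in T_1$. By the definition of $T_1$ it is a point of $T(M)$ with a unique minimizer, hence a differentiable point of $u$ by Theorem \ref{diff}. Let $y_0$ be a generating point of it, so that $Dg(y_0)\neq 0$ and $(x_\ast,t_\ast)$ lies on the characteristic from $y_0$ with $t_\ast\in[\overline t_s(y_0),t_s(y_0)]$ and $\overline t_s(y_0)<\infty$. First I would rule out the strict case $\overline t_s(y_0)<t_s(y_0)$: Case 2 of Proposition \ref{cluster} asserts that then every point on the characteristic segment between the two termination times is nondifferentiable, which contradicts the differentiability of $(x_\ast,t_\ast)$. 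Hence $\overline t_s(y_0)=t_s(y_0)$, and $(x_\ast,t_\ast)=(x_s(y_0),t_s(y_0))$ is exactly the coincident termination point, a differentiable point whose unique minimizer $y_0$ satisfies $Dg(y_0)\neq0$.

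At this point I invoke the ``Furthermore'' part of Case 1 of Proposition \ref{cluster} in its differentiable subcase: there exists $h>0$ such that for every $\epsilon\in(0,h]$ one can find $x_\epsilon\in B_\epsilon(x_s(y_0))$ for which $(x_\epsilon,t_s(y_0)+\epsilon)$ is a nondifferentiable point, i.e. $(x_\epsilon,t_s(y_0)+\epsilon)\in\Sigma$. Letting $\epsilon\to 0^+$, these points converge to $(x_s(y_0),t_s(y_0))=(x_\ast,t_\ast)$, so $(x_\ast,t_\ast)\in\overline\Sigma$. As $(x_\ast,t_\ast)\in T_1$ was arbitrary, this yields $T_1\subseteq\overline\Sigma$ and closes the argument.

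I do not expect a genuine obstacle here, since all the analytic content is already packaged inside Proposition \ref{cluster}; the argument is essentially bookkeeping about closures. The step deserving the most care is the verification that a point of $T_1$ must be a \emph{coincident} termination point rather than an interior point of a nondegenerate termination segment—this is precisely what forces us into the differentiable branch of Case 1, the only branch for which the ``cluster from above'' conclusion is available. I would also note that no separate treatment of $t_s(y_0)=\infty$ is needed, since a termination point in $T_1$ has, by construction, a finite termination time.
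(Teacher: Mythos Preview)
Your proposal is correct and follows essentially the same approach as the paper: the paper's proof is a one-line observation that, since $T(M)=\Sigma\cup T_1$ and every point of $T_1$ is a cluster point of nondifferentiable points (via Proposition~\ref{cluster}), one has $T_1\subseteq\overline\Sigma$ and hence $\overline{T(M)}=\overline\Sigma$. Your argument simply fleshes out this observation with the closure bookkeeping and the verification that a $T_1$ point must fall into the coincident-termination, differentiable branch of Proposition~\ref{cluster}.
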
 

  But the question is whether $T_1$ is an empty set or not.  Let us see, when $n=1$, $T_1=\emptyset$ holds, because for any $y_0$ with $\overline t_s(y_0)<\infty$, it is easy
   to prove that $(\overline x_s(y_0), \overline t_s(y_0))$ must have
   more than one minimizer. The reason is that there are only two
   direction $-1$ and $1$ when $Dg(y_0)\neq 0$.
    See more detail in Lemma \ref{n=1}. While when
    $n>1$,  $T_1$ may not be $\emptyset$. That is,
    $(\overline x_s(y_0), \overline t_s(y_0))$ may have unique
    minimizer. For instance, for some  initial data $g$ and
     some point $(x_0, t_0)$, where
    $x_0=y_0+\frac{Dg(y_0)}{|Dg(y_0)|}t_0$,  the curvature of  the  two
    surfaces $\partial B_{t_0}(x_0)$ and $H(y_0)$  at $y_0$ are same,
     and $H(y_0)$  is in the exterior of $\partial B_{t_0}(x_0)$, and
     $y_0$ is unique minimizer for $(x_0, t_0)$ (Recall $H(y)$ is
     determined by initial data $g$, so we can choose some $g$ satisfying
     conditions above).  Then for any $t>t_0$,
     we have that the curvature of  $\partial B_{t}(x)$ at $y_0$ is less
      than the curvature of $H(y_0)$ at $y_0$, where
      $x=y_0+\frac{Dg(y_0)}{|Dg(y_0)|}t$. The picture is that $H(y_0)$ is
      inside two surfaces $\partial B_{t_0}(x_0)$ and
      $\partial B_{t}(x)$. Hence locally, $H(y_0)$ is inside  the ball
      $\overline B_t(x)$. Since $H(y_0)$ separates the neighborhood of $y_0$ into two
       parts $H^+(y_0)$ and $H^-(y_0)$,
       it implies that there exists
      some $y_1\in \overline B_t(x)$ such that $y_1\in H^-(y_0) $, i.e.,
      $g(y_1)<g(y_0)$. Hence $(x, t)$
      does not have $y_0$ as its minimizer. Therefore
      $t_0=\overline t_s(y_0)= t_s(y_0)$. That is,
      $(\overline x_s(y_0), \overline t_s(y_0))$ coincides with
      $( x_s(y_0), t_s(y_0))$, and it  has unique minimizer.
      Therefore it is in $T_1$.

  For points in $T_1$,
  the following lemma holds:
  \begin{lemma}For any $(x_0, t_0)\in T_1$, let $y_0$ be the unique minimizer for $(x_0, t_0)$, we have that $\overline t_s(y_0)= t_s(y_0)$.
  \end{lemma}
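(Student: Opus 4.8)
The plan is to argue by contradiction, exploiting the dichotomy between differentiable and nondifferentiable behaviour along the characteristic segment between the two termination times that was already established in Proposition \ref{cluster}. First I would pin down the data attached to $(x_0,t_0)$. Since $(x_0,t_0)\in T_1\subset T(M)$, by Definition \ref{M} it is a termination point of the characteristic issuing from some $y'\in M$, so $Dg(y')\neq0$, $\overline t_s(y')<\infty$, $x_0=y'+\frac{Dg(y')}{|Dg(y')|}t_0$, and $t_0$ lies in the admissible time interval attached to $y'$ (namely $[\overline t_s(y'),t_s(y')]$ when $t_s(y')<\infty$, and $[\overline t_s(y'),\infty)$ otherwise). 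By the (unnumbered) lemma asserting $y'\in L(\overline x_s(y'),\overline t_s(y'))$ and $y'\in L(x_s(y'),t_s(y'))$, together with the definition of $t_s(y')$ as an infimum, the point $y'$ is itself a minimizer for $(x_0,t_0)$. But $(x_0,t_0)$ is assumed to have the unique minimizer $y_0$, so $y'=y_0$. Consequently $Dg(y_0)\neq0$, $y_0\in M$, $x_0=y_0+\frac{Dg(y_0)}{|Dg(y_0)|}t_0$, and $t_0\geq\overline t_s(y_0)$ with $t_0\leq t_s(y_0)$ whenever $t_s(y_0)<\infty$.

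Next I would suppose, toward a contradiction, that $\overline t_s(y_0)<t_s(y_0)$ (this hypothesis includes the case $t_s(y_0)=\infty$, since $y_0\in M$ forces $\overline t_s(y_0)<\infty$). Then the point $(x_0,t_0)$, which carries the finite time $t_0$ with $\overline t_s(y_0)\leq t_0$ and $t_0\leq t_s(y_0)$, lies in the segment $\{(x,t):x=y_0+\frac{Dg(y_0)}{|Dg(y_0)|}t,\ t\in[\overline t_s(y_0),t_s(y_0)]\}$ (interpreted as $[\overline t_s(y_0),\infty)$ if $t_s(y_0)=\infty$). By Proposition \ref{cluster} (Case 2) every point of this segment is a nondifferentiable point, so in particular $u$ is nondifferentiable at $(x_0,t_0)$. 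On the other hand, $(x_0,t_0)$ has the unique minimizer $y_0$, so part 1) of Theorem \ref{diff} says that $u$ is differentiable at $(x_0,t_0)$. This contradiction, combined with the always-valid inequality $\overline t_s(y_0)\leq t_s(y_0)$, forces $\overline t_s(y_0)=t_s(y_0)$.

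The argument is short once Proposition \ref{cluster} is in hand, so there is no substantial computational obstacle; the one point requiring care is the identification $y'=y_0$ in the first step, where I must confirm that the generating point of the characteristic really is a minimizer for $(x_0,t_0)$ at the termination time so that uniqueness of the minimizer can be invoked, and that $(x_0,t_0)$ genuinely sits in the closed segment to which Proposition \ref{cluster} Case 2 applies, including the endpoint cases $t_0=\overline t_s(y_0)$ and $t_0=t_s(y_0)$. If one wishes to sidestep any endpoint subtlety, for interior $t_0$ one can invoke Lemma \ref{more than one} to produce a second minimizer directly, and for the first-termination-point endpoint one can reproduce the subsequence argument of Proposition \ref{cluster} (extracting $\bar y\in L(\overline x_s(y_0),\overline t_s(y_0))$ with $Dg(\bar y)=0$, hence $\bar y\neq y_0$) to contradict uniqueness in the same way.
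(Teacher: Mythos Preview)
Your proof is correct and follows essentially the same contradiction argument as the paper: assume $\overline t_s(y_0)<t_s(y_0)$, invoke Proposition~\ref{cluster} (Case 2) to conclude that every point on the characteristic segment, in particular $(x_0,t_0)$, is nondifferentiable (equivalently, has more than one minimizer), contradicting the defining property of $T_1$. Your argument is in fact slightly more careful than the paper's, since you explicitly verify the identification $y'=y_0$ (and hence $Dg(y_0)\neq 0$), which the paper tacitly assumes.
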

  \begin{proof}It will be proved by contradiction methods. For any point $(x_0, t_0)$
   in $T_1$, since $T_1$ is a subset of $T(M)$, by Proposition
   \ref{cluster}, if $\overline t_s(y_0)< t_s(y_0)$, then
   $(\overline x_s(y_0), \overline t_s(y_0))$, $( x_s(y_0),  t_s(y_0))$
   and any point in between are all nondifferentiable points. Hence any
    of these points have more than one minimizer. It implies that
    $(x_0, t_0)$ has more than one minimizer,
     which is a contradiction. Therefore $\overline t_s(y_0)= t_s(y_0)$.
  \end{proof}



No matter for $C^1$ or $C^2$ initial data for the Eikonal equation, in general, $T(M)=\Sigma\cup T_1$ is a only proper subset of $\overline \Sigma$.  $\Sigma\cup T_1$
contains maybe only part of cluster points of $\Sigma$, some cluster points of $\Sigma$ could also be
the termination points of characteristics from $y_0$ with
$Dg(y_0)=0$.




  Next the following set is defined:
 \begin{equation}
 H_{y_0}=\{y\in {\mathbb R^n}|(y-y_0)\cdot Dg(y_0)>0\}.
 \end{equation}
 It is easy to see that $H_{y_0}$ is equivalent to the following
 set, (thus denote by $H_{y_0}$ again)\begin{equation}H_{y_0}=\{y\in {\mathbb R^n}|y\in B_t(x),
 \mbox{where}\,x=y_0+\frac{Dg(y_0)}{|Dg(y_0)|}t, \, \mbox{for some } \,
 t>0\}.
\end{equation}
It means that $H_{y_0}$ covers all the balls with center
$y_0+\frac{Dg(y_0)}{|Dg(y_0)|}t$ and radius $t>0$. With this picture,
the proof of next proposition looks more transparent.

 Another set is defined as follows:
 \begin{equation}
 D=\{y_0\in {\mathbb R^n}| Dg(y_0)\neq 0,  \exists\; \bar y \in H_{y_0}\;\mbox{such that}\; g(\bar y)\le g(y_0)\}.
 \end{equation}
  We find out that $M$ and $D$ are equivalent, which is  the following proposition:
 \begin{proposition}\label{M=D}$M=D$. i.e., $\{y_0\in {\mathbb R^n}|Dg(y_0)\neq 0, \overline t_s(y_0)<\infty\}=\{y_0\in {\mathbb R^n}| Dg(y_0)\neq 0,  \exists\; \bar y \in H_{y_0}\;\mbox{such that}\; g(\bar y)\le g(y_0)\}$.
 \end{proposition}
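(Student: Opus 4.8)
The plan is to prove the two inclusions $M\subseteq D$ and $D\subseteq M$ separately, in both cases exploiting the geometric reformulation of $H_{y_0}$ already recorded in the excerpt: $H_{y_0}$ is the union over $t>0$ of the open balls $B_t(x(t))$ with $x(t)=y_0+\frac{Dg(y_0)}{|Dg(y_0)|}t$. The single fact that makes everything work is that each closed ball $\overline B_t(x(t))$ is tangent at $y_0$ to the hyperplane $\{y\mid (y-y_0)\cdot Dg(y_0)=0\}$ and lies entirely in the closed half-space $\{y\mid(y-y_0)\cdot Dg(y_0)\ge0\}$, meeting that hyperplane only at $y_0$; consequently every point of $\overline B_t(x(t))$ other than $y_0$ lies in the open half-space $H_{y_0}$. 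I would record this tangency fact (a one-line computation: the outward normal to $\partial B_t(x(t))$ at $y_0$ is $-Dg(y_0)/|Dg(y_0)|$) as a preliminary observation, together with the monotonicity statement that $\overline B_t(x(t))\subseteq\overline B_{t'}(x(t'))$ whenever $t\le t'$, since the tangent balls grow with $t$.

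For $M\subseteq D$, I would take $y_0\in M$, so $Dg(y_0)\ne0$ and $\overline t_s(y_0)<\infty$, and fix any finite $t>\overline t_s(y_0)$. By the definition of $\overline t_s$, the point $(x(t),t)$ does not have $y_0$ as its unique minimizer, so there is a minimizer $\bar y\ne y_0$ with $g(\bar y)=u(x(t),t)\le g(y_0)$ (if $y_0$ is a non-unique minimizer, take a second one; if $y_0$ is not a minimizer at all, take any minimizer, which then differs from $y_0$ since $g(\bar y)<g(y_0)$). Since $\bar y\in\overline B_t(x(t))$ and $\bar y\ne y_0$, the tangency observation gives $\bar y\in H_{y_0}$, and $g(\bar y)\le g(y_0)$, so $y_0\in D$.

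For $D\subseteq M$, I would take $y_0\in D$, so $Dg(y_0)\ne0$ and there is $\bar y\in H_{y_0}$ with $g(\bar y)\le g(y_0)$. By the reformulation of $H_{y_0}$, $\bar y\in B_{t_1}(x(t_1))$ for some $t_1>0$, and $\bar y\ne y_0$ because $\bar y$ lies in the open ball while $y_0$ lies on its boundary. For every $t\ge t_1$ the monotonicity gives $\bar y\in\overline B_t(x(t))$, so $u(x(t),t)\le g(\bar y)\le g(y_0)$, while $y_0$ itself always lies on $\partial B_t(x(t))$; thus either $g(y_0)=u(x(t),t)$, in which case $\bar y$ is a second, distinct minimizer, or $y_0$ is not a minimizer at all. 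In either case $y_0$ fails to be the unique minimizer for every $t\ge t_1$, whence $\overline t_s(y_0)\le t_1<\infty$ and $y_0\in M$.

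I do not expect a serious obstacle; the proof is essentially bookkeeping of the tangent-ball picture. The one point that needs care is the logical handling of the two ways $y_0$ can fail to be the unique minimizer (either $y_0$ is not a minimizer, or it is a minimizer sharing its value with a distinct competitor), since the definitions of $\overline t_s$ and $t_s$ distinguish these cases; keeping the inequality $g(\bar y)\le g(y_0)$ non-strict throughout handles both uniformly. The monotonicity of the tangent balls is what upgrades ``fails to be unique minimizer at the single time $t_1$'' to ``fails for all $t\ge t_1$'', which is exactly what is needed to bound $\overline t_s(y_0)$ by $t_1$; I would state and use it explicitly rather than rely on any continuity of $\overline t_s$.
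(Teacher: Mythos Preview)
Your proposal is correct and follows essentially the same approach as the paper: both prove the two inclusions by unpacking the definitions of $\overline t_s$ and of $H_{y_0}$ via the tangent balls $\overline B_t(x(t))$. The paper's proof is terser and splits each direction into the two subcases (strict versus non-strict inequality, equivalently $\overline t_s(y_0)=t_s(y_0)$ versus $\overline t_s(y_0)<t_s(y_0)$), whereas you handle both uniformly with the single inequality $g(\bar y)\le g(y_0)$ and make the tangency and monotonicity of the balls explicit; this is a cosmetic rather than substantive difference.
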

 \begin{proof} For any $y_0\in M$, there are two cases. 1) $t_s(y_0)=\overline t_s(y_0)<\infty$, then there exists some $\bar y\in H_{y_0}$ such that $g(\bar y)<g(y_0)$.

 2) $\overline t_s(y_0)<t_s(y_0)$, then there exists some $\bar y\in H_{y_0}$ such that $g(\bar y)=g(y_0)$.

 Both cases imply that $y_0\in  \{y_0\in{\mathbb R^n}| Dg(y_0)\neq 0,  \exists\; \bar y \in H_{y_0},  g(\bar y)\le g(y_0)\}$.

 Conversely, for any $y_0\in  \{y_0\in{\mathbb R^n}| Dg(y_0)\neq 0,  \exists\; \bar y \in H_{y_0},  g(\bar y)\le g(y_0)\}$, 1) if $g(\bar y)< g(y_0)$, then $t_s(y_0)<\infty$, it implies that  $\overline t_s(y_0)<\infty$.

 2) If $g(\bar y)= g(y_0)$, then $\overline t_s(y_0)<\infty$.
 \end{proof}
 Let us define $E$ as follows:
 \begin{equation}E= \{y_0\in{\mathbb R^n}| Dg(y_0)\neq 0,  \exists\;
 \bar y \in H_{y_0},\,\mbox{such that }\;  g(\bar y)< g(y_0)\}
 \end{equation}
 \begin{lemma}\label{E} $E$ is an open set.
 \end{lemma}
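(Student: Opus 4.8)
The plan is to show $E$ is open by proving that every $y_0 \in E$ has a neighborhood contained in $E$. Fix $y_0 \in E$, so that $Dg(y_0) \neq 0$ and there exists $\bar y \in H_{y_0}$ with $g(\bar y) < g(y_0)$. Recall $H_{y_0} = \{y \mid (y - y_0) \cdot Dg(y_0) > 0\}$ is an open half-space, so the strict inequality $(\bar y - y_0) \cdot Dg(y_0) > 0$ holds with a positive margin. The idea is that all three relevant conditions — $Dg(y_0) \neq 0$, the membership $\bar y \in H_{y_0}$, and the strict inequality $g(\bar y) < g(y_0)$ — are stable under small perturbations of $y_0$ (with $\bar y$ held fixed), because $g \in C^1$ makes $Dg$ and $g$ continuous and all the inequalities involved are strict.

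First I would use continuity of $Dg$ at $y_0$: since $Dg(y_0) \neq 0$, there is a radius $\delta_1 > 0$ such that $Dg(y) \neq 0$ for all $y \in B_{\delta_1}(y_0)$. Next, I would handle the half-space condition. Since $(\bar y - y_0) \cdot Dg(y_0) > 0$ and the map $y \mapsto (\bar y - y) \cdot Dg(y)$ is continuous (as $g \in C^1$), there is a radius $\delta_2 > 0$ such that $(\bar y - y) \cdot Dg(y) > 0$ for all $y \in B_{\delta_2}(y_0)$; this says exactly that $\bar y \in H_{y}$ for such $y$. Finally, for the value condition, I would use continuity of $g$ at $y_0$: since $g(\bar y) < g(y_0)$ strictly, there is a radius $\delta_3 > 0$ such that $g(\bar y) < g(y)$ for all $y \in B_{\delta_3}(y_0)$.

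Taking $\delta = \min\{\delta_1, \delta_2, \delta_3\}$, for every $y \in B_\delta(y_0)$ we have simultaneously $Dg(y) \neq 0$, $\bar y \in H_{y}$, and $g(\bar y) < g(y)$, so the same witness point $\bar y$ certifies $y \in E$. Hence $B_\delta(y_0) \subset E$, and since $y_0 \in E$ was arbitrary, $E$ is open.

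I do not expect a genuine obstacle here, as the statement is a routine openness argument resting on the $C^1$ regularity of $g$ and the fact that the defining relation of $E$ uses only strict inequalities. The one point requiring mild care is the half-space condition: one must observe that keeping the witness $\bar y$ fixed while perturbing $y_0$ turns the membership $\bar y \in H_{y_0}$ into the continuous, strict scalar inequality $(\bar y - y) \cdot Dg(y) > 0$, so that it too is preserved under perturbation. This is the only step where the dependence of $H_y$ on the base point $y$ could cause confusion, but it is resolved by fixing $\bar y$ and letting only $y$ vary.
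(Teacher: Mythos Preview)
Your proof is correct and follows essentially the same approach as the paper: fix the witness $\bar y$ and use continuity of $g$ and $Dg$ to preserve the three strict conditions under perturbation of $y_0$. The only cosmetic difference is that the paper verifies $(\bar y - y)\cdot Dg(y) > 0$ by an explicit $\epsilon$--$\delta$ estimate, whereas you invoke continuity of the map $y \mapsto (\bar y - y)\cdot Dg(y)$ directly.
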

 \begin{proof}
For any $y_0\in E$, we have that $Dg(y_0)\neq 0$,  and $\exists
 \bar y \in H_{y_0}$ such that $ g(\bar y)< g(y_0)$.  There exists some
 open set $U$ such that
$y_0\in U$,
 and for any $y\in U$, $Dg(y)\neq 0$ and $g(y)>g(\bar y)$, due
 to that $Dg$ is continuous.

Next we prove that $\bar y\in H_y$. That is, $(\bar y-y)\cdot Dg(y)>0$.
Let $a=dist(\bar y, \partial H_{y_0})$, i.e. $a=(\bar y-y_0)\cdot \frac{Dg(y_0)}{|Dg(y_0)|}$. For any $\epsilon>0$, there exists some $\delta$, such that when $|y-y_0|<\delta$, we have that $|Dg(y)-Dg(y_0)|<\epsilon$.
\begin{eqnarray*}
(\bar y-y)\cdot Dg(y)=&&(\bar y-y_0)\cdot Dg(y)+( y_0-y)\cdot Dg(y)\\
=&&(\bar y-y_0)\cdot Dg(y_0)+(\bar y-y_0)\cdot  (Dg(y)-Dg(y_0))\\&&+( y_0-y)\cdot Dg(y_0)+( y_0-y)\cdot (Dg(y)-Dg(y_0))\\
\ge&&a|Dg(y_0)|-\epsilon |\bar y-y_0|-\delta |Dg(y_0)|-\epsilon \delta>0
\end{eqnarray*}
when $\epsilon$, $\delta$ are sufficiently small. It implies that $E$ is open.
 \end{proof}
 Since $E$ is open, therefore $E$ is composed of at most countable path connected components. i.e.,
 \begin{equation}\label{E_i}E=\bigcup E_i,\end{equation} where $E_i$ is a path connected component of $E$.

 Let us define set $J$ as follows:
 \begin{equation}J= \{y_0\in{\mathbb R^n}| Dg(y_0)\neq 0,  t_s(y_0)<\infty\}
 \end{equation}
 We have the equivalence of $E$ and $J$ as follows:
 \begin{proposition}\label{E=J}$E=J$; i.e. $\{y_0\in{\mathbb R^n}| Dg(y_0)\neq 0,  \exists\; \bar y \in H_{y_0},  g(\bar y)< g(y_0)\}=\{y_0\in{\mathbb R^n}| Dg(y_0)\neq 0,  t_s(y_0)<\infty\}$.
 \end{proposition}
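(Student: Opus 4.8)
The plan is to prove the two inclusions $J\subseteq E$ and $E\subseteq J$ separately, and the whole argument rests on one geometric observation about the balls attached to the characteristic from $y_0$. Namely, for $x=y_0+\frac{Dg(y_0)}{|Dg(y_0)|}t$ the closed ball $\overline B_t(x)$ is tangent at $y_0$ to the hyperplane $\partial H_{y_0}=\{y\mid (y-y_0)\cdot Dg(y_0)=0\}$, since $x-y_0$ is parallel to $Dg(y_0)$ and $|x-y_0|=t$. By convexity this ball lies in the closed half-space $\{y\mid (y-y_0)\cdot Dg(y_0)\ge 0\}$ and meets $\partial H_{y_0}$ only at $y_0$. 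Hence every point of $\overline B_t(x)$ other than $y_0$ lies in the open half-space $H_{y_0}$. Moreover, as $t$ increases these balls are nested and mutually tangent at $y_0$, so once $y_0$ fails to be a minimizer at some time it fails at all larger times.

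First I would establish $E\subseteq J$. Fix $y_0\in E$, so $Dg(y_0)\neq 0$ and there is some $\bar y\in H_{y_0}$ with $g(\bar y)<g(y_0)$. By the equivalent description of $H_{y_0}$ recalled before Proposition \ref{M=D}, there is a $t>0$ with $\bar y\in B_t(x)$ where $x=y_0+\frac{Dg(y_0)}{|Dg(y_0)|}t$. Then $\bar y\in\overline B_t(x)$ and $g(\bar y)<g(y_0)$, so $u(x,t)\le g(\bar y)<g(y_0)$ and $y_0$ is not a minimizer for $(x,t)$. By the definition (\ref{t_s}) of $t_s$, this forces $t_s(y_0)\le t<\infty$, so $y_0\in J$.

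For the reverse inclusion $J\subseteq E$, fix $y_0\in J$, so $Dg(y_0)\neq 0$ and $t_s(y_0)<\infty$. By the nesting remark above, for any $t>t_s(y_0)$ the point $y_0$ is not a minimizer for $(x,t)$ with $x=y_0+\frac{Dg(y_0)}{|Dg(y_0)|}t$; hence there exists $\bar y\in\overline B_t(x)$ with $g(\bar y)=u(x,t)<g(y_0)$. Since $g(\bar y)<g(y_0)$ we have $\bar y\neq y_0$, and the tangency observation gives $\bar y\in H_{y_0}$. Together with $g(\bar y)<g(y_0)$ this shows $y_0\in E$, completing both inclusions.

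The only step requiring genuine care, and the crux of the whole proof, is the tangency-and-nesting observation of the first paragraph: that each $\overline B_t(x)$ sits inside the closed half-space cut out by $Dg(y_0)$ and touches its bounding hyperplane only at $y_0$. This is exactly the computation already underlying the equivalent description of $H_{y_0}$, so I expect it to be routine rather than delicate; once it is in hand, both inclusions follow immediately from the definitions of $t_s$ and $H_{y_0}$ and the continuity of $u$. In particular, no semiconcavity or fixed-point machinery is needed here, in contrast with Propositions \ref{cluster} and \ref{dense}, and the proof uses only $g\in C^1$ through the continuity of $Dg$ and of $u$.
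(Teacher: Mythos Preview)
Your proof is correct and complete; the paper itself simply declares ``The proof is trivial'' and gives no argument at all, so you have supplied exactly the routine two-inclusion verification the authors omitted. The tangency observation you highlight is precisely the content of the paper's equivalent description of $H_{y_0}$ stated just before Proposition~\ref{M=D}, so your approach is the intended one.
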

 The proof is trivial.
 Next we have the construction of some  path connected sets $F_i$.
 \begin{proposition}\label{F_i}$M=D= \bigcup F_i$, where $F_i\supset E_i$.  $F_i$ is also path connected.
 \end{proposition}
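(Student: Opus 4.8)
The plan is to build the sets $F_i$ by attaching each point of $D\setminus E$ to a single path-component $E_i$ of $E$. Proposition \ref{M=D} already gives $M=D$, and Proposition \ref{E=J} together with the obvious inclusion (a strict inequality $g(\bar y)<g(y_0)$ is in particular a non-strict one) gives $E\subset D$ with $E=\bigcup E_i$. So the only work left is to account for the points of $D\setminus E$. First I would record what these points look like: if $y_0\in D\setminus E$, then by definition of $D$ there is $\bar y\in H_{y_0}$ with $g(\bar y)\le g(y_0)$, while $y_0\notin E$ forbids the strict inequality; hence necessarily $g(\bar y)=g(y_0)$ and $g\ge g(y_0)$ throughout $H_{y_0}$.

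The key geometric step is to connect such a $y_0$ into $E$ along the gradient direction. I would consider the segment $\gamma(s)=y_0+s\,Dg(y_0)$. Because $g\in C^1$, we have $g(\gamma(s))=g(y_0)+s|Dg(y_0)|^2+o(s)>g(y_0)$ for small $s>0$, and $Dg(\gamma(s))\neq 0$ by continuity of $Dg$. Moreover, since $\bar y\in H_{y_0}$ means the strict inequality $(\bar y-y_0)\cdot Dg(y_0)>0$ holds, the same continuity argument used in the proof of Lemma \ref{E} shows $\bar y\in H_{\gamma(s)}$ for small $s$. Thus $g(\bar y)=g(y_0)<g(\gamma(s))$ with $\bar y\in H_{\gamma(s)}$, i.e. $\gamma(s)\in E$ for all $s$ in some interval $(0,\epsilon_0]$. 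In particular $y_0\in\overline E$, and the connected arc $\gamma((0,\epsilon_0])\subset E$ lies in a single path-component $E_{i(y_0)}$; shrinking $\epsilon_0$ only shrinks this arc, so the index $i(y_0)$ is well defined.

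With this I would set $F_i:=E_i\cup\{y_0\in D\setminus E: i(y_0)=i\}$. Then $\bigcup_i F_i=E\cup(D\setminus E)=D=M$, each $F_i\supset E_i$ holds by construction, and $F_i$ is path connected: any $y_0\in F_i\setminus E_i$ is joined to the point $\gamma(\epsilon_0)\in E_i$ by the arc $\gamma([0,\epsilon_0])$, which lies in $F_i$ (its endpoint $y_0$ lies in $F_i$ and the remaining arc lies in $E_i\subset F_i$), while any two points of $E_i$ are joined inside $E_i$; concatenating these yields a path between any two points of $F_i$.

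I expect the main obstacle to be the geometric step of the second paragraph, namely verifying that translating $y_0$ in the direction $Dg(y_0)$ genuinely lands in $E$ (so that $y_0$ is reachable from $E$ by a short path) and that the resulting arc selects one and only one component, so that the assignment $y_0\mapsto i(y_0)$ is unambiguous; everything else is bookkeeping. One point worth emphasizing is that the statement does not require the $F_i$ to be disjoint or to be the path-components of $D$ — it asks only for path-connected sets $F_i\supset E_i$ whose union is $D$ — which is exactly what the construction above delivers.
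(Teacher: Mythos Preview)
Your argument is correct and follows the same overall plan as the paper: show that every $y_0\in D\setminus E$ can be joined by a short path to a single component $E_{i(y_0)}$, then set $F_i=E_i\cup\{y_0:i(y_0)=i\}$. The one substantive difference lies in how that short path into $E$ is produced. The paper invokes Proposition~\ref{limsH+} (giving $\limsup_{y\in H^+(y_0),\,y\to y_0}t_s(y)=\overline t_s(y_0)<\infty$) to conclude that an entire one-sided neighborhood $U\cap H^+(y_0)$ lies in $E=J$, and then uses its path-connectedness. You bypass the termination-time machinery and argue directly from the definition of $E$: moving along the gradient ray $\gamma(s)=y_0+s\,Dg(y_0)$ and recycling the witness $\bar y$ gives $\bar y\in H_{\gamma(s)}$ with $g(\bar y)=g(y_0)<g(\gamma(s))$, hence $\gamma(s)\in E$. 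Your route is more self-contained at this step; the paper's route yields a full open patch in $E$ rather than a single arc, which it exploits in the next lemma to prove $F_i\cap F_j=\emptyset$---but your construction already gives disjointness since $i(y_0)$ is single-valued, so nothing is lost.
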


\begin{proof}
For any $y_0\in D\setminus E$, we have that $Dg(y_0)\neq 0$ and
$\overline t_s(y_0)<t_s(y_0)=\infty$. By Proposition \ref{limsH+}
that $\limsup_{y_n\in H^+(y_0),\,y_n\rightarrow y_0}t_s(y_n) =
\overline t_s(y_0)$, then there exists a small neighborhood  $U$ of
$y_0$, such that for any $y\in U\cap H^+(y_0)$, we have that
$t_s(y)<\infty$.
It implies that  $U\cap H^+(y_0)\subset E$. Furthermore, $U\cap
H^+(y_0)$ is path connected, then we have that $U\cap
H^+(y_0)\subset E_i$ for some $i$. Since $y_0$ is  on the boundary
of $U\cap H^+(y_0)$,  for any point in $U\cap H^+(y_0)$, there will
be a continuous curve connecting it and $y_0$.  Therefore there is a
path connected  set $F_i\supset E_i$, such that $y_0\in F_i$ and
$F_i\subset D$.
\end{proof}
\begin{lemma}$F_i\cap F_j=\emptyset$, $i\neq j$.
\end{lemma}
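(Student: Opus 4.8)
The plan is to reduce the statement to the elementary fact that two distinct path connected components of the open set $E$ must be disjoint, i.e. $E_i\cap E_j=\emptyset$ for $i\neq j$. First I would make the construction in Proposition \ref{F_i} precise by recording that each $F_i$ decomposes as $F_i=E_i\cup A_i$, where $A_i\subset D\setminus E$ is the set of those $y_0$ for which some neighborhood $U$ of $y_0$ satisfies $U\cap H^+(y_0)\subset E_i$. This is exactly what the proof of Proposition \ref{F_i} produces: by Proposition \ref{limsH+} one has $\limsup_{y_n\in H^+(y_0),\,y_n\to y_0}t_s(y_n)=\overline t_s(y_0)<\infty$ for $y_0\in D\setminus E$, so a small enough $U$ makes $U\cap H^+(y_0)$ a path connected subset of $E$, hence contained in a single $E_i$.

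Then I would argue by contradiction. Suppose $y_0\in F_i\cap F_j$ with $i\neq j$. Since $E_i\subset E$ while $A_i\subset D\setminus E$, one has $F_i\cap E=E_i$; thus if $y_0\in E$ then $y_0\in E_i\cap E_j=\emptyset$, a contradiction. Hence $y_0\in D\setminus E$, so $y_0\in A_i\cap A_j$, and there are neighborhoods $U$ and $U'$ of $y_0$ with $U\cap H^+(y_0)\subset E_i$ and $U'\cap H^+(y_0)\subset E_j$.

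To finish I would exhibit a common point. Since $H(y_0)$ passes through $y_0$ and separates a neighborhood into $H^+(y_0)$ and $H^-(y_0)$, the point $y_0$ lies on the boundary of $H^+(y_0)$, so every neighborhood of $y_0$ meets $H^+(y_0)$. In particular the open set $(U\cap U')\cap H^+(y_0)$ is nonempty, and any point $p$ in it satisfies both $p\in E_i$ and $p\in E_j$. Hence $E_i\cap E_j\neq\emptyset$, which forces $E_i=E_j$ and therefore $i=j$, the desired contradiction.

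The routine steps here are immediate; the only point requiring care — and the one I would write out explicitly — is the well-definedness of the index attached to each $y_0\in D\setminus E$, namely that for small $U$ the slice $U\cap H^+(y_0)$ really lies in one single component $E_i$ rather than spreading across several. This is where the local structure of $H(y_0)$ (an $(n-1)$-dimensional $C^1$ surface, so that $H^+(y_0)\cap B_r(y_0)$ is connected for small $r$) is used. Once this is in hand the disjointness of the $F_i$ is essentially forced by the disjointness of the path connected components of $E$.
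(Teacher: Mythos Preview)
Your proposal is correct and follows essentially the same route as the paper: argue by contradiction, reduce to the case $y_0\in(F_i\setminus E_i)\cap(F_j\setminus E_j)\subset D\setminus E$, invoke the construction of Proposition~\ref{F_i} to get $U\cap H^+(y_0)\subset E_i$ and $U'\cap H^+(y_0)\subset E_j$, and then exhibit a common point to contradict $E_i\cap E_j=\emptyset$. Your write-up is in fact a bit more careful than the paper's (you use two neighborhoods $U,U'$ and explain why their intersection meets $H^+(y_0)$, and you flag the connectedness of $U\cap H^+(y_0)$), but the underlying argument is the same.
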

\begin{proof}Since $E_i$ is a path connected component,
$E_i\cap E_j=\emptyset$ for $i\neq j$. By the argument in the proof
of Proposition \ref{F_i}, hence $(F_j\setminus E_j)\subset
(D\setminus E)$, therefore $E_i\cap F_j=\emptyset$. Next we will
prove by a contradiction argument. If we assume $F_i\cap
F_j\neq\emptyset$, then $\exists y_0\in F_i\cap F_j$. Since $E_i\cap
E_j=\emptyset$, $E_i\cap F_j=\emptyset$ and $F_i\cap E_j=\emptyset$,
hence $y_0\in (F_i\setminus E_i)\cap(F_j\setminus E_j)$. By the
argument in the proof of Proposition \ref{F_i} again, we have that
$U\cap H^+(y_0)\subset E_i$ and $U\cap H^+(y_0)\subset E_j$. It
implies that $U\cap H^+(y_0)\subset E_i\cap E_j=\emptyset$.  It is a
contradiction. Then the proof is complete.
\end{proof}
From propositions above, it can be seen that for any $y_0\in E_i$,
we have that $\overline t_s(y_0)\le t_s(y_0)<\infty$. For any
$y_0\in F_i\setminus E_i$, we have that $\overline t_s(y_0)<
t_s(y_0)=\infty$.

$F_i$ is path connected, but it is not necessarily a path connected component of $M$.  So there  exists a  path connected component of $M$, we call it $R_j$, such that $R_j\supset F_i$. Then we have
$M= \bigcup R_j$, where   $R_j$ is a path connected component of $M$.

 The number of path connected components of $M$ may be less than
the number of path connected components of $E$.  Since for some $i$
and $j$, $E_i$ and $E_j$ are two path connected components of $E$,
there may exist a path connected component $R_k$ of $M$ such that
$R_k\supset E_i\cup E_j$.

\begin{proposition}\label{T(Ei)}
 $T(E_i)$ is 1) either a connected set,
 2) or if  $T(E_i)$ is not a connected set,  then for any two
 disjoint closed subset $A$ and $B$ of  $T(E_i)$ such that
 $T(E_i)=A\cup B$, we have   $distance(A, B)=0$.
 \end{proposition}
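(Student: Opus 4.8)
The plan is to prove directly that every decomposition $T(E_i)=A\cup B$ into disjoint, relatively closed, nonempty subsets satisfies $\mathrm{dist}(A,B)=0$; the connectedness alternative is simply the case in which no such decomposition exists, so nothing further is needed there. The first step is to transfer the separation from $T(E_i)$ down to the parameter set $E_i$. For each $y_0\in E_i$ the termination segment $S_{y_0}=\{(y_0+\frac{Dg(y_0)}{|Dg(y_0)|}t,\,t):t\in[\overline t_s(y_0),t_s(y_0)]\}$ is a genuine finite line segment, since $E_i\subset E=J$ forces $t_s(y_0)<\infty$ by Proposition \ref{E=J}. As $S_{y_0}$ is connected, $A\cap S_{y_0}$ and $B\cap S_{y_0}$ are disjoint relatively closed sets covering it, so each $S_{y_0}$ lies wholly in $A$ or wholly in $B$; this produces a partition $E_i=E_i^A\sqcup E_i^B$ into nonempty pieces. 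Because $E_i$ is path connected, these pieces cannot be mutually separated, so after possibly interchanging $A$ and $B$ I may fix $y^*\in E_i^B$ and a sequence $y_n\in E_i^A$ with $y_n\to y^*$.

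Next I would set $\tau=\limsup_n\overline t_s(y_n)$, noting $\tau\le\limsup_n t_s(y_n)\le t_s(y^*)$ by Proposition \ref{limsup}. In the easy case $\tau\ge\overline t_s(y^*)$, passing to a subsequence with $\overline t_s(y_{n_k})\to\tau\in[\overline t_s(y^*),t_s(y^*)]$ and using continuity of $Dg$ together with $Dg(y^*)\neq0$, the first termination points $(\overline x_s(y_{n_k}),\overline t_s(y_{n_k}))\in S_{y_{n_k}}\subset A$ converge to $(y^*+\frac{Dg(y^*)}{|Dg(y^*)|}\tau,\tau)\in S_{y^*}\subset B$. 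Thus a sequence in $A$ approaches a point of $B$, giving $\mathrm{dist}(A,B)=0$.

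The main obstacle is the remaining case $\tau<\overline t_s(y^*)$: now the first termination points of the $y_n$ accumulate strictly below the segment $S_{y^*}$, so their limit lands on the characteristic from $y^*$ before time $\overline t_s(y^*)$ and need not belong to $B$. Here the key is Proposition \ref{dense}: since $\liminf_{y\to y^*}\overline t_s(y)\le\tau<\overline t_s(y^*)$, every point $z(t_0)=(y^*+\frac{Dg(y^*)}{|Dg(y^*)|}t_0,\,t_0)$ with $t_0\in(\tau,\overline t_s(y^*))$ is a cluster point of nondifferentiable points. I would then verify the crucial sublemma that these nearby nondifferentiable points in fact lie in $T(E_i)$: because $t_0<\overline t_s(y^*)$, the unique minimizer of $z(t_0)$ is $y^*$, so by upper semicontinuity of $L$ the minimizers of a nondifferentiable point close to $z(t_0)$ all cluster near $y^*$, hence lie in $E_i$ and have $Dg\neq0$, and by Theorem \ref{diff} such a point is a termination point of a characteristic issuing from $E_i$.

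To close this case I would argue by contradiction, assuming $\mathrm{dist}(A,B)=d>0$. On the interval $I=(\tau,\overline t_s(y^*))$ each ball of radius $d/3$ about $z(t_0)$ meets $T(E_i)$ by the cluster property above, and since no two points at distance $d/3$ can straddle $A$ and $B$, all $T(E_i)$-points in that ball lie in one piece; this defines a colour $c(t_0)\in\{A,B\}$. Continuity of $t_0\mapsto z(t_0)$ together with $\mathrm{dist}(A,B)=d$ makes $c$ locally constant, hence constant on the connected interval $I$. But as $t_0\to\overline t_s(y^*)^-$ one has $z(t_0)\to(\overline x_s(y^*),\overline t_s(y^*))\in S_{y^*}\subset B$, forcing $c\equiv B$ near the right endpoint, while as $t_0\to\tau^+$ the points $(\overline x_s(y_{n_k}),\overline t_s(y_{n_k}))\in A$ sit within $d/3$ of $z(t_0)$, forcing $c\equiv A$ near the left endpoint. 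This contradiction yields $\mathrm{dist}(A,B)=0$ in this case as well, completing the proof.
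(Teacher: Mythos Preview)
Your proof is correct and follows the same architecture as the paper's: transfer the separation to a partition $E_i=E_i^A\sqcup E_i^B$ via the connected segments $S_{y_0}$, use connectedness of $E_i$ to produce a sequence in one piece converging to a point $y^*$ in the other, then split on the size of a $\limsup$ of termination times and invoke Propositions \ref{limsup} and \ref{dense} in the two cases. The only cosmetic difference is in closing the hard case: the paper takes $t_1=\limsup t_s(y)$ over \emph{all} $y\in E_i^A$ near $y^*$ (rather than along one fixed sequence), which lets it argue directly that any minimizer of a nearby nondifferentiable point at time $t>t_1$ has $t_s\ge t>t_1$ and hence cannot lie in $E_i^A$, so those points are automatically in $B$---this makes the contradiction immediate and avoids your colour-function argument.
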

\begin{proof}$T(E_i)$ is the set of all termination points of
characteristics
 from $y_0\in E_i$ and points between $(\overline x_s(y_0),
 \overline t_s(y_0))$ and $(x_s(y_0), t_s(y_0))$.  If $T(E_i)$ is not a connected
set, then for any two disjoint closed subsets $A$ and $B$ of
$T(E_i)$ such that $T(E_i)=A\cup B$,  we have that $(L(A)\cap
L(B))\cap E_i=\emptyset $ and $(L(A)\cup L(B))\cap E_i=E_i$,  due to
that
 for any $y_0\in E_i$, $Dg(y_0)\neq 0$, hence there is only one characteristic
 starting from $y_0$. Recall (\ref{L(x, t)}), $L(A)$ here means  the set of minimizers for all points $(x, t)$ in $A$.  Since $E_i$ is connected,
$L(A)\cap E_i$ and $ L(B)\cap E_i$ can not both be closed sets of
$E_i$.

We assume that $L(A)\cap E_i$ is not a closed set of $E_i$. Then there
exists some sequence $\{y_n\}\in L(A)\cap E_i$ and some
$y_0\in L(B)\cap E_i$, such that $y_n\rightarrow y_0$. Let
$t_1=\limsup_{y_n\in L(A)\cap E_i, y_n\rightarrow y_0}t_s(y_n)$.

1.First if we assume   $t_1\ge\overline t_s(y_0)$,  since
$t_s(y_0)<\infty$, then
  there must exist some sequence $\{y_j\}\subset L(A)\cap E_i$ such that
 $y_j\rightarrow y_0$ and $( x_s(y_j),  t_s(y_j))\rightarrow (x , t
 )$, where $x=y_0+\frac{Dg(y_0)}{|Dg(y_0)|}t$ and $t\in [\overline t_s(y_0), t_s(y_0)]$, due to that $\limsup_{y_n\rightarrow y_0}t_s(y_n)=t_s(y_0)$ in Proposition \ref{limsup}.
 Since $(x_s(y_j),  t_s(y_j))\in A$, and
 $(x, t)\in B$, it contradicts with
 that $A$ and $B$ are  disjoint closed subsets  of
 $T(E_i)$. Hence we must have $t_1<\overline t_s(y_0)$.

2.   Then when $t_1<\overline t_s(y_0)$,  for any $t$ such that
$t_1<t<\overline t_s(y_0)$, let $x=y_0+\frac{Dg(y_0)}{|Dg(y_0)|}t$,
by Proposition \ref{dense}, we have that for any $\epsilon$, there
exists some $x_\epsilon\in \overline B_\epsilon(x)$ such that $(x_\epsilon,t)$
is a nondifferentiable point.

 Next we need to prove $(x_\epsilon,t)\in B$. Since $t<\overline t_s(y_0)$, $y_0$ is unique element in $L(x,t)$.
 Due to that  $L$ is upper
 semi-continuous, when $\epsilon$ is sufficiently small, for any
 element $y_\epsilon$ in $L(x_\epsilon, t)$,
 we have that $y_\epsilon\approx y_0$. 
 Since $E_i$ is an open set,  $y_\epsilon\in E_i$.

Due to that $t>t_1$, hence
 $y_{\epsilon}\in L(B)$. It follows that
 $(x_{\epsilon}, t)$
 is in $B$. By the arbitrarity of $t$  as long as $t>t_1$,
 $(x_1, t_1)$ is a cluster point of points in $B$,
 where $x_1=y_0+\frac{Dg(y_0)}{|Dg(y_0)|}t_1$. At the same time,
 $(x_1, t_1)$ is also a cluster point of points in $A$. So we have that
 $distance(A, B)=0$.

\end{proof}Since $2)$ in Proposition \ref{T(Ei)} is a natural
phenomena coming from the proof, we give a definition for it.
\begin{definition}\label{almost} We  define that a set $C$ is almost connected if for any two
 disjoint closed subset $A$ and $B$ of  $C$ such that
 $C=A\cup B$, we have $distance(A, B)=0$.
\end{definition}
Next we will see what happens to $T(F_i)$.
\begin{proposition}\label{T(F_i)}
 If $T(E_i)$ is a connected set, then $T(F_i)$ is also a connected set.
 If $T(E_i)$ is an almost connected set, then $T(F_i)$ is  also an almost connected set.
 \end{proposition}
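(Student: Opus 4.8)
The plan is to describe $T(F_i)$ explicitly as the union of $T(E_i)$ with a family of characteristic rays, to show that each such ray meets $T(E_i)$ through an endpoint lying in $\overline{T(E_i)}$, and then to transfer (almost) connectedness from $T(E_i)$ to the whole union by elementary point-set topology. First I would record the decomposition. Since $F_i=E_i\cup(F_i\setminus E_i)$, and since every $y_0\in E_i$ has $\overline t_s(y_0)\le t_s(y_0)<\infty$ whereas every $y_0\in F_i\setminus E_i$ has $\overline t_s(y_0)<t_s(y_0)=\infty$, Definition \ref{M} yields
\[
T(F_i)=T(E_i)\cup\bigcup_{y_0\in F_i\setminus E_i}R(y_0),\qquad R(y_0):=\Big\{(x,t):x=y_0+\tfrac{Dg(y_0)}{|Dg(y_0)|}t,\ t\ge\overline t_s(y_0)\Big\}.
\]
Each $R(y_0)$ is a half-line, hence connected, and since $E_i\subset E=J$, picking any $y_0\in E_i$ shows $T(E_i)\neq\emptyset$.

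The hard part, and the step I expect to be the main obstacle, is the \emph{attaching lemma}: for every $y_0\in F_i\setminus E_i$ the endpoint $(\overline x_s(y_0),\overline t_s(y_0))$ of $R(y_0)$ lies in $\overline{T(E_i)}$. I would prove this by combining Proposition \ref{limsH+}, which gives $\limsup_{y_n\in H^+(y_0),\,y_n\to y_0}\overline t_s(y_n)=\overline t_s(y_0)$, with the fact extracted from the proof of Proposition \ref{F_i} that a whole one-sided neighborhood $U\cap H^+(y_0)$ is contained in $E_i$. This produces a sequence $y_n\in E_i$ with $y_n\to y_0$ and $\overline t_s(y_n)\to\overline t_s(y_0)$; since $Dg$ is continuous and $Dg(y_0)\neq 0$, the first termination points $(\overline x_s(y_n),\overline t_s(y_n))\in T(E_i)$ converge to $(\overline x_s(y_0),\overline t_s(y_0))$, which proves the claim.

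For the connected case I would invoke the elementary fact that if $A,B$ are connected and $\overline A\cap B\neq\emptyset$ then $A\cup B$ is connected. Taking $A=T(E_i)$ and $B=R(y_0)$, the endpoint of $R(y_0)$ lies in $\overline{T(E_i)}\cap R(y_0)$ by the attaching lemma, so each $T(E_i)\cup R(y_0)$ is connected; as all these sets contain the common nonempty connected set $T(E_i)$, their union $T(F_i)$ is connected.

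For the almost connected case, suppose $T(F_i)=A\cup B$ with $A,B$ disjoint and closed in $T(F_i)$ (hence both clopen). I would first show that a nonempty separation forces both pieces to meet $T(E_i)$. If, say, $B\cap T(E_i)=\emptyset$, then $T(E_i)\subset A$, and since $A$ is closed in $T(F_i)$ we get $\overline{T(E_i)}\cap T(F_i)\subset A$; by the attaching lemma every endpoint $(\overline x_s(y_0),\overline t_s(y_0))$ lies in $A$. Each ray $R(y_0)$ is connected and sits inside the separation $A\sqcup B$ while meeting $A$ at its endpoint, so $R(y_0)\subset A$; combined with $T(E_i)\subset A$ this gives $T(F_i)\subset A$, i.e. $B=\emptyset$. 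Hence in any nontrivial separation both $A\cap T(E_i)$ and $B\cap T(E_i)$ are nonempty disjoint closed subsets of $T(E_i)$ with union $T(E_i)$, and the almost connectedness of $T(E_i)$ (Definition \ref{almost}) gives $\mathrm{dist}(A\cap T(E_i),B\cap T(E_i))=0$, whence $\mathrm{dist}(A,B)=0$. This shows $T(F_i)$ is almost connected and completes the proposal.
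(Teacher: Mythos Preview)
Your proposal is correct and follows essentially the same approach as the paper: both arguments hinge on the attaching lemma that for every $y_0\in F_i\setminus E_i$ the first termination point $(\overline x_s(y_0),\overline t_s(y_0))$ lies in $\overline{T(E_i)}$, proved via Proposition~\ref{limsH+} together with the fact from the proof of Proposition~\ref{F_i} that $U\cap H^+(y_0)\subset E_i$. The paper's proof is terser---after establishing that $(x_s(y_k),t_s(y_k))\to(\overline x_s(y_0),\overline t_s(y_0))$ and that $(x_0,t_0)$ is joined to this endpoint by a characteristic segment, it simply asserts ``Hence we obtain the desirable result''---whereas you spell out the point-set topology explicitly, in particular the clopen-separation argument for the almost connected case; this extra detail is sound and arguably clarifies what the paper leaves implicit.
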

 \begin{proof}For any $(x_0, t_0)\in T(F_i\setminus E_i)$,
 $\exists y_0\in L(x_0, t_0)$ such that  $Dg(y_0)\neq 0$. Therefore
 $y_0\in F_i\setminus E_i$. First by the result in the proof of
 Proposition \ref{F_i}, we know that there is some small neighborhood
   $U$ of $y_0$ such that $U\cap H^+(y_0)\subset E_i$. Then by Proposition
  \ref{limsH+} that $\limsup_{y_n\in H^+(y_0),\,
y_n\rightarrow y_0}t_s(y_n)=\overline t_s(y_0)$,  there exists some
sequence $\{y_k\}\subset H^+(y_0)\subset E_i$ such that
$y_k\rightarrow y_0$ and $\lim_{n\rightarrow \infty}t_s(y_k)=
\overline t_s(y_0)$. Therefore $(x_s(y_k), t_s(y_k))\in T(E_i)$ goes to
$(\overline x_s(y_0), \overline t_s(y_0))\in T(F_i\setminus E_i)$.
$(x_0, t_0)$ is connected with $(\overline x_s(y_0), \overline t_s(y_0))$
 by the characteristic  segment which is a subset of $T(F_i\setminus E_i)$. 
 Hence we obtain the desirable result.
 \end{proof}
 $T(F_i)$ may intersect with $T(F_j)$, for some $i\neq j$. Hence the
number of connected components or almost connected components of
$T(M)$ may be less than the number of $i$ of $M=\bigcup F_i$.

It is obvious that we have the following proposition:
\begin{proposition}If for some $(x, t)$, $L(x, t)=
\{y_0, y_1, y_2, ..., y_n\}$, $y_i\in F_i$ and $T(F_i)$ is a
connected set, for $i=0, 1, ..., n$, Then $\bigcup_{i=0}^{i=n}
T(F_i)$ is a connected set.
\end{proposition}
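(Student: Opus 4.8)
The plan is to reduce the statement to the elementary topological fact that a union of connected sets sharing a common point is connected; the real content is to show that the given point $(x,t)$ itself belongs to every one of the sets $T(F_i)$. Since $T(F_0),\dots,T(F_n)$ are connected by hypothesis, once I know they all contain $(x,t)$ their union is automatically connected, which is precisely the claim.

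First I would check that each $y_i$ satisfies $Dg(y_i)\neq 0$: this is immediate, since $y_i\in F_i\subset M=D$ and every point of $M$ has $Dg\neq 0$ by Definition \ref{M} and Proposition \ref{M=D}. Because $y_i$ is a minimizer for $(x,t)$ and $Dg(y_i)\neq 0$, Proposition \ref{NC} forces $(x,t)$ to lie on the characteristic from $y_i$, namely $x=y_i+\frac{Dg(y_i)}{|Dg(y_i)|}t$. It then remains to locate $t$ inside the termination interval $[\overline t_s(y_i),t_s(y_i)]$ (or $[\overline t_s(y_i),\infty)$ when $t_s(y_i)=\infty$), which is exactly what membership in $T(F_i)$ requires by Definition \ref{M}. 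For the bounds on $t$ I would exploit the monotonicity coming from the geometry of the balls along a single characteristic: for $t'<t''$ the balls $\overline B_{t'}(x')$ and $\overline B_{t''}(x'')$ centered at the corresponding characteristic points are internally tangent at $y_i$, so $\overline B_{t'}(x')\subset \overline B_{t''}(x'')$. Consequently the property ``$y_i$ is a minimizer'' is downward closed in $t$, and so is ``$y_i$ is the unique minimizer''. Since $y_i$ is a minimizer for $(x,t)$, the upper bound $t\le t_s(y_i)$ follows at once from the definition (\ref{t_s}). Since $n\ge 1$, the point $(x,t)$ has more than one minimizer, so $y_i$ is not the unique minimizer for $(x,t)$; by downward closedness of uniqueness this rules out $t<\overline t_s(y_i)$, giving the lower bound $\overline t_s(y_i)\le t$. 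Hence $(x,t)\in T(F_i)$ for every $i$, so $(x,t)\in\bigcap_{i=0}^n T(F_i)$. (When $n=0$ the union is the single connected set $T(F_0)$ and there is nothing to prove.)

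The step I expect to require the most care is the monotonicity/nested-ball claim together with the endpoint bookkeeping, in particular the case $t_s(y_i)=\infty$ (which occurs for $y_i\in F_i\setminus E_i$ and is accommodated by the second branch of Definition \ref{M}); everything after that is routine. With $(x,t)$ shown to be a common point of the connected sets $T(F_0),\dots,T(F_n)$, the union $\bigcup_{i=0}^{i=n}T(F_i)$ is connected, completing the proof.
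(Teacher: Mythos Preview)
Your proof is correct and matches the approach the paper has in mind: the paper states this proposition as ``obvious'' without proof, and the intended reason is precisely the one you spell out, namely that $(x,t)$ lies in every $T(F_i)$ (since each $y_i\in F_i$ is a non-unique minimizer, so $\overline t_s(y_i)\le t\le t_s(y_i)$), whence the connected sets $T(F_i)$ share a common point and their union is connected. Your careful verification of the monotonicity of the minimizer and unique-minimizer properties along a characteristic fills in exactly the details the paper suppresses.
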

The above property is same for almost connected sets.

 Combine Proposition \ref{F_i}, $\Sigma\cup T_1=T(M)=\bigcup T(F_i)$, Proposition
 \ref{T(Ei)} and Proposition \ref{T(F_i)},
the following theorem can be obtained:
\begin{theorem}\label{thm1} $\Sigma\cup T_1$ is composed of at most
 countable connected components or almost connected components. That is,
  $\Sigma\cup T_1=\bigcup S_j$,  each  $S_j$ is 1) either a
connected component,
 2) or an almost connected component. Each $S_j$ is a finite or
 countable union of some $T(F_i)$.

\end{theorem}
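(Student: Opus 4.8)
The plan is to build $\Sigma\cup T_1$ out of the blocks $T(F_i)$ and then merge those blocks that overlap. First I would settle countability. By Lemma \ref{E} the set $E$ is open in $\mathbb{R}^n$, and an open subset of $\mathbb{R}^n$ has at most countably many path connected components; hence the family $\{E_i\}$ in (\ref{E_i}) is at most countable. By Proposition \ref{F_i} each $F_i$ is attached to a single $E_i$ (one has $F_i\supset E_i$, and the points of $F_i\setminus E_i$ are produced from boundary points of that same $E_i$), so the family $\{F_i\}$ is indexed by the same countable set and is itself at most countable. Combining Proposition \ref{F_i} with the identities $\Sigma\cup T_1=T(M)$ and $T(M)=\bigcup_i T(F_i)$, I conclude that $\Sigma\cup T_1$ is an at most countable union of the sets $T(F_i)$.

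Next I would record that each block is well behaved: by Proposition \ref{T(Ei)} every $T(E_i)$ is connected or almost connected, and by Proposition \ref{T(F_i)} this property transfers to $T(F_i)$. To assemble the components I would introduce an auxiliary graph $G$ whose vertices are the sets $T(F_i)$ and whose edges join $T(F_i)$ to $T(F_j)$ exactly when $T(F_i)\cap T(F_j)\neq\emptyset$ (such intersections occur precisely at points whose minimizer set meets several $F_i$, as in the proposition preceding the theorem). Since the vertex set is countable, $G$ has at most countably many connected components $\mathcal{C}_j$, and I set $S_j=\bigcup_{T(F_i)\in\mathcal{C}_j}T(F_i)$. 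Then $\Sigma\cup T_1=\bigcup_j S_j$, each $S_j$ is a finite or countable union of some $T(F_i)$, and blocks from different graph components are disjoint, so distinct $S_j$ are disjoint.

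The heart of the argument is showing that every $S_j$ is connected or almost connected. Suppose $S_j=A\cup B$ with $A,B$ disjoint closed nonempty subsets. If some block $T(F_i)$ is split, i.e. both $A\cap T(F_i)$ and $B\cap T(F_i)$ are nonempty, then that block cannot be connected, so it is almost connected, and since $A\cap T(F_i),\,B\cap T(F_i)$ are disjoint closed sets covering it, Definition \ref{almost} forces $\mathrm{dist}(A,B)=0$. If no block is split, then each block lies wholly in $A$ or wholly in $B$; but two intersecting blocks lying on opposite sides would place a common point in both $A$ and $B$, which is impossible, so no $A$-block meets any $B$-block and $\mathcal{C}_j$ would be disconnected, a contradiction. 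Hence either no nontrivial split exists (and, when all blocks of $\mathcal{C}_j$ are connected, a chaining argument along finite paths in $G$ shows $S_j$ is genuinely connected), or every split forces $\mathrm{dist}(A,B)=0$ and $S_j$ is almost connected.

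The step I expect to be the main obstacle is confirming that the $S_j$ deserve to be called connected or almost connected \emph{components}, that is, their maximality. Disjointness of distinct $S_j$ is immediate, but almost connectedness tolerates pairs of closed pieces at distance zero, so a priori two graph-distinct sets $S_j,S_k$ could satisfy $\mathrm{dist}(S_j,S_k)=0$ and jointly form a larger almost connected set — precisely the distance-zero phenomenon already visible for a single $T(E_i)$ in Proposition \ref{T(Ei)}. I would handle this by strengthening the merging relation so that it also absorbs blocks at mutual distance zero, or equivalently by proving that blocks in different graph components stay genuinely separated in the relevant topological sense; once that delicate point is in place, the decomposition $\Sigma\cup T_1=\bigcup_j S_j$ of Theorem \ref{thm1} into at most countably many connected or almost connected components follows.
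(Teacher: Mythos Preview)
Your proposal is correct and follows the same route as the paper: identify $\Sigma\cup T_1=T(M)=\bigcup_i T(F_i)$ via Proposition~\ref{F_i}, use Propositions~\ref{T(Ei)} and~\ref{T(F_i)} to get that each $T(F_i)$ is connected or almost connected, and then merge overlapping blocks. The paper's own proof is a single sentence that simply cites these propositions; you have supplied the explicit merging step (your graph $G$) and the verification that a union of overlapping connected/almost-connected blocks is again connected or almost connected, which the paper leaves implicit.

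Two minor remarks. First, your Case~1/Case~2 dichotomy already shows that \emph{every} closed splitting of $S_j$ forces $\mathrm{dist}(A,B)=0$ (Case~2 is impossible), so $S_j$ is automatically almost connected; the parenthetical chaining argument for genuine connectedness is optional. Second, your worry about maximality is mathematically legitimate, but the paper uses ``component'' informally to mean ``piece'' rather than ``maximal almost-connected subset''---it never defines an almost connected \emph{component} and never proves maximality either. So you are not missing anything the paper supplies; if anything you are being more careful than the source.
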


Next we study $\Sigma$, the set of all nondifferentiable points of
$u$. It is easy to see that the following lemma holds:
\begin{lemma}$\Sigma=T(M)\setminus \{(x, t)|(x, t)\,\mbox{has unique
minimizer}\}$.
\end{lemma}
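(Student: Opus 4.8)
The plan is to reduce the claim to the already-proved decomposition $T(M)=\Sigma\cup T_1$ together with the differentiability dichotomy on $T(M)$ supplied by Theorem \ref{diff}. First I would observe that, by Definition \ref{M}, every point of $T(M)$ lies on the characteristic issuing from some $y_0$ with $Dg(y_0)\neq 0$, so that $y_0$ is a minimizer of that point with $Dg(y_0)\neq 0$. This is exactly the configuration needed to apply Theorem \ref{diff}: for a point $(x,t)\in T(M)$, part 1) shows that a unique minimizer forces differentiability, while part 2) shows that the presence of a second minimizer (together with the minimizer $y_0$ satisfying $Dg(y_0)\neq 0$) forces nondifferentiability. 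Hence, restricted to $T(M)$, the property ``nondifferentiable'' coincides with ``has more than one minimizer,'' and ``differentiable'' coincides with ``has a unique minimizer.''

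The cleanest route from here is to note that the set of points of $T(M)$ possessing a unique minimizer is, by the very definition of $T_1$, equal to $T_1$. Therefore $T(M)\setminus\{(x,t)\mid (x,t)\text{ has unique minimizer}\}=T(M)\setminus T_1$. Since $\Sigma$ and $T_1$ are disjoint (points of $\Sigma$ have more than one minimizer, whereas points of $T_1$ have a unique one) and $T(M)=\Sigma\cup T_1$, this difference is exactly $\Sigma$, which yields the lemma.

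Alternatively, I would verify the two inclusions directly. For $\Sigma\subseteq T(M)\setminus\{(x,t)\mid (x,t)\text{ has unique minimizer}\}$, I would use that $\Sigma\subseteq T(M)$ (established in the proof of $T(M)=\Sigma\cup T_1$) and that every point of $\Sigma$ has more than one minimizer, so none can have a unique minimizer. For the reverse inclusion, a point $(x,t)\in T(M)$ lacking a unique minimizer has more than one minimizer, hence is nondifferentiable by the equivalence above, and therefore lies in $\Sigma$.

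There is essentially no analytic obstacle; the lemma is a bookkeeping consequence of the definitions and of $T(M)=\Sigma\cup T_1$. The only point that demands care is the verification that on $T(M)$ the criterion of Theorem \ref{diff} genuinely makes ``unique minimizer'' and ``differentiable'' equivalent — this is precisely where one must invoke that a point of $T(M)$ always admits a minimizer $y_0$ with $Dg(y_0)\neq 0$, so that part 2) of Theorem \ref{diff} is applicable whenever a second minimizer is present.
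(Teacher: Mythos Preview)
Your proposal is correct and matches the paper's treatment: the paper simply states that the lemma is ``easy to see'' and gives no explicit proof. Your argument via $T(M)=\Sigma\cup T_1$, the definition of $T_1$ as the points of $T(M)$ with a unique minimizer, and the disjointness of $\Sigma$ and $T_1$ (from Theorem~\ref{diff}~1)) is exactly the bookkeeping the paper leaves to the reader.
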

Therefore \begin{eqnarray*}\Sigma&=&(\cup T(F_i))\setminus \{(x,
t)|(x, t)\,\mbox{has unique minimizer}\}\\&=&\bigcup (
T(F_i)\setminus \{(x, t)|(x, t)\,\mbox{has unique
minimizer}\}).\end{eqnarray*} We study the property of
$T(F_i)\setminus \{(x, t)|(x, t)\,\mbox{has unique minimizer}\}$
first. The following lemma holds:
\begin{lemma}$T(F_i)\setminus \{(x, t)|(x, t)\,\mbox{has unique minimizer}\}$
 is a connected set or an almost connected set.
\end{lemma}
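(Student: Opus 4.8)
The plan is to identify this set, which I denote $\tilde T:=T(F_i)\setminus\{(x,t)\,|\,(x,t)\text{ has unique minimizer}\}$, as the nondifferentiable part $T(F_i)\cap\Sigma$ of $T(F_i)$, to show it is dense in $T(F_i)$, and then to transfer the connectedness or almost-connectedness of $T(F_i)$ (Proposition \ref{T(F_i)}) to $\tilde T$ by a short topological argument. First I would note that, by Theorem \ref{diff}, a point of $T(F_i)$ is nondifferentiable exactly when it has more than one minimizer, so $\tilde T=T(F_i)\cap\Sigma$ and the removed set is $T(F_i)\cap T_1$. For a removed point $(x_0,t_0)=(x_s(y_0),t_s(y_0))$ the minimizer $y_0\in F_i$ is unique, $Dg(y_0)\neq 0$, and $\overline t_s(y_0)=t_s(y_0)=t_0<\infty$ by the lemma characterizing $T_1$. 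Since $Dg(y_0)\neq 0$ and $t_s(y_0)<\infty$, Proposition \ref{E=J} gives $y_0\in E$, and because $F_i\cap E=E_i$ we conclude $y_0\in E_i$, which is open by Lemma \ref{E}.

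Next I would prove $T(F_i)\subseteq\overline{\tilde T}$, i.e.\ that each removed point is a cluster point of $\tilde T$. As $(x_0,t_0)$ is a differentiable point whose unique minimizer $y_0$ satisfies $Dg(y_0)\neq 0$, the ``Furthermore'' part of Proposition \ref{cluster} provides $h>0$ such that for every $\epsilon\in(0,h]$ there is $x_\epsilon\in B_\epsilon(x_0)$ with $(x_\epsilon,t_0+\epsilon)$ nondifferentiable; thus $(x_\epsilon,t_0+\epsilon)\to(x_0,t_0)$ through nondifferentiable points. To place these points in $\tilde T$ I would use that $L(x_0,t_0)=\{y_0\}$ together with the upper semicontinuity of $L$: for small $\epsilon$ every minimizer of $(x_\epsilon,t_0+\epsilon)$ lies near $y_0$, hence in the open set $E_i$, and by continuity of $Dg$ has nonzero gradient. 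For such a minimizer $y_\epsilon\in E_i$, Proposition \ref{NC} puts $(x_\epsilon,t_0+\epsilon)$ on the characteristic from $y_\epsilon$, and since it has more than one minimizer $\overline t_s(y_\epsilon)\le t_0+\epsilon\le t_s(y_\epsilon)$; therefore $(x_\epsilon,t_0+\epsilon)\in T(E_i)\subseteq T(F_i)$ and, being nondifferentiable, it lies in $\tilde T$. Hence $(x_0,t_0)\in\overline{\tilde T}$.

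Finally I would run the topological step. Suppose $\tilde T=A\cup B$ with $A,B$ nonempty, disjoint, and relatively closed in $\tilde T$; it suffices to show $\mathrm{dist}(A,B)=0$. If instead $\mathrm{dist}(A,B)=d>0$, then $\mathrm{dist}(\overline A,\overline B)=d>0$, so $\overline A\cap\overline B=\emptyset$, and from $T(F_i)\subseteq\overline{\tilde T}=\overline A\cup\overline B$ one gets the splitting $T(F_i)=(T(F_i)\cap\overline A)\cup(T(F_i)\cap\overline B)$ into two nonempty, disjoint, relatively closed subsets at distance $\ge d>0$, contradicting the almost-connectedness of $T(F_i)$ granted by Proposition \ref{T(F_i)}. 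Thus every nontrivial relatively-closed splitting of $\tilde T$ has distance zero, so $\tilde T$ is connected or almost connected. I expect the main obstacle to be the membership claim in the middle step, namely that the approximating singular points belong to $T(F_i)$ and not merely to $\Sigma$; keeping them on characteristics emanating from the same component $E_i$ is precisely where the openness of $E_i$, the uniqueness of the minimizer at the removed point, and the upper semicontinuity of $L$ must be used together.
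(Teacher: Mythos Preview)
Your proposal is correct and follows essentially the same route as the paper: show that every point of $T(F_i)$ with a unique minimizer lies in $T(E_i)$ and is a cluster point of nondifferentiable points of $T(F_i)$ (via Proposition \ref{cluster}, openness of $E_i$, and upper semicontinuity of $L$), then lift a positive-distance splitting of $\tilde T$ to one of $T(F_i)$, contradicting Proposition \ref{T(F_i)}. The only cosmetic difference is that the paper explicitly partitions the removed points into sets $A_1,B_1$ according to whether they cluster to $A$ or $B$, whereas you phrase the same step more compactly using the closures $\overline A,\overline B$.
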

\begin{proof}1) If $T(F_i)\setminus \{(x, t)|(x, t)\,\mbox{has unique
minimizer}\}$ is connected, then proposition holds.

 2) If $T(F_i)\setminus \{(x, t)|(x, t)\,\mbox{has unique
minimizer}\}$ is not connected, then for any two disjoint closed
subsets $A$ and $B$ of $T(F_i)\setminus \{(x, t)|(x, t)\,\mbox{has
unique minimizer}\}$  such that $A\cup B=T(F_i)\setminus \{(x,
t)|(x, t)\,\mbox{has unique minimizer}\}$, we need to prove that
$distance (A, B)=0$.

It will be proved by contradiction method. Assume that $distance (A,
B)\neq 0$. First we see the set $T(F_i)\cap \{(x, t)|(x,
t)\,\mbox{has unique minimizer}\}$. For any $(x, t)\in T(F_i)\cap
\{(x, t)|(x, t)\,\mbox{has unique minimizer}\}$, $(x, t)$ must be in
$T(E_i)$. It implies that the unique minimizer $y$ for $(x, t)$ is
in $E_i$.  By Proposition \ref{cluster}, there exist $(x_n, t_n)\in
\Sigma$ such that $(x, t)$ is a cluster point of $\{(x_n, t_n)\}$.
Let $y_n\in L(x_n, t_n)$, we have that $y_n\rightarrow y$. Since
$E_i$ is open,  $y_n\in E_i$. Therefore $(x_n, t_n)\in T(E_i)$,
furthermore, $(x_n, t_n)\in T(F_i)\setminus \{(x, t)|(x,
t)\,\mbox{has unique minimizer}\}$. That is,  $(x_n, t_n)$ is in $A$
or $B$. It implies that $(x, t)$ must be a cluster point of points
in $A$ or $B$.
  Let $A_1$, $B_1$ be two subsets of $T(F_i)$, where $$A_1=
  \{(x, t)\in T(F_i)| (x, t) \,\mbox{has unique minimizer},
\,\mbox{and}\, (x, t) \,\mbox {is a cluster point of}\, A\},$$
$$B_1=\{(x, t)\in T(F_i)|(x, t) \,\mbox{has unique minimizer}, \,\mbox{and}\,
(x, t) \,\mbox {is a cluster point of}\, B\}.$$ Hence  $(x, t)$ is in $A_1$ or in $B_1$.
It implies that  $$T(F_i)\cap \{(x,
t)|(x, t)\,\mbox{has unique minimizer}\}=A_1\cup B_1.$$ We also have that $A_1\cap
B_1=\emptyset$, due to the assumption that $distance (A, B)\neq 0$.

Those imply that $(A\cup A_1)\cup (B\cup B_1)=T(F_i)$ and $(A\cup
A_1)\cap (B\cup B_1)=\emptyset$. We also have  $distance(A\cup A_1,
B\cup B_1)\neq 0$,  due to that   $distance
(A, B)\neq 0$ again. Therefore $A\cup A_1$ and $B\cup B_1$ are two
disjoint closed subsets of $T(F_i)$ with $distance(A\cup A_1, B\cup
B_1)\neq 0$. That contradicts with that $T(F_i)$ is connected or
almost connected. Hence the proof is complete.
\end{proof}

Therefore by the above lemma, we have the following theorem
about $\Sigma$:
\begin{theorem}$\label{Sigma}\Sigma$ is composed of at most countable connected
components or almost connected components. That is, $\Sigma=\bigcup
\Sigma_j$.  Each $\Sigma_j$ is a finite or countable union of some
$T(F_i)\setminus \{(x, t)|(x, t)\,\mbox{has unique minimizer}\}$.
\end{theorem}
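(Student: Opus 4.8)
The plan is to obtain Theorem \ref{Sigma} by combining the decomposition of $\Sigma$ recorded just above with the preceding lemma, organizing the pieces into components exactly as was done for $\Sigma \cup T_1$ in Theorem \ref{thm1}. Abbreviate $P_i := T(F_i) \setminus \{(x,t)\mid (x,t)\ \text{has unique minimizer}\}$. The identity $\Sigma = T(M)\setminus\{(x,t)\mid(x,t)\ \text{has unique minimizer}\}$, together with $M = D = \bigcup F_i$ from Proposition \ref{F_i} and $T(M) = \bigcup T(F_i)$, yields immediately $\Sigma = \bigcup_i P_i$.

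First I would record countability of the index set: since $E$ is open (Lemma \ref{E}) it has at most countably many path connected components $E_i$, and because each $F_i \supset E_i$ the decomposition $M = \bigcup F_i$ is indexed by the same countable set; hence there are at most countably many pieces $P_i$. The lemma immediately preceding the theorem already supplies that each $P_i$ is connected or almost connected in the sense of Definition \ref{almost}.

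The remaining step, and the main obstacle, is to glue the pieces into maximal components. I would introduce on the index set the equivalence relation generated by $i \sim j$ whenever $P_i \cap P_j \neq \emptyset$ and let $\Sigma_j$ be the union of the $P_i$ over a single class; each class being at most countable, $\Sigma_j$ is a finite or countable union of pieces, and the $\Sigma_j$ cover $\Sigma$. When every $P_i$ in a class is connected, $\Sigma_j$ is connected by the standard fact that a chain of connected sets with consecutive members meeting has connected union. The delicate point is propagating the almost connected property: given a splitting $\Sigma_j = A \cup B$ into disjoint closed sets with $\mathrm{distance}(A,B) > 0$, I would note that for each $i$ the sets $A \cap P_i$ and $B \cap P_i$ are closed in $P_i$ and at positive distance, so connectedness or almost connectedness of $P_i$ forces $P_i \subset A$ or $P_i \subset B$; since consecutive pieces in a class meet, the whole class lands on one side, contradicting $A, B \neq \emptyset$. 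The real subtlety, which also underlies Theorem \ref{thm1}, is that for almost connected pieces the relevant overlaps may appear only as cluster points in the closure rather than as genuine intersections, so to obtain \emph{maximal} almost connected components one must enlarge the chaining relation to include accumulation at distance zero and verify that the union over such an enlarged class remains almost connected; carrying this out carefully at accumulation points is where the work lies.
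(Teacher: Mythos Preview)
Your proposal is correct and follows essentially the same approach as the paper: the paper's proof is simply ``by the above lemma'' (the one establishing that each $T(F_i)\setminus\{(x,t)\mid(x,t)\ \text{has unique minimizer}\}$ is connected or almost connected), mirroring exactly the passage from Proposition~\ref{T(F_i)} to Theorem~\ref{thm1}. You supply more detail on the chaining of overlapping pieces into components than the paper does, and the subtlety you flag at the end about maximality under distance-zero accumulation is a concern the paper itself leaves implicit in both Theorem~\ref{thm1} and here.
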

Although ${\mathbb R^n}\times \{t>0\}\setminus \Sigma$ is not necessarily open, by the similar argument in Proposition 3.3.4 in \cite{C}, we have the following proposition about the regularity of $u$ on ${\mathbb R^n}\times \{t>0\}\setminus \Sigma$.
\begin{proposition}\label{C1} $u$ is $C^1$ on ${\mathbb R^n}\times \{t>0\}\setminus \Sigma$.
\end{proposition}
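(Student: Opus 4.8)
The plan is to reduce the statement to the continuity of the gradient map on the set $A:={\mathbb R^n}\times\{t>0\}\setminus\Sigma$, and then to extract that continuity from the upper semicontinuity of the superdifferential of the semiconcave function $u$. First I would observe that, by the very definition of $\Sigma$ together with Theorem \ref{diff}, $u$ is differentiable at every point of $A$; so the only thing left to establish is that the map $(x,t)\mapsto Du(x,t)$ is continuous when restricted to $A$. Recall that, as noted in the Preliminary subsection, $g\in C^1$ forces $u$ to be locally semiconcave. Hence I may invoke the structural properties of the superdifferential of a semiconcave function (Proposition 3.3.4 in \cite{C}): at each point $(x,t)$ of the open upper half space, $D^+u(x,t)$ is a nonempty, compact, convex set; the multifunction $(x,t)\mapsto D^+u(x,t)$ is upper semicontinuous; and $u$ is differentiable at $(x,t)$ if and only if $D^+u(x,t)$ is a singleton, in which case $D^+u(x,t)=\{Du(x,t)\}$.

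The core of the argument is a sequential continuity check. Fix $(x_0,t_0)\in A$. Since $u$ is differentiable there, $D^+u(x_0,t_0)=\{Du(x_0,t_0)\}$ is a singleton. Let $(x_k,t_k)\in A$ with $(x_k,t_k)\to(x_0,t_0)$. Each $(x_k,t_k)$ is again a differentiability point, so $Du(x_k,t_k)\in D^+u(x_k,t_k)$. By upper semicontinuity of $D^+u$ at $(x_0,t_0)$ and the fact that the limit set $D^+u(x_0,t_0)$ is the single point $Du(x_0,t_0)$, for every $\epsilon>0$ there is a neighborhood of $(x_0,t_0)$ on which $D^+u$ is contained in the ball $B_\epsilon(Du(x_0,t_0))$; consequently $Du(x_k,t_k)\to Du(x_0,t_0)$. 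This shows that $Du$ is continuous relative to $A$, and therefore $u\in C^1(A)$.

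The main obstacle, and the reason the standard statement cannot be quoted verbatim, is that $A$ is not open (as already remarked before the proposition). One therefore cannot simply say ``$u$ is differentiable on an open set, hence $C^1$ there''; differentiability at points of $A$ must be paired with continuity of the gradient taken within $A$ only, since near a point of $A$ there may be points of $\Sigma$ at which $u$ fails to be differentiable. The point that rescues the argument is that upper semicontinuity of $D^+u$ is a property of $u$ on the whole ambient open set ${\mathbb R^n}\times\{t>0\}$, not of $A$; it controls $D^+u$ at all nearby points regardless of whether they lie in $A$ or in $\Sigma$, and this is exactly what is needed to force the gradients along a sequence in $A$ to converge. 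Care is also needed to interpret $C^1(A)$ correctly as differentiability at each point of $A$ together with continuity of $Du|_A$, which is what the preceding computation delivers.
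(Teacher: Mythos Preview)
Your argument is correct and is exactly the one the paper has in mind: the paper does not spell out a proof but simply points to ``the similar argument in Proposition 3.3.4 in \cite{C}'', which is precisely the upper semicontinuity of $D^+u$ combined with the singleton characterization at differentiability points that you use. Your write-up even anticipates the paper's caveat that $A$ need not be open and handles it correctly by working with sequences in $A$ while invoking upper semicontinuity on the ambient half-space.
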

Next we consider the property about connected components of
$\overline \Sigma$. First we see $\overline S_j$, recall $S_j$ is a
connected component or an almost connect component of $T(M)$. The
following proposition tells us that no matter $S_j$ is  connected or
 almost connected, $\overline S_j$ is a connected set.
\begin{proposition}$\overline S_j$ is connected.
\end{proposition}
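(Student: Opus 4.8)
The plan is to split along the dichotomy already recorded in Theorem \ref{thm1}: $S_j$ is either a genuine connected component or merely an almost connected component in the sense of Definition \ref{almost}. If $S_j$ is connected there is nothing delicate to do, since the closure of a connected subset of a topological space is connected, so $\overline{S_j}$ is connected at once. All the real work sits in the almost connected case, and I would handle it by contradiction.

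So suppose $S_j$ is almost connected but $\overline{S_j}$ is disconnected. Then I can write $\overline{S_j}=P\cup Q$ with $P,Q$ nonempty, disjoint, and closed in $\overline{S_j}$; because $\overline{S_j}$ is itself closed in ${\mathbb R^{n+1}}$, both $P$ and $Q$ are closed in ${\mathbb R^{n+1}}$. Set $A=P\cap S_j$ and $B=Q\cap S_j$. First I would verify that this is a legitimate separation of $S_j$: plainly $A\cup B=S_j$, $A\cap B=\emptyset$, and $A,B$ are closed in $S_j$. They are also nonempty, since $P$ and $Q$ are relatively open in $\overline{S_j}$ while $S_j$ is dense in $\overline{S_j}$, so each of $P,Q$ meets $S_j$. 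Moreover $\overline A\subseteq \overline P=P$ and $\overline B\subseteq \overline Q=Q$, whence $\overline A\cap\overline B=\emptyset$.

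Next I would feed the separation $S_j=A\cup B$ into the almost connectedness of $S_j$, which by Definition \ref{almost} yields $distance(A,B)=0$. The crucial point, and the step I expect to be the main obstacle, is that $distance(A,B)=0$ alone is not enough: two disjoint closed sets in ${\mathbb R^{n+1}}$ can have distance zero if they escape to infinity, so the bare metric statement does not by itself contradict $\overline A\cap\overline B=\emptyset$. To close this gap I would reopen the construction behind Proposition \ref{T(Ei)} and Proposition \ref{T(F_i)}, which is exactly what produces the almost connectedness of each $S_j$. There the equality $distance(A,B)=0$ is never obtained abstractly; it is always realized at a concrete finite point $(x_1,t_1)$, with $t_1=\limsup_{y_n\in L(A)\cap E_i,\ y_n\to y_0}t_s(y_n)$ and $x_1=y_0+\frac{Dg(y_0)}{|Dg(y_0)|}t_1$ (finite because $y_0\in E_i$ forces $\overline t_s(y_0)\le t_s(y_0)<\infty$), and this $(x_1,t_1)$ is simultaneously a cluster point of $A$ and of $B$. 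Hence for the separation above one genuinely obtains a point $(x_1,t_1)\in\overline A\cap\overline B$.

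This is the contradiction: $(x_1,t_1)\in\overline A\subseteq P$ and $(x_1,t_1)\in\overline B\subseteq Q$, while $P\cap Q=\emptyset$. Therefore $\overline{S_j}$ cannot be disconnected. In summary, the routine part is the topological bookkeeping that converts a separation of $\overline{S_j}$ into a separation of $S_j$; the essential part, which requires the geometry of termination points rather than soft topology, is upgrading the ``distance zero'' in the definition of almost connectedness to an honest common cluster point in ${\mathbb R^{n+1}}$, and for that I would invoke the explicit finite cluster point constructed in the proof of Proposition \ref{T(Ei)}.
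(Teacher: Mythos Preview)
Your approach is the paper's: argue by contradiction, separate $\overline{S_j}=P\cup Q$ into disjoint closed pieces, restrict to $A=P\cap S_j$ and $B=Q\cap S_j$, and invoke the (almost) connectedness of $S_j$. The paper's proof then simply asserts that $distance(A,B)=0$ yields sequences from $A$ and from $B$ converging to a \emph{common} point $(x_0,t_0)\in\overline{S_j}$, which must lie in one of $P,Q$ while being a limit of points of the other---the contradiction. You are right that this passage from ``distance zero'' to ``common finite limit point'' is not automatic in ${\mathbb R^{n+1}}$; your plan to justify it by going back to the explicit finite cluster point $(x_1,t_1)$ produced in the proof of Proposition~\ref{T(Ei)} (finite because $y_0\in E_i$ forces $t_s(y_0)<\infty$) makes your argument more scrupulous than the paper's own at precisely this step. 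One small point of execution to watch: since $S_j$ is a union of several $T(F_i)$, you should first note that the separation $A,B$ of $S_j$ must split at least one individual $T(F_i)$ nontrivially (otherwise the $T(F_i)$'s contained in $A$ and those in $B$ would be pairwise disjoint, contradicting the way the $T(F_i)$'s are grouped into $S_j$), and it is for that $T(F_i)$ that you re-run the construction.
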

\begin{proof}We will prove by contradiction. Assume that
$\overline S_j$ is not connected. Then there exist two disjoint
closed sets $A$ and $B$ of $\overline S_j$ such that $A\cup
B=\overline S_j$. Hence $A\cap  S_j$ and $B\cap S_j$ are two
disjoint closed sets of $S_j$. Since $S_j$ is connected or almost
connected, we have that $distance (A\cap  S_j, B\cap S_j)=0$.
Therefore there exist  some sequence $\{(x_n, t_n)\}\subset A\cap
S_j$, $\{(\bar x_n, \bar t_n)\}\subset B\cap S_j$ and some point
$(x_0, t_0)$ such that $(x_n, t_n)\rightarrow (x_0, t_0)$ and $(\bar
x_n, \bar t_n)\rightarrow (x_0, t_0)$. It implies that $(x_0,
t_0)\in \overline S_j$. Hence  $(x_0, t_0)$ is in $A$ or $B$. If we
assume that it is in $A$, then we have that $(\bar x_n, \bar t_n)\in
B\cap  S_j\subset B$ such that $(\bar x_n, \bar t_n)\rightarrow
(x_0, t_0)\in A$. It contradicts with that $A$ and $B$ are two
closed disjoint sets of $\overline S_j$.
\end{proof}
By Proposition \ref{over}, $\overline\Sigma=\overline
{T(M)}=\overline {\cup S_j}$. In general, $\overline {\cup
S_j}\supset \bigcup \overline S_j$. For any $(x, t)\in \overline
{\cup S_j}\setminus \bigcup \overline  S_j$, it is a cluster point
of points from different $S_j$. This case is quite complicate,
perhaps we could not guarantee that $\overline {\cup S_j}$ has at
most countable connected components. Hence  a simpler
case is considered, which is the case when $\overline {\cup S_j}= \bigcup
\overline  S_j$. For instance, when the number of connected and
almost connected components of $T(M)$ is finite.

Therefore we have the following theorem:
\begin{theorem}\label{overSigma}If $\overline {\cup S_j}= \bigcup \overline
S_j
$, then $\overline\Sigma$ is composed of at most connected
components.
\end{theorem}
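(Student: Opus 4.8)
The plan is to assemble the statement from the structural facts already in hand, reducing everything to a single combinatorial observation about countable unions of connected sets. First I would recall that, by Proposition \ref{over}, $\overline\Sigma=\overline{T(M)}$, and by Theorem \ref{thm1}, $T(M)=\bigcup_j S_j$ where the index set is at most countable and each $S_j$ is a connected component or an almost connected component of $T(M)$. The hypothesis $\overline{\cup S_j}=\bigcup\overline{S_j}$ is then exactly what is needed to commute the closure past the union, so that $\overline\Sigma=\overline{\bigcup_j S_j}=\bigcup_j\overline{S_j}$, expressing $\overline\Sigma$ as an at most countable union of the closed sets $\overline{S_j}$.

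Next I would invoke the proposition proved immediately above the theorem, which guarantees that each $\overline{S_j}$ is connected, regardless of whether $S_j$ itself is connected or only almost connected. Thus $\overline\Sigma=\bigcup_j\overline{S_j}$ is an at most countable union of connected sets, and it remains only to bound the number of connected components of such a union.

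The final step is the elementary topological fact that a union of an at most countable family of connected sets has at most countably many connected components. I would argue as follows: since each $\overline{S_j}$ is connected, it is contained entirely in a single connected component of $\overline\Sigma$; this defines a map $\phi$ from the (at most countable) index set to the set of connected components of $\overline\Sigma$, sending $j$ to the unique component containing $\overline{S_j}$. Because every point of $\overline\Sigma=\bigcup_j\overline{S_j}$ lies in some $\overline{S_j}$, and hence every component contains at least one such $\overline{S_j}$, the map $\phi$ is surjective. A surjection from an at most countable set forces its target to be at most countable, so $\overline\Sigma$ has at most countably many connected components.

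I do not expect a serious technical obstacle here; the content of the theorem resides entirely in its hypothesis. The genuine difficulty, already flagged in the paragraph preceding the statement, is that without the assumption $\overline{\cup S_j}=\bigcup\overline{S_j}$ the closure of a countable union can strictly contain the union of the closures, and then a point of $\overline\Sigma\setminus\bigcup\overline{S_j}$ could be a limit of points drawn from infinitely many distinct $S_j$, possibly fusing the $\overline{S_j}$ into uncountably many configurations and breaking the counting argument. The hypothesis is imposed precisely to exclude this pathology, after which the proof is just the surjection argument above.
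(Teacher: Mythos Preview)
Your proposal is correct and follows essentially the same approach as the paper: the paper gives no explicit proof for this theorem, treating it as an immediate consequence of the preceding proposition that each $\overline{S_j}$ is connected together with the hypothesis $\overline{\cup S_j}=\bigcup\overline{S_j}$ and Proposition \ref{over}. Your surjection argument merely spells out the final counting step that the paper leaves tacit.
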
It is trivial to obtain the following corollary.
\begin{corollary}If $\Sigma$ is composed of finite connected components
or almost connected components, then $\overline\Sigma$ is composed
of finite connected components.
\end{corollary}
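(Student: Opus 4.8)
The plan is to reduce everything to two elementary facts: that the closure operation commutes with \emph{finite} unions, and that the closure of a connected or almost connected set is connected. First I would write the hypothesis explicitly. By Theorem \ref{Sigma} we have $\Sigma=\bigcup_j \Sigma_j$ with each $\Sigma_j$ connected or almost connected, and the assumption of the corollary is that this union is finite, say $\Sigma=\Sigma_1\cup\cdots\cup\Sigma_N$.

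Next I would pass to closures. Since the union is finite, $\overline\Sigma=\overline{\Sigma_1\cup\cdots\cup\Sigma_N}=\overline{\Sigma_1}\cup\cdots\cup\overline{\Sigma_N}$. This is the only place where finiteness is used in an essential way, and it is precisely the step that fails for countable unions; this is why the general statement Theorem \ref{overSigma} must carry the extra hypothesis $\overline{\cup S_j}=\bigcup\overline{S_j}$, and why here it is automatic.

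Then I would invoke connectedness of each $\overline{\Sigma_j}$. Each $\Sigma_j$ is connected or almost connected, so the argument used for the preceding proposition (that $\overline{S_j}$ is connected) applies verbatim: if $\overline{\Sigma_j}=A\cup B$ were a partition into disjoint nonempty closed sets, then $A\cap\Sigma_j$ and $B\cap\Sigma_j$ would be disjoint closed subsets of $\Sigma_j$ with union $\Sigma_j$, forcing $\mathrm{dist}(A\cap\Sigma_j,\,B\cap\Sigma_j)=0$ by (almost) connectedness, whence a common limit point would lie in both $A$ and $B$, a contradiction. Hence each $\overline{\Sigma_j}$ is connected.

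Finally, $\overline\Sigma$ is a union of the $N$ connected sets $\overline{\Sigma_1},\ldots,\overline{\Sigma_N}$, and a finite union of connected sets always has at most finitely many (indeed at most $N$) connected components. This yields the conclusion. I do not anticipate any genuine obstacle: the statement is essentially a repackaging of the preceding proposition together with the finite-union property of closures, and the only point requiring care is to record that the proof of ``$\overline{S_j}$ connected'' uses nothing about $S_j$ beyond its being connected or almost connected, so that it transfers directly to each $\Sigma_j$.
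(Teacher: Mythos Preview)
Your proposal is correct and is exactly the natural unpacking of what the paper means by ``It is trivial to obtain the following corollary'': finiteness makes the hypothesis $\overline{\bigcup S_j}=\bigcup\overline{S_j}$ of Theorem~\ref{overSigma} automatic, and the preceding proposition (whose proof uses only that $S_j$ is connected or almost connected) then gives that each closure is connected. Your observation that the proposition's argument transfers verbatim from the $S_j$ to the $\Sigma_j$ is the only point requiring a word of justification, and you supply it.
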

Since $\overline \Sigma$ is closed, ${\mathbb R^n}\times \{t>0\}\setminus \overline\Sigma$ is an open set. Similarly, we can have the regularity of $u$ on this open set.
\begin{proposition} $u$ is $C^1$ on ${\mathbb R^n}\times \{t>0\}\setminus \overline\Sigma$.
\end{proposition}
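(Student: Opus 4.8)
The plan is to reduce this statement to the already-established regularity on the complement of $\Sigma$ (Proposition \ref{C1}), exploiting that the set in question is genuinely open. First I would observe that $\overline\Sigma$ is by definition closed, so $\Omega := {\mathbb R^n}\times\{t>0\}\setminus\overline\Sigma$ is an open set. Since $\Sigma\subset\overline\Sigma$, we have the inclusion $\Omega\subset {\mathbb R^n}\times\{t>0\}\setminus\Sigma$; thus every point of $\Omega$ lies in the complement of $\Sigma$ and, by Theorem \ref{diff}, is a differentiable point of $u$ (equivalently, $\tilde L$ is a singleton there, so the single value of $Dg$ over the minimizers determines $Du$).

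The second step is to upgrade pointwise differentiability on $\Omega$ to genuine $C^1$ regularity, using the local semiconcavity of $u$ established in the Preliminary subsection. Recall that for a locally semiconcave function $v$ the superdifferential map $z\mapsto D^+v(z)$ is upper semicontinuous and always contains $Dv(z)$ at points of differentiability, and that $v$ is differentiable at $z$ precisely when $D^+v(z)$ is a singleton (Proposition 3.3.4 in \cite{C}). On the open set $\Omega$ every point is a differentiability point, so $D^+u(z)=\{Du(z)\}$ is single-valued throughout $\Omega$. An upper semicontinuous set-valued map that is everywhere single-valued on an open set is continuous as an ordinary map, so $Du$ is continuous on $\Omega$, whence $u\in C^1(\Omega)$. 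This is exactly the argument indicated for Proposition \ref{C1}, now carried out on the genuinely open set $\Omega$ rather than on the possibly non-open complement of $\Sigma$.

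There is no real obstacle here: the whole content is the inclusion $\overline\Sigma\supset\Sigma$ together with the openness of its complement. The only point requiring a little care is the meaning of ``$C^1$ on a set'' in Proposition \ref{C1}, where the complement of $\Sigma$ need not be open and the continuity of $Du$ can only be read relative to that set. Restricting to the open subset $\Omega$ removes this subtlety: any sequence $z_k\to z$ with $z\in\Omega$ can be taken entirely inside $\Omega$, so the semiconcavity estimate forces $Du(z_k)\to Du(z)$, and relative continuity of $Du$ along $\Omega$ therefore coincides with genuine continuity. This closes the proof.
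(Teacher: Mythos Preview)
Your proof is correct and follows essentially the same approach as the paper: the paper simply remarks that since $\overline\Sigma$ is closed, its complement is open, and then says ``Similarly, we can have the regularity of $u$ on this open set,'' referring back to the argument of Proposition~\ref{C1} (i.e., Proposition~3.3.4 in \cite{C}). You have spelled out this ``similarly'' in full detail, but the content is the same.
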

\subsection{Relation between  $D^*u(x, t)$ and minimizers of
$(x, t)$}
Next we discuss the relation between  $D^*u(x, t)$ and minimizers of
$(x, t)$. For the case of strictly convex, superlinear and smooth Hamiltonians, in \cite{C}, Theorem 6.4.9
tells us that there is a one-to-one correspondence between  $D^*u(x,
t)$ and the set of minimizers of $(x, t)$. For the Eikonal equation,
we can see from Theorem \ref{diff} that the one-to-one
correspondence between  $D^*u(x, t)$ and the set of minimizers of
$(x, t)$ does not hold anymore. A differentiable point may
correspond to more than one minimizer, even infinitely many
minimizers.    In \cite{C}, for the Eikonal equation with more
general setting and assumption that $g$ is semiconcave with linear
modulus,  it tells us the following: if $0$ is not in $D^*u(x, t)$,
then there is still a one-to-one correspondence between  $D^*u(x,
t)$ and the set of minimizers of $(x, t)$. But what if  $D^*u(x, t)$
may include $0$? Without assumption that $g$ is semiconcave with
linear modulus, only assume that $g$ is $C^1$, we can have the
following proposition: there is still a one-to-one correspondence,
while not between  $D^*u(x, t)$ and the set of minimizers of $(x,
t)$, but  between  $D^*u(x, t)$ and another set, which is the set of
value $Dg$ at these minimizers of $(x, t)$.
\begin{proposition}\label{*}
There is a one-to-one correspondence between  $D^*u(x, t)$ and
$\tilde L(x, t)=\{Dg(y_0)|y_0\in L(x, t)\}$.
\end{proposition}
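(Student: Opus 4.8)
The plan is to exhibit the natural bijection given by spatial projection, $\Phi:D^*u(x,t)\to\tilde L(x,t)$, $\Phi(p)=\Pi_x p$, and to verify that it is well defined, injective and surjective. For the well-definedness, take any $p\in D^*u(x,t)$ together with a sequence of differentiability points $z_k\to(x,t)$ with $Du(z_k)\to p$. At each $z_k$ pick a minimizer $y_k\in L(z_k)$. By Theorem \ref{diff}, at a differentiability point either the minimizer is unique (case 1) or all minimizers have $Dg=0$ (case 3); in both situations $Du(z_k)=(Dg(y_k),-|Dg(y_k)|)$ for this $y_k$. Since the $y_k$ remain in a bounded region and the minimizer multifunction $L$ is upper semicontinuous, a subsequence gives $y_k\to y_0\in L(x,t)$, and continuity of $Dg$ (recall $g\in C^1$) yields $p=(Dg(y_0),-|Dg(y_0)|)$. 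Hence $\Phi(p)=Dg(y_0)\in\tilde L(x,t)$. Injectivity is then immediate: $p$ is a limit of gradients of $u$ at differentiability points, so it satisfies the Eikonal relation $p_t=-|p_x|$, and is therefore completely determined by its spatial part $\Phi(p)$.

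The core of the argument is surjectivity: for every $y_0\in L(x,t)$ the vector $(Dg(y_0),-|Dg(y_0)|)$ must be a reachable gradient. First suppose $Dg(y_0)\neq0$. By Proposition \ref{NC} the point $(x,t)$ lies on the characteristic from $y_0$, and since $y_0$ is a minimizer one has $t\le t_s(y_0)$. When $t<t_s(y_0)$, the corollary to Proposition \ref{limsup} supplies points $y_n\in H^-(y_0)$ with $y_n\to y_0$ and $\overline t_s(y_n)\to t_s(y_0)$, so $\overline t_s(y_n)>t$ for large $n$. Then on the characteristic from $y_n$ the point $(x_n,t)$ with $x_n=y_n+t\,\frac{Dg(y_n)}{|Dg(y_n)|}$ has $y_n$ as its unique minimizer (because $t<\overline t_s(y_n)$), hence is a differentiability point with gradient $(Dg(y_n),-|Dg(y_n)|)$. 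Letting $n\to\infty$ gives $(x_n,t)\to(x,t)$ and the gradients converge to $(Dg(y_0),-|Dg(y_0)|)$, which is therefore reachable. The endpoint value $t=t_s(y_0)$ is handled by running this construction for $t'\uparrow t_s(y_0)$ and extracting a diagonal sequence.

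Next suppose $Dg(y_0)=0$, so I must show $0\in D^*u(x,t)$. The key observation is that any minimizer lying in the open ball $B_t(x)$ minimizes $g$ over a full neighborhood of itself, hence is an unconstrained local minimum of $g$ and satisfies $Dg=0$; only minimizers on the sphere $\partial B_t(x)$ can carry a nonzero gradient. Consequently, when $y_0$ is interior, for small $\delta>0$ the shrunken problem $(x,t-\delta)$ has the same minimum value $g(y_0)$ (as $y_0$ still lies in $\overline B_{t-\delta}(x)$), and every one of its minimizers $y$ satisfies $g(y)=g(y_0)=\min_{\overline B_t(x)}g$ and $|y-x|<t$, so $y$ is interior to $\overline B_t(x)$ and thus a local minimum with $Dg(y)=0$. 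By Theorem \ref{diff}, part 3), $u$ is differentiable at $(x,t-\delta)$ with $Du=0$; since $(x,t-\delta)\to(x,t)$, this gives $0\in D^*u(x,t)$. Combined with the previous paragraph, $\Phi$ is onto, completing the bijection.

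The step I expect to be the main obstacle is precisely this last, $Dg(y_0)=0$ case when the critical minimizer $y_0$ sits on the boundary sphere $\partial B_t(x)$: shrinking the radius then discards $y_0$ itself, so one cannot directly isolate it. The remedy I would pursue is a combined shift-and-shrink of the ball, moving the center slightly toward $y_0$ while decreasing the radius, chosen so that $y_0$ becomes an interior minimizer and no points of value below $g(y_0)$ enter the ball, thereby reducing to the interior case just treated. Arranging this perturbation to simultaneously preserve the minimum value and keep all surviving minimizers interior, so that Theorem \ref{diff}, part 3), applies, is the delicate quantitative point; it rests on $y_0$ being a genuine local minimum of $g$, a property that the termination-point analysis (Lemma \ref{more than one}) forces on the $Dg=0$ minimizers that actually occur.
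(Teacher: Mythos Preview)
Your overall architecture matches the paper's: show $D^*u(x,t)=\{(Dg(y_0),-|Dg(y_0)|):y_0\in L(x,t)\}$ by two inclusions, then note the obvious bijection with $\tilde L(x,t)$. The well-definedness and injectivity arguments are fine, and your treatment of the surjectivity case $Dg(y_0)\neq 0$ is essentially the paper's (the paper handles $t\le t_s(y_0)$ in one stroke by choosing $t_k<\overline t_s(y_k)$ with $t_k\to t$, but your diagonal extraction is equivalent).

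The genuine gap is the boundary subcase $Dg(y_0)=0$, $|y_0-x|=t$. Your proposed remedy---shift the center toward $y_0$ and shrink the radius so that $y_0$ becomes \emph{interior}---cannot work in general, because such a $y_0$ need not be a local minimum of $g$. For instance, with $n=1$, $g(y)=-y^3$, and $(x,t)=(-1,1)$, the unique minimizer is $y_0=0$, which lies on the boundary, has $Dg(0)=0$, but is not a local minimum. Any perturbation making $0$ interior necessarily admits points $y>0$ into the new ball, where $g(y)<g(0)$; so ``no points of value below $g(y_0)$ enter'' fails. Lemma~\ref{more than one} does not help here: it concerns minimizers with $Dg\neq 0$ along a characteristic segment and says nothing about boundary critical points being local minima.

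The paper's fix is cleaner and avoids the issue entirely. Write $x=y_0+P_0t$ with $|P_0|\le 1$, and approach along the same ray: set $x_n=y_0+P_0t_n$ with $t_n\uparrow t$. Then $\overline B_{t_n}(x_n)\subset\overline B_t(x)$, with the only possible point of contact being $y_0$ itself. One now argues directly that every $\bar y\in L(x_n,t_n)$ has $Dg(\bar y)=0$: if not, then $\bar y\neq y_0$ (since $Dg(y_0)=0$), so $\bar y$ lies in the \emph{open} ball $B_t(x)$; but $g(\bar y)=g(y_0)$ (both inequalities follow from the nesting of balls), and $Dg(\bar y)\neq 0$ forces a nearby point in $B_t(x)$ with strictly smaller value, contradicting $y_0\in L(x,t)$. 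Theorem~\ref{diff}~3) then gives $Du(x_n,t_n)=0$, and $(x_n,t_n)\to(x,t)$ yields $0\in D^*u(x,t)$. The key conceptual point you missed is that one does \emph{not} need $y_0$ interior; one only needs the shrunken ball to stay inside the original (tangent at $y_0$), so that the \emph{original} optimality rules out minimizers with nonzero gradient.
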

\begin{proof}For any $l\in D^*u(x, t)$, by the definition of $D^*u$,
there exists some sequence $\{(x_n, t_n)\}$ such that $(x_n, t_n)
\rightarrow (x, t)$ and $Du(x_n, t_n)\rightarrow l$. Let $y_n\in
L(x_n, t_n)$, since $L$ is upper semi-continuous multi-function,
there exists some subsequence $\{y_{n_k}\}$ and some $y_0$ such that
$y_{n_k}\rightarrow y_0$ and $y_0\in L(x, t)$. By Theorem
\ref{diff}, we have that $\nabla u(x_{n_k}, t_{n_k})=Dg(y_{n_k})$.
Hence $Du(x_{n_k}, t_{n_k})=(Dg(y_{n_k}), -|Dg(y_{n_k})|)$. Let
$k\rightarrow \infty$, we have that $l=(Dg(y_0), -|Dg(y_0)|)$. Hence
$D^*u(x, t)\subset \{(Dg(y_0), -|Dg(y_0)|)|y_0\in L(x, t)\}$.

Next, for any $y_0\in L(x, t)$, there are two cases:

Case 1, $Dg(y_0)\neq 0$. By Proposition \ref{limsup}, we have that $
\limsup_{y_n\rightarrow y_0}\overline t_s(y_n)=t_s(y_0)$. Hence
there exists some sequence $\{y_k\}$ such that $y_k\rightarrow y_0$
and \\$\lim_{y_k\rightarrow y_0}\overline t_s(y_k)=t_s(y_0)$. We have
known that $t\le t_s(y_0)$. Let
$x_k=y_k+\frac{Dg(y_k)}{|Dg(y_k)|}t_k$, where $t_k<\overline
t_s(y_k)$ and $t_k\rightarrow t$. Since $(x_k, t_k)$ has unique
minimizer $y_k$, by Theorem \ref{diff}, $Du(x_k, t_k)=(Dg(y_k),
-|Dg(y_k)|)$. Let $k\rightarrow \infty$,  $Du(x_k, t_k)\rightarrow
(Dg(y_0), -|Dg(y_0)|)$.  Since $(x_k, t_k)\rightarrow (x, t)$, we
have that $l=(Dg(y_0), -|Dg(y_0)|)\in D^*u(x, t)$.

Case 2, $Dg(y_0)=0$. There exists some $P_0$ with $|P_0|\le 1$ such that $x=y_0+P_0t$. Let $t_n<t$ and $t_n\rightarrow t$. Let $x_n=y_0+P_0t_n$.

Claim: For any $y\in L(x_n, t_n)$, we have that $Dg(y)=0$.

Proof of the claim: By contradiction arguments. If assume $\exists\,
\bar y\in L(x_n, t_n)$ with $Dg(\bar y)\neq 0$. Then $g(\bar y)=g(y_0)$
and $\bar y\in \overline B_{t_n}(x_n)\subset B_t(x)$. Hence $\exists y_1\in
B_t(x)$ such that $g(y_1)<g(\bar y)=g(y_0)$. It contradicts with
that $y_0$ is a minimizer for $(x, t)$. The proof of the claim is
complete.

By the claim above and Theorem \ref{diff} 3), $u$ is differentiable
at $(x_n, t_n)$
 and $Du(x_n, t_n)=0\in {\mathbb R}^{n+1}$. It implies that
 $Du(x_n, t_n)\rightarrow 0=(Dg(y_0), -|Dg(y_0|)\in D^*u(x, t)$.

 From above, it implies that \begin{equation}D^*u(x, t)=\{(Dg(y_0),
 -|Dg(y_0)|)\big| \;y_0\in L(x, t)\}.
 \end{equation}
 Next we will prove that there is a one-to-one correspondence between
 \\$\{(Dg(y_0),
 -|Dg(y_0)|)\big| \;y_0\in L(x, t)\}$ and $\tilde L(x, t)=\{Dg(y_0)
 |y_0\in L(x, t)\}$.  It is easy to see that for any
 $y_0$, $y_1$, $(Dg(y_0), -|Dg(y_0)|)=(Dg(y_1), -|Dg(y_1)|)$ if and
  only if $Dg(y_0)=Dg(y_1)$.
\end{proof}
\subsection{ Cause for the formation of the  singularity points}

Finally in this section, we consider the reason for the appearance of the  singularity points. Here a singularity point means a point in $\Sigma\cup T_1$.
 For one dimensional case,  local maximum is the reason for the appearance of the set of singularity points. But for the multi-dimensional case, local maximum is not the only reason. First we show some examples:

{\bf Example 1:} $g$ is a saddle surface with the saddle point at
$0$.

\noindent There is no local maximum for $g$, but for any point
$y_0\in {\mathbb R}^n\setminus\{0\}$, we have  $Dg(y_0)\neq 0$ and
$\exists\, \bar y$, s.t. $\bar y\in H_{y_0}$ and $g(\bar y)<g(y_0)$.
It implies that $\overline t_s(y_0)<\infty$. Therefore the set of singularity points appears. Furthermore $M={\mathbb
R}^n\setminus\{0\}$, which only has one connected component, so by
Theorem \ref{thm1}, $T(M)=\Sigma$ only has one connected component
or one almost connected component (see in Definition \ref{almost}).

{\bf Example 2:} $g$ is a oblique saddle surface.

\noindent There is no local maximum for $g$, furthermore, there is
no point with $Dg=0$. Similar to the  argument in example 1, for any
$y_0\in {\mathbb R}^n$, \, $\exists\, \bar y$, s.t. $\bar y\in
H_{y_0}$ and $g(\bar y)<g(y_0)$. It implies that $\overline
t_s(y_0)<\infty$. Therefore the set of singularity points appears. Every
characteristic touches the set of singularity points. Furthermore, $M={\mathbb
R}^n$, which only has one connected component.  $T(M)=\Sigma$ only
has one connected component or one almost connected component.

Next, we show that when initial data has a strict local maximum, the set of singularity points does appear as one dimensional case.
\begin{proposition}If $y_0$ is a strict local maximum; i.e. there is a bounded and closed set $A\ni y_0$ such that for any $y$ in $A$, $g(y)=g(y_0)$, and there is a bounded and  open set $U\supset A$ such that for   any $y$ in $U\setminus A$, $g(y)<g(y_0)$,  then there exists some $y_1$, such that $\overline t_s(y_1)<\infty$.
\end{proposition}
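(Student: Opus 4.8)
The plan is to reduce the claim to the equivalence $M=D$ proved in Proposition \ref{M=D}. By that proposition it is enough to produce a single point $y_1$ with $Dg(y_1)\neq 0$ admitting some $\bar y\in H_{y_1}$ with $g(\bar y)\le g(y_1)$, for then $y_1\in M$, which is exactly $\overline t_s(y_1)<\infty$. The geometric idea I would use is to send a straight line through the maximum set $A$, take $y_1$ just past $A$ on one side so that $Dg(y_1)$ points back across $A$ (has a negative component along the line), and then find $\bar y$ on the opposite side of $A$ at a height no larger than $g(y_1)$; since $H_{y_1}$ is precisely the half-space into which $Dg(y_1)$ points, this $\bar y$ will automatically land in $H_{y_1}$.

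Concretely, I would fix a unit vector $e$ and the line $L=\{y_0+se:s\in\mathbb R\}$. As $A$ is bounded and closed, $s^+=\max\{s:y_0+se\in A\}$ and $s^-=\min\{s:y_0+se\in A\}$ are finite and attained, giving $a^\pm=y_0+s^\pm e\in A$ with $g(a^\pm)=g(y_0)$. For small $\delta>0$ the points $a^++\delta e$ and $a^--\delta e$ leave $A$ (by maximality of $s^\pm$) but stay in the open set $U$, so $\phi(\delta):=g(a^++\delta e)$ and $\psi(\delta):=g(a^--\delta e)$ are both $<g(y_0)$ for small $\delta>0$. The mean value theorem applied to $\phi$ on $[0,\delta_1]$ then supplies $\xi\in(0,\delta_1)$ with $\phi'(\xi)=(\phi(\delta_1)-g(y_0))/\delta_1<0$; setting $y_1=a^++\xi e$ gives $Dg(y_1)\cdot e=\phi'(\xi)<0$, so in particular $Dg(y_1)\neq 0$.

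To build $\bar y$, I would first fix a small $\delta'>0$, set $v_-=\psi(\delta')<g(y_0)$, and note that by the intermediate value theorem $\psi$ attains every value in $[v_-,g(y_0)]$ on $[0,\delta']$. Shrinking $\delta_1$ afterwards forces $g(y_1)=\phi(\xi)>v_-$ (because $\xi<\delta_1$ and $\phi(0)=g(y_0)>v_-$), so $g(y_1)\in(v_-,g(y_0))$ and hence $g(y_1)=\psi(\delta)$ for some $\delta\in(0,\delta')$; I would take $\bar y=a^--\delta e$, which satisfies $g(\bar y)=g(y_1)$. A direct computation gives $\bar y-y_1=(s^--s^+-\delta-\xi)e=-c\,e$ with $c=(s^+-s^-)+\delta+\xi>0$, so $(\bar y-y_1)\cdot Dg(y_1)=-c\,\phi'(\xi)>0$, i.e. $\bar y\in H_{y_1}$. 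This places $y_1$ in $D=M$ and finishes the proof.

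The step I expect to be the main obstacle is the height matching $g(\bar y)\le g(y_1)$: both $g(y_1)$ and the accessible values of $g$ near $a^-$ crowd against the common maximum $g(y_0)$, and one cannot simply reach for a far-away point where $g$ is small, since all control is lost once the line exits $U$. The fix is the order of quantifiers I used above—pin down the reference level $v_-$ on the far side first, and only then shrink $\delta_1$ so that $g(y_1)$ stays above $v_-$—which lets the intermediate value theorem line up the two heights inside $U$. The remaining content is routine sign bookkeeping along $L$, and since the argument never uses $a^+\neq a^-$, it also covers the degenerate case in which $L$ meets $A$ only at $y_0$.
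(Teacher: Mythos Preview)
Your proof is correct and follows the same route as the paper's: both produce a point $y_1$ near $A$ with $Dg(y_1)\neq 0$ together with a witness $\bar y\in H_{y_1}$ satisfying $g(\bar y)\le g(y_1)$, and then invoke the identification $M=D$. Your one-dimensional reduction with the mean value theorem makes explicit what the paper's brief sketch leaves to the reader; note, though, that the IVT height-matching step is unnecessary, since $\bar y=a^--\delta' e$ already has $g(\bar y)=v_-<g(y_1)$ and lies in $H_{y_1}$ by the same sign computation.
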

\begin{proof}There exist some  $y_1$, $y_2$ such that $y_2\in A$ and
$y_1\in U\setminus A$, $y_2\approx y_1$ and $Dg(y_1)\neq 0$. Since
$g(y_2)>g(y_1)$, it is easy to see that $y_2\in H_{y_1}$. By that
$U$ is open and $A\subset U$, there exists some $y_3$ such that
$y_3\in U\setminus A$, $y_3\in H_{y_1}$ and $g(y_3)<g(y_1)$.
\end{proof}

 We can see that local maximum or $Dg=0$ is not only reason to produce
 the set of singularity points.  From the above arguments or by Proposition \ref{M=D}, it can be emphasized that  the reason to the appearance of the set of singularity points is that $\exists\, \bar y$, s.t. $\bar y\in H_{y_0}$ and $g(\bar y)\le g(y_0)$.

\section{$C^2$ initial data case: global structure }
In this section, we consider a better class of initial data, $C^2$
case.  Some better results can be obtained. The tool of curvatures of surfaces can also be applied.
\subsection{Better properties of termination points}
 First we study  properties
 of termination times of characteristics from $y_0$ with $Dg(y_0)\neq 0$.
\begin{proposition}
If $Dg(y_0)\neq 0$, then $\overline t_s(y_0)>0$.
\end{proposition}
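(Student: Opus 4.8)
The plan is to prove the statement directly by showing that $y_0$ is the \emph{unique} minimizer for $(x(t),t)$ for all sufficiently small $t>0$, where $x(t)=y_0+t\frac{Dg(y_0)}{|Dg(y_0)|}$ is the point on the characteristic from $y_0$ at time $t$. Once this is established, the definition (\ref{bart_s}) of $\overline t_s(y_0)$ as a supremum immediately forces $\overline t_s(y_0)>0$. Write $\nu=\frac{Dg(y_0)}{|Dg(y_0)|}$ and $a=|Dg(y_0)|>0$, and put $z=y-y_0$. Since $|x(t)-y_0|=t$, every point of the ball satisfies $\overline B_t(x(t))\subset \overline B_{2t}(y_0)$, so for small $t$ the ball lies inside any prescribed neighborhood of $y_0$; this lets me work entirely with a local Taylor expansion of $g$ and disregard points of $\overline B_t(x(t))$ far from $y_0$.

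The geometric heart of the matter is that $y_0$ lies on the boundary sphere $\partial B_t(x(t))$, whose inward normal at $y_0$ is exactly $\nu$, the direction of $Dg(y_0)$. Concretely, $y=y_0+z\in\overline B_t(x(t))$ means $|z-t\nu|\le t$, i.e. $|z|^2\le 2t\,\nu\cdot z$, which yields the crucial lower bound
\[
Dg(y_0)\cdot z \;=\; a\,\nu\cdot z \;\ge\; \frac{a}{2t}\,|z|^2 .
\]
Thus every admissible displacement $z$ points "into" the gradient to second order, with a coefficient $\frac{a}{2t}$ that blows up as $t\to0$. I would then invoke the $C^2$ regularity via Taylor's theorem in integral form: fixing $\rho>0$ and setting $C=\sup_{|w|\le\rho}\|D^2g(y_0+w)\|$, one has $|g(y_0+z)-g(y_0)-Dg(y_0)\cdot z|\le \tfrac12 C|z|^2$ for $|z|\le\rho$. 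Combined with the displayed inequality this gives, for $y\in\overline B_\rho(y_0)$,
\[
g(y)-g(y_0)\;\ge\;\Big(\frac{a}{2t}-\frac{C}{2}\Big)|z|^2 .
\]

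It now suffices to take $0<t<\delta:=\min\{a/C,\ \rho/2\}$. For such $t$ the ball $\overline B_t(x(t))$ is contained in $\overline B_\rho(y_0)$, so the Taylor bound applies to every point of the ball, and the coefficient $\frac{a}{2t}-\frac{C}{2}$ is strictly positive; hence $g(y)>g(y_0)$ for every $y\in\overline B_t(x(t))$ with $y\ne y_0$. Consequently $u(x(t),t)=g(y_0)$ with $y_0$ the unique minimizer, so $\overline t_s(y_0)\ge\delta>0$. The step I expect to be the main obstacle is precisely the balancing in the last display: the level surface $H(y_0)$ has bounded curvature (controlled by $C$), whereas the sphere $\partial B_t(x(t))$ has curvature $1/t\to\infty$, and it is this curvature mismatch that lets the quadratic gain $\frac{a}{2t}|z|^2$ dominate the second-order error. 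This is exactly where the $C^2$ hypothesis is genuinely used: for merely $C^1$ data the error is only $o(|z|)$ rather than $O(|z|^2)$, and then both the argument and the conclusion can fail.
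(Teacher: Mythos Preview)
Your argument is correct and rests on the same geometric idea as the paper's: for small $t$ the sphere $\partial B_t(x(t))$, whose principal curvatures are all $1/t$, curves more sharply than the $C^2$ level surface $H(y_0)$, so $\overline B_t(x(t))\setminus\{y_0\}\subset H^+(y_0)$ and $y_0$ is the unique minimizer. The paper states this in curvature language (choose $t_0$ with $\lambda_1(y_0)<1/t_0$) and leaves the passage from the curvature inequality to the containment implicit, whereas you unpack it into the explicit Taylor estimate combined with the ball constraint $|z|^2\le 2t\,\nu\cdot z$, obtaining a concrete lower bound $\overline t_s(y_0)\ge\delta$. The two routes are equivalent; yours is more self-contained and quantitative, the paper's more succinct. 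One trivial caveat: your choice $\delta=\min\{a/C,\rho/2\}$ assumes $C>0$; if $D^2g\equiv 0$ near $y_0$ the quotient $a/C$ is undefined, but then the coefficient $\frac{a}{2t}-\frac{C}{2}=\frac{a}{2t}$ is positive for every $t$ and $\delta=\rho/2$ already works.
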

\begin{proof}Since $g$ is in $C^2$, then there exists a  $C^2$ $n-1$ dimensional surface
$H(y_0)$ (recall Definition \ref{H}). Let $Dg(y_0)$ be normal vector of $H(y_0)$ at $y_0$.
 The principal  curvatures of $H(y_0)$ at $y_0$ are  $\lambda_{n-1}(y_0)\le ...\le\lambda_2(y_0)\le \lambda_{1}(y_0)$. It is known that all the principal  curvatures of a ball with inner normal vector and radius $t$ are $\frac{1}{t}$.  Then
there exists a $t_0$ such that the maximal principal curvature $\lambda_1(y_0)$ of $H(y_0)$ at $y_0$ with $\lambda_{1}(y_0)<\frac{1}{t_0}$. Hence $\lambda_{n-1}(y_0)\le ...\le\lambda_2(y_0)\le \lambda_{1}(y_0)<\frac{1}{t_0}$.  When $t_0$ is
sufficiently small, we have that 
 $\overline B_{t_0}(x_0)\setminus\{y_0\}\subset\subset H^+(y_0)$, where $x_0=y_0+\frac{Dg(y_0)}{|Dg(y_0)|}t_0$. It implies
that $y_0$ is unique minimizer for $(x_0, t_0)$. Therefore
$\overline t_s(y_0)\ge t_0>0$.
\end{proof}
Contrast to $C^2$ case, when initial data is $C^1$, for some $y_0$ such that $Dg(y_0)\neq 0$, $t_s(y_0)$
may be 0. The reason is the following:  For any $t>0$, let $x=y_0+\frac{Dg(y_0)}{|Dg(y_0)|}t$.  Since $H(y_0)$ is in $C^1$ and $\partial B_t(x)$ is in $C^2$, it is not necessary to exist a  ball $B_t(x)$ such that $H(y_0)$ is in the exterior of $ B_t(x)$. If there does not exist such ball for any $t$, then  $y_0$ is not a minimizer for $(x,t)$. That implies $t_s(y_0)=0$.

Recall in the previous section, we have that for $C^1$ initial data,
\\ $\liminf_{y_n\rightarrow y_0}t_s(y_n)\le \overline t_s(y_0)$. Now
for $C^2$ initial data,  a better result holds.
\begin{proposition}\label{liminf=}$$\liminf_{y_n\rightarrow
y_0}t_s(y_n)=
 \liminf_{y_n\rightarrow y_0}\overline t_s(y_n)= \overline t_s(y_0).
 $$
\end{proposition}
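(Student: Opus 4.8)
The plan is to prove the single nontrivial inequality
$$\liminf_{y_n\to y_0}\overline t_s(y_n)\ge \overline t_s(y_0),$$
after which the stated chain of equalities follows at once. Indeed, since $\overline t_s(y_n)\le t_s(y_n)$ always holds, Proposition \ref{liminf} and (\ref{limi}) give
$$\liminf_{y_n\to y_0}\overline t_s(y_n)\le \liminf_{y_n\to y_0}t_s(y_n)\le \overline t_s(y_0),$$
so once the reverse bound is established all three quantities coincide with $\overline t_s(y_0)$. The inequality above is exactly lower semicontinuity of $\overline t_s$ at $y_0$, and I would reformulate it as follows: \emph{for every $t_1<\overline t_s(y_0)$ there is a neighborhood $V$ of $y_0$ with $\overline t_s(y)\ge t_1$ for all $y\in V$.} Letting $t_1\uparrow\overline t_s(y_0)$ then yields the claim, the case $\overline t_s(y_0)=\infty$ being covered by letting $t_1\to\infty$. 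Throughout $Dg(y_0)\neq 0$, so $Dg(y)\neq 0$ for $y$ near $y_0$ and all characteristics involved are well defined.

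Fix $t_1<t_2<\overline t_s(y_0)$ and set $x_i=y_0+\frac{Dg(y_0)}{|Dg(y_0)|}t_i$ for $i=1,2$. Since $t_2<\overline t_s(y_0)$, the point $y_0$ is the unique minimizer for $(x_2,t_2)$, i.e. $g>g(y_0)$ on $\overline B_{t_2}(x_2)\setminus\{y_0\}$; because the two balls are internally tangent at $y_0$ we have $\overline B_{t_1}(x_1)\subset\overline B_{t_2}(x_2)$ with $y_0$ their only common boundary point, so $g>g(y_0)$ on $\overline B_{t_1}(x_1)\setminus\{y_0\}$ as well. This is the nonlocal ingredient. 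The local ingredient is the curvature comparison used in the previous proposition: since $y_0$ is a minimizer for $(x_2,t_2)$ and $\overline B_{t_2}(x_2)$ is tangent to $H(y_0)$ at $y_0$ from the $H^+$ side, the largest principal curvature $\lambda_1(y_0)$ of $H(y_0)$ at $y_0$ must satisfy $\lambda_1(y_0)\le 1/t_2<1/t_1$. As $g\in C^2$ and $Dg(y_0)\neq 0$, the principal curvatures of the level surfaces $H(y)$ depend continuously on $y$, so after shrinking we obtain a neighborhood $V$ and $\delta>0$ with $\lambda_1(y)\le 1/t_1-\delta$ for all $y\in V$.

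Now fix $y\in V$ and let $x(y)=y+\frac{Dg(y)}{|Dg(y)|}t_1$; I would show that $y$ is the unique minimizer of $g$ on $\overline B_{t_1}(x(y))$, which gives $\overline t_s(y)\ge t_1$. Split the ball into a far region $\{|p-y_0|\ge \rho\}$ and a near region $\{|p-y_0|<\rho\}$. On the far region, compactness together with the strict inequality $g>g(y_0)$ on $\overline B_{t_1}(x_1)\setminus\{y_0\}$ gives $\min g\ge g(y_0)+c$ with $c>0$; since $x(y)\to x_1$ and $g$ is continuous, for $y$ close enough to $y_0$ the values of $g$ on the far part of $\overline B_{t_1}(x(y))$ still exceed $g(y_0)+c/2>g(y)$, so no minimizer lies there. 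On the near region I would use the second-order comparison: in tangent--normal coordinates at $y$, the lower boundary of $\partial B_{t_1}(x(y))$ sits at height $\tfrac{1}{2t_1}|\xi|^2+o(|\xi|^2)$ above the tangent plane, while $H(y)$ sits at $\tfrac12\sum_i\lambda_i(y)\xi_i^2+o(|\xi|^2)$; the gap $1/t_1-\lambda_1(y)\ge\delta$ bounds the difference below by $\tfrac{\delta}{2}|\xi|^2$, so for $\rho$ small (uniformly in $y\in V$ by the $C^2$ bounds on $g$) the ball lies strictly inside $\overline{H^+(y)}$ near $y$, i.e. $g>g(y)$ on $\overline B_{t_1}(x(y))\cap B_\rho(y_0)\setminus\{y\}$. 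Combining the two regions, $y$ is the strict unique minimizer, hence $\overline t_s(y)\ge t_1$, and the proof concludes as in the first paragraph.

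The main obstacle is the near-region step: one must convert the strict curvature inequality $\lambda_1(y)<1/t_1$ into a genuine statement that $g>g(y)$ on a fixed-size neighborhood, and do so uniformly for all $y$ near $y_0$. This is where the $C^2$ hypothesis is essential --- continuity of $D^2g$ controls both the curvature gap and the $o(|\xi|^2)$ remainders uniformly --- and it is precisely the feature that fails for merely $C^1$ data, for which Proposition \ref{dense} shows that $\liminf\overline t_s(y_n)<\overline t_s(y_0)$ can genuinely occur. Establishing the \emph{strict} gap $\lambda_1(y_0)\le 1/t_2<1/t_1$, rather than the non-strict $\lambda_1(y_0)\le 1/t_1$, is the reason for introducing the intermediate time $t_2$.
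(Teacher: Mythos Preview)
Your proof is correct and follows essentially the same approach as the paper's: reduce to the single inequality $\liminf \overline t_s(y_n) \ge \overline t_s(y_0)$, then for $t<\overline t_s(y_0)$ use the $C^2$ curvature comparison $\lambda_1(y_n)<1/t$ to show $y_n$ is the unique minimizer in $\overline B_t(x_n)$ near $y_n$, with the far region handled via the unique-minimizer property at $y_0$. The paper is terser --- it localizes minimizers by citing upper semicontinuity of $L$ rather than your explicit compactness argument, and it does not introduce the intermediate time $t_2$ to force the strict curvature gap --- but the substance is identical.
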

\begin{proof} We only need to prove $\liminf_{y_n\rightarrow y_0}
\overline t_s(y_n)\ge \overline t_s(y_0)$. For any
$t<\overline t_s(y_0)$, let $x=y_0+\frac{Dg(y_0)}{|Dg(y_0)|}t$. For any $y_n\approx y_0$, let $x_n=y_n+\frac{Dg(y_n)}{|Dg(y_n)|}t$, so $x_n\approx x$. Since $y_0$ is unique minimizer for $(x, t)$,  for any $\tilde y_n\in L(x_n, t)$,  $\tilde y_n\approx y_0$ holds. It implies that $\tilde y_n\approx y_n$. Next we want to prove that $y_n$ is unique minimizer for $(x_n, t)$.

Since $g$ is $C^2$,  $H(y_0)$  and $H(y_n)$ are $C^2$ surfaces.  We have that the maximal  principal  curvature $\lambda_{1}(y_0)$ of $H(y_0)$ at $y_0$ is
approximately equal to the  maximal  principal  curvature $\lambda_{1}(y_n)$ of $H(y_n)$ at $y_n$ when $y_n$ is sufficiently close to $y_0$.  Since
$t<\overline t_s(y_0)$, $y_0$ is unique minimizer for $(x, t)$. It implies that $H(y_0)$ is outside of $\overline B_t(x)\setminus\{y_0\}$. Hence $\lambda_{1}(y_0)<\frac{1}{t}$, recall that all the principal  curvatures of a ball with inner normal vector and radius $t$ are $\frac{1}{t}$.  Therefore $\lambda_{1}(y_n)<\frac{1}{t}$; i.e. all the principal curvatures of $H(y_n)$ at $y_n$ are smaller than $\frac{1}{t}$. It implies that in a small neighborhood of $y_n$, $H(y_n)$ is outside of $\overline B_t(x_n)\setminus\{y_n\}$.
 Hence in that small neighborhood of $y_n$,
$\overline B_t(x_n)\setminus\{y_n\}\subset H^+(y_n)$. That is,  for any $y\in \overline B_t(x_n)\setminus\{y_n\}$ and $y\approx y_n$, $g(y)>g(y_n)$ holds.
  Since for $(x_n, t)$, the
minimizer $\tilde y_n\approx y_n$, we have that $y_n=\tilde y_n$, i.e.,
$y_n$ is that unique minimizer. It implies that $\overline
t_s(y_n)\ge t$. Since the above inequality holds for all $y_n\approx
y_0$, we have that
$$\liminf_{y_n\rightarrow y_0}\overline t_s(y_n)\ge t.$$  $t$ is
arbitrary as long as $t<\overline t_s(y_0)$, so
$$\liminf_{y_n\rightarrow y_0}\overline t_s(y_n)\ge \overline
t_s(y_0).$$ Then combine Proposition \ref{liminf} and $\overline
t_s(y_n)\le t_s(y_n)$, the proof is complete.
\end{proof}

 For $C^2$ initial data, we can conclude that
\begin{eqnarray*}\limsup_{y_n\rightarrow y_0} t_s(y_n)&=&
\limsup_{y_n\rightarrow y_0}\overline t_s(y_n)=t_s(y_0)\\
\liminf_{y_n\rightarrow y_0}t_s(y_n)&=& \liminf_{y_n\rightarrow
y_0}\overline t_s(y_n)=\overline t_s(y_0).
\end{eqnarray*}
From their proof, we can see that the above equalities also holds
when $t_s(y_0)=\infty$ or $\overline t_s(y_0)=\infty$.

 From above
equalities, it is easy to see that the following corollary holds:
 \begin{corollary}\label{con}If  $\overline t_s(y_0)=t_s(y_0)$, then
 $\lim_{y_n\rightarrow y_0} t_s(y_n)= \lim_{y_n\rightarrow
 y_0} \overline t_s(y_n)=t_s(y_0)$.
\end{corollary}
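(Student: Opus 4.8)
The plan is to obtain Corollary \ref{con} as an immediate consequence of the two chains of equalities displayed just above it, invoking only the elementary principle that a quantity depending on $y_n$ has a limit as $y_n \to y_0$ precisely when its upper and lower limits coincide, in which case the limit equals their common value. All the substantive work — the $C^2$ curvature comparison yielding $\liminf_{y_n\to y_0}\overline t_s(y_n)\ge \overline t_s(y_0)$ — has already been completed in Proposition \ref{liminf=}, and the matching $\limsup$ identities come from Proposition \ref{limsup}. So here I expect no new estimates, only bookkeeping.

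First I would recall the two established facts for $C^2$ data, valid (as already noted) even when the termination times are infinite:
\begin{align*}
\limsup_{y_n\to y_0} t_s(y_n) &= \limsup_{y_n\to y_0}\overline t_s(y_n) = t_s(y_0),\\
\liminf_{y_n\to y_0} t_s(y_n) &= \liminf_{y_n\to y_0}\overline t_s(y_n) = \overline t_s(y_0).
\end{align*}
Then I would feed in the hypothesis $\overline t_s(y_0)=t_s(y_0)$. For the quantity $t_s(y_n)$ this forces $\limsup_{y_n\to y_0}t_s(y_n)=t_s(y_0)=\overline t_s(y_0)=\liminf_{y_n\to y_0}t_s(y_n)$; since the upper and lower limits agree, the limit $\lim_{y_n\to y_0}t_s(y_n)$ exists and equals $t_s(y_0)$. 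Repeating the identical step with $\overline t_s(y_n)$ in place of $t_s(y_n)$ gives $\lim_{y_n\to y_0}\overline t_s(y_n)=t_s(y_0)$ as well, which is exactly the assertion.

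Since the argument is purely a matter of combining two prior propositions, there is no genuine obstacle. The only point worth stating carefully is that the coincidence of $\limsup$ and $\liminf$, taken over all approaches $y_n\to y_0$, is what upgrades the one-sided bounds into an honest limit, so that the conclusion holds along every sequence tending to $y_0$ rather than merely along some subsequence. If one wished to be fully self-contained, one could unwind the $\limsup$/$\liminf$ notation as $\inf_{\delta}\sup$ and $\sup_{\delta}\inf$ over punctured balls $0<|y-y_0|<\delta$ and apply a squeeze, but this is routine and I would not belabor it.
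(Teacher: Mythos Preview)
Your proposal is correct and matches the paper's approach exactly: the paper states that the corollary follows immediately from the two displayed chains of equalities (the $\limsup$ identities from Proposition \ref{limsup} and the $\liminf$ identities from Proposition \ref{liminf=}), and your argument is precisely the obvious unwinding of that claim via $\limsup = \liminf$ implies the limit exists.
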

Furthermore  the following holds:
 \begin{corollary}If $g$ does not have local minimum, then for any $y_0$ with $Dg(y_0)\neq 0$, we have that   $\overline
t_s(y_0)=t_s(y_0)$.  On
$\{y|Dg(y)\neq 0\}$, $t_s(\cdot)$ is a continuous function. It may be equal to $\infty$ at some
points.
\end{corollary}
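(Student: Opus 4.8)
The plan is to prove the two assertions in turn, each resting almost entirely on results already proved in this and the previous section. For the first assertion, I would argue by contradiction. Fix $y_0$ with $Dg(y_0)\neq 0$ and suppose $\overline t_s(y_0)<t_s(y_0)$. Since $\overline t_s(y_0)\le t_s(y_0)$ always holds, a strict inequality forces $\overline t_s(y_0)<\infty$ (otherwise $\overline t_s(y_0)=t_s(y_0)=\infty$) and makes the interval $(\overline t_s(y_0),t_s(y_0))$ nonempty. Picking any finite $t$ in this interval and setting $x=y_0+\frac{Dg(y_0)}{|Dg(y_0)|}t$, Lemma \ref{more than one} tells me that $(x,t)$ has a minimizer $y_1\neq y_0$ at which $g$ attains a local minimum in $\mathbb{R}^n$. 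This contradicts the hypothesis that $g$ has no local minimum, so the interval must be empty and $\overline t_s(y_0)=t_s(y_0)$. When $\overline t_s(y_0)=\infty$ the equality is immediate.

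For the continuity claim I would simply feed the first assertion into Corollary \ref{con}. Having shown $\overline t_s(y_0)=t_s(y_0)$ for every $y_0$ in $\{y\mid Dg(y)\neq 0\}$, Corollary \ref{con} yields $\lim_{y_n\rightarrow y_0}t_s(y_n)=\lim_{y_n\rightarrow y_0}\overline t_s(y_n)=t_s(y_0)$, which is exactly continuity of $t_s$ at $y_0$. Equivalently, one can assemble the two one-sided limits directly: Proposition \ref{limsup} gives $\limsup_{y_n\rightarrow y_0}t_s(y_n)=t_s(y_0)$ and Proposition \ref{liminf=} gives $\liminf_{y_n\rightarrow y_0}t_s(y_n)=\overline t_s(y_0)$; since these two quantities coincide by the first part, the limit exists and equals $t_s(y_0)$. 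The remark that $t_s$ may equal $\infty$ is accommodated because all the cited identities are noted to persist when $t_s(y_0)=\infty$, so continuity is understood in the extended-real sense, with values in $[0,\infty]$.

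I do not expect a genuine obstacle, since this corollary is essentially a repackaging of Lemma \ref{more than one}, Proposition \ref{limsup}, and Proposition \ref{liminf=}. The one point demanding care is the precise use of Lemma \ref{more than one}: its conclusion is that every minimizer other than $y_0$ is a local minimum of $g$, so the "no local minimum" hypothesis is exactly what collapses the gap between $\overline t_s(y_0)$ and $t_s(y_0)$. I would also be careful to dispose of the $\overline t_s(y_0)=\infty$ case separately (trivially) and to phrase the continuity statement in the extended sense so that it remains meaningful at points where $t_s=\infty$.
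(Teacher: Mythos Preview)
Your proposal is correct and matches the paper's intended argument: the paper states this corollary without proof, treating it as an immediate consequence of Lemma \ref{more than one} (which forces a local minimum of $g$ whenever $\overline t_s(y_0)<t_s(y_0)$) together with Corollary \ref{con} for the continuity claim. Your careful handling of the $\overline t_s(y_0)=\infty$ case and the extended-real interpretation of continuity are appropriate elaborations of what the paper leaves implicit.
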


Next we consider what will happen when  $\overline
t_s(y_0)<t_s(y_0)$. If $\overline t_s(y_0)<t_s(y_0)$,  by Proposition \ref{cluster}, then the line
segment connecting $(\overline x_s(y_0), \overline t_s(y_0))$ with
$( x_s(y_0),  t_s(y_0))$ belongs to $\Sigma$. Is there a continuous $n-$ dimensional
surface containing this
 line segment that belongs to $\Sigma$ too?
\begin{proposition}\label{surface}For any $y_0$ with $Dg(y_0)\neq 0$,
if $\overline t_s(y_0)<t_s(y_0)$, then on $ H(y_0)$, $\overline
t_s(\cdot)$ and $t_s(\cdot)$ are continuous at $y_0$.
\end{proposition}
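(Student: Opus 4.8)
The plan is to prove the two continuity statements separately, establishing $\lim_{y\to y_0,\,y\in H(y_0)}\overline t_s(y)=\overline t_s(y_0)$ and $\lim_{y\to y_0,\,y\in H(y_0)}t_s(y)=t_s(y_0)$. Note that the two limits are \emph{different} (since $\overline t_s(y_0)<t_s(y_0)$), so the point of restricting to the level surface $H(y_0)$ is precisely that it separates the two behaviours which, along general sequences, collapse into the gap between $\liminf$ and $\limsup$ described in Propositions \ref{limsup} and \ref{liminf=}. The essential preliminary is to exploit the hypothesis $\overline t_s(y_0)<t_s(y_0)$: exactly as in the proof of Proposition \ref{cluster} (Case 2), there is a point $\bar y\neq y_0$ with $\bar y\in L(\overline x_s(y_0),\overline t_s(y_0))$, $Dg(\bar y)=0$ and $g(\bar y)=g(y_0)$. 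Since $y_0$ is the unique minimizer at all times below $\overline t_s(y_0)$ and $g(\bar y)=g(y_0)$, this $\bar y$ must lie on $\partial\overline B_{\overline t_s(y_0)}(\overline x_s(y_0))$ and $\bar y\in H_{y_0}$. I will use $\bar y$ as the ``far co-minimizer'' that pins $\overline t_s$ from above.

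First I would prove continuity of $t_s$. The bound $\limsup_{y\to y_0,\,y\in H(y_0)}t_s(y)\le t_s(y_0)$ is immediate from Proposition \ref{limsup}. For the reverse I argue by contradiction: if $\liminf_{y\to y_0,\,y\in H(y_0)}t_s(y)<t_s(y_0)$, choose $t\in(\overline t_s(y_0),t_s(y_0))$ and a sequence $y_n\in H(y_0)$, $y_n\to y_0$, with $t_s(y_n)<t$; then each ball $\overline B_t(x_n)$, $x_n=y_n+\frac{Dg(y_n)}{|Dg(y_n)|}t$, contains a minimizer $\hat y_n$ with $g(\hat y_n)<g(y_n)=g(y_0)$. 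Passing to a convergent subsequence $\hat y_n\to\hat y\in\overline B_t(x_0)$ gives $g(\hat y)\le g(y_0)$. If $g(\hat y)<g(y_0)$ this contradicts $t<t_s(y_0)$. If $g(\hat y)=g(y_0)$ then $\hat y\in L(x_0,t)$, so by Lemma \ref{more than one} either $\hat y=y_0$ or $Dg(\hat y)=0$. The case $\hat y=y_0$ is ruled out by the curvature comparison of Proposition \ref{liminf=}: because $y_0$ is a minimizer for every $t<t_s(y_0)$, the principal curvatures of $H(y_0)$ satisfy $\lambda_1(y_0)\le 1/t_s(y_0)<1/t$, and since $g\in C^2$ the curvatures vary continuously, so for $y_n$ near $y_0$ the ball $\overline B_t(x_n)$ stays locally outside $H^-(y_0)$ near $y_n$, contradicting $\hat y_n\in H^-(y_0)$ near $y_0$. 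The case $Dg(\hat y)=0$ is ruled out because $\hat y$ is then a local minimum, so $g\ge g(\hat y)=g(y_0)$ in a neighbourhood of $\hat y$, contradicting $g(\hat y_n)<g(y_0)$ with $\hat y_n\to\hat y$. Hence $\liminf t_s\ge t_s(y_0)$, and continuity of $t_s$ on $H(y_0)$ follows.

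With $t_s$-continuity available I finish with $\overline t_s$. The lower bound $\liminf_{y\to y_0,\,y\in H(y_0)}\overline t_s(y)\ge\overline t_s(y_0)$ is exactly the curvature estimate of Proposition \ref{liminf=}, valid for all sequences. For the upper bound, let $t^*(y)$ be the time at which $\bar y$ first reaches the boundary of the ball on the characteristic from $y$, i.e. the positive solution of $|\bar y-y-\tfrac{Dg(y)}{|Dg(y)|}t|=t$, namely
\[
t^*(y)=\frac{|\bar y-y|^2}{2\,(\bar y-y)\cdot Dg(y)/|Dg(y)|}.
\]
This is continuous in $y$ near $y_0$ (the denominator is positive since $\bar y\in H_{y_0}$, and by continuity of $Dg$ it stays positive for $y$ close to $y_0$) and satisfies $t^*(y_0)=\overline t_s(y_0)$. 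Because $t_s(y)\to t_s(y_0)>\overline t_s(y_0)=\lim_{y\to y_0}t^*(y)$, for $y\in H(y_0)$ close to $y_0$ we have $t^*(y)<t_s(y)$, so $y$ is still a minimizer at time $t^*(y)$; since $g(\bar y)=g(y)=g(y_0)$, the point $\bar y$ is then a second minimizer, whence $\overline t_s(y)\le t^*(y)$. Letting $y\to y_0$ yields $\limsup\overline t_s(y)\le\overline t_s(y_0)$, which together with the lower bound gives continuity of $\overline t_s$.

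I expect the main obstacle to be the lower bound for $t_s$, i.e. showing $\liminf_{y\to y_0,\,y\in H(y_0)}t_s(y)\ge t_s(y_0)$. This is where the argument must simultaneously control three distinct ways the ball from a nearby $y$ could capture a value below $g(y_0)$ prematurely: a genuine interior drop (excluded by $t<t_s(y_0)$), a nearby tangential contact (excluded by the $C^2$ curvature comparison, which crucially fails for merely $C^1$ data), and a contact at a far co-minimizer (excluded by its local-minimum structure via Lemma \ref{more than one}). The compactness step extracting $\hat y$ and the careful choice $t\in(\overline t_s(y_0),t_s(y_0))$ so that Lemma \ref{more than one} applies are the delicate points; everything else is a continuity/limit bookkeeping once $\bar y$ has been produced.
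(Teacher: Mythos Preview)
Your argument is correct, but it is organized differently from the paper's. The paper proves a single key claim---that for any $t\in(\overline t_s(y_0),t_s(y_0))$ and any $y_n\in H(y_0)$ close to $y_0$ one has $y_n\in L(x_n,t)$---by combining the curvature comparison near $y_n$ with the inclusion $\overline B_t(x_n)\setminus B_\epsilon(y_n)\subset B_{t_s(y_0)}(x_s(y_0))$ far from $y_n$. This one fact immediately yields both $t_s(y_n)\ge t$ (lower bound for $t_s$) and, since the local-minimum co-minimizers of $(\overline x_s(y_0),\overline t_s(y_0))$ are trapped in $B_t(x_n)$ and share the value $g(y_0)=g(y_n)$, also $\overline t_s(y_n)<t$ (upper bound for $\overline t_s$). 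You instead decouple the two: the lower bound for $t_s$ is obtained by a contradiction/compactness argument with a three-case analysis of the limit point $\hat y$, and the upper bound for $\overline t_s$ uses the explicit first-contact time $t^*(y)$ with a fixed boundary co-minimizer $\bar y$, together with the already-established continuity of $t_s$ to guarantee $t^*(y)<t_s(y)$. Your route is a bit longer and introduces a logical dependency (you need $t_s$-continuity before $\overline t_s$-continuity), whereas the paper's single claim does double duty; on the other hand, your $t^*(y)$ formula is clean and makes the mechanism for the $\overline t_s$ upper bound very transparent. Both arguments hinge on the same $C^2$ curvature comparison $\lambda_1(y_0)\le 1/t_s(y_0)<1/t$, which is exactly what fails for $C^1$ data.
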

Recall that $H(y_0)$ is the $n-1$ dimensional $C^2$ surface $\subset {\mathbb R^n}$ such that $y_0\in H(y_0)$  and for any $y\in H(y_0)$,  $g(y)=g(y_0)$ defined in Definition \ref{H}.
\begin{proof}First by Proposition \ref{liminf=}, we have $$
\liminf_{y_n\in H(y_0), y_n\rightarrow y_0}\overline t_s(y_n)\ge
\liminf_{y_n\rightarrow y_0}\overline t_s(y_n)= \overline
t_s(y_0).$$ Next we need to prove $\limsup_{y_n\in H(y_0),
y_n\rightarrow y_0}\overline t_s(y_n)\le \overline t_s(y_0)$.  For
any $t\in(\overline t_s(y_0), t_s(y_0))$,
$x=y_0+\frac{Dg(y_0)}{|Dg(y_0)|}t$. $L(\overline x_s(y_0), \overline
t_s(y_0))=\{y_0, \tilde y, \hat y,...\}$, where $g$ attains  local
minimum in ${\mathbb R^n}$ at $\tilde y$, $\hat y$,...and these points
are in $\overline B_{\overline t_s(y_0)}(\overline x_s(y_0))$. Due to $Dg(y_0)\neq 0$, $\exists\, \epsilon > 0$ such that $L(\overline x_s(y_0), \overline
t_s(y_0))\setminus\{y_0\}\subset \overline B_{\overline t_s(y_0)}(\overline x_s(y_0))\setminus B_\epsilon (y_0)\subset B_t(x)$. For any
$y_n\in H(y_0)$ and $y_n\approx y_0$, let $x_n=y_n+
\frac{Dg(y_n)}{|Dg(y_n)|}t$. Since $B_t(x_n)\approx B_t(x)$ and $\overline B_{\overline t_s(y_0)}(\overline x_s(y_0))\setminus B_\epsilon (y_0)$ is a closed set,  
 it implies that $B_t(x_n)\supset\overline B_{\overline t_s(y_0)}(\overline x_s(y_0))\setminus B_\epsilon (y_0) $, hence $L(\overline x_s(y_0), \overline
t_s(y_0))\setminus\{y_0\}\subset B_t(x_n)$.

Next we want to prove $y_n\in L(x_n, t)$. Since $\overline t_s(y_0)<t< t_s(y_0)$, similar to the argument in the proof of Proposition \ref{liminf=},  it implies that the maximal principal curvature of $H(y_0)$
at $y_0\le$ the principal curvature of $\partial B_{t_s(y_0)}(x_s(y_0))$ at
$y_0<$ the principal curvature of $\partial B_{t}(x)$ at $y_0$; i.e. $\lambda_1(y_0)\le\frac{1}{t_s(y_0)}<\frac{1}{t}$, so $\lambda_1(y_n)<\frac{1}{t}$; i.e. the maximal principal
curvature of $H(y_n)<$ the principal curvature of $\partial B_{t}(x_n)$ at
$y_n$. It implies that there exists some small $\epsilon>0$ such that  when   $y\in \overline B_{t}(x_n)\cap  B_\epsilon (y_n)$,
$g(y_n)\le g(y)$. When $y\in \overline B_{t}(x_n)\setminus B_\epsilon (y_n)$, since $\overline B_{t}(x_n)\setminus B_\epsilon (y_n)$ is a closed set, $\overline B_{t}(x_n)\setminus B_\epsilon (y_n)\approx \overline B_{t}(x)\setminus B_\epsilon (y_0)$ and  $B_{t_s(y_0)}(x_s(y_0))\supset \overline B_{t}(x)\setminus B_\epsilon (y_0)$,
 we have that $B_{t_s(y_0)}(x_s(y_0))\supset \overline B_{t}(x_n)\setminus B_\epsilon (y_n)$, i.e., $y\in B_{t_s(y_0)}(x_s(y_0))$, hence $ g(y)\ge g(y_0)=g(y_n)$.
Therefore \begin{equation}\label{L(xn)}y_n\in L(x_n,
t).\end{equation}

Since $g(y_n)=g(y_0)=g(\tilde y)=g(\hat y)=...$, we have that $y_n$,
$\tilde y$, $\hat y, ...\in L(x_n, t)$. It implies that $\overline
t_s(y_n)<t$. Since it is true for any $y_n\in H(y_0)$ and
$y_n\approx y_0$,   $\limsup_{y_n\in H(y_0), y_n\rightarrow
y_0}\overline t_s(y_n)\le t$. Therefore $\limsup_{y_n\in H(y_0),
y_n\rightarrow y_0}\overline t_s(y_n)\le \overline t_s(y_0)$. So far
we have that
\begin{equation}\lim_{y_n\in H(y_0), y_n\rightarrow y_0}\overline
t_s(y_n)= \overline t_s(y_0).\end{equation} Next by Proposition
\ref{limsup}, we have that
$$\limsup_{y_n\in H(y_0), y_n\rightarrow y_0} t_s(y_n)\le
\limsup_{y_n\rightarrow y_0} t_s(y_n)=t_s(y_0).$$
Finally we need to prove that $\liminf_{y_n\in H(y_0), y_n\rightarrow
y_0} t_s(y_n)\ge  t_s(y_0)$. For any $t<t_s(y_0)$,
let $x=y_0+\frac{Dg(y_0)}{|Dg(y_0)|}t$. For any $y_n\in H(y_0)$ and
$y_n\approx y_0$, let $x_n=y_n+\frac{Dg(y_n)}{|Dg(y_n)|}t$. 
From (\ref{L(xn)}), we already know $y_n\in L(x_n, t)$. It implies
that $t_s(y_n)\ge t$. Hence $\liminf_{y_n\in H(y_0), y_n\rightarrow
y_0} t_s(y_n)\ge t$. Then $\liminf_{y_n\in H(y_0), y_n\rightarrow
y_0} t_s(y_n)\ge t_s(y_0) $. Therefore
\begin{equation}\lim_{y_n\in H(y_0), y_n\rightarrow y_0} t_s(y_n)=
 t_s(y_0).
\end{equation}
\end{proof}
By Proposition \ref{surface}, we see that for any $y_0$ with $Dg(y_0)\neq 0$ and $\overline t_s(y_0)<t_s(y_0)$, there is a continuous $n-$ dimensional surface through $(\overline x_s(y_0), \overline t_s(y_0))$,  which is a subset of $\Sigma$.

If combine Corollary \ref{con} and Proposition \ref{surface}, on
$H(y_0)$,  the following holds:
\begin{corollary}For any $y_0$ with $Dg(y_0)\neq 0$,   on
$\{y\in H(y_0)|Dg(y)\neq 0, \overline
t_s(y)<\infty\}$,  $\overline t_s(\cdot)$ and $t_s(\cdot)$ are
continuous functions.
\end{corollary}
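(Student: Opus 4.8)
The plan is to reduce the statement to the two results just established, Corollary \ref{con} and Proposition \ref{surface}, by a dichotomy on the order relation between $\overline t_s$ and $t_s$ at the base point. Fix $y_0$ with $Dg(y_0)\neq 0$ and write $S=\{y\in H(y_0)\mid Dg(y)\neq 0,\ \overline t_s(y)<\infty\}$. To prove continuity of $\overline t_s(\cdot)$ and $t_s(\cdot)$ on $S$, I would take an arbitrary $y\in S$ together with an arbitrary sequence $\{y_n\}\subset S$ with $y_n\to y$, and show $\overline t_s(y_n)\to\overline t_s(y)$ and $t_s(y_n)\to t_s(y)$. Since $\overline t_s(y)\le t_s(y)$ always holds, exactly one of the cases $\overline t_s(y)=t_s(y)$ or $\overline t_s(y)<t_s(y)$ occurs, and I would treat them separately.

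In the first case, $\overline t_s(y)=t_s(y)<\infty$, Corollary \ref{con} applies directly: it yields $\lim_{y_n\to y}\overline t_s(y_n)=\lim_{y_n\to y}t_s(y_n)=t_s(y)$ for \emph{every} sequence $y_n\to y$ in ${\mathbb R^n}$, hence in particular for sequences confined to $S$ (the restricted $\limsup$ is no larger and the restricted $\liminf$ is no smaller than the full ones, so the restricted limit exists and equals the common value). Thus both functions are continuous at $y$ relative to $S$ with no further work.

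In the second case, $\overline t_s(y)<t_s(y)$, Proposition \ref{surface} asserts that $\overline t_s(\cdot)$ and $t_s(\cdot)$ are continuous at $y$ when restricted to the level surface $H(y)$. The observation that closes the argument is that $H(y)$ and $H(y_0)$ coincide in a neighbourhood of $y$: by definition of $S$ we have $y\in H(y_0)$, so $g(y)=g(y_0)$, and hence the local $C^2$ level surface $\{g=g(y_0)\}$ through $y$ supplied by Definition \ref{H} is simultaneously $H(y_0)$ and $H(y)$ near $y$. Since $\{y_n\}\subset S\subset H(y_0)=H(y)$ for $n$ large, the continuity on $H(y)$ provided by Proposition \ref{surface} gives $\overline t_s(y_n)\to\overline t_s(y)$ and $t_s(y_n)\to t_s(y)$ (interpreted in the extended-real sense should $t_s(y)=\infty$, consistent with the remarks already recorded for those results), i.e.\ continuity at $y$ relative to $S$.

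As $y\in S$ was arbitrary, the two cases together show that $\overline t_s(\cdot)$ and $t_s(\cdot)$ are continuous on $S$. The proof is essentially bookkeeping once the case split is made; the one point that genuinely requires care, and the step I would flag as the crux, is the local identification $H(y)=H(y_0)$. It rests entirely on the fact that membership of $y$ in the level surface $H(y_0)$ forces $g(y)=g(y_0)$, and it is exactly this that lets me transport Proposition \ref{surface}, whose conclusion is phrased for $H(y)$, into the desired assertion about $H(y_0)$.
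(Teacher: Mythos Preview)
Your proof is correct and follows exactly the approach the paper intends: the paper itself merely says to combine Corollary \ref{con} and Proposition \ref{surface}, and your dichotomy on whether $\overline t_s(y)=t_s(y)$ or $\overline t_s(y)<t_s(y)$ is precisely how that combination is carried out. Your explicit observation that $H(y)$ and $H(y_0)$ coincide locally near $y\in H(y_0)$ is the one detail the paper leaves implicit, and you have handled it correctly.
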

Proposition \ref{surface} is about what happens  when $y_n\in H(y_0)$.
Next we consider what will happen when $y_n\in H^+(y_0)$ or
$H^-(y_0)$.    Then  the following proposition holds:
\begin{proposition}\label{H+H-}\label{H^+(y_0)}$$\lim_{y_n\in H^+(y_0), y_n
\rightarrow y_0}\overline t_s(y_n)= \lim_{y_n\in H^+(y_0),
y_n\rightarrow y_0} t_s(y_n)=\overline t_s(y_0).$$ And
$$\lim_{y_n\in H^-(y_0), y_n\rightarrow y_0}\overline t_s(y_n)=
 \lim_{y_n\in H^-(y_0), y_n\rightarrow y_0} t_s(y_n)= t_s(y_0).
$$
\end{proposition}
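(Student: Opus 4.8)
The plan is to prove each of the two displayed chains by pinning its $\liminf$ from below and its $\limsup$ from above and then squeezing, exploiting throughout the pointwise inequality $\overline t_s(y)\le t_s(y)$. This inequality means that a lower bound for $\liminf \overline t_s$ together with an upper bound for $\limsup t_s$ simultaneously controls all four one-sided limits of both $\overline t_s$ and $t_s$ over the relevant subfamily. The $H^+$ half is then essentially free. Proposition \ref{limsH+} already gives $\limsup_{y_n\in H^+(y_0)}t_s(y_n)=\limsup_{y_n\in H^+(y_0)}\overline t_s(y_n)=\overline t_s(y_0)$, and since a $\liminf$ taken over the subfamily $\{y_n\in H^+(y_0)\}$ can only exceed the $\liminf$ over all sequences, the $C^2$ estimate of Proposition \ref{liminf=} yields $\liminf_{y_n\in H^+(y_0)}\overline t_s(y_n)\ge\liminf_{y_n\to y_0}\overline t_s(y_n)=\overline t_s(y_0)$. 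With $\overline t_s\le t_s$ this squeezes everything over $H^+(y_0)$ to $\overline t_s(y_0)$.

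For the $H^-$ half the upper bound is again available for free: the corollary following Proposition \ref{limsup} gives $\limsup_{y_n\in H^-(y_0)}\overline t_s(y_n)=\limsup_{y_n\in H^-(y_0)}t_s(y_n)=t_s(y_0)$. The real content is the matching lower bound $\liminf_{y_n\in H^-(y_0)}\overline t_s(y_n)\ge t_s(y_0)$, which is genuinely stronger than the subset trick (that alone would give only $\ge\overline t_s(y_0)$); the improvement must come from the sign condition $g(y_n)<g(y_0)$. I would fix $t<t_s(y_0)$ and reduce to the claim that every $y_n\in H^-(y_0)$ sufficiently close to $y_0$ is the \emph{unique} minimizer for $(x_n,t)$, where $x_n=y_n+\frac{Dg(y_n)}{|Dg(y_n)|}t$; this forces $\overline t_s(y_n)\ge t$, and letting $t\uparrow t_s(y_0)$ finishes.

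To establish this uniqueness I would combine a global confinement of competitors with the local curvature argument already used in Proposition \ref{liminf=}. Let $B^{*}=\overline B_{t_s(y_0)}(x_s(y_0))$ be the maximal ball for $y_0$; since $y_0\in L(x_s(y_0),t_s(y_0))$, we have $g\ge g(y_0)$ on $B^{*}$. Because $t<t_s(y_0)$, the ball $\overline B_t(x)$ with $x=y_0+\frac{Dg(y_0)}{|Dg(y_0)|}t$ is internally tangent to $B^{*}$ at $y_0$, so $\overline B_t(x)\setminus\operatorname{int}B^{*}=\{y_0\}$; a short compactness argument then shows that for $x_n$ close enough to $x$ the poke-out set $\overline B_t(x_n)\setminus\operatorname{int}B^{*}$ lies in an arbitrarily small ball $B_\rho(y_0)$. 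Now any competitor $y'$ of $y_n$, meaning $y'\in\overline B_t(x_n)$ with $g(y')\le g(y_n)<g(y_0)$, cannot lie in $B^{*}$ and hence lies in this small set near $y_0$, thus near $y_n$. On the other hand, $t<t_s(y_0)$ gives $\lambda_1(y_0)\le\frac{1}{t_s(y_0)}<\frac1t$ for the maximal principal curvature of $H(y_0)$, so by $C^2$ continuity $\lambda_1(y_n)<\frac1t$ for $y_n$ near $y_0$, producing a fixed radius $\rho_0$ with $g(y)>g(y_n)$ for all $y\in\overline B_t(x_n)$ with $0<|y-y_n|\le\rho_0$. Taking $\rho\le\rho_0/2$ and $|y_n-y_0|<\rho_0/2$ forces every competitor to equal $y_n$, so $y_n$ is the unique minimizer, as required. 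The case $t_s(y_0)=\infty$ is handled identically, with the closed region $\overline{H_{y_0}}$ (on which $g\ge g(y_0)$) playing the role of $B^{*}$.

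The main obstacle is exactly this lower bound for $H^-(y_0)$. The naive estimate comparing $\overline B_t(x_n)$ with $\overline B_t(x)$ through the thin crescent $\overline B_t(x_n)\setminus\overline B_t(x)$ breaks down for a tangential approach $y_n\to y_0$, where the gap $g(y_0)-g(y_n)$ may be far smaller than $|x_n-x|$, so competitors in the crescent cannot be excluded by continuity alone. Reframing the competitors as points lying \emph{outside the maximal ball} $B^{*}$, rather than outside $\overline B_t(x)$, is what localizes them near $y_0$ and lets the curvature estimate finish the argument. The one point requiring care is that the local-uniqueness radius $\rho_0$ be uniform as $y_n$ ranges over a fixed neighborhood of $y_0$, which is guaranteed by the continuity of the principal curvatures of the level surfaces $H(y)$ under the $C^2$ hypothesis.
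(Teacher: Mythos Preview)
Your proof is correct and follows essentially the same route as the paper's own argument: the $H^+$ chain is obtained by squeezing Proposition~\ref{limsH+} against Proposition~\ref{liminf=}, and the $H^-$ chain combines the upper bound from Proposition~\ref{limsup} with the key lower bound $\liminf_{y_n\in H^-(y_0)}\overline t_s(y_n)\ge t_s(y_0)$, proved by showing $y_n$ is the unique minimizer for $(x_n,t)$ via (i) confinement of competitors inside the maximal ball $\overline B_{t_s(y_0)}(x_s(y_0))$ where $g\ge g(y_0)>g(y_n)$, and (ii) the curvature comparison $\lambda_1(y_n)<\tfrac{1}{t}$ near $y_0$. Your write-up is somewhat more explicit than the paper's about the compactness step localizing the poke-out set, the uniformity of $\rho_0$, and the $t_s(y_0)=\infty$ case, but the underlying argument is the same.
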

\begin{proof}

By Proposition \ref{limsH+}, we have  that $$\limsup_{y_n\in
H^+(y_0), y_n\rightarrow y_0}\overline t_s(y_n)=\limsup_{y_n\in
H^+(y_0), y_n\rightarrow y_0} t_s(y_n)=\overline t_s(y_0).$$  By
Proposition \ref{liminf=}, $$\liminf_{ y_n\rightarrow y_0}\overline
t_s(y_n)=\liminf_{ y_n\rightarrow y_0}
t_s(y_n)=\overline t_s(y_0)
$$
Hence $\lim_{y_n\in H^+(y_0), y_n\rightarrow y_0}\overline
t_s(y_n)=\lim_{y_n\in H^+(y_0), y_n\rightarrow y_0}
t_s(y_n)=\overline t_s(y_0)$.


Next  we want to prove $\liminf_{y_n\in H^-(y_0),\,y_n\rightarrow
y_0}\overline t_s(y_n)\ge t_s(y_0)$. For any $y_n\in H^-(y_0)$ with
$y_n\approx y_0$,  it is true that  $g(y_n)<g(y_0)$.

Since $g$ is in $C^2$,  $H(y_0)$ and $H(y_n)$ are $C^2$. For any
$t<t_s(y_0)$, we have that $H(y_0)$ is outside of $\overline B_t(x)\setminus \{y_0\}$, hence the maximal principal curvature of $H(y_0)$ at $y_0$ is
smaller than the principal curvature of $\partial B_t(x)$,  so the maximal principal curvature
of $H(y_n)$ at $y_n$ is smaller than the  principal curvature of $\partial
B_t(x_n)$ when $y_n$ is sufficiently close to $y_0$, here
$x=y_0+\frac{Dg(y_0)}{|Dg(y_0)|}t$,
$x_n=y_n+\frac{Dg(y_n)}{|Dg(y_n)|}t$. It implies that when $y\in
\overline B_t(x_n)\setminus \{y_n\}$ and $y\approx y_n$, we have that
$g(y)>g(y_n)$.

For any $y\in \overline B_t(x_n)\setminus \{y_n\}$ and $y$ is not close to $y_n$, $y$ is in $B_{t_s(y_0)}( x_s(y_0))$ when $y_n$ is sufficiently close to $y_0$. It implies that $g(y)\ge g(y_0)>g(y_n)$.
That is, $y_n$ is unique minimizer for $(x_n, t)$. Then we obtain
$$\overline t_s(y_n)\ge t.$$ So $$\liminf_{y_n\in H^-(y_0),\,y_n
\rightarrow y_0}\overline t_s(y_n)\ge t.$$ Since $t$ is arbitrary as
long as $t< t_s(y_0)$, $$\liminf_{y_n\in H^-(y_0),\,y_n\rightarrow
y_0}\overline t_s(y_n)\ge t_s(y_0).$$ By $\overline t_s(y)\le
t_s(y)$, we have that $$\liminf_{y_n\in H^-(y_0),\,y_n\rightarrow
y_0}t_s(y_n)\ge\liminf_{y_n\in H^-(y_0),\,y_n\rightarrow
y_0}\overline t_s(y_n)\ge t_s(y_0).$$ By Proposition \ref{limsup},
$\limsup_{y_n\rightarrow y_0} t_s(y_n)=\limsup_{y_n\rightarrow y_0}
\overline t_s(y_n)=t_s(y_0)$. Hence $\lim_{y_n\in H^-(y_0),
y_n\rightarrow y_0}\overline t_s(y_n)=
 \lim_{y_n\in H^-(y_0), y_n\rightarrow y_0} t_s(y_n)= t_s(y_0)
$.

\end{proof}
 From the proof above, we can see that the proposition above holds
when $\overline t_s(y_0)=\infty$ or $t_s(y_0)=\infty$.

\subsection{Path connected components of $\Sigma\cup T_1$}
Finally, when $g$ is $C^2$,   better results can be obtained than
the case when $g$ is $C^1$ in Theorem \ref{thm1}.  Recall $F_i$ in
Proposition \ref{F_i}.
\begin{proposition}\label{path}If $g$ is $C^2$,  then $T(F_i)$ is path connected.
\end{proposition}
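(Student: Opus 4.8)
The plan is to reduce path connectedness of $T(F_i)$ to the continuous lifting of a single path in $F_i$, and then to defeat the jump discontinuities of the termination times by inserting the termination segments themselves. Recall $T(F_i)=\bigcup_{y\in F_i}\tau(y)$, where for $y\in F_i$ I write $\tau(y)$ for the termination object: the characteristic segment from $P_1(y)=(\overline x_s(y),\overline t_s(y))$ to $P_2(y)=(x_s(y),t_s(y))$ when $t_s(y)<\infty$, and the corresponding ray when $t_s(y)=\infty$. Each $\tau(y)$ is itself a segment (or ray), hence a path inside $T(F_i)$, so to connect two points $p,q\in T(F_i)$ it suffices to slide them along their own segments to $P_1(y_p),P_1(y_q)$ and then to join $P_1(y_p)$ to $P_1(y_q)$. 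Since $F_i$ is path connected (Proposition \ref{F_i}), I fix a path $\gamma:[0,1]\to F_i$ joining $y_p$ to $y_q$. Because $g\in C^2$ and $Dg\neq 0$ on $M\supset F_i$, the field $\frac{Dg(y)}{|Dg(y)|}$ is continuous, so the transport map $\Psi(s,t)=\bigl(\gamma(s)+\frac{Dg(\gamma(s))}{|Dg(\gamma(s))|}t,\ t\bigr)$ is continuous on $[0,1]\times\mathbb R$. Writing $a(s)=\overline t_s(\gamma(s))$ and $b(s)=t_s(\gamma(s))$, everything reduces to producing a continuous height function whose (possibly completed) graph stays in the band $a\le t\le b$, after which $\Gamma=\Psi(s,h(s))$ is the desired path.

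Next I would record the regularity of $a$ and $b$. By Proposition \ref{liminf=}, $\liminf_{y\to y_0}\overline t_s(y)=\overline t_s(y_0)$, so $a$ is lower semicontinuous; by Proposition \ref{limsup}, $\limsup_{y\to y_0}t_s(y)=t_s(y_0)$, so $b$ is upper semicontinuous, and $a\le b$. The finer point is that the discontinuities are one sided: by Proposition \ref{H+H-}, approaching $y_0$ through $H^+(y_0)$ forces both $\overline t_s$ and $t_s$ to tend to $\overline t_s(y_0)$, while approaching through $H^-(y_0)$ forces both to tend to $t_s(y_0)$; and by Corollary \ref{con}, wherever $\overline t_s(y_0)=t_s(y_0)$ both are genuinely continuous. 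To exploit this I would not lift an arbitrary $\gamma$, but choose it so that $g\circ\gamma$ is piecewise strictly monotone (a generic $C^1$ path in the open set $E_i$, joined to the endpoints of $F_i\setminus E_i$, achieves this since $g\in C^2$). On each monotone piece, for $s$ on one side of a point $s_0$ we have $\gamma(s)\in H^+(\gamma(s_0))$ and on the other $\gamma(s)\in H^-(\gamma(s_0))$, so Proposition \ref{H+H-} guarantees that $a$ and $b$ possess one sided limits at every point; hence $a$ is regulated (right or left continuous according to the sign of $(g\circ\gamma)'$), its only discontinuities are jumps from $t_s(\gamma(s_0))$ down to $\overline t_s(\gamma(s_0))$, and these jump points are at most countable, with only finitely many of size exceeding any fixed $\epsilon$.

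With this structure I would build $\Gamma$ by completing the graph of the regulated selection across its jumps: at each jump point $s_0$ the one sided limit of $\Psi(s,a(s))$ is $P_2(\gamma(s_0))$ on the $H^-$ side while its value is $P_1(\gamma(s_0))$ on the $H^+$ side, and the whole vertical segment $\tau(\gamma(s_0))=\Psi(\{s_0\}\times[\overline t_s(\gamma(s_0)),t_s(\gamma(s_0))])$ joining these two endpoints lies in $T(F_i)$. Inserting these countably many segments (finitely many long ones) and reparametrizing yields one continuous path in $T(F_i)$. For an endpoint $y_0\in F_i\setminus E_i$, where $t_s(y_0)=\infty$, I would attach the ray $\tau(y_0)$ to the $E_i$ part by approaching $y_0$ inside $H^+(y_0)\subset E_i$: by Proposition \ref{H+H-} the second termination points of the approaching characteristics converge to $P_1(y_0)$, and along a one sided $H^+$ approach $t_s$ is continuous, so the connection is by an honest path. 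This is the path connected upgrade of the $C^1$-level statement behind $T(F_i)$.

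The step I expect to be the genuine obstacle is precisely the exclusion of oscillation, that is, showing that $a=\overline t_s\circ\gamma$ cannot behave like $\sin(1/s)$ near a point. Lower semicontinuity of $a$ together with upper semicontinuity of $b$ is by itself \emph{not} enough to force a continuous selection, or even path connectedness of the band $\{a\le t\le b\}$ (a topologist's sine curve is a genuine counterexample), so the whole argument hinges on the $C^2$ input: it is Proposition \ref{H+H-} (the existence of the two one sided limits, resting on the curvature comparison of $H(y)$ with spheres) together with the freedom to take $g\circ\gamma$ piecewise monotone that downgrades the possible discontinuities of $a$ from wild oscillation to a countable set of one sided jumps, each bridged by a termination segment already contained in $T(F_i)$. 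This is exactly the place where the $C^1$ theory could only produce ``almost connected'' components (Proposition \ref{T(Ei)}): there Proposition \ref{liminf=} is unavailable and the oscillatory behaviour cannot be ruled out.
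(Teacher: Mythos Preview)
Your proposal is correct and follows essentially the same route as the paper: lift a path $\gamma$ from the base $F_i$ (working in the open set $E_i$ and handling $F_i\setminus E_i$ by an $H^+$ approach exactly as you describe), arrange the path so that $g\circ\gamma$ is piecewise monotone, invoke Proposition~\ref{H+H-} to get one sided limits of $\overline t_s,\ t_s$ along $\gamma$, and bridge each jump by the characteristic termination segment $\tau(\gamma(s_0))\subset T(F_i)$. The paper carries out the same construction by explicit case analysis (splitting into the situations $h(s)\in H(h(s_0))$, $H^+(h(s_0))$, $H^-(h(s_0))$ on each side, using Proposition~\ref{surface} for the level-set case), whereas you package the same idea more compactly as ``complete the graph of a regulated selection''; your restriction to \emph{strict} monotonicity lets you bypass the $H(h(s_0))$ case and Proposition~\ref{surface} entirely, at the cost of the mildly informal step of producing such a $\gamma$ (the paper's local modification of $\bar h$ to $h$ is the analogous informal step).
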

\begin{proof}First we prove that  $T(E_i)$ is path connected. By equation (\ref{E_i}) and Proposition \ref{E=J}, we know that for any $y_0\in E_i$, $t_s(y_0)<\infty$.

For any two points $(x_1, t_1)$, $(x_2, t_2)$ in $T(E_i)$, there
exist $y_1\in L(x_1, t_1)$ and $E_i$, $y_2\in L(x_2, t_2)$ and $E_i$
such that $t_1\in [\overline t_s(y_1),  t_s(y_1)]$ and $t_2\in
[\overline t_s(y_2), t_s(y_2)]$. 1. If $y_1=y_2$, then $(x_1, t_1)$
and $(x_2, t_2)$ are connected by the line segment which is a subset
of $\{(x, t)|x=y_1+\frac{Dg(y_1)}{|Dg(y_1)|}t, t\in [\overline
t_s(y_1), t_s(y_1)]\}$.

2. If $y_1\neq y_2$, since $E_i$ is path connected, there is a
continuous curve $\bar h(s)\subset E_i$ with $\bar h(0)=y_1$ and
$\bar h(1)=y_2$. On the basis of $\bar h(s)$, we can  construct a
continuous curve $h(s)$ still with $ h(0)=y_1$ and $ h(1)=y_2$ such
that for each $s_0\in (0, 1]$, there exists some $s_1<s_0$ with $\{
h(s)|s\in [s_1, s_0)\}\subset H( h(s_0))$ or  $\{ h(s)|s\in [s_1,
s_0)\}\subset H^+( h(s_0))$ or  $\{ h(s)|s\in [s_1, s_0)\}\subset
H^-( h(s_0))$ and for each $s_0\in [0, 1)$, there exists some
$s_2>s_0$, similar properties hold. If $\bar h(s)$ satisfies
properties above, then this $\bar h(s)=h(s)$. Otherwise, if for some
$s_0$ such that there does not exist any $s_1<s_0$ with $\{ \bar
h(s)|s\in [s_1, s_0)\}$ included in $H( \bar h(s_0))$ or $H^+( \bar
h(s_0))$ or $H^-( \bar h(s_0))$, then we can construct $h(s)$ as the
following: for any $ s_m\approx s_0$ and $ s_m< s_0$, we have that
$\bar h(s_m)\in H( \bar h(s_0))$ or $H^+( \bar h(s_0))$ or $H^-(
\bar h(s_0))$, since in any neighborhood of $\bar h(s_0)$, $H( \bar
h(s_0))$ separates it into these three parts. 1. If $\bar h(s_m)\in
H( \bar h(s_0))$, then let $h(s)\in H( \bar h(s_0))$ for any $s\in
(s_m, s_0)$. 2. $\bar h(s_m)\in H^+( \bar h(s_0))$. Since $H(y)$ is
some kind of level set along the direction $Dg(y)$, there will be a
continuous curve $h(s)$ with $s\in (s_m, s_0)$ transverse those
level sets such that $g(h(s))$ strictly decreases. 3. If $\bar
h(s_m)\in H^-( \bar h(s_0))$, then similarly to 2, there will be a
continuous curve $h(s)$ with $s\in (s_m, s_0)$ transverse those
level sets such that $g(h(s))$ strictly increases. The case of $
s_m> s_0$ is same as the case of $ s_m<s_0$. By this way, we can
have the desirable $h(s)$, $s\in [0, 1]$.

For any $s_0\in [0, 1]$, 1) if $\overline t_s(h(s_0))=t_s(h(s_0))$,
by Corollary \ref{con}, we have that $\lim_{s\rightarrow
s_0}\overline t_s(h(s))=\lim_{s\rightarrow s_0}
t_s(h(s))=t_s(h(s_0))$. It implies that $\overline t_s(h(s))$ and $
t_s(h(s))$ are both continuous at $s_0$. 2) If $\overline
t_s(h(s_0))<t_s(h(s_0))$, then

Case 1. $\exists s_1,\, s_2$, such that $s_0\in [s_1, s_2]$, ($s_1$
may be equal to $s_2$),  and $\{h(s)|s\in[s_1, s_2]\}\subset$
$H(h(s_1))$. And $\exists s_3,\, s_4$ such that $\{h(s)|s\in(s_3,
s_1]\}\subset H^+(h(s_1))$ and  $\{h(s)|s\in[s_2, s_4)\}\subset H^+
(h(s_2))$. Then by Proposition \ref{H^+(y_0)}, we know that
\begin{eqnarray*}&&\lim_{s\rightarrow s_1^-}\overline t_s(h(s))=
\lim_{s\rightarrow s_1^-} t_s(h(s))= \overline t_s(h(s_1)),\\
&&\lim_{s\rightarrow s_2^+}\overline t_s(h(s))=
\lim_{s\rightarrow s_2^+} t_s(h(s))= \overline t_s(h(s_2)).
\end{eqnarray*}
If $s_1=s_2$; i.e. $s_0=s_1=s_2$, then  $\overline t_s(h(s))$ and $
t_s(h(s))$ are both continuous at $s_0$. If $s_1<s_2$, then by
Proposition \ref{surface}, $$T_1=\{(x, t)|x=
h(s)+\frac{Dg(h(s))}{|Dg(h(s))|}t,  t\in[\overline t_s(h(s)),
t_s(h(s)], s\in [s_1, s_2] \}$$ is a continuous $n$ dimensional
surface which is a subset of $T(E_i)$, it is obvious to be able to
find a curve  on $T_1$ connecting $(\overline x_s(h(s_1)), \overline
t_s(h(s_1)))$ and $(\overline x_s(h(s_2)), \overline t_s(h(s_2)))$.

Case 2. $\exists s_1,\, s_2$, such that $s_0\in [s_1, s_2]$, ($s_1$
may be equal to $s_2$),  and $\{h(s)|s\in[s_1, s_2]\}\subset$
$H(h(s_1) )$. And $\exists s_3,\, s_4$ such that $\{h(s)|s\in(s_3,
s_1]\} \subset H^+(h(s_1))$ and  $\{h(s)|s\in[s_2, s_4)\}\subset
H^-(h(s_2))$. Then by Proposition \ref{H^+(y_0)}, we know that
\begin{eqnarray*}&&\lim_{s\rightarrow s_1^-}\overline t_s(h(s))=
\lim_{s\rightarrow s_1^-} t_s(h(s))= \overline t_s(h(s_1)),\\
&&\lim_{s\rightarrow s_2^+}\overline t_s(h(s))=
\lim_{s\rightarrow s_2^+} t_s(h(s))=  t_s(h(s_2)).
\end{eqnarray*}
If $s_1=s_2$; i.e. $s_0=s_1=s_2$, then  $\overline t_s(h(s))$ and $ t_s(h(s))$ are both continuous from left and from right at $s_0$. $(\overline x_s(h(s_1)), \overline t_s(h(s_1)))$ and $( x_s(h(s_1)),  t_s(h(s_1)))$ can be connected by characteristic segment from $h(s_1)$ . If $s_1<s_2$,
 then it is obvious to be able to find a curve  on $T_1$ connecting $(\overline x_s(h(s_1)), \overline t_s(h(s_1)))$ and $( x_s(h(s_2)),  t_s(h(s_2)))$.

 Similarly, for the left two cases,  the similar  results can be obtained. Therefore $T(E_i)$ is path connected.

 Next for any point $(x, t)$ in $T(F_i\setminus E_i)$,
 $\exists y_0\in F_i\setminus E_i$, such that $x=y_0+
 \frac{Dg(y_0)}{|Dg(y_0)|}t$ and $\overline t_s(y_0)\le t <t_s(y_0)
 =\infty$. By the argument in the proof of Proposition \ref{F_i}, we
 have known that there exist some $U$ which is  a neighborhood of $y_0$
    such that $U\cap H^+(y_0)\subset E_i$. For any point $y_1$ in
    $U\cap H^+(y_0)$, there is a continuous
    curve $h(s)$
     connecting $y_0$ and $y_1$ such that $h(0)=y_0$, $h(1)=y_1$ and
     $\{h(s)|s\in (0, 1]\}\subset U\cap H^+(y_0)$. By Proposition
     \ref{H^+(y_0)},
     we have that $\lim_{s\rightarrow 0^+}\overline t_s(h(s))=
     \lim_{s\rightarrow 0^+} t_s(h(s))= \overline t_s(y_0)$. Then
     combine the argument in the proof of that $T(E_i)$ is path
     connected, it is true that
$(x_s(h(s)), t_s(h(s)))$ and $(\overline x_s(h(s)), \overline
t_s(h(s)))$ are both continuous curves on $[0, 1]$. Finally it is
easy to see that $(\overline x_s(y_0), \overline t_s(y_0))$ is
connected with $(x, t)$ by characteristic segment which is a subset
of $T(F_i\setminus E_i)$. Therefore $T(F_i)$ is path connected.
\end{proof}
So combine Proposition \ref{F_i} and $T(M)=\Sigma\cup T_1$, the following holds:
\begin{theorem}\label{path connected} If $g$ is $C^2$,  then the set of singularity points $\Sigma\cup T_1$
is composed of at
most countable path connected components, that is, $\Sigma\cup T_1=\bigcup
T(F_i)=\bigcup S_j$, where $S_j$ is a path connected component of
$\Sigma\cup T_1$.
\end{theorem}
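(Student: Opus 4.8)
The plan is to assemble the statement from the structural results already in place, reducing it to a soft topological fact about countable unions of path connected sets, so that essentially all the analytic content is inherited from earlier propositions. First I would recall that $\Sigma\cup T_1=T(M)$ (by the proposition identifying $T(M)$ with $\Sigma\cup T_1$) and that $M=\bigcup_i F_i$ by Proposition \ref{F_i}; since $T$ is applied pointwise to the characteristics issuing from each $y_0$, this gives $\Sigma\cup T_1=T(M)=\bigcup_i T(F_i)$. The index set is at most countable: by Lemma \ref{E} the set $E$ is open, so its path connected components $E_i$ in (\ref{E_i}) form an at most countable family (each $E_i$ contains a point of $\mathbb{Q}^n$), and the $F_i\supset E_i$ of Proposition \ref{F_i} are indexed by the same family. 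Moreover, for $C^2$ data each $T(F_i)$ is path connected by Proposition \ref{path}. Thus $\Sigma\cup T_1$ is an at most countable union of path connected sets $T(F_i)$.

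Next I would pass from this countable cover to the genuine path connected components. The key observation is purely topological: a path connected subset of a space always lies inside exactly one path connected component. Applying this to each $T(F_i)$, every $T(F_i)$ is contained in a single path connected component of $\Sigma\cup T_1$. Conversely, because the $T(F_i)$ cover $\Sigma\cup T_1$, any path connected component $S$ contains a point $p$, and $p\in T(F_i)$ for some $i$, so $T(F_i)\subset S$. Hence the assignment sending $i$ to the component containing $T(F_i)$ is a surjection from the at most countable index set onto the collection of path connected components, which shows there are at most countably many components $S_j$.

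Finally I would record the stated decomposition. Each component $S_j$ is precisely the union of those $T(F_i)$ it contains: these cover $S_j$ (every point of $S_j$ lies in some $T(F_i)$, which is then wholly inside $S_j$), and each such $T(F_i)$ lies inside $S_j$. This yields $\Sigma\cup T_1=\bigcup_i T(F_i)=\bigcup_j S_j$ with each $S_j$ a path connected component realized as a finite or countable union of the $T(F_i)$, as claimed.

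I do not expect a genuine obstacle at this stage, since all the real work is carried by Proposition \ref{path} (path connectedness of each $T(F_i)$, where the $C^2$ hypothesis enters through the curvature arguments and the limit identities of Propositions \ref{surface} and \ref{H+H-}) and by Proposition \ref{F_i} (the construction and countability of the $F_i$). The only points deserving care are the countability of the index family and the clean statement that path connected subsets refine into components. The possible overlaps $T(F_i)\cap T(F_j)\neq\emptyset$ noted after Proposition \ref{T(F_i)} are exactly what merges several $T(F_i)$ into a single $S_j$, but they cause no difficulty for the surjection-based counting argument above.
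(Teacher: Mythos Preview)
Your proposal is correct and follows essentially the same approach as the paper: combine $\Sigma\cup T_1=T(M)=\bigcup_i T(F_i)$ from Proposition \ref{F_i} and the identification of $T(M)$, invoke Proposition \ref{path} for the path connectedness of each $T(F_i)$ under the $C^2$ hypothesis, and conclude. The paper's own proof is in fact just the one-line remark ``combine Proposition \ref{F_i} and $T(M)=\Sigma\cup T_1$''; your version spells out the countability of the index set and the surjection onto components more explicitly than the paper does, but this is added clarity rather than a different route.
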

$T(F_i)$ is path connected, but not necessarily a path connected
component of $T(M)$. $T(F_i)$ may intersect with other $T(F_j)$.  A
path connected component $S_k$ may include one or many $T(F_i)$.

Similar to the $C^1$ initial data case, on the other hand, $F_i$ is path connected, but not necessarily a
path connected component of $M$. $R_j$ is a  path connected
component of $M$. Then there exist some $F_l$, such that
$R_j=\bigcup F_l$, so $T(R_j)=\bigcup T(F_l)$. Hence $T(R_j)$ is not
necessarily path connected, it may include several  path connected
components. The reason is that for any $y_0\in F_l\setminus E_l$, we
have that $\overline t_s(y_0)<t_s(y_0)=\infty$. By Proposition
\ref{H+H-}, $\lim_{y_n\in H^+(y_0), y_n \rightarrow y_0}\overline
t_s(y_n)= \lim_{y_n\in H^+(y_0), y_n\rightarrow y_0}
t_s(y_n)=\overline t_s(y_0)$ and $\lim_{y_n\in H^-(y_0),
y_n\rightarrow y_0}\overline t_s(y_n)= \infty$, $T(F_l\setminus
E_l)$ may separate different path connected components. For one
dimensional case, it is easy to see that $T(F_l\setminus E_l)$ must
separate two neighboring  path connected components.

So far by Theorem \ref{path connected}, we can easily obtain a
result about $\Sigma $ for one dimensional case.
\begin{corollary}\label{one}For one dimensional case, the set of all
nondifferentiable points  $\Sigma$ is composed of at most countable
path connected components.
\end{corollary}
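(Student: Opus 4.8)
The plan is to deduce this from Theorem \ref{path connected} by identifying $\Sigma$ with $\Sigma\cup T_1$ in the one dimensional setting. Theorem \ref{path connected} already asserts that the full singularity set $\Sigma\cup T_1=\bigcup T(F_i)$ is a union of at most countably many path connected components, so it suffices to show that for $n=1$ no genuine $T_1$ points occur, i.e. $T_1=\emptyset$. Once this is in hand we have $\Sigma=\Sigma\cup T_1$, and the conclusion of the corollary is immediate.

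To establish $T_1=\emptyset$ when $n=1$, I would argue that for every $y_0$ with $Dg(y_0)\neq 0$ and $\overline t_s(y_0)<\infty$, the first termination point $(\overline x_s(y_0),\overline t_s(y_0))$ necessarily has more than one minimizer; this is exactly the one dimensional reduction recorded in Lemma \ref{n=1}. Concretely, suppose $Dg(y_0)>0$, so that the characteristic is $x=y_0+t$ and, writing $t_0=\overline t_s(y_0)$, the relevant ball $\overline B_{t_0}(\overline x_s(y_0))$ is the interval $[y_0,y_0+2t_0]$ with $y_0$ at its left endpoint. Since at a $T_1$ point $\overline t_s(y_0)=t_s(y_0)$, continuity of $u$ shows $y_0$ is still a minimizer at $t_0$, while the definition of $\overline t_s$ together with the upper semicontinuity of the minimizer multifunction $L(\cdot,\cdot)$ in (\ref{L(x, t)}) produces a second minimizer $\bar y_1\in L(\overline x_s(y_0),\overline t_s(y_0))$. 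The only delicate point is to rule out $\bar y_1=y_0$: because $Dg(y_0)>0$ and $g$ is $C^2$, $g$ is strictly increasing in a neighborhood of $y_0$, so any competing minimizer $y_1(t)$ lies strictly to the right of $y_0$ with $g(y_1(t))\le g(y_0)$ and therefore stays bounded away from $y_0$; hence $\bar y_1\neq y_0$. Thus $(\overline x_s(y_0),\overline t_s(y_0))$ has at least two distinct minimizers, one of which, $y_0$, satisfies $Dg(y_0)\neq 0$, and part 2) of Theorem \ref{diff} forces the point to be nondifferentiable. The case $Dg(y_0)<0$ is symmetric.

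Since every termination point of a characteristic from a point $y_0$ with $Dg(y_0)\neq 0$ is, in one dimension, nondifferentiable, no such termination point can have a unique minimizer; by the very definition of $T_1$ as the set of termination points in $T(M)$ with a unique minimizer, this yields $T_1=\emptyset$. Consequently $\Sigma\cup T_1=\Sigma$, and Theorem \ref{path connected} gives precisely that $\Sigma$ is composed of at most countably many path connected components.

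I expect the only real obstacle to be the verification that the limiting competing minimizer $\bar y_1$ is distinct from $y_0$; everything else is a bookkeeping reduction to Theorem \ref{path connected}. This distinctness is exactly where the hypothesis $Dg(y_0)\neq 0$, and the resulting local strict monotonicity of $g$, is essential, and it is the one dimensional rigidity—only the two directions $\pm 1$ are available—that rules out the appearance of $T_1$.
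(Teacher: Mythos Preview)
Your proposal is correct and follows essentially the same route as the paper: reduce to showing $T_1=\emptyset$ in one dimension (which is exactly Lemma \ref{n=1}) and then invoke Theorem \ref{path connected}. The only cosmetic difference is in the mechanism used to keep the competing minimizer away from $y_0$---you use the local strict monotonicity of $g$ coming from $Dg(y_0)\neq 0$, while the paper argues via the dichotomy $Dg(y_t)=0$ versus $Dg(y_t)\neq 0$ and the fact that only the two characteristic directions $\pm 1$ are available---but these are equivalent observations.
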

Before giving the proof, we give a lemma first.
\begin{lemma}\label{n=1}For one dimensional case, $$T(M)\cap \{(x, t)|(x, t)\,
\mbox{has unique minimizer}\}=\emptyset.$$
\end{lemma}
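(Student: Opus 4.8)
The plan is to prove the equivalent statement that every point of $T(M)$ has at least two minimizers, which by Theorem \ref{diff} is the same as saying no point of $T(M)$ is a point of uniqueness. So I would fix $y_0\in M$ and a termination point $(x,t)$ on its characteristic, with $t\in[\overline t_s(y_0),t_s(y_0)]$ and $x=y_0+\tfrac{Dg(y_0)}{|Dg(y_0)|}t$. Since $n=1$ the unit direction $\tfrac{Dg(y_0)}{|Dg(y_0)|}$ is $+1$ or $-1$; I would treat $Dg(y_0)>0$ and note the case $Dg(y_0)<0$ is symmetric (ball $[y_0-2t,y_0]$, second minimizer $y_0-2T$). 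With $Dg(y_0)>0$ we have $x=y_0+t$ and $\overline B_t(x)=[x-t,x+t]=[y_0,\,y_0+2t]$. The one feature of $n=1$ that drives everything is that the left endpoint of this interval is pinned at $y_0$ for all $t$, and that the intervals are nested and increasing in $t$; hence once $y_0$ ceases to be a minimizer it fails to be one for all larger times.

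If $\overline t_s(y_0)<t_s(y_0)$, I would simply invoke Proposition \ref{cluster} (Case 2): every point with $t\in[\overline t_s(y_0),t_s(y_0)]$ is then nondifferentiable, and Theorem \ref{diff} forces more than one minimizer. This settles the entire segment at once, so the only remaining case is $\overline t_s(y_0)=t_s(y_0)=:T$, where the sole termination point is $(\overline x_s(y_0),T)=(y_0+T,T)$; for $C^2$ data $T>0$ by the Proposition giving $\overline t_s(y_0)>0$ whenever $Dg(y_0)\neq0$.

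The main step is to produce a second minimizer at the far endpoint $y_0+2T$. Since $y_0\in L(\overline x_s(y_0),\overline t_s(y_0))$, we have $g\ge g(y_0)$ on $[y_0,y_0+2T]$. For $t<T$ the point $y_0$ is the unique minimizer, so $g>g(y_0)$ on $(y_0,y_0+2t]$; letting $t\uparrow T$ gives $g>g(y_0)$ on $(y_0,y_0+2T)$. For $t>T=t_s(y_0)$, on the other hand, $y_0$ is no longer a minimizer, so some $z_t\in[y_0,y_0+2t]$ satisfies $g(z_t)<g(y_0)$; because $g\ge g(y_0)$ on $[y_0,y_0+2T]$, necessarily $z_t\in(y_0+2T,\,y_0+2t]$. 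Letting $t\downarrow T$ squeezes $z_t\to y_0+2T$, and continuity of $g$ yields $g(y_0+2T)\le g(y_0)$; combined with $g(y_0+2T)\ge g(y_0)$ this gives $g(y_0+2T)=g(y_0)=u(\overline x_s(y_0),T)$. As $T>0$, $y_0+2T\neq y_0$ lies in $\overline B_T(\overline x_s(y_0))$, so $(\overline x_s(y_0),T)$ has the two distinct minimizers $y_0$ and $y_0+2T$. Thus every point of $T(M)$ has more than one minimizer, proving the lemma.

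The genuinely one-dimensional ingredient --- and the expected main obstacle to any higher-dimensional analogue --- is the pinning of the left endpoint of $\overline B_t(x)$ at $y_0$: for $n>1$ the ``far'' part of the sphere moves and bends, and a matching of the curvature of $H(y_0)$ against that of the sphere can leave a unique minimizer, which is precisely the mechanism producing $T_1\neq\emptyset$ when $n>1$. The only fine point to flag is the use of $T=\overline t_s(y_0)>0$: this holds for $C^2$ data by the cited Proposition, and since a termination point at $t=0$ would lie on the initial surface $\{t=0\}$, the statement is to be read for genuine termination points with positive time.
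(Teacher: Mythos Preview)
Your proof is correct. The overall architecture matches the paper's: split into the cases $\overline t_s(y_0)<t_s(y_0)$ (handled by Proposition~\ref{cluster}) and $\overline t_s(y_0)=t_s(y_0)=:T$, and in the latter pass to the limit $t\to T^+$ to produce a second minimizer. The difference is in how that limit is run. The paper takes actual minimizers $y_t\in L(x,t)$ for $t>T$ and argues, via the two possible characteristic slopes $\pm 1$ (with a separate sub-case when $Dg(y_t)=0$), that $y_t$ stays bounded away from $y_0$, so any subsequential limit $\bar y$ is a minimizer $\neq y_0$. You instead exploit the nesting of the intervals $[y_0,y_0+2t]$ to show directly that $g>g(y_0)$ on the open interior $(y_0,y_0+2T)$, which forces any point with $g<g(y_0)$ for $t>T$ into $(y_0+2T,y_0+2t]$; the squeeze then identifies the second minimizer explicitly as the far endpoint $y_0+2T$. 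Your route avoids the case split on $Dg(y_t)$ and yields the slightly sharper statement that the minimizer set at $(\overline x_s(y_0),T)$ is exactly $\{y_0,\,y_0+2T\}$; the paper's argument is a bit more robust in that it never needs to pin down where the second minimizer sits, only that one exists away from $y_0$. Your use of $T>0$ via the $C^2$ proposition is appropriate here since the lemma sits in the $C^2$ section.
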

\begin{proof}For any $y_0\in M$, 1. if $\overline t_s(y_0)<
t_s(y_0)$, then by Proposition \ref{cluster}, for any $(x, t)$ on
the characteristic from $y_0$ and $\overline t_s(y_0)\le t\le
t_s(y_0)$, $(x, t)$ has more than one minimizer. 2. If $\overline
t_s(y_0)= t_s(y_0)$, for any $(x, t)$ on the characteristic from
$y_0$ and $t>t_s(y_0)$, $(x, t)$ does not have $y_0$ as its
minimizer. Let $y_t\in L(x, t)$. 1) If $Dg(y_t)=0$, since
$Dg(y_0)\neq 0$, there exists an open set $U$ such that $y_0\in U$
and for any $y\in U$, $Dg(y)\neq 0$, then   $y_t$ is  not in $U$. 2)
If $Dg(y_t)\neq 0$, due to that it is one dimensional case, the
characteristic from $y$ with $Dg(y)\neq 0$ only has two choices of
slope, $1$ or $-1$. We must have
$\frac{Dg(y_t)}{|Dg(y_t)|}=-\frac{Dg(y_0)}{|Dg(y_0)|}$. Otherwise,
$y_t=y_0$, it contradicts with that $t>t_s(y_0)$.
 Hence  $|y_t-y_0|=2t$. When $t$ goes to $t_s(y_0)$, for both cases 1) and
2), there exists some $\bar y$ such that $\bar y$ is a minimizer for
$(x_s(y_0), t_s(y_0))$ and $\bar y\neq y_0$. Therefore $(x_s(y_0),
t_s(y_0))$ has more than one minimizer.
\end{proof}
Now we complete the proof of Corollary \ref{one}.
\begin{proof}By the above lemma, $\Sigma=T(M)\setminus \{(x, t)|(x, t)\,
\mbox{has unique minimizer}\}=T(M)$, hence by Theorem \ref{path
connected}, $\Sigma$ is compose of at most countable path connected
sets; i.e. $\Sigma=\bigcup T(F_i)$. Therefore it is composed of at
most countable path connected components.
\end{proof}
\section{$C^k$ initial data case, $k\ge 2$: regularity properties}
In section 3, when the initial data is in $C^1$, it is proved that $u$ is in $C^1({\mathbb R^n}\times \{t>0\}\setminus \Sigma)$.
In this section, we want to find out that when the initial data is in $C^k$, $k\ge 2$,  what are the regularity properties of solutions $u$. First $T_1$ is further studied, the relation $T_1=\Gamma\setminus \Sigma$ is identified, where the set of conjugate points $\Gamma$ is only defined for $C^k$ initial data, $k\ge 2$. Then it is shown that there is  a blow-up of second derivatives of $u$ at any point in $T_1$. For any point before the first termination point on the characteristic from $y_0$ with $Dg(y_0)\neq 0$,  we  find a neighborhood $U$ of it,  $u$ is $C^k(U)$, $k\ge 2$. For characteristics from points with $Dg=0$,  a subtle region $P_0$ is detected. Therefore if $\Sigma\cup T_1\cup P_0$ is cut off, then for any point, there is a  neighborhood $U$ of it,  $u$ is $C^k(U)$,  $k\ge 2$.

  For the case of strictly convex, superlinear and smooth Hamiltonians, it is proved that  if $\Sigma\cup T_1$ is cut off, then $u$ has $C^k$ regularity, $k\ge 2$ (see \cite{Flem}\cite{ZTW})\cite{C}).  The argument in the proof in \cite{ZTW} uses that the range of minimizers  is in ${\mathbb R^n}$, so the criterion of a local minimum in ${\mathbb R^n}$ can be used. While for the Eikonal equation,  the range of minimizers for $(x, t)$ is $\overline B_t(x)$, if $y_0$ is a minimizer for $(x, t)$ with $Dg(y_0)\neq 0$, then $y_0$ is on the boundary of $\overline B_t(x)$, so it is not an interior point of $\overline B_t(x)$. The argument in the proof  for the case of strictly convex, superlinear and smooth Hamiltonians can not be applied here. But the idea of the usage of some kind of linearity for the characteristic can be borrowed from \cite{ZTW}. First we  prove that except  $n$ moments on the effective characteristic, $u$ is $C^k$ in the  neighborhood of any  point.
   Then two important eigenvalues of $X_y$ are identified, and  the desirable result is obtained.

Recall that the characteristic from $y_0$ with $Dg(y_0)\neq 0$ has the following form:
\begin{equation}\label{X}X(y_0,
t)=y_0+\frac{Dg(y_0)}{|Dg(y_0)|}t.\end{equation}
Since in this section, $C^k$ initial data are considered, $k\ge 2$,    the
derivative of $X$ (in definition \ref{X}) in term of $y$ coordinate
is meaningful. Similarly, we follow the definition in
\cite{C}:
\begin{definition}(Conjugates points)

A point $(x, t)$ is called conjugate if there exists $y\in {\mathbb R^n}$
 such that $Dg(y)\neq 0$, $x=X(y,
t)=y+\frac{Dg(y)}{|Dg(y)|}t$,  $y$ is a minimizer for $(x, t)$,  and
$$det X_y(y, t)=0.$$
We denote  by
$\Gamma$ the set of conjugate points.
\end{definition}
Next we want to study the relation between $T_1$ and $\Gamma$.
\begin{proposition}\label{1}If $Dg(y_0)\neq 0$ and $(x_s(y_0), t_s(y_0))$ has
unique minimizer $y_0$, then $det X_y(y_0, t_s(y_0))=0$.
\end{proposition}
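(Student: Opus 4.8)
The plan is to express the Jacobian $X_y(y_0,t)$ through the principal curvatures of the level surface $H(y_0)$, and then to use the uniqueness of the minimizer to identify $t_s(y_0)$ with the reciprocal of the largest principal curvature, at which that determinant necessarily vanishes.

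First I would write the characteristic map as $X(y,t)=y+t\,N(y)$ with $N(y)=Dg(y)/|Dg(y)|$, so that $X_y(y_0,t)=I+t\,DN(y_0)$. Differentiating $|N|^{2}\equiv 1$ gives $N^{T}DN=0$, hence the image of $DN(y_0)$ lies in the tangent hyperplane $N^{\perp}=T_{y_0}H(y_0)$. In an orthonormal basis $\{\tau_1,\dots,\tau_{n-1},N\}$ adapted to $H(y_0)$, the matrix $DN(y_0)$ is block lower triangular: its last row vanishes (image in $N^{\perp}$), and its top-left block $A=DN(y_0)|_{T_{y_0}H(y_0)}$ has eigenvalues $-\lambda_1(y_0),\dots,-\lambda_{n-1}(y_0)$, where the $\lambda_i(y_0)$ are the principal curvatures of $H(y_0)$ at $y_0$ oriented by $N$ (so that a sphere of radius $t$ with inner normal has all $\lambda_i=1/t$, matching the convention of Section 4). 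Consequently
\[
\det X_y(y_0,t)=\det\!\bigl(I_{n-1}+tA\bigr)=\prod_{i=1}^{n-1}\bigl(1-t\,\lambda_i(y_0)\bigr),
\]
so $\det X_y(y_0,t)$ vanishes exactly when $t=1/\lambda_i(y_0)$ for some $i$ with $\lambda_i(y_0)>0$.

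Second, I would identify $t_s(y_0)$ with $1/\lambda_1(y_0)$, $\lambda_1$ being the largest principal curvature. Since $(x_s(y_0),t_s(y_0))$ has the single minimizer $y_0$ and $Dg(y_0)\neq 0$, we must have $\overline t_s(y_0)=t_s(y_0)$, for otherwise Proposition \ref{cluster} would produce more than one minimizer there; hence for every $t<t_s(y_0)$ the point $(X(y_0,t),t)$ has $y_0$ as its unique minimizer, so $g(y)>g(y_0)$ for all $y\in\overline B_t(x)\setminus\{y_0\}$ near $y_0$, with $x=X(y_0,t)$. Because $\partial B_t(x)$ and $H(y_0)$ are tangent at $y_0$ with common inner normal $N(y_0)$, this local strict minimality is exactly a comparison of second fundamental forms and forces $\lambda_1(y_0)\le 1/t$; letting $t\uparrow t_s(y_0)$ gives $t_s(y_0)\le 1/\lambda_1(y_0)$. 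For the reverse inequality I would argue by contradiction: if $\lambda_1(y_0)<1/t_s(y_0)$ (which also covers $\lambda_1(y_0)\le 0$), then at $t=t_s(y_0)$ the surface $H(y_0)$ still lies strictly outside $\overline B_{t_s}(x_s)$ near $y_0$, so any minimizer other than $y_0$ stays bounded away from $y_0$; but by definition of $t_s(y_0)$ there exist $\bar y_t\in\overline B_t(X(y_0,t))$ with $g(\bar y_t)<g(y_0)$ as $t\downarrow t_s(y_0)^{+}$, and a subsequential limit $\bar y$ of these is a minimizer for $(x_s,t_s)$ with $g(\bar y)=g(y_0)$ and $\bar y\neq y_0$, contradicting uniqueness. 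Hence $t_s(y_0)=1/\lambda_1(y_0)$, so the factor $1-t_s(y_0)\lambda_1(y_0)$ in the product vanishes and $\det X_y(y_0,t_s(y_0))=0$.

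The determinant computation is routine; the delicate step is the reverse inequality $t_s(y_0)\ge 1/\lambda_1(y_0)$, where one must rule out a global termination in which a distant competitor enters the ball. This is precisely where the unique-minimizer hypothesis does the work, since it forbids a second minimizer at time $t_s(y_0)$ and thereby leaves only the tangential (focal) mechanism $\lambda_1(y_0)=1/t_s(y_0)$. The curvature comparisons needed here are the $C^2$ second-fundamental-form estimates already employed in Propositions \ref{liminf=} and \ref{surface}, so they carry over with only cosmetic changes.
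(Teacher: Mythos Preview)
Your argument is correct, but it takes a genuinely different route from the paper's. The paper argues by contradiction in three lines: if $\det X_y(y_0,t_s(y_0))\neq 0$, the inverse function theorem makes the characteristic map $f:(y,t)\mapsto(X(y,t),t)$ a local diffeomorphism near $(y_0,t_s(y_0))$; combined with the unique-minimizer hypothesis and upper semicontinuity of $L$, every nearby $(x,t)$ then has a unique minimizer, hence is a differentiable point of $u$ by Theorem~\ref{diff}. This contradicts Proposition~\ref{cluster}, which says $(x_s(y_0),t_s(y_0))$ is a cluster point of nondifferentiable points.

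Your approach is more explicit and more geometric: you diagonalize $X_y(y_0,t)$ via the shape operator of the level surface $H(y_0)$ to get $\det X_y(y_0,t)=\prod_{i=1}^{n-1}(1-t\lambda_i(y_0))$, and then pin down $t_s(y_0)=1/\lambda_1(y_0)$ directly from the unique-minimizer hypothesis by a two-sided curvature comparison. This is longer but yields strictly more information --- the identification of $t_s(y_0)$ with the first focal time $1/\lambda_1(y_0)$ --- which the paper's abstract argument does not give. The paper's proof, by contrast, is shorter precisely because it recycles the Brouwer fixed-point step already invested in Proposition~\ref{cluster}. Both arguments use the $C^2$ hypothesis in an essential way (yours through the second fundamental form, the paper's through the $C^1$ regularity of $X_y$ needed for the inverse function theorem), and both ultimately exploit uniqueness of the minimizer to rule out a distant competitor; they simply package that step differently.
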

\begin{proof}It will be proved by a contradiction argument. Assume
that \\$det X_y(y_0, t_s(y_0))\neq 0$. Define a map $f: (y,
t)\rightarrow (X(y, t), t)$. Since $g$ is $C^2$, we have that $f$ is
a $C^1$ map. By assumption that $det X_y(y_0, t_s(y_0))\neq 0$ and
the inverse function theorem, there exist two  open subsets $V$ and
 $W$ of ${\mathbb R^{n+1}}$ such that $(y_0, t_s(y_0))\in V$ and
 $(x_s(y_0), t_s(y_0))\in W$, and the map $f$ from $V$ to $W$ is
 bijective. Since $(x_s(y_0), t_s(y_0))$ has unique minimizer $y_0$,
for any $(x, t)\approx (x_s(y_0), t_s(y_0))$ and $y\in L(x, t)$,
 we have that $y\approx y_0$. It implies that when $(x, t)$ is
 sufficiently close to $(x_s(y_0), t_s(y_0))$, for any $y\in L(x,
 t)$, $y\in V$. Due to the bijection of $f$ from $V$ to $W$, $y$ is
 unique minimizer for $(x, t)$. By Theorem \ref{diff},  $(x, t)$ is a
 differentiable point of $u$. It contradicts with that
 $(x_s(y_0), t_s(y_0))$ is a cluster point of nondifferentiable
 points of $u$ shown in Proposition \ref{cluster}.
\end{proof}
Hence the proposition above immediately implies
$T_1\subset \Gamma$. We will prove $T_1=\Gamma\setminus \Sigma$ later.

Combining Proposition \ref{cluster} and  the proposition above, we have
that a termination point of the characteristic from $y_0$ with
$Dg(y_0)\neq 0$ is either a nondifferentiable point or a conjugate
point. How about the points on the characteristic  before the first
termination point $(\overline x_s(y_0), \overline t_s(y_0))$? We
already know that these points are differentiable points of $u$. So
the question is that  whether  there are some conjugate points i.e., $det X_y(y_0, t)=0$ holds for some $t<\overline
t_s(y_0)$.

First the following lemma helps us to rule out most points.
\begin{lemma}\label{X_y at most}When $Dg(y_0)\neq 0$, on the characteristic from $y_0$, there are at most $n$ moments $t_i$, $i=1, 2...,n$ such that $det X_y(y_0, t_i)=0$.
\end{lemma}
\begin{proof}Since $X(y_0, t)=y_0+\frac{Dg(y_0)}{|Dg(y_0)|}t$, \begin{equation}\label{X_y}X_y(y_0, t)=I_d+A(y_0)t,\end{equation} where $A(y_0)=(\frac{z}{|z|})'|_{z=Dg(y_0)}D^2g(y_0)$.

If $det X_y(y_0, t_0)=0$, then there exists some $\theta\in {\mathbb R^n}$ such that $$X_y(y_0, t_0)\theta=\theta + A(y_0)\theta t_0=0.$$
It implies that $$A(y_0)\theta=-\frac{\theta}{t_0}.$$
Hence $-\frac{1}{t_0}$ is an eigenvalue of $A(y_0)$ with eigenvector $\theta$. Since $A(y_0)$ has at most $n$ real eigenvalues, we complete the proof.
\end{proof}
Next we find out that by using the lemma above and two special eigenvectors, it is proved that on the characteristic from $y_0$ with $Dg\neq 0$, before the first termination time, actually, every point is not a conjugate point.
This result is similar to the case of strictly convex, superlinear and smooth Hamiltonians, and it is worth mentioning that for the case of strictly convex, superlinear and smooth Hamiltonians, there is only one termination time for every
characteristic, in stead of possible two termination time for the
Eikonal equation.

\begin{theorem}\label{det}For any $y_0$ with $Dg(y_0)\neq 0$, and any $t<\overline t_s(y_0)$, we have that  $det X_y(y_0, t)\neq 0$.
\end{theorem}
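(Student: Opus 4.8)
The plan is to argue by contradiction, converting a hypothetical zero of $\det X_y$ into a statement about the curvature of the level surface $H(y_0)$ that is incompatible with $y_0$ being a minimizer for times strictly below $\overline t_s(y_0)$. Suppose $\det X_y(y_0,t_0)=0$ for some $0<t_0<\overline t_s(y_0)$. As in the proof of Lemma \ref{X_y at most}, the representation \eqref{X_y}, namely $X_y(y_0,t)=I_d+A(y_0)t$, shows that $-1/t_0$ is an eigenvalue of $A(y_0)$, so there is a nonzero $\theta$ with $A(y_0)\theta=-\theta/t_0$.

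First I would pin down the direction of $\theta$. Computing $\left(\frac{z}{|z|}\right)'=\frac{1}{|z|}\bigl(I-\hat z\hat z^{\,T}\bigr)$ with $\hat z=z/|z|$, one obtains $A(y_0)=\frac{1}{|Dg(y_0)|}\bigl(I-\hat p\hat p^{\,T}\bigr)D^2g(y_0)$ with $\hat p=Dg(y_0)/|Dg(y_0)|$. Since the range of $A(y_0)$ lies in $\hat p^{\perp}$, the eigenrelation forces $\theta\perp Dg(y_0)$; that is, $\theta$ is tangent to $H(y_0)$ at $y_0$. Contracting $A(y_0)\theta=-\theta/t_0$ with $\theta$ and using $\theta\perp\hat p$ yields
\[
\frac{\theta^{T}D^2g(y_0)\,\theta}{|Dg(y_0)|}=-\frac{|\theta|^2}{t_0}.
\]
Reading the left-hand side as a normal curvature of $H(y_0)$ at $y_0$ with respect to the inner normal $Dg(y_0)/|Dg(y_0)|$ (the same sign convention under which a ball of radius $t$ has principal curvatures $1/t$), this says the normal curvature of $H(y_0)$ in the direction $\theta$ equals $1/t_0$; in particular the maximal principal curvature satisfies $\lambda_1(y_0)\ge 1/t_0$.

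The second half of the argument is the geometric comparison already used to prove $\overline t_s(y_0)>0$ and inside the proof of Proposition \ref{liminf=}: if $y_0$ is a minimizer for $(x,t)$ with $x=y_0+\frac{Dg(y_0)}{|Dg(y_0)|}t$, then $H(y_0)$ must lie locally outside $\overline B_t(x)\setminus\{y_0\}$, which forces $\lambda_1(y_0)\le 1/t$. Since $t_0<\overline t_s(y_0)$, I may choose $t'$ with $t_0<t'<\overline t_s(y_0)$; then $y_0$ is the unique minimizer for $(x',t')$ with $x'=y_0+\frac{Dg(y_0)}{|Dg(y_0)|}t'$, so $\lambda_1(y_0)\le 1/t'<1/t_0$. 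This contradicts the bound $\lambda_1(y_0)\ge 1/t_0$ from the previous step, so no such $t_0$ exists and $\det X_y(y_0,t)\neq 0$ for every $t<\overline t_s(y_0)$.

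I expect the main obstacle to be making the curvature bookkeeping of the middle step fully rigorous: justifying that the eigenvector $\theta$ is genuinely tangent to $H(y_0)$, that the contracted identity is exactly a normal curvature with the correct sign, and that the implication ``$y_0$ is a minimizer $\Rightarrow\lambda_1(y_0)\le 1/t$'' can be invoked with a strict gain once there is room $t'>t_0$ below $\overline t_s(y_0)$. The degenerate situations ($\lambda_1(y_0)\le 0$, or $\overline t_s(y_0)=\infty$) are then handled automatically, since in those cases $A(y_0)$ carries no negative eigenvalue and, by the block structure of $A(y_0)$ relative to $\hat p^{\perp}\oplus\mathbb{R}\hat p$, one has $\det X_y(y_0,t)=\prod_{i=1}^{n-1}\bigl(1-t\lambda_i(y_0)\bigr)>0$ for all $t>0$.
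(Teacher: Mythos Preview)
Your argument is correct and takes a genuinely different, more direct route than the paper's. The paper argues by contradiction as well, but instead of reading the vanishing of $\det X_y(y_0,t_0)$ as a curvature statement, it identifies two eigendirections of $X_y(y_0,\tilde t)$ (one with eigenvalue $1$ coming from the left eigenvector $Dg(y_0)$, and your $\theta$ with eigenvalue $-(\tilde t-t_0)/t_0$), and then shows by a perturbative parallelogram construction that the characteristics from $y_0+\theta$ and from $y_0+ar_1$ must cross at some time $\hat t<\tilde t<\overline t_s(y_0)$. This forces $\liminf_{y\to y_0}\overline t_s(y)<\overline t_s(y_0)$, contradicting Proposition~\ref{liminf=}. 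By contrast, you bypass the characteristic-intersection picture entirely: once $\theta\perp Dg(y_0)$ is established, the contracted identity is exactly the normal curvature of $H(y_0)$ in the direction $\theta$, and the contradiction comes from the elementary second-order touching principle already used in the proofs of Propositions~4.1 and~\ref{liminf=}. Your computation in fact yields the explicit factorization $\det X_y(y_0,t)=\prod_{i=1}^{n-1}(1-t\lambda_i(y_0))$ via the block structure of $A(y_0)$ relative to $\hat p^\perp\oplus\mathbb{R}\hat p$, which makes the whole statement transparent: $\det X_y(y_0,\cdot)$ vanishes precisely at the reciprocals of the positive principal curvatures of $H(y_0)$, and these are all $\ge\overline t_s(y_0)$.

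One small remark on the step you flag as an obstacle: the paper asserts the strict inequality $\lambda_1(y_0)<1/t$ whenever $t<\overline t_s(y_0)$, but as you implicitly recognize, the second-order touching condition only gives $\lambda_1(y_0)\le 1/t$. Your device of choosing $t'\in(t_0,\overline t_s(y_0))$ to manufacture the strict gap $\lambda_1(y_0)\le 1/t'<1/t_0$ is exactly the right way to close this, and is cleaner than what the paper writes.
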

\begin{proof}We will prove by a contradiction argument. Assume that there exists some $t<\overline t_s(y_0)$ such that $det X_y(y_0, t)= 0$. Let $x=y_0+\frac{Dg(y_0)}{|Dg(y_0)|}t$. Since $t<\overline t_s(y_0)$, by Lemma \ref{diff}, $u$ is differentiable at $(x, t)$ and $$u(x, t)=g(y_0).$$
Take differentiation in term of $y$, then $$\nabla u(x, t)X_y(y_0, t)=Dg(y_0).$$
By Lemma \ref{diff}, we know that $\nabla u(x, t)=Dg(y_0)$, hence we have that
$$Dg(y_0)X_y(y_0, t)=Dg(y_0).$$
It implies that $Dg(y_0)$ is left eigenvector of $X_y(y_0, t)$ with eigenvalue 1. Accordingly, there exists a vector $r_1$, which is a right eigenvector of $X_y(y_0, t)$ with eigenvalue 1.

Next due to $det X_y(y_0, t)= 0$, there exists $\theta$ such that $$X_y(y_0, t)\theta=0.$$ Hence $\theta$ is a right eigenvector of $A(y_0)$ in (\ref{X_y}) with eigenvalue $-\frac{1}{t}$.

By Lemma \ref{X_y at most}, we know that there are at most $n$ moments $t_i$
such that $det X_y(y_0, t_i)= 0$, so there is some $\tilde t\in (t, \overline t_s(y_0))$ such that $det X_y(y_0, \tilde t)\neq 0$.

Due to that the length of $\theta$ and $r_1$ is arbitrary, we can let $|\theta|=|r_1|<<1$. First observe that
\begin{eqnarray*} X_y(y_0, \tilde t)ar_1&=&ar_1, \qquad a\in (-1, 1)\\
 X_y(y_0, \tilde t)\theta &=&-\frac{\tilde t-t}{t}\theta.
\end{eqnarray*}
Let $l$ denote the line segment from $y_0+\theta$ to $X(y_0, \tilde t)-\frac{\tilde t-t}{t}\theta$. Let $AB$ denote the parallelogram with four vertices $y_0-r_1$, $y_0+r_1$, $X(y_0, \tilde t)-r_1$ and $X(y_0, \tilde t)+r_1$. It is easy to see that the characteristic segment from $y_0$ and $t\in [0, \tilde t]$ is the mid line of $AB$.  Since $-\frac{\tilde t-t}{t}$ is negative, $\theta$ and $r_1$ are independent, we have that $l$ must intersect with $AB$ into the point $(X(y_0, t), t)$ on the mid line. Next
\begin{eqnarray*}X(y_0+ar_1, \tilde t)&=&X(y_0, \tilde t)+X_y(y_0, \tilde t)ar_1+o(|r_1|)\\&=&X(y_0, \tilde t)+ar_1+o(|r_1|),\\
X(y_0+\theta, \tilde t)&=&X(y_0, \tilde t)+X_y(y_0, \tilde t)\theta+o(|\theta|)\\&=&X(y_0, \tilde t)-\frac{\tilde t-t}{t}\theta+o(|\theta|).
\end{eqnarray*}
Let $\tilde l$ denote the characteristic segment from $y_0+\theta$, $t\in [0, \tilde t]$. Let $\tilde {AB}$ denote the set containing all characteristic segments from $y_0+ar_1$, $a\in (-1, 1)$, $t\in [0, \tilde t]$. By the equalities above,   the distance between $l$ and $\tilde l$ and the distance between $AB$ and $\tilde {AB}$ are $o(|r_1|)$; i.e. $o(|\theta|)$. It can be considered as a very small disturbance of order of $o(|\theta|)$. Hence $\tilde {AB}$ and $\tilde l$ must intersect with each other into one point as well. Let $$(\hat x, \hat t)=\tilde {AB}\cap \tilde l, $$we have that $\hat t<\tilde t<\overline t_s(y_0)$. $(\hat x, \hat t)$ is on the characteristic from $y_0+\theta$ and on the characteristic from $y_0+a_0r_1$ for some $a_0\in (-1, 1)$. That is,
\begin{eqnarray*}\hat x&=&y_0+\theta+\frac{y_0+\theta}{|y_0+\theta|}\hat t\\
&=&y_0+a_0r_1+\frac{y_0+a_0r_1}{|y_0+a_0r_1|}\hat t.
\end{eqnarray*}
It implies that $y_0+\theta$ and $y_0+a_0r_1$ are on the $\partial B_{\hat t}(\hat x)$. Then there are four cases: 1. If $g(y_0+\theta)=g(y_0+a_0r_1)$ and   $y_0+\theta$, $y_0+a_0r_1$ are minimizers for $(\hat x, \hat t)$, then  $(\hat x, \hat t)$ has more than one minimizer with $Dg\neq 0$, hence   $\hat t= \overline t_s(y_0+\theta)=\overline t_s(y_0+a_0r_1)$. 2.  If $g(y_0+\theta)=g(y_0+a_0r_1)$ and $y_0+\theta$, $y_0+a_0r_1$ are not minimizers for $(\hat x, \hat t)$,  then $\hat t> \overline t_s(y_0+\theta)$ and $\hat t>\overline t_s(y_0+a_0r_1)$. 3. If $g(y_0+\theta)<g(y_0+a_0r_1)$, then  $\hat t>\overline t_s(y_0+a_0r_1)$. 4. If $g(y_0+\theta)>g(y_0+a_0r_1)$, then $\hat t> \overline t_s(y_0+\theta)$.

 From above, we can conclude that $\overline t_s(y_0+\theta)\le \hat t$ or $\overline t_s(y_0+a_0r_1)\le \hat t$. Hence $\overline t_s(y_0+\theta)\le \tilde t$ or $\overline t_s(y_0+a_0r_1)\le \tilde t$. Due to that $|\theta| $ and $|r_1|$ can be arbitrarily small, the follwing holds, $$\underline{\lim}_{y_n\rightarrow y_0} \overline t_s(y_n)\le \tilde t<\overline t_s(y_0).$$ It contradicts with that $\underline{\lim}_{y_n\rightarrow y_0} \overline t_s(y_n)=\overline t_s(y_0)$ in Lemma \ref{liminf}. Hence we obtain the desired result.
\end{proof}
By the theorem above, we immediately know that any point in $\Gamma$ must be a termination point.   With Proposition \ref{1},    the following holds:
\begin{corollary}$T_1=\Gamma\setminus\Sigma$.
\end{corollary}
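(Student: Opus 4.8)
The plan is to establish the two inclusions $T_1 \subseteq \Gamma \setminus \Sigma$ and $\Gamma \setminus \Sigma \subseteq T_1$ separately. The first inclusion will rest on Proposition \ref{1} (which already gives $T_1 \subseteq \Gamma$), and the second, which is the substantive direction, will combine Theorem \ref{det} to locate the time $t$ relative to the termination times with the differentiability criterion Theorem \ref{diff} to force uniqueness of the minimizer.

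For the inclusion $T_1 \subseteq \Gamma \setminus \Sigma$ I would argue as follows. Take any $(x, t) \in T_1$. By definition $T_1$ consists of the termination points in $T(M)$ having a unique minimizer, so in particular $u$ is differentiable at $(x,t)$ by Theorem \ref{diff} part 1), whence $(x, t) \notin \Sigma$. Let $y_0$ be this unique minimizer; since $(x,t)$ is a termination point of a characteristic issued from a point of $M$, we have $Dg(y_0) \neq 0$, and by the earlier lemma for points of $T_1$ the two termination times coincide, $\overline t_s(y_0) = t_s(y_0) = t$ with $x = x_s(y_0)$. Proposition \ref{1} then yields $\det X_y(y_0, t_s(y_0)) = 0$, so $(x, t)$ is a conjugate point, i.e. $(x,t) \in \Gamma$. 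This gives $T_1 \subseteq \Gamma \setminus \Sigma$.

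For the reverse inclusion $\Gamma \setminus \Sigma \subseteq T_1$, take $(x, t) \in \Gamma$ with $(x, t) \notin \Sigma$. By the definition of a conjugate point there is some $y$ with $Dg(y) \neq 0$, $x = X(y,t) = y + \frac{Dg(y)}{|Dg(y)|} t$, $y \in L(x, t)$, and $\det X_y(y, t) = 0$. The crucial step is to pin down $t$: Theorem \ref{det} guarantees $\det X_y(y, s) \neq 0$ for every $s < \overline t_s(y)$, so the vanishing of the determinant forces $t \geq \overline t_s(y)$; meanwhile $y \in L(x,t)$ forces $t \leq t_s(y)$ by the definition of $t_s$. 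Hence $\overline t_s(y) \leq t \leq t_s(y) < \infty$, so $y \in M$ and $(x, t) \in T(M)$. To see the minimizer is unique, note that $(x,t) \notin \Sigma$ makes it a differentiable point, and since it has a minimizer $y$ with $Dg(y) \neq 0$, the contrapositive of Theorem \ref{diff} part 2) forces $L(x, t)$ to be a singleton. Thus $(x,t)$ is a termination point in $T(M)$ with unique minimizer, i.e. $(x,t) \in T_1$.

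The main obstacle is the reverse inclusion, and specifically the need to certify that a conjugate point lying outside $\Sigma$ sits inside the termination interval $[\overline t_s(y), t_s(y)]$ rather than strictly before the first termination time. This is exactly the content that Theorem \ref{det} supplies, by excluding conjugate points before $\overline t_s(y)$; without it the argument would not get off the ground. Once $t \ge \overline t_s(y)$ is secured, the uniqueness of the minimizer drops out of the differentiability hypothesis through Theorem \ref{diff}, and everything else is routine bookkeeping with the definitions of $M$, $T(M)$, and $T_1$.
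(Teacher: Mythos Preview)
Your proof is correct and follows essentially the same approach as the paper, which simply remarks that Theorem \ref{det} forces any conjugate point to lie in the termination interval and then combines this with Proposition \ref{1}. One small quibble: in the reverse inclusion you write $\overline t_s(y) \le t \le t_s(y) < \infty$, but the finiteness of $t_s(y)$ is not yet established at that line (and is not needed---membership in $M$ only requires $\overline t_s(y) < \infty$, which you do have since $\overline t_s(y) \le t$); once you conclude the minimizer is unique, the lemma on $T_1$ gives $t_s(y) = \overline t_s(y) = t$ a posteriori.
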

That is, the following equivalence  holds:
\begin{corollary}\label{3}If $(x_0, t_0)$ has unique minimizer $y_0$ with $Dg(y_0)\neq 0$, then $det X_y(y_0, t_0)=0$ iff $t_0=\overline t_s(y_0)=t_s(y_0)$.
\end{corollary}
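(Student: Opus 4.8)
The plan is to prove the two implications separately, assembling Proposition \ref{1}, Theorem \ref{det}, and Proposition \ref{cluster}; no fundamentally new analysis should be required, since this corollary is the bookkeeping that turns the conjugacy condition $det X_y=0$ into the relation $T_1=\Gamma\setminus\Sigma$ recorded just above. In both directions I would first record the one fact common to them: since $y_0$ is a minimizer for $(x_0,t_0)$ and $Dg(y_0)\neq 0$, Proposition \ref{NC} forces $(x_0,t_0)$ onto the characteristic from $y_0$, so that $x_0=X(y_0,t_0)=y_0+\frac{Dg(y_0)}{|Dg(y_0)|}t_0$.

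For the implication $(\Leftarrow)$, suppose $t_0=\overline t_s(y_0)=t_s(y_0)$. Together with $x_0=X(y_0,t_0)$ this identifies $(x_0,t_0)$ with $(x_s(y_0),t_s(y_0))$, a point which by hypothesis has $y_0$ as its unique minimizer. Proposition \ref{1} then applies verbatim and gives $det X_y(y_0,t_s(y_0))=det X_y(y_0,t_0)=0$, which is the desired conclusion.

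For the implication $(\Rightarrow)$, assume $det X_y(y_0,t_0)=0$. I would first locate $t_0$ between the two termination times. Because $y_0$ is a minimizer for $(x_0,t_0)$, the definition (\ref{t_s}) of the second termination time gives $t_0\le t_s(y_0)$. In the other direction, Theorem \ref{det} asserts that $det X_y(y_0,t)\neq 0$ for every $t<\overline t_s(y_0)$; its contrapositive applied at $t=t_0$ forces $t_0\ge\overline t_s(y_0)$. Hence $\overline t_s(y_0)\le t_0\le t_s(y_0)$, and it remains only to collapse this interval to a single point.

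The collapse is where the uniqueness hypothesis does its work, and I expect it to be the only genuinely delicate step. Suppose toward a contradiction that $\overline t_s(y_0)<t_s(y_0)$. Then $t_0$ lies in $[\overline t_s(y_0),t_s(y_0)]$, and Proposition \ref{cluster} (Case 2) says that every point of the characteristic from $y_0$ with time parameter in this interval is a nondifferentiable point; by Theorem \ref{diff} such a point carries more than one minimizer. Applied to $(x_0,t_0)$ this contradicts the assumption that $(x_0,t_0)$ has the unique minimizer $y_0$. Therefore $\overline t_s(y_0)=t_s(y_0)$, which squeezes the interval and yields $t_0=\overline t_s(y_0)=t_s(y_0)$, completing the argument.
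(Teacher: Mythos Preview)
Your proof is correct and uses exactly the same ingredients the paper assembles: Proposition \ref{1} for $(\Leftarrow)$, and Theorem \ref{det} together with Proposition \ref{cluster} for $(\Rightarrow)$. The paper itself gives no explicit proof of this corollary, presenting it as an immediate reformulation of $T_1=\Gamma\setminus\Sigma$; your argument spells out precisely the reasoning implicit in that claim.

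One minor simplification you could make in $(\Rightarrow)$: since $y_0$ is the \emph{unique} minimizer for $(x_0,t_0)$, the definition of $\overline t_s(y_0)$ gives $t_0\le\overline t_s(y_0)$ directly (uniqueness at time $t_0$ implies uniqueness at every earlier time along the characteristic, since $\overline B_{t'}(X(y_0,t'))\setminus\{y_0\}\subset B_{t_0}(x_0)$), which together with $t_0\ge\overline t_s(y_0)$ from Theorem \ref{det} already pins down $t_0=\overline t_s(y_0)$; the appeal to Proposition \ref{cluster} then only needs to rule out $\overline t_s(y_0)<t_s(y_0)$. But your route via $t_0\le t_s(y_0)$ is equally valid.
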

Next similar to the proof of Theorem 6.4.10 in \cite{C}, the following holds:
\begin{corollary}\label{2}For any $y_0$ with $Dg(y_0)\neq 0$ and any $t_0 < \overline t_s(y_0)$, there exists a neighborhood $W$ of $(X(y_0, t_0), t_0)$ such that $u$ is $C^k$ on $W$.
\end{corollary}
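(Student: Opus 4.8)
The plan is to run an inverse function theorem argument on the characteristic map, exactly in the spirit of the proof of Theorem 6.4.10 in \cite{C}, and then to bootstrap the regularity of $u$ from that of its gradient. Write $x_0 = X(y_0, t_0)$ and consider the map $f(y,t) = (X(y,t), t)$. Since $g \in C^k$ with $k \ge 2$, the gradient $Dg$ is $C^{k-1}$, and because $Dg(y_0) \neq 0$ the normalized field $Dg(y)/|Dg(y)|$ is $C^{k-1}$ near $y_0$; hence $X$, and therefore $f$, is $C^{k-1}$ on a neighborhood of $(y_0, t_0)$. The Jacobian of $f$ at $(y_0, t_0)$ is block triangular with blocks $X_y(y_0, t_0)$ and $1$, so its determinant equals $\det X_y(y_0, t_0)$, which is nonzero by Theorem \ref{det} since $t_0 < \overline t_s(y_0)$. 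First I would invoke the inverse function theorem to produce open sets $V \ni (y_0, t_0)$ and $W \ni (x_0, t_0)$ such that $f\colon V \to W$ is a $C^{k-1}$ diffeomorphism; write its inverse as $f^{-1}(x,t) = (Y(x,t), t)$ with $Y \in C^{k-1}(W)$.

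Next I would identify the minimizer map on $W$ with this smooth inverse $Y$. Because $t_0 < \overline t_s(y_0)$, the point $(x_0, t_0)$ has $y_0$ as its unique minimizer. Using the upper semicontinuity of the multifunction $L(\cdot,\cdot)$ together with this uniqueness, one may shrink $W$ so that for every $(x,t) \in W$ each minimizer $y \in L(x,t)$ lies so close to $y_0$ that $(y,t) \in V$ and $Dg(y) \neq 0$. For such a minimizer, Proposition \ref{NC} forces $(x,t)$ to lie on the characteristic issuing from $y$, i.e. $X(y,t) = x$, so that $f(y,t) = (x,t)$; the injectivity of $f$ on $V$ then gives $y = Y(x,t)$. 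Thus every $(x,t) \in W$ has the single minimizer $Y(x,t)$, with $Dg(Y(x,t)) \neq 0$.

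With the minimizer map in hand, I would apply part 1) of Theorem \ref{diff}: since each $(x,t) \in W$ has a unique minimizer, $u$ is differentiable there with $\nabla u(x,t) = Dg(Y(x,t))$ and $u_t(x,t) = -|Dg(Y(x,t))|$. The crucial gain is now a bootstrap on regularity: $Dg$ is $C^{k-1}$ and $Y$ is $C^{k-1}$, so the composition $Dg \circ Y = \nabla u$ is $C^{k-1}$ on $W$; since $Dg(Y) \neq 0$, the Euclidean norm is smooth there and $u_t = -|Dg \circ Y|$ is likewise $C^{k-1}$. Hence the full spatial-temporal gradient of $u$ is $C^{k-1}(W)$, which forces $u \in C^k(W)$, recovering precisely the regularity of the initial data. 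The main obstacle is the localization step of the second paragraph: one must rule out the appearance of extraneous minimizers under small perturbations of $(x,t)$ and confirm that the a priori set-valued minimizer selection genuinely coincides with the smooth branch $Y$. This is exactly where the uniqueness of the minimizer below $\overline t_s(y_0)$ and the upper semicontinuity of $L$ are indispensable, and it is the analogue of the step that, for strictly convex smooth Hamiltonians, is handled instead by the local-minimum criterion in $\mathbb{R}^n$.
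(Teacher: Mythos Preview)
Your argument is correct and follows the same inverse-function-theorem scheme as the paper: apply Theorem \ref{det} to get $\det X_y(y_0,t_0)\neq 0$, invert $f(y,t)=(X(y,t),t)$ locally, identify the minimizer with the smooth branch $Y(x,t)$, and read off $Du=(Dg\circ Y,\,-|Dg\circ Y|)\in C^{k-1}$.

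The only noteworthy difference is in the localization step where you show that $Y(x,t)$ is the unique minimizer on $W$. The paper invokes Proposition \ref{liminf=} (the $C^2$ fact $\liminf_{y\to y_0}\overline t_s(y)=\overline t_s(y_0)$) to conclude directly that $t<\overline t_s(Y(x,t))$, hence $Y(x,t)$ is the unique minimizer. You instead argue via upper semicontinuity of $L$ to trap all minimizers in $V$, then use Proposition \ref{NC} plus the injectivity of $f$ on $V$ to force uniqueness. Both routes are valid; yours has the mild advantage of not appealing to the $C^2$-specific continuity result Proposition \ref{liminf=}, while the paper's route makes the role of $\overline t_s$ more explicit.
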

\begin{proof}Recall the map $f: (y, t)\rightarrow (X(y, t), t)$. By Theorem \ref{det}, for any $y_0$ with $Dg(y_0)\neq 0$ and any $t_0 < \overline t_s(y_0)$, $det X_y(y_0, t_0)\neq 0$ holds. By the inverse function theorem, there exist two open sets $V$, $W$ of ${\mathbb R^{n+1}}$ such that $(y_0, t_0)\in V$ and
 $(X(y_0, t_0), t_0)\in W$, and the map $f$ from $V$ to $W$ is
 one- to- one and onto and a $C^{k-1}$ diffeomorphism. For any $(x,t)\in W$, let $(y, t)=f^{-1}(x, t)$. Hence $(y, t)\in V$.   When $W$ is sufficiently small, due to  Proposition \ref{liminf=} that $\underline {\lim}_{y_n\rightarrow y_0}\overline t_s(y_n)=\overline t_s(y_0)$,  we have that $t<\overline t_s(y)$. It implies that $y$ is unique minimizer for $(x, t)$. Hence by Theorem \ref{diff}, $u$ is differentiable at $(x, t)$ and $\nabla u(x,t)=Dg(y)=Dg(\Pi_y f^{-1}(x, t))$. Therefore $\nabla u$ is $C^{k-1}$ on $W$. $u_t(x, t)=-|\nabla u(x, t)|$, since $\nabla u(x, t)=Dg(y)\neq 0$, we have that $u_t$ is $C^{k-1}$ on $W$. Therefore $u$ is $C^k$ on $W$.
\end{proof}

For any $(x_0, t_0)\in T_1$, we have known that it is a differentiable point of $u$. Can it be considered as a "good" point? From Proposition \ref{cluster}, it is a cluster point of nondifferentiable points from later time. Next proposition tells us that the norm of second derivatives is going to blow up when $t\rightarrow t_0^-$ along the characteristic. So roughly speaking, the points in $T_1$ are generating points of  the nondifferentiable points of $u$. Theorem 6.5.4 in \cite{C} shows the same property for the case of strictly convex, superlinear and smooth Hamiltonians.
\begin{proposition} Let $(x_0, t_0)\in T_1$ and $y_0$ be the unique minimizer for $(x_0, t_0)$, then $$\lim_{t\rightarrow t_0^-}||\nabla^2 u(X(y_0, t), t)||=+\infty.$$
\end{proposition}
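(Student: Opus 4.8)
The plan is to exploit the differential identity that links the spatial Hessian of $u$ to $D^2g$ through the Jacobian $X_y$ of the characteristic map, and then to show that the degeneracy of $X_y$ at the termination time cannot be absorbed by $D^2g(y_0)$.

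First I would record what membership in $T_1$ provides. Since $(x_0,t_0)\in T_1$, the point has the unique minimizer $y_0$ with $Dg(y_0)\neq 0$, and by the lemma characterizing $T_1$ we have $\overline t_s(y_0)=t_s(y_0)=t_0$; hence by Proposition \ref{1}, $\det X_y(y_0,t_0)=0$. Moreover, for every $t<t_0=\overline t_s(y_0)$ the point $(X(y_0,t),t)$ has $y_0$ as its unique minimizer, and by Corollary \ref{2} the solution $u$ is $C^k$ ($k\ge 2$) in a neighborhood of it; in particular the spatial Hessian $\nabla^2u(X(y_0,t),t)$ is well defined for all such $t$. Here $\nabla^2 u$ denotes the Hessian in the $x$-variables, and showing that its norm blows up will a fortiori give the blow-up of any Hessian containing it.

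Next I would derive the key identity. On a small neighborhood of $y_0$ in the $y$-variable (with $t<\overline t_s(y_0)$ fixed and close to $t_0$), the inverse function theorem applied to $f:(y,t)\mapsto(X(y,t),t)$ together with Theorem \ref{diff} shows that $y$ is the unique minimizer for $(X(y,t),t)$ and $\nabla u(X(y,t),t)=Dg(y)$. Differentiating this relation with respect to $y$ while holding $t$ fixed and evaluating at $y_0$ yields, for all $t<t_0$,
\[
\nabla^2 u(X(y_0,t),t)\,X_y(y_0,t)=D^2 g(y_0).
\]
Now I argue by contradiction: suppose $\|\nabla^2 u(X(y_0,t),t)\|$ stays bounded as $t\to t_0^-$. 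Then along some sequence $t_j\to t_0^-$ we have $\nabla^2 u(X(y_0,t_j),t_j)\to H$ for a matrix $H$, and passing to the limit in the identity (using continuity of $t\mapsto X_y(y_0,t)$) gives $H\,X_y(y_0,t_0)=D^2 g(y_0)$. Choosing a nonzero $\theta$ in the kernel of $X_y(y_0,t_0)$, which exists since $\det X_y(y_0,t_0)=0$, and applying the limiting identity to $\theta$ forces $D^2 g(y_0)\theta=H\,X_y(y_0,t_0)\theta=0$. But by Lemma \ref{X_y at most}, $X_y(y_0,t_0)=I_d+A(y_0)t_0$ with $A(y_0)=(\frac{z}{|z|})'|_{z=Dg(y_0)}D^2 g(y_0)$, so $X_y(y_0,t_0)\theta=0$ rewrites as $\theta=-t_0\,(\frac{z}{|z|})'|_{z=Dg(y_0)}D^2 g(y_0)\theta$; if $D^2 g(y_0)\theta=0$ this forces $\theta=0$, contradicting $\theta\neq 0$. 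Hence the norm cannot remain bounded, which proves $\lim_{t\to t_0^-}\|\nabla^2 u(X(y_0,t),t)\|=+\infty$.

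The main obstacle is precisely the final step: ruling out that the blow-up of $X_y^{-1}$ in the kernel direction $\theta$ is cancelled upon left-multiplication by $D^2 g(y_0)$. This is resolved by the structural factorization $A(y_0)=(\frac{z}{|z|})'|_{z=Dg(y_0)}\,D^2 g(y_0)$ from Lemma \ref{X_y at most}, which guarantees that the kernel direction of $X_y(y_0,t_0)$ cannot also lie in the kernel of $D^2 g(y_0)$; everything else is a routine application of the chain rule, the inverse function theorem, and a compactness argument.
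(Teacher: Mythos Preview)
Your proof is correct and follows essentially the same route as the paper: derive the identity $\nabla^2 u(X(y_0,t),t)\,X_y(y_0,t)=D^2g(y_0)$, take a nonzero $\theta\in\ker X_y(y_0,t_0)$, and use the factorization $X_y=I_d+t\,(\tfrac{z}{|z|})'|_{z=Dg(y_0)}D^2g(y_0)$ to see that $D^2g(y_0)\theta\neq 0$. The only difference is cosmetic: the paper applies the identity directly to $\theta$ to get the explicit lower bound $\|\nabla^2 u(X(y_0,t),t)\|\ge |D^2g(y_0)\theta|/|X_y(y_0,t)\theta|\to+\infty$, whereas you argue by contradiction and compactness. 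One small wording fix: your contradiction hypothesis ``stays bounded'' literally negates to $\limsup=+\infty$, not $\lim=+\infty$; to conclude the full limit, start instead from ``suppose $\lim\neq+\infty$'', which yields a bounded sequence $t_j\to t_0^-$, and then your compactness argument goes through verbatim.
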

\begin{proof}Since $(x_0, t_0)\in T_1$, $t_0=t_s(y_0)=\overline t_s(y_0)$. When $t<\overline t_s(y_0)$, by Corollary \ref{2}, we know that there is a neighborhood $W$ of $(X(y_0, t), t)$, $u\in C^k(W)$. By Lemma \ref{diff},
$$\nabla u(X(y_0, t), t)=Dg(y_0),$$hence
\begin{equation}\label{nabla2}\nabla^2 u(X(y_0, t), t)X_y(y_0, t)=Dg^2(y_0).\end{equation}
Recall $X(y_0, t)=y_0+\frac{Dg(y_0)}{|Dg(y_0)|}t$ and
$$X_y(y_0, t)=I_d+K'(Dg(y_0))D^2g(y_0)t, $$ where $K(z)=\frac{z}{|z|}$.
By Corollary \ref{3}, $det X_y(y_0, t_0)=0$. It implies that there exists some $\theta\in {\mathbb R^n}\setminus \{0\}$ such that $$0=X_y(y_0, t_0)\theta=\theta+K'(Dg(y_0))D^2g(y_0)\theta t.$$
From it, we know that $D^2g(y_0)\theta\neq 0$. Go back to (\ref{nabla2}), then $$\nabla^2 u(X(y_0, t), t)X_y(y_0, t)\theta=Dg^2(y_0)\theta.$$
It implies that $$||\nabla^2 u(X(y_0, t), t)||\ge \frac{|Dg^2(y_0)\theta|}{|X_y(y_0, t)\theta|}\rightarrow +\infty$$
as $t\rightarrow t_0^-$ due to $0=X_y(y_0, t_0)\theta$.
\end{proof}
Actually, by the argument above, for any $(x_0, t_0)\in \Gamma$, let $y_0$ be a minimizer for $(x_0, t_0)$ with $det X_y(y_0, t_0)=0$, then $\lim_{t\rightarrow t_0^-}||\nabla^2 u(X(y_0, t), t)||=+\infty$ also holds.

Next we take a look at characteristics from some point $y_0$ with $Dg(y_0)=0$. First when $|P|<1$, we have the following:
\begin{proposition}\label{P<1}For any point $y_0$ with $Dg(y_0)=0$, 1) if there exists some $P$ such that $|P|<1$ and $t_s(y_0,P)>0$, then $g$ attains a local minimum at $y_0$. 2) For any point $(x_0, t_0)$, where $t_0<t_s(y_0,P)$, $x_0=X(y_0, P, t_0)=y_0+Pt_0$ and $|P|<1$, there exists a neighborhood $U$ of $(x_0, t_0)$ such that $u\equiv g(y_0)$ on $U$. 3) For $(x_s(y_0,P), t_s(y_0,P))$ with $|P|<1$, it is either a nondifferentiable point of $u$ or a point on the characteristic from another point $y_1$ with $Dg(y_1)=0$ and with some direction $P_1$ such that $|P_1|=1$.
\end{proposition}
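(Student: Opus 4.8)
The plan is to treat the three parts in order, in each case exploiting the Hopf-Lax formula (\ref{HL}) together with the elementary geometry of the balls $\overline B_t(y_0+Pt)$ for $|P|<1$; the only genuinely delicate part is 3). For 1), I would fix some $t\in(0,t_s(y_0,P))$, so that by definition of the second termination time $y_0$ is a minimizer for $(y_0+Pt,t)$, i.e. $g(y)\ge g(y_0)$ for all $y\in\overline B_t(y_0+Pt)$. Since $|P|<1$, the point $y_0$ lies in the interior of this ball, and in fact $B_{(1-|P|)t}(y_0)\subset\overline B_t(y_0+Pt)$ because $|z-(y_0+Pt)|\le |z-y_0|+|P|t$. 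Hence $g\ge g(y_0)$ on a full neighborhood of $y_0$, which is the asserted local minimum.

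For 2), the key observation is again a ball inclusion made strict by $|P|<1$. Choosing $t_1\in(t_0,t_s(y_0,P))$ and writing $x_1=y_0+Pt_1$, the point $y_0$ is a minimizer for $(x_1,t_1)$, so $g\ge g(y_0)$ on $\overline B_{t_1}(x_1)$. For $z\in\overline B_{t_0}(x_0)$ one has $|z-x_1|\le t_0+|P|(t_1-t_0)<t_1$, so $\overline B_{t_0}(x_0)$ is compactly contained in the open ball $B_{t_1}(x_1)$; by continuity this persists for $(x,t)$ near $(x_0,t_0)$, giving $\overline B_t(x)\subset\overline B_{t_1}(x_1)$ and hence $u(x,t)=\min_{\overline B_t(x)}g\ge g(y_0)$. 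On the other hand $y_0\in\overline B_t(x)$ for such $(x,t)$, so $u(x,t)\le g(y_0)$. The two bounds pinch $u\equiv g(y_0)$ on a neighborhood $U$ of $(x_0,t_0)$.

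The hard part is 3), where I would work at the second termination point $(x_s,t_s)=(y_0+Pt_s,t_s)$, assuming $0<t_s=t_s(y_0,P)<\infty$ (so that 1) applies and $g$ has a local minimum at $y_0$). First, letting $t\uparrow t_s$ and using continuity of $u$, the point $y_0$ remains a minimizer for $(x_s,t_s)$, so $u(x_s,t_s)=g(y_0)$. Next, for $t>t_s$ the point $y_0$ is not a minimizer, forcing $u(y_0+Pt,t)<g(y_0)$ and the existence of a minimizer $y_t$ with $g(y_t)<g(y_0)$. Taking $t\downarrow t_s$ and using upper semicontinuity of $L$, a subsequence $y_t\to\bar y\in L(x_s,t_s)$ with $g(\bar y)=g(y_0)$. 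The crucial step is to read off information from $g(y_t)<g(y_0)=g(\bar y)$ with $y_t\to\bar y$: since $g$ has a local minimum at $y_0$, this is impossible if $\bar y=y_0$, so $\bar y\neq y_0$ and $(x_s,t_s)$ has at least two minimizers. I would then split on $Dg(\bar y)$: if $Dg(\bar y)\neq 0$, then $(x_s,t_s)$ has two minimizers one of which has nonzero gradient, so by Theorem \ref{diff} it is nondifferentiable; if $Dg(\bar y)=0$, I claim $\bar y$ must lie on $\partial B_{t_s}(x_s)$, for otherwise $\bar y$ would be an interior minimizer of $g$ over the ball, hence a local minimum in ${\mathbb R^n}$, again contradicting $g(y_t)<g(\bar y)$ with $y_t\to\bar y$. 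With $\bar y\in\partial B_{t_s}(x_s)$ and $Dg(\bar y)=0$, setting $y_1=\bar y$ and $P_1=(x_s-\bar y)/t_s$ gives $|P_1|=1$ and $x_s=y_1+t_sP_1$, exhibiting $(x_s,t_s)$ as a point on a characteristic from $y_1$ with $Dg(y_1)=0$ and $|P_1|=1$.

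The main obstacle I anticipate is precisely the boundary localization of $\bar y$ and its distinctness from $y_0$: both rest on the same approach-from-below phenomenon ($g(y_t)<g(y_0)$) combined with the local-minimum property from 1) and the upper semicontinuity of $L$, and one must argue carefully that the limiting minimizer genuinely sits on the sphere $\partial B_{t_s}(x_s)$ rather than being an interior critical point. I would also record the degenerate case $t_s(y_0,P)=0$ separately (there $(x_s,t_s)$ lies on the initial hyperplane and the statement is vacuous), and note that the dichotomy in 3) need not be exclusive.
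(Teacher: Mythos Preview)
Your proposal is correct and follows essentially the same route as the paper. Parts 1) and 2) match the paper's proof almost verbatim (ball inclusions coming from $|P|<1$); for part 3) your argument via the sequence $y_t$ with $t\downarrow t_s$ is a more explicit version of what the paper compresses into the phrases ``otherwise it will contradict with 1)'' and ``otherwise it will contradict with 1) and that $(x_s,t_s)$ is the second termination point,'' but the content---another minimizer $\bar y\neq y_0$ must exist, and if $Dg(\bar y)=0$ it cannot be interior---is the same.
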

\begin{proof}Since $y_0$ is a minimizer for $(x_s(y_0,P), t_s(y_0,P))$,  for any $y $ in the ball $\overline B_{t_s(y_0,P)}(x_s(y_0,P))$, $g(y_0)\le g(y)$. Due to $|P|<1$, we have that $y_0$ is inside the ball,  hence $g$ attains a local minimum at $y_0$.

Next we will prove 2). For any $y$ in the ball $\overline B_{t_s(y_0,P)}(x_s(y_0,P))$, $g(y_0)\le g(y)$. Since $|P|<1$, $y_0$ is an interior point of $B_{t_0}(x_0)$. There exists a neighborhood $U$ of $(x_0, t_0)$ such that $U$ is inside the cone with the vertex $(x_s(y_0,P), t_s(y_0,P))$ and the base $\overline B_{t_s(y_0,P)}(x_s(y_0,P))$, and  for any $(x, t)\in U$, $y_0\in B_t(x) $. Hence  $\overline B_t(x)\subset \overline B_{t_s(y_0,P)}(x_s(y_0,P))$. Therefore $y_0$ is a minimizer for $(x, t)$. We conclude that  $u(x, t)\equiv g(y_0)$ when $(x, t)\in U$.

Finally we will prove 3). For the second termination point $(x_s(y_0,P), t_s(y_0,P))$, it must have another minimizer, otherwise it will contradict with 1).  If there is a minimizer $y_1$ with $Dg(y_1)\neq 0$, then $(x_s(y_0,P), t_s(y_0,P))\in \Sigma$. If there is no minimizer with $Dg\neq 0$, then there is a minimizer $y_1$ with $Dg(y_1)= 0$, and $(x_s(y_0,P), t_s(y_0,P))$ lies in the characteristic from $y_1$ with some direction $P_1$ such that  $|P_1|=1$. Otherwise, it will contradicts with 1) and that $(x_s(y_0,P), t_s(y_0,P))$ is the second termination point of the characteristic from $y_0$ with the direction $P$.
\end{proof}
For the case of strictly convex, superlinear and smooth Hamiltonians, Theorem 6.4.11 in \cite{C} tells us that if $g\in C^k({\mathbb R^n})$, $k\ge 2$, then $u\in C^k({\mathbb R^n}\times \{t>0\}\setminus \overline \Sigma)$. But  next simple examples show that it does not hold for the Eikonal equation.

{\bf Example 1: } {\it If $g=y^2$, $y\in {\mathbb R}$, then $u(x, t)=0$ when $|x|\le t$, $u(x, t)=g(x-t)=(x-t)^2$ when $|x|> t$.

It is easy to see that $u\in C^2(\{(x, t)||x|\neq t\})$ and is not $C^2$ on lines $|x|=t$. Since there is no nondifferentiable points of $u$,  $\overline \Sigma=\emptyset$. Hence  $u\in C^2({\mathbb R}\times \{t>0\}\setminus \overline \Sigma)=C^2({\mathbb R}\times \{t>0\})$ does not hold.}

{\bf Example 2: } {\it If $g=y^3$, then $u$ is in $C^3({\mathbb R}\times \{t>0\}) $.}

{\bf Example 3: } {\it If $g=y^4$, then $u$ is in $C^2({\mathbb R}\times \{t>0\}) $, but not in $C^4({\mathbb R}\times \{t>0\}) $.}

From example 1, we can see that when   $C^k$ regularity is considered, points on  characteristics  from $0$ with $Dg(0)=0$ and $|P|=1$  are not good points from the starting time, instead of the termination time. From example  3, points on characteristics  from $0$ with $Dg(0)=0$ and $|P|=1$  are good points  from $t=0$ to $\infty$. Hence the $C^k$ regularity remains uncertain for characteristics from some $y_0$ with $Dg(y_0)=0$ and $|P|=1$. If we cut out these  characteristics from $t=0$ to $t_s(y_0, P)$; i.e. these effective characteristics, then the situation is clear. Hence the following main theorem about $C^k$ regularity of $u$ holds. First let $$P^0= \{(x, t)|x=y_0+Pt, t\in [0, t_s(y_0, P)], 
Dg(y_0)=0,
|P|=1\}).$$
\begin{theorem}\label{C2}For any point $(x, t)\in{\mathbb R^n}\times \{t>0\}\setminus (\Sigma\cup T_1\cup P^0)$, there exists a neighborhood $U$ of $(x, t)$ such that $u\in C^k(U)$.
\end{theorem}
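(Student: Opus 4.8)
The plan is to fix a point $(x_0, t_0) \in {\mathbb R^n}\times\{t>0\}\setminus(\Sigma\cup T_1\cup P^0)$ and split into cases according to the set of minimizers $L(x_0,t_0)$, so that each case reduces to a regularity statement already established, namely Corollary \ref{2} or part 2) of Proposition \ref{P<1}. Since $(x_0,t_0)\notin\Sigma$, the contrapositive of part 2) of Theorem \ref{diff} forbids the situation where $L(x_0,t_0)$ fails to be a singleton yet contains a point with nonzero gradient. Hence exactly one of the following holds: either (I) there is a minimizer $y_0$ with $Dg(y_0)\neq 0$, in which case $L(x_0,t_0)=\{y_0\}$ is a singleton; or (II) every $y\in L(x_0,t_0)$ satisfies $Dg(y)=0$.

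In Case (I), by Proposition \ref{NC} the point lies on the characteristic from $y_0$, so $x_0=y_0+\frac{Dg(y_0)}{|Dg(y_0)|}t_0$, and since $y_0$ is the unique minimizer we have $t_0\le\overline t_s(y_0)$. I would first rule out the equality $t_0=\overline t_s(y_0)$: if $\overline t_s(y_0)=t_s(y_0)$ then $(x_0,t_0)$ is a termination point of $T(M)$ with a single minimizer, hence a point of $T_1$, which is excluded; if $\overline t_s(y_0)<t_s(y_0)$ then by Case 2 of Proposition \ref{cluster} the first termination point carries more than one minimizer, contradicting uniqueness. Therefore $t_0<\overline t_s(y_0)$, and Corollary \ref{2} furnishes a neighborhood $W$ of $(x_0,t_0)=(X(y_0,t_0),t_0)$ on which $u\in C^k$.

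In Case (II), for a minimizer $y_0$ write $P=(x_0-y_0)/t_0$; then $|P|\le 1$ because $y_0\in\overline B_{t_0}(x_0)$, and $(x_0,t_0)$ lies on the characteristic from $y_0$ in direction $P$ with $t_0\le t_s(y_0,P)$. If some minimizer has $|P|<1$, I would again exclude the endpoint $t_0=t_s(y_0,P)$: by part 3) of Proposition \ref{P<1} the second termination point with $|P|<1$ is either nondifferentiable (hence in $\Sigma$) or lies on a characteristic issuing from a point with $Dg=0$ and $|P_1|=1$ (hence in $P^0$), both excluded. Thus $t_0<t_s(y_0,P)$, and part 2) of Proposition \ref{P<1} gives a neighborhood $U$ on which $u\equiv g(y_0)$ is constant, so $u\in C^k(U)$ trivially. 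The only remaining possibility is that every minimizer has $|P|=1$; but then $x_0=y_0+Pt_0$ with $Dg(y_0)=0$, $|P|=1$ and $t_0\in(0,t_s(y_0,P)]$, so $(x_0,t_0)\in P^0$ by definition, contradicting the standing hypothesis.

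The bulk of the argument is assembly, and the main obstacle is to verify that the excluded set is \emph{exactly} $\Sigma\cup T_1\cup P^0$: that in each admissible case the time coordinate $t_0$ falls \emph{strictly} below the relevant termination time, so that the open-neighborhood results genuinely apply. The delicate points are precisely the two boundary exclusions above, ruling out $t_0=\overline t_s(y_0)$ in Case (I) through the definition of $T_1$ together with Proposition \ref{cluster}, and ruling out $t_0=t_s(y_0,P)$ with $|P|<1$ in Case (II) through part 3) of Proposition \ref{P<1}. Once these endpoints are excluded, the interior $C^k$ regularity follows immediately from Corollary \ref{2} and from the local constancy of $u$ in Proposition \ref{P<1}.
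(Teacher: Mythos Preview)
Your proposal is correct and follows essentially the same route as the paper's proof: the same case split on whether some minimizer has $Dg\neq 0$, and the same invocation of Corollary \ref{2} in Case (I) and Proposition \ref{P<1} in Case (II). The only difference is cosmetic: where you carefully exclude the endpoint $t_0=\overline t_s(y_0)$ by hand via Proposition \ref{cluster} and the definition of $T_1$, the paper simply invokes the identity $T(M)=\Sigma\cup T_1$ (so that $(x_0,t_0)\notin T(M)$ forces $t_0<\overline t_s(y_0)$ in one step).
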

\begin{proof}For any point $(x, t)$ not in $\Sigma\cup T_1\cup P^0$, if $(x, t)$ has a minimizer $y_0$ with $Dg(y_0)\neq 0$, since the set of termination points from points with $Dg\neq 0$ $T(M)=\Sigma\cup T_1$, then $(x, t)$ is not in $T(M)$, hence $t < \overline t_s(y_0)$. By Corollary \ref{2}, the result holds.

If all minimizers $y_0$ for  $(x, t)$ have  $Dg(y_0) = 0$, since $(x, t)$  is not in $P^0$, $(x, t)$ is on the characteristic from $y_0$ with the direction $P$ such that $|P| < 1$. By 3) in Proposition \ref{P<1}, we have that $t < t_s(y_0, P)$, then by 2) in Proposition \ref{P<1}, the result is proved.
\end{proof}
We have known that $\Sigma\cup T_1\supset \Sigma$ and $\overline \Sigma \supset \Sigma\cup T_1$, from the theorem above, it can be immediately obtained that for $C^2$ initial data, the following holds:
\begin{corollary}$(\Sigma\cup T_1\cup P^0)\supset \overline \Sigma $.
\end{corollary}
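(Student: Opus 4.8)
The plan is to obtain this inclusion as an immediate consequence of Theorem \ref{C2} by a contrapositive argument. I would take an arbitrary point $(x_0, t_0) \in \mathbb{R}^n \times \{t>0\} \setminus (\Sigma \cup T_1 \cup P^0)$ and show that it cannot belong to $\overline{\Sigma}$; passing to complements then yields exactly $\overline{\Sigma} \subset \Sigma \cup T_1 \cup P^0$, which is the assertion $(\Sigma \cup T_1 \cup P^0) \supset \overline{\Sigma}$.

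First I would apply Theorem \ref{C2} to $(x_0, t_0)$: since this point lies outside $\Sigma \cup T_1 \cup P^0$, there exists a neighborhood $U$ of $(x_0, t_0)$ on which $u \in C^k(U)$ with $k \ge 2$. In particular $u$ is of class $C^1$ on $U$, hence differentiable at every point of $U$, so $U \cap \Sigma = \emptyset$. Thus $(x_0, t_0)$ admits an open neighborhood containing no nondifferentiable points of $u$, which means $(x_0, t_0)$ is not a limit point of $\Sigma$ and therefore $(x_0, t_0) \notin \overline{\Sigma}$. This establishes the containment of complements and hence the corollary.

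I expect there to be no genuine obstacle here: all of the substantive work has already been packed into Theorem \ref{C2} (together with the differentiability criterion of Theorem \ref{diff} and the termination-point analysis of the earlier sections), and the corollary exploits only the elementary fact that local $C^k$ regularity with $k \ge 2$ forces local differentiability. The single implication deserving an explicit sentence is $u \in C^1(U) \Rightarrow U \cap \Sigma = \emptyset$, after which the definition of closure finishes the argument; in particular I would not need to decide whether $U$ meets $T_1$ or $P^0$. It is worth recording that the reverse-type inclusion $\Sigma \cup T_1 \subset \overline{\Sigma}$ has already been obtained in Proposition \ref{over}, so this corollary completes the two-sided relation $\Sigma \cup T_1 \subset \overline{\Sigma} \subset \Sigma \cup T_1 \cup P^0$ announced as new phenomenon 8, marking the essential departure from the strictly convex, superlinear and smooth case where $\Sigma \cup T_1 = \overline{\Sigma}$.
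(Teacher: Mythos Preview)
Your proposal is correct and follows essentially the same approach as the paper: the paper simply states that the corollary ``can be immediately obtained'' from Theorem \ref{C2}, and your contrapositive argument is precisely the one-line justification the paper leaves implicit. Your additional remark tying this to Proposition \ref{over} and new phenomenon 8 is accurate and appropriate.
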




 Tian-Hong Li, 
 {\sl thli@math.ac.cn}\\

 JingHua Wang, 
 {\sl jwang@amss.ac.cn  }\\

    HaiRui Wen,
{\sl  wenhr@bit.edu.cn}

\end{document}